\def\co{\colon\thinspace}
\newcommand{\bC}{\mathbb{C}}
\newcommand{\bE}{\mathbb{E}}
\newcommand{\bZ}{\mathbb{Z}}
\newcommand{\bN}{\mathbb{N}}
\newcommand{\bQ}{\mathbb{Q}}
\newcommand{\bH}{\mathbb{H}}
\newcommand{\bR}{\mathbb{R}}
\newcommand{\Qbar}{\overline{\bQ}}
\newcommand{\mc}{\mathcal}
\newcommand{\Hom}{\mathrm{Hom}}
\newcommand{\Out}{\mathrm{Out}}
\newcommand{\Aut}{\mathrm{Aut}}
\newcommand{\ab}{\mathrm{ab}}
\newcommand{\trace}{\mathrm{tr}}
\newcommand{\stker}{\underrightarrow{\mathrm{ker}}}
\newcommand{\cA}{\mathcal{A}}
\newcommand{\cB}{\mathcal{B}}
\newcommand{\cC}{\mathcal{C}}
\newcommand{\cD}{\mathcal{D}}
\newcommand{\cE}{\mathcal{E}}
\newcommand{\cF}{\mathcal{F}}
\newcommand{\cG}{\mathcal{G}}
\newcommand{\cH}{\mathcal{H}}
\newcommand{\cM}{\mathcal{M}}
\newcommand{\cN}{\mathcal{N}}
\newcommand{\cS}{\mathcal{S}}
\newcommand{\cU}{\mathcal{U}}
\newcommand{\cV}{\mathcal{V}}
\newcommand{\cW}{\mathcal{W}}
\newcommand{\cX}{\mathcal{X}}
\newcommand{\cY}{\mathcal{Y}}
\newcommand{\cSF}{\mathcal{SF}}
\newcommand{\cSol}{\mathcal{SOL}}
\newcommand{\llangle}{\langle\negthinspace\langle}
\newcommand{\rrangle}{\rangle\negthinspace\rangle}
\newtheorem{theorem}{Theorem}[section]
\newtheorem{lemma}[theorem]{Lemma}
\newtheorem{corollary}[theorem]{Corollary}
\newtheorem{proposition}[theorem]{Proposition}
\newtheorem*{rep@theorem}{\rep@title}
\newcommand{\newreptheorem}[2]{%
\newenvironment{rep#1}[1]{%
 \def\rep@title{#2 \ref{##1}}%
 \begin{rep@theorem}}%
 {\end{rep@theorem}}}
\theoremstyle{definition}
\newtheorem{remark}[theorem]{Remark}
\newtheorem{definition}[theorem]{Definition}
\newtheorem{example}[theorem]{Example}
\newtheorem{notation}[theorem]{Notation}
\def\into {\hookrightarrow}
\begin{document}

\title[Recognizing 3--manifold groups]{Recognizing geometric 3--manifold groups using the word problem}
\author{Daniel P.\ Groves}
\address{Department of Mathematics, Statistics, and Computer Science, University of Illinois at Chicago, 322 Science and Engineering Offices (M/C 249), 851 S. Morgan St., Chicago, IL 60607-7045, USA}
\email{groves@math.uic.edu}
\thanks{}

\author{Jason Fox Manning}
\address{244 Mathematics Building, Dept.\ of Mathematics, University at Buffalo, Buffalo, NY 14260-2900, USA}
\email{j399m@buffalo.edu}
\thanks{}

\author{Henry Wilton}
\address{Department of Mathematics, University College London, Gower Street, London, WC1E 6BT, UK}
\email{hwilton@math.ucl.ac.uk}
\thanks{The first author is supported by NSF Grant CAREER DMS-0953794.
  The second author is supported by NSF Grant DMS-1104703.  The third author is supported by an EPSRC Career Acceleration Fellowship.}

\subjclass[2000]{Primary 57M50, 20F10. Secondary 20F05, 20F65.}

\keywords{}

\date{}

\dedicatory{}

\begin{abstract}
Adyan and Rabin showed that most properties of groups cannot be algorithmically recognized from a finite presentation alone.  We prove that, if one is also given a solution to the word problem, then the class of fundamental groups of closed, geometric 3-manifolds is algorithmically recognizable.  In our terminology, the class of geometric 3-manifold groups is `recursive modulo the word problem'.   
\end{abstract}

\maketitle

\section{Introduction}

Decision problems have been central to the field of geometric group theory since its inception in the work of Dehn \cite{dehn_unendliche_1911}.  These may be split into {\em local} problems, where a group is given and questions are asked about elements or (finite) collections of elements, and {\em global} problems, where questions are asked about different finite presentations of groups.   Miller's survey article provides further details \cite{miller_iii_decision_1992}. 

We will focus on global problems: recognizing whether a finite group presentation defines a group with some particular property.  This is impossible for most natural properties of groups \cite{adyan_algorithmic_1955,adyan_unsolvability_1957,rabin_recursive_1958}; most notoriously, it is impossible to recognize whether or not a finite presentation presents the trivial group.

In order to aim towards positive recognition results, it is natural to consider a relative version of the problem.  We investigate certain `local-to-global' phenomena, where a local problem (specifically, the word problem) is assumed to be solvable and this solution is harnessed to determine global information.  Examples of these phenomena already occur in the literature.  In \cite{GrovesWilton09} it was proved that, given a solution to the word problem, it is possible to decide whether a finite presentation presents a free group, or a limit group; as mentioned in the same paper, surface groups can be recognized similarly (see Theorem \ref{thm: Surfaces rmwp} below).  
In \cite{manning:casson}, it was shown how to determine whether or not a closed $3$--manifold is hyperbolic, given access to a solution to the word problem in its fundamental group.
It is possible, with a solution to the word problem, to decide if a finitely presented group which has no $2$-torsion admits a nontrivial splitting over the trivial group \cite{Touikan09}.  On the other hand, amongst fundamental groups of non-positively curved square complexes (a class in which there is a uniform solution to the word problem), it is impossible to decide if a group has a nontrivial finite quotient \cite{bridson_profinite_2012}.

In the current paper and its sequel \cite{GMW2}, we turn our attention to the problem of detecting whether a finite presentation presents the fundamental group of a closed 3--manifold.  This is the mysterious dimension; in dimensions four and higher, \emph{every} finitely presentable group arises as the fundamental group of a closed manifold.   The main result of this paper is the following.

\begin{theorem}\label{thm: Woolly main theorem}
There exists an algorithm that, given a solution to the word problem, decides whether or not a finite presentation presents the fundamental group of a closed geometric 3--manifold.
\end{theorem}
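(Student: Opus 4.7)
The approach is to run, in parallel, verification procedures tailored to each of the eight Thurston geometries, together with obstruction procedures that will correctly answer ``no'' on any input that is not a geometric 3--manifold group. For the geometries whose fundamental groups are finite or virtually abelian---$S^3$, $E^3$, and $S^2\times\bR$---there is a short, explicit list of possible target groups $H$. For each such $H$ I would enumerate assignments of the generators of $G$ to words in $H$ (and vice versa); using the solution to the word problem in $G$ together with the known structure of $H$, each assignment can be verified to be a homomorphism, and two such maps can be checked to be mutual inverses, so isomorphism of $G$ with any group on the finite list is certifiable. For closed hyperbolic 3--manifolds I would invoke \cite{manning:casson}, which furnishes exactly the recognition needed given the word problem.

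This leaves the three remaining Seifert fibered geometries ($\bH^2\times\bR$, $\widetilde{SL_2(\bR)}$, Nil) and Sol. In the Seifert fibered case the fundamental group sits in a central extension
\[ 1 \to \bZ \to G \to \pi_1^{\mathrm{orb}}(\Sigma) \to 1 \]
for some closed 2--orbifold $\Sigma$. On the certifying side I would enumerate candidate words $z$ for a generator of the central $\bZ$, verify centrality via the word problem, and then recognize $G/\langle z\rangle$ as the fundamental group of a closed 2--orbifold---a natural extension of the surface group recognition of \cite{GrovesWilton09} (Theorem \ref{thm: Surfaces rmwp} below), since every closed 2--orbifold group is virtually a surface group. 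For Sol, the fundamental group has the form $\bZ^2\rtimes_A\bZ$ with $A\in GL_2(\bZ)$ Anosov; these are enumerable and isomorphism with a given one is certifiable as in the virtually abelian cases.

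The chief obstacle is twofold and lies in the Seifert fibered case. First, one must have a procedure that recognizes closed 2--orbifold groups from the word problem, generalizing the surface group case and handling torsion carefully. Second, and harder, one needs a uniformly terminating procedure that will correctly return ``no'' on any presentation of a non-geometric group: the positive certificates are produced by enumeration, but the negative side requires exhaustive, word-problem-detectable obstructions (for example, wrong abelianization, incompatible splittings, the presence of large free subgroups, or failure of any candidate centre to work). Weaving these positive and negative procedures into a single algorithm that terminates on every input is the technical heart of the argument.
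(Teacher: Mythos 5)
Your outline correctly identifies the high-level strategy of the paper: treat each Thurston geometry separately, run positive verification procedures in parallel, and worry about how to certify a negative answer.  However, the proposal essentially stops at the point where the real difficulty begins, and the two obstacles you single out at the end are precisely the ones left unresolved.  Three specific gaps are worth naming.

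First, the ``negative side'' is not a technicality to be woven in at the end; it is the content of the theorem.  The paper handles it by formalizing what ``given a solution to the word problem'' means: a class $\cC$ is recursive modulo the word problem if for \emph{any} set $\cD$ with uniformly solvable word problem there are recursively enumerable sets separating $\cC\cap\cD$ from $\cD\smallsetminus\cC$.  Crucially, the separating sets are allowed to misbehave on presentations outside $\cD$.  This relaxation is what makes negative certification feasible at all, and it must be stated before a proof can even begin.  Your proposal speaks of an algorithm that ``terminates on every input,'' which is strictly stronger than what is being proved and is not achievable: one cannot, for instance, certify that an arbitrary presentation fails to define $\bZ^3$.

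Second, the structural theorem that makes the negative side work is missing.  For the Seifert fibred geometries, the paper's strategy is to show that the group in question lies in some class $\cV_f\cX$ of virtually-$\cX$ groups (which is recursive modulo the word problem), and then to certify ``not an aspherical $3$--manifold group'' within that class.  The decisive tool is Theorem \ref{t: Virtual}: a torsion-free group which is virtually an irreducible $3$--manifold group is itself a $3$--manifold group.  Consequently, the only obstruction within $\mc{VAM}_3$ to membership in $\mc{AM}_3$ is the presence of torsion, and torsion can be certified by exhibiting a finite quotient into which a finite cyclic subgroup injects (using residual finiteness of $3$--manifold groups).  Without Theorem \ref{t: Virtual} you have no way to turn ``not a geometric $3$--manifold group'' into a recursively enumerable condition.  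Your proposed obstructions (wrong abelianization, large free subgroups, etc.) are sound ideas but do not come close to being exhaustive.

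Third, your treatment of the $\bH^2\times\bR$ and $\widetilde{SL_2(\bR)}$ geometries glosses over the one case where the covering degree to a circle bundle is genuinely unbounded.  For $\mathbb{E}^3$, Nil, $S^2\times\bR$, spherical and Sol, there is a uniform bound ($12$, $12$, $2$, $240$, $8$ respectively) on the index of a nice subgroup, so one can reduce to a fixed finite-index problem.  For Seifert fibred manifolds with hyperbolic base orbifold there is no uniform bound.  The paper resolves this by computing, from the presentation itself, a bound $N(\Gamma)$ on the index of a circle-bundle subgroup.  This requires an \emph{effective} version of Selberg's Lemma applied to the tautological representation on a component of the $PSL_2(\bC)$--representation variety of $\Gamma$ (Theorem \ref{thm:computingselbergnumbers}).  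Your sketch of passing to a cover where the base orbifold becomes a surface is correct in spirit, but without an explicit computable bound on that cover's degree the algorithm does not terminate.  Relatedly, the recognition of central extensions of hyperbolic surface groups is done via factor sets over free groups (limit group theory), not by recognizing $2$--orbifold groups directly; this sidesteps your concern about handling torsion in the base.

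Finally, a small factual error: the spherical geometry does not have a ``short, explicit list'' of fundamental groups---there are infinitely many lens spaces.  What is true is that every spherical $3$--manifold group has a cyclic subgroup of index at most $240$, and within the class of such groups the relevant isomorphism tests become decidable because every group there is finite.

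In summary, the proposal is a reasonable description of the positive side of the argument and of where the difficulties lie, but it does not supply the key lemmas (the formal framework of recursive separability, Theorem \ref{t: Virtual}, and the effective Selberg bound) that actually resolve those difficulties.
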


In \cite{GMW2}, we remove the word `geometric' from the above theorem, and recognize 3--manifold groups using the word problem.

Note that we have been deliberately vague about what it means to be given a solution to the word problem in the statement of Theorem \ref{thm: Woolly main theorem}.  There are some subtleties to this notion, and we postpone a precise statement until Theorem \ref{t:Main} of the next section.  For now, the reader should note that Theorem \ref{thm: Woolly main theorem} implies that geometric 3--manifold groups can be recognized in many classes of groups that arise in practice, such as linear groups and automatic groups.  

In fact, we prove that that each of Thurston's eight geometries can individually be recognized using the word problem.  As a consequence, we recover the following well known corollary of geometrization (cf.\ \cite[Theorem 1.4]{manning:casson} and
\cite{LuoTillmannYang}). 

\begin{corollary} \label{Geometric?}
There is an algorithm that takes as input a triangulation of a closed 3--manifold $M$ and determines whether or not $M$ is geometric, and if so determines the geometry that $M$ admits.
\end{corollary}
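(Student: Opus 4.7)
The plan is to reduce Corollary \ref{Geometric?} to Theorem \ref{thm: Woolly main theorem}. Given a triangulation $\mc T$ of a closed 3--manifold $M$, first extract a finite presentation of $\pi_1(M)$ by a standard algorithm: pick a maximal tree in the 1--skeleton of $\mc T$, take the remaining edges as generators, and read off a relator from each 2--simplex. Second, one needs a solution to the word problem in $\pi_1(M)$. Since the corollary is announced as a consequence of geometrization, it is legitimate to invoke the Geometrization Theorem, which implies that closed 3--manifold groups are residually finite; a finitely presented residually finite group automatically has a uniformly computable solution to the word problem (McKinsey), so from the triangulation alone we produce both the presentation and the oracle required by Theorem \ref{thm: Woolly main theorem}.

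With these inputs in hand, apply Theorem \ref{thm: Woolly main theorem} to decide whether $\pi_1(M)$ is the fundamental group of some closed geometric 3--manifold. The paragraph preceding the corollary already asserts the stronger fact that each of Thurston's eight geometries can be recognized individually modulo the word problem, so from the same input we can determine which geometry, if any, is realized by a closed manifold with fundamental group $\pi_1(M)$.

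It remains to upgrade this group--theoretic information to a statement about $M$ itself: one must argue that $M$ is geometric (with the geometry detected in the previous step) if and only if $\pi_1(M)$ lies in the corresponding class of geometric 3--manifold groups. This is where geometrization is really used. In the finite $\pi_1$ case one invokes the Elliptization Theorem and the classification of spherical space forms; in the $\pi_1 \cong \bZ$ case the Kneser--Milnor decomposition and the Sphere Theorem force $M$ to be $S^2 \times S^1$ or the twisted $S^2$--bundle over $S^1$; in the remaining six geometries $M$ is aspherical, and Mostow rigidity together with Waldhausen's theorem pins $M$ down up to homeomorphism from $\pi_1(M)$.

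The main obstacle is not a single deep step but the bookkeeping implicit in the last paragraph: one must run through each of the eight geometries and verify that having a matching fundamental group forces $M$ to carry the matching geometry. Once the Geometrization Theorem is in the background, this is a routine case analysis, and the algorithmic content of the corollary is entirely supplied by Theorem \ref{thm: Woolly main theorem}.
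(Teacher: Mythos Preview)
Your proposal is correct and follows essentially the same approach as the paper's proof. The paper's argument is even terser than yours: it simply observes that (i) the word problem is explicitly solvable from a triangulation via Geometrization and Hempel's residual finiteness theorem, and (ii) ``the geometry of a closed $3$--manifold is determined by its fundamental group,'' then appeals to Theorem~\ref{thm: Woolly main theorem} (and implicitly Theorem~\ref{thm:recursivegeometries}); your case analysis in the last two paragraphs is exactly what lies behind assertion (ii), which the paper leaves unjustified.
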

\begin{proof}
The word problem for the fundamental group can be explicitly solved from a triangulation of a 3--manifold.
This follows from Perelman's Geometrization Theorem and a theorem of Hempel \cite{hempel87} (see Proposition \ref{hempelcor} below).
On the other hand, the geometry of a closed $3$--manifold is determined by its fundamental group.
\end{proof}

Along the way, we give a proof of the following theorem, which is of independent interest.   Although we presume this is known to the experts, we are not aware of a proof in the literature.  

\begin{reptheorem}{t: Virtual}
If $\Gamma$ is a torsion-free group with a subgroup of finite index isomorphic to $\pi_1M$, where $M$ is a closed, irreducible 3--manifold, then $\Gamma$ is also the fundamental group of a closed, irreducible 3--manifold.
\end{reptheorem}

The theorem is the dimension-three case of a conjecture of Wall in all dimensions \cite[p.\ 281]{farrell_borel_2002}. The result relies on Geometrization, Mostow Rigidity, the Convergence Group Theorem, and a theorem of Zimmermann.

\subsection*{Acknowledgements}
We thank Daryl Cooper for pointing out that Theorem \ref{t: Virtual} would
simplify our program.

\section{Recursive classes modulo the word problem}

We now turn to the formal setup which allows us to precisely state our main theorem (see Theorem \ref{t:Main} below).

Fix a countable alphabet, and let $\cU$ be the set of all finite
presentations on generators in that alphabet.  A subset $\cC \subset \cU$ is \emph{recursively enumerable} if there is an algorithm to recognize when a presentation $P$ is in $\cC$, and \emph{(absolutely) recursive} if both $\cC$ and its complement $\cU\smallsetminus\cC$ are recursively enumerable---that is, it can be algorithmically determined whether or not a finite presentation is an element of $\cC$.

We are concerned with isomorphism-closed classes of finitely presentable groups, which naturally correspond to isomorphism-closed subsets of $\cU$.  We will refer to such a subset as a \emph{class of groups}.  It is not hard to see that the class of \emph{perfect} groups is absolutely recursive.   However, this is an unusual phenomenon.  A class $\cC$ is {\em Markov} if there is a finitely presentable $G_0\in \cC$, and if there is a finitely presentable $H_0$ such that whenever $H_0$ is a subgroup of $G$, $G\notin\cC$. Note that the class consisting of just the trivial group is Markov.
A fundamental result of Adyan and Rabin states that if $\cC$ is Markov then $\cC$ is not recursive \cite{adyan_algorithmic_1955,adyan_unsolvability_1957,rabin_recursive_1958}.  In contrast, many natural classes of groups are recursively enumerable.

To remedy this problem, we restrict our attention to subsets $\cD$ of $\cU$ with better algorithmic properties.   The weakest natural assumption that one might impose is that the word problem should be uniformly solvable in $\cD$. Such classes of groups abound: linear groups, automatic groups, one-relator groups, residually finite groups etc.

\begin{remark}\label{rem: Trivial group}
If $\cD$ is a set of presentations in which the word problem is uniformly solvable, then there is an algorithm that recognizes whether \emph{or not} a finite presentation $P\in\cD$ represents the trivial group: the algorithm just checks whether or not every generator of $P$ is trivial.\end{remark}

This observation motivates our next definitions.  

\begin{definition}
A pair of disjoint sets of presentations $\mathcal{X}$ and $\mathcal{Y}$ is \emph{recursively separable} if there are recursively enumerable subsets $\mc{X}'$ and $\mc{Y}'$ of $\cU$ such that $\mc{X}\subseteq\mc{X}'$ but $\mc{Y}\cap\mc{X}'=\varnothing$ and $\mc{Y}\subseteq \mc{Y}'$ but $\mc{X}\cap\mc{Y}'=\varnothing$.
\end{definition}

\begin{definition}
Let $\cD$ be a set of finite presentations and let $\cC$ be a subset.  We will say that $\cC$ is \emph{recursive in $\cD$} if $\cC$ and $\cD\smallsetminus\cC$ are recursively separable.
\end{definition}

\begin{definition}
A class of groups $\cC$ is said to be \emph{recursive modulo the word problem} if, whenever $\cD$ is a set of presentations in which the word problem is uniformly solvable, the intersection $\cC\cap\cD$ is recursive in $\cD$.
\end{definition}

\begin{notation}
Throughout this paper $\cX_{\cC,\cD}$ and $\cY_{\cC,\cD}$ will always
denote recursively enumerable sets of presentations with the property
that $\cX_{\cC,\cD}$, $\cY_{\cC,\cD}$ separate $\cC\cap\cD$ and
$\cD\smallsetminus\cC$.  
\end{notation}
Note that there is neither a requirement that the union of $\cX_{\cC,\cD}$ and $\cY_{\cC,\cD}$ should be the whole of $\cU$, nor that their intersection should be trivial.  This is because these sets will come from a uniform solution to the word problem in $\cD$, which when applied to presentations not in $\cD$ may either fail to terminate or give incorrect answers.

\begin{remark}
  One could consider alternate definitions of ``recursive modulo the word problem''.  For example, say that a class of groups $\cC$ is \emph{recursive modulo a word-problem oracle} if both $\cC$ and its complement $\cU\smallsetminus \cC$ can be recursively enumerated by a Turing machine with an oracle which solves the word problem in every presentation in $\cU$.

  Since such an oracle is equivalent to an oracle which solves the halting problem, one can show that any recursively enumerable class of groups is recursive modulo a word-problem oracle.  The next example exhibits a class of groups which is recursively enumerable (hence recursive modulo a word-problem oracle) but not recursive
  modulo the word problem.
\end{remark}

\begin{example}
Let $\mc{FQ}$ be the class of all finitely presentable groups with a non-trivial finite quotient.  Then $\mc{FQ}$ is recursively enumerable, and hence both $\mc{FQ}$ and $\cU\smallsetminus\mc{FQ}$ can be enumerated by a Turing machine with an oracle for the halting problem.

On the other hand, in \cite{bridson_profinite_2012} it is shown that it is not possible to algorithmically determine whether or not the fundamental group of a compact non-positively curved cube complex is in $\mc{FQ}$.  As the word problem is uniformly solvable among such fundamental groups, it follows that $\mc{FQ}$ is not recursive modulo the word problem.
\end{example}

Let us denote by $\cM_n$ the class of fundamental groups of closed $n$-manifolds (so $\cM_n=\cU$ for $n\geq 4$) and by $\mc{AM}_n$ the class of fundamental groups of closed, aspherical $n$-manifolds.

\begin{example}
We shall see below that $\cM_1=\mc{AM}_1=\{\bZ\}$ is recursive modulo the word problem.
\end{example}

\begin{example}
In \cite{GrovesWilton09} it was shown that the class of finitely generated free groups is recursive modulo the word problem.  Similar arguments show that $\cM_2$ and $\mc{AM}_2$ are recursive modulo the word problem (see Section \ref{sec: Surface groups} below).
\end{example}

\begin{example}
It is a standard fact that $\cM_n=\cU$ for $n\geq 4$; in particular,
$\cM_n$ is recursive modulo the word problem when $n\geq 4$, for
trivial reasons.  Mark Sapir has proposed a proof that $\mc{AM}_n$ is not recursive modulo the word problem for $n\geq 4$ \cite{sapir_personal_2011}.
\end{example}

Therefore, there is a gap in our understanding in dimension three;
the purpose of this paper is to begin to begin to fill that gap.  Recall that a closed 3--manifold $M$ is \emph{geometric} if it admits a Riemannian metric such that the universal cover is homogeneous. 

\begin{theorem} \label{t:Main}
Let $\cG$ be the class of fundamental groups of closed geometric 3--manifolds.  Then $\cG$ is recursive modulo the word problem.
\end{theorem}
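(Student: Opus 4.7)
The strategy is to treat each of Thurston's eight geometries separately. Let $\cG_X \subseteq \cU$ denote the class of fundamental groups of closed 3--manifolds admitting geometry $X$, so that $\cG$ is the finite union of the $\cG_X$. Recursiveness modulo the word problem is preserved under finite unions: combine the positive enumerations in parallel, and combine the negative enumerations by outputting only those presentations appearing in all finitely many lists, which is again recursively enumerable. It therefore suffices to show that each $\cG_X$ is recursive modulo the word problem.

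For \emph{positive recognition}, each $\cG_X$ other than $\cG_{\bH^3}$ is, up to isomorphism, a concrete family: the $S^3$, $\bE^3$, and $S^2 \times \bR$ classes are finite lists, while $Nil$, $Sol$, and the Seifert fibered classes $\bH^2 \times \bR$ and $\widetilde{SL_2}$ are parametrized by explicit algebraic data (a central $\bZ$--extension of a surface or $2$--orbifold group, or a $\bZ^2 \rtimes_A \bZ$ structure). To certify $G \cong G_0$ for a specific $G_0$ drawn from such a list, one enumerates candidate homomorphisms $\phi \co G \to G_0$ and $\psi \co G_0 \to G$ by assigning generators to words in the target; using the word problem in $G$ and the known word problem in $G_0$, one verifies that each map is well defined and that $\phi \circ \psi$ and $\psi \circ \phi$ are identity maps. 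For the parametrized families I would exhibit the normal subgroup structure directly via effective Reidemeister--Schreier, and --- for the Seifert fibered cases --- invoke the surface-group recognition of Section \ref{sec: Surface groups} on the quotient $G/Z$ by the centre. For $\bH^3$, apply the algorithm of \cite{manning:casson}. Theorem \ref{t: Virtual} enters throughout, allowing one to pass freely to finite-index torsion-free subgroups (obtained by enumerating finite quotients) and then to transfer conclusions back.

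For \emph{negative recognition} --- enumerating presentations in $\cD$ whose groups are provably not in $\cG_X$ --- the tools available under the word problem include computing the abelianization, enumerating finite quotients, exhibiting non-abelian free subgroups by ping-pong, and examining finite-index subgroups via Reidemeister--Schreier. The task for each $X$ is to assemble a list of such obstructions that is complete, in the sense that every group outside $\cG_X$ eventually fails one of them.

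I expect the main obstacle to be the negative direction: one must show that, for each geometry, the semi-decidable algebraic invariants above actually characterize $\cG_X$. For example, to certify $G \notin \cG_{Sol}$ one must rule out every finite-index subgroup of the form $\bZ^2 \rtimes_A \bZ$ with $A$ Anosov, and to certify $G \notin \cG_{\widetilde{SL_2}}$ one must rule out every central extension of a hyperbolic $2$--orbifold group with nonzero Euler class. Showing that the assembled obstructions are complete for each geometry --- and coherent across all eight --- is the core difficulty, and it is here that Theorem \ref{t: Virtual}, Perelman's geometrization, and the fine structure theory of geometric 3--manifold groups are expected to carry the weight of the argument.
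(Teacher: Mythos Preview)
Your overall strategy --- treat each geometry separately and take the finite union --- is exactly the paper's approach, and your positive-direction sketch is fine (indeed simpler than you make it: the paper just uses that $\cM_3$ is recursively enumerable, Lemma~\ref{m3re}, and that each geometric class is a r.e.\ subset).

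There is, however, a genuine gap in your negative direction for the Seifert fibred classes with hyperbolic base, $\cSF_{-,=}$ and $\cSF_{-,\neq}$. You propose to recognize these via ``the quotient $G/Z$ by the centre'', but a group $\Gamma\in\cSF_{-}$ need not have infinite cyclic centre: the fibre subgroup is normal but not central when the base orbifold is non-orientable or the monodromy is nontrivial. What is true is that $\Gamma$ has a \emph{finite-index} subgroup that is a genuine circle bundle over an orientable hyperbolic surface --- but the index of this subgroup is \emph{not} uniformly bounded over $\cSF_{-}$, unlike the spherical, Euclidean, Nil, and Sol cases where you get absolute bounds $240$, $12$, $12$, $8$. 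So your phrase ``rule out every finite-index subgroup'' hides an infinite search, and your list of semi-decidable obstructions gives no terminating negative certificate.

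The paper's solution (Section~\ref{sec:SF-}) is an \emph{effective Selberg's Lemma}: from a presentation $P$ one computes the $PSL_2(\bC)$-representation variety, decomposes it into irreducible components, and for each component computes an explicit bound on the index of a torsion-free subgroup of the image of the tautological representation. This yields a computable function $N(P)$ such that if $\Gamma\in\cSF_{-}$ then $\Gamma$ has a circle-bundle subgroup of index at most $N(P)$. With this computable (though non-uniform) bound in hand, Lemma~\ref{lem: Virtually recursive modulo the word problem} applies and the argument closes exactly as in the bounded-index geometries. This is the missing ingredient in your sketch; without it, the $\bH^2\times\bR$ and $\widetilde{SL_2(\bR)}$ cases remain open.
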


The reader is referred to \cite{scott:geom} for a comprehensive survey of geometric 3-manifolds.  In a subsequent paper \cite{GMW2}, we will remove the `geometric' hypothesis and
prove that $\cM_3$ and $\mc{AM}_3$ are both recursive modulo the word
problem.

By Thurston's classification, there are precisely eight mutually exclusive classes of closed geometric 3--manifold groups, as follows.

\begin{itemize}
\item $\cH$, the class of fundamental groups of closed hyperbolic 3--manifolds.
\item $\cSF_{+,\neq}$, the class of spherical 3--manifold groups.
\item $\cSF_{+,=}$, the class of fundamental groups of 3--manifolds modelled on $S^2\times\bR$.
\item $\cSF_{0,=}$, the class of Euclidean 3--manifold groups.
\item $\cSF_{0,\neq}$, the class of fundamental groups of manifolds modelled on the Heisenberg Lie group $\mathrm{Nil}$.
\item $\cSF_{-,=}$, the class of fundamental groups of manifolds modelled on $\bH^2\times\bR$.
\item $\cSF_{-,\neq}$, the class of fundamental groups of manifolds modelled on the Lie group $\widetilde{SL_2(\bR)}$.
\item $\cSol$, the class of fundamental groups of manifolds modelled on the Lie group $\mathrm{Sol}$. 
\end{itemize}

In the above notation, $\cSF_{\chi,e}$ is precisely the class of fundamental groups of closed Seifert fibred 3--manifolds such that the sign of the orbifold Euler characteristic of the base orbifold is given by $\chi$ and the Euler number of the Seifert bundle structure is either equal to zero or not according to $e$.

In our proof, the separate geometries are dealt with more or less separately, and so we in fact prove the following, stronger, result.

\begin{theorem}\label{thm:recursivegeometries}
Each of $\cH$, $\cSF_{\chi,e}$ (for all values of $\chi,e$) and $\cSol$ is recursive modulo the word problem.
\end{theorem}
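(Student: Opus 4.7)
The plan is to handle each geometry $\mc{C}\in\{\cH,\cSol\}\cup\{\cSF_{\chi,e}\}$ separately, producing in each case recursively enumerable sets $\mc{X}_{\mc{C},\mc{D}}$ and $\mc{Y}_{\mc{C},\mc{D}}$ that separate $\mc{C}\cap\mc{D}$ from $\mc{D}\smallsetminus\mc{C}$ for an arbitrary word-problem class $\mc{D}$.  The positive side $\mc{X}_{\mc{C},\mc{D}}$ will be uniform in $\mc{D}$, produced by Tietze-enumeration of standard presentations of $\pi_1$ for manifolds with the required geometry; the negative side $\mc{Y}_{\mc{C},\mc{D}}$ will invoke the word-problem oracle to extract an algebraic certificate of non-membership.

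For the positive side we enumerate finite triangulations $T$ of closed 3--manifolds, apply the geometrization procedure implicit in Corollary~\ref{Geometric?} (Perelman plus Hempel) to decide which geometry $T$ admits, read off the standard presentation of $\pi_1(T)$, and enumerate all Tietze transformations of it.  Since every presentation in $\mc{C}$ is Tietze-equivalent to one produced in this way, the resulting set is recursively enumerable and contains $\mc{C}$, with no dependence on $\mc{D}$.  For the classes with only finitely many isomorphism types---the spherical groups, the four groups modelled on $S^2\times\bR$, and the ten 3--dimensional Bieberbach groups---we could equally well start from an explicit finite list.

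For the negative side the geometry-by-geometry analysis begins.  The hyperbolic class $\cH$ is the main result of \cite{manning:casson}.  For the $S^2$-based geometries the allowed groups are finite or virtually $\bZ$ of bounded combinatorial complexity, so a word-problem search quickly produces an element of too large an order or an abelian subgroup of the wrong rank.  For each remaining Seifert fibered geometry the group carries a virtually central infinite cyclic subgroup (a power of the regular fibre) whose quotient is a 2--orbifold group of prescribed geometric type; for $\cSol$ this virtual centre is replaced by a virtual $\bZ^2$ on which the remaining generator acts with Anosov monodromy.  Using Theorem~\ref{t: Virtual} to pass to a torsion-free finite-index subgroup, the plan is to search with the word-problem oracle either for this structure together with an identification of the quotient as a 2--orbifold group of the correct type, or for a certificate (such as a pair of elements failing to commute with a candidate central generator) that no such structure can exist.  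The main obstacle will be making this last step fully effective: it requires an extension of Theorem~\ref{thm: Surfaces rmwp} that recognises 2--orbifold groups modulo the word problem, together with a careful analysis of centres and commensurators in each Seifert and Sol geometry, so that the algorithmic recognition of the virtual fibre--base splitting and the geometric type of the base orbifold can be carried out using only the word problem and finite search.
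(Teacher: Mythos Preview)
Your positive side is essentially what the paper does (enumerate triangulated 3--manifolds, filter by geometry, Tietze-enumerate), and your citation of \cite{manning:casson} for $\cH$ is on target.  The gap is entirely on the negative side for the Seifert and Sol geometries, and you have identified it yourself without resolving it.

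The decisive missing idea is a \emph{computable bound on the index} of the virtual fibre--base structure.  Your plan says: pass to a finite-index subgroup, look for a central $\bZ$ (or normal $\bZ^2$ for Sol) with the right quotient, and otherwise search for a certificate that no such structure exists.  But without an \emph{a priori} bound on the index you cannot terminate the search on either branch: you cannot exhaust finite-index subgroups, and a failed commutation test with finitely many candidate ``fibre'' elements never certifies that no fibre exists.  The paper closes this gap geometry-by-geometry.  For $\cSF_{+,\neq}$, $\cSF_{+,=}$, $\cSF_{0,=}$, $\cSF_{0,\neq}$ and $\cSol$ the classification of base orbifolds gives explicit universal bounds ($240$, $2$, $12$, $12$, $8$) on the index of a cover which is a lens space, $S^2\times S^1$, $3$--torus, circle bundle over $T^2$, or $T^2$--bundle over $S^1$, respectively; one then reduces to recognising a single abelian or $2$--step nilpotent group, or (for Sol) to the Bieri--Strebel structure theorem for finitely presented abelian-by-cyclic groups.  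For $\cSF_{-,=}$ and $\cSF_{-,\neq}$ no universal bound exists, and the paper instead proves an \emph{effective Selberg Lemma}: from the presentation one computes the $PSL_2(\bC)$--representation variety, decomposes it into components, and extracts a computable function $N(P)$ bounding the index of a torsion-free subgroup of the Fuchsian quotient.  This converts the problem into recognising $\cV_{N}\cB_{-,e}$, where $\cB_{-,e}$ (circle bundles over hyperbolic surfaces) is shown recursive modulo the word problem via factor sets over free groups, not via an orbifold-group recogniser.

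In short, your proposed ``extension of Theorem~\ref{thm: Surfaces rmwp} to 2--orbifold groups'' is both unproved and unnecessary: the paper sidesteps orbifold bases entirely by passing, at computably bounded index, to covers whose bases are genuine surfaces (or tori), and then invokes the limit-group, nilpotent, and metabelian machinery already developed in Sections~\ref{s:limit groups}--\ref{sec: Surface groups}.  Once that bounded-index reduction is in hand, Corollary~\ref{cor: AM3 in VAM3} (which you correctly anticipated using) finishes each case.
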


\subsection{Outline}
Theorem \ref{thm:recursivegeometries} is the conjunction of the
statements in Theorems \ref{thm: SF+neq}, \ref{thm: SF+=}, \ref{thm:
  SF0=}, \ref{thm: SF0neq}, \ref{thm: SOL}, \ref{thm: SF-} and
\ref{thm: hyp}.

In Section \ref{sec:basic} we record some general facts
about recursiveness and recursiveness modulo the word problem of sets
of presentations.  In Section \ref{s:limit groups}, we develop some
aspects of the theory of limit groups which are useful for our
purposes.  In Section \ref{sec:wall3d}, we establish that torsion-free
virtual $3$--manifold groups are $3$--manifold groups.  In Section
\ref{sec: Surface groups}, we show that surface groups and aspherical
surface groups are recursive modulo the word problem.  

Sections
\ref{sec:spherical}--\ref{sec:hyp} establish, for each geometry in turn, 
the recursiveness modulo the word problem of 
the set of fundamental groups of closed $3$--manifolds admitting that
geometry.  Section \ref{sec:spherical} deals with spherical geometry,
Section \ref{sec:S2XR} with $S^2\times\bR$, Section \ref{sec:euclid} with
$\mathbb{E}^3$, Section \ref{sec:Nil} with $\mathrm{Nil}$, Section
\ref{sec:Sol} with $\mathrm{Sol}$, Section \ref{sec:SF-} with
$\mathbb{H}^2\times\bR$ and $\widetilde{SL_2(\bR)}$.  Finally,
Section \ref{sec:hyp} deals with hyperbolic geometry.

\section{Basic constructions and examples}\label{sec:basic}

\subsection{First properties}

\begin{remark}\label{rem: Unions and intersections}
If $\mc{X}$ and $\mc{Y}$ are recursively enumerable then so are $\mc{X}\cap\mc{Y}$ and $\mc{X}\cup\mc{Y}$.
\end{remark}

\begin{lemma}\label{lem: Recursive subsets of recursive subsets}
Let $\cC\subseteq\cD\subseteq\mc{E}$ be sets of presentations.
\begin{enumerate}
\item\label{basic1} If $\mc{C}$ is recursive in $\mc{D}$ and $\mc{D}$ is recursive in $\mc{E}$ then $\mc{C}$ is recursive in $\mc{E}$.
\item\label{basic2} If $\cC$ is recursive in $\mc{E}$ then $\cC$ is also recursive in $\cD$.
\end{enumerate}
\end{lemma}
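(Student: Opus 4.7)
The plan is to prove both parts by direct set-theoretic manipulation of the recursively enumerable separators furnished by the hypotheses. For part (\ref{basic1}), fix separators $\cX_1,\cY_1$ witnessing that $\cC$ is recursive in $\cD$, and separators $\cX_2,\cY_2$ witnessing that $\cD$ is recursive in $\cE$. Explicitly, $\cX_1,\cY_1,\cX_2,\cY_2$ are recursively enumerable with $\cC\subseteq\cX_1$, $(\cD\smallsetminus\cC)\cap\cX_1=\varnothing$, $\cD\smallsetminus\cC\subseteq\cY_1$, $\cC\cap\cY_1=\varnothing$, $\cD\subseteq\cX_2$, $(\cE\smallsetminus\cD)\cap\cX_2=\varnothing$, $\cE\smallsetminus\cD\subseteq\cY_2$, $\cD\cap\cY_2=\varnothing$. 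I would then propose the separators
\[
\cX := \cX_1\cap\cX_2,\qquad \cY:=\cY_1\cup\cY_2,
\]
both of which are recursively enumerable by Remark \ref{rem: Unions and intersections}.

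The key verification is that $(\cX,\cY)$ separates $\cC$ from $\cE\smallsetminus\cC$. Containment $\cC\subseteq\cX$ is immediate from $\cC\subseteq\cX_1$ and $\cC\subseteq\cD\subseteq\cX_2$. Writing $\cE\smallsetminus\cC=(\cD\smallsetminus\cC)\cup(\cE\smallsetminus\cD)$, the inclusion $\cE\smallsetminus\cC\subseteq\cY$ follows from the two inclusions $\cD\smallsetminus\cC\subseteq\cY_1$ and $\cE\smallsetminus\cD\subseteq\cY_2$. For disjointness: $\cY\cap\cC=(\cY_1\cap\cC)\cup(\cY_2\cap\cC)$, and $\cY_2\cap\cC\subseteq\cY_2\cap\cD=\varnothing$, so $\cY\cap\cC=\varnothing$. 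The only step that requires slight care is showing $\cX\cap(\cE\smallsetminus\cC)=\varnothing$: a presentation $P$ in this intersection lies in $\cX_2$ but not in $\cE\smallsetminus\cD$ (since those are disjoint), so $P\in\cD$; combined with $P\in\cX_1$ and $P\notin\cC$, this contradicts $\cX_1\cap(\cD\smallsetminus\cC)=\varnothing$.

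Part (\ref{basic2}) is essentially immediate, and indeed is the main reason the definition was set up this way. Given separators $\cX,\cY$ witnessing that $\cC$ is recursive in $\cE$, I claim the \emph{same} pair $(\cX,\cY)$ witnesses that $\cC$ is recursive in $\cD$. Since $\cD\subseteq\cE$, we have $\cD\smallsetminus\cC\subseteq\cE\smallsetminus\cC$, so $\cD\smallsetminus\cC\subseteq\cY$ and $\cX\cap(\cD\smallsetminus\cC)\subseteq\cX\cap(\cE\smallsetminus\cC)=\varnothing$; the remaining two conditions $\cC\subseteq\cX$ and $\cY\cap\cC=\varnothing$ are literally part of the hypothesis.

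There is no serious obstacle here: both statements reduce to bookkeeping with recursively enumerable sets, the only nontrivial moving part being the case analysis in the disjointness check of part (\ref{basic1}), where one must use that $\cC\subseteq\cD$ to rule out $P\in\cE\smallsetminus\cD$.
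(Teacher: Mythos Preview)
Your proof is correct and takes essentially the same approach as the paper: the paper also sets $\cX_{\cC,\cE}=\cX_{\cC,\cD}\cap\cX_{\cD,\cE}$ and $\cY_{\cC,\cE}=\cY_{\cC,\cD}\cup\cY_{\cD,\cE}$ for part~(\ref{basic1}), and reuses the same separators $\cX_{\cC,\cE},\cY_{\cC,\cE}$ for part~(\ref{basic2}). You have simply spelled out the verifications that the paper leaves to the reader.
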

\begin{proof}
For the first assertion, set $\mc{X}_{\cC,\mc{E}}=\cX_{\cC,\cD}\cap\cX_{\cD,\mc{E}}$ and $\mc{Y}_{\cC,\mc{E}}=\cY_{\cC,\cD}\cup\cY_{\cD,\mc{E}}$.  For the second, just note that $\cX_{\cC,\cD}=\cX_{\cC,\mc{E}}$ and $\cY_{\cC,\cD}=\cY_{\cC,\mc{E}}$ are as required.
\end{proof}

\begin{lemma}\label{lem: recursive subsets 2}
If $\cD$ is recursive modulo the word problem and $\cC$ is recursive in $\cD$ then $\cC$ is also recursive modulo the word problem.
\end{lemma}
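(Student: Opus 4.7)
The plan is to deduce this directly from the transitivity of the ``recursive in'' relation established as part (\ref{basic1}) of Lemma \ref{lem: Recursive subsets of recursive subsets}. Fix an arbitrary set $\mc{E}$ of presentations in which the word problem is uniformly solvable; my task is to produce a recursive separation of $\cC \cap \mc{E}$ from $\mc{E} \smallsetminus \cC$. I will obtain this by funnelling through the intermediate set $\cD \cap \mc{E}$ in the chain $\cC \cap \mc{E} \subseteq \cD \cap \mc{E} \subseteq \mc{E}$.

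The first ingredient is immediate: the hypothesis that $\cD$ is recursive modulo the word problem, applied to $\mc{E}$, provides recursively enumerable sets $\cX_{\cD,\mc{E}}$ and $\cY_{\cD,\mc{E}}$ witnessing that $\cD \cap \mc{E}$ is recursive in $\mc{E}$.

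The second ingredient is that $\cC \cap \mc{E}$ is recursive in $\cD \cap \mc{E}$. For this I claim that the same sets $\cX_{\cC,\cD}$ and $\cY_{\cC,\cD}$ that witness $\cC$ recursive in $\cD$ also work here. Indeed, since $\cC \subseteq \cD$ we have
\[
(\cD \cap \mc{E}) \smallsetminus (\cC \cap \mc{E}) = (\cD \smallsetminus \cC) \cap \mc{E} \subseteq \cD \smallsetminus \cC,
\]
so the separation of $\cC$ from $\cD \smallsetminus \cC$ restricts to a separation of $\cC \cap \mc{E}$ from $(\cD \cap \mc{E}) \smallsetminus (\cC \cap \mc{E})$ with no modification.

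Finally, I apply part (\ref{basic1}) of Lemma \ref{lem: Recursive subsets of recursive subsets} to the chain $\cC \cap \mc{E} \subseteq \cD \cap \mc{E} \subseteq \mc{E}$ to conclude that $\cC \cap \mc{E}$ is recursive in $\mc{E}$. Since $\mc{E}$ was an arbitrary set with uniformly solvable word problem, $\cC$ is recursive modulo the word problem. There is no real obstacle to overcome here, only the routine check that intersecting the separating sets for $(\cC,\cD)$ with $\mc{E}$ does not destroy the separating property -- which follows from the inclusion displayed above.
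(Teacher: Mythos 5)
Your proof is correct and essentially the same as the paper's: the paper simply writes down $\cX_{\cC,\cE}=\cX_{\cC,\cD}\cap\cX_{\cD,\cE}$ and $\cY_{\cC,\cE}=\cY_{\cC,\cD}\cup\cY_{\cD,\cE}$, which are exactly the formulas your route through Lemma \ref{lem: Recursive subsets of recursive subsets}\eqref{basic1} produces. Your extra step---verifying that $\cX_{\cC,\cD},\cY_{\cC,\cD}$ also separate $\cC\cap\cE$ from $(\cD\cap\cE)\smallsetminus(\cC\cap\cE)$---just makes explicit the observation the paper leaves to the reader, namely that one may apply the transitivity lemma to the chain $\cC\cap\cE\subseteq\cD\cap\cE\subseteq\cE$ rather than to $\cC\subseteq\cD\subseteq\cE$.
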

\begin{proof}
Suppose the word problem is uniformly solvable in $\cE$.  Taking
\[
\cX_{\cC,\cE}=\cX_{\cC,\cD}\cap\cX_{\cD,\cE}
\]
and
\[
\cY_{\cC,\cE}=\cY_{\cC,\cD}\cup\cY_{\cD,\cE}
\]
proves the result.
\end{proof}

\subsection{The Isomorphism Problem}

\begin{remark}
Let $\cD$ be a class of groups in which the isomorphism problem is solvable.  Then every singleton sub-class-of-groups is recursive in $\cD$.

The converse to this is not quite true.  Saying that the isomorphism problem is solvable in $\cD$ is the same as saying that the singleton classes of groups in $\cD$ are \emph{uniformly} recursive, in the sense that there is a single Turing machine that can recognize the class of any group.
\end{remark}

\subsection{The Reidemeister--Schreier Algorithm}

The Reidemeister--Schreier Algorithm enumerates the finite-index subgroups of a given finitely presented group (see \cite[Proposition 4.1]{lyndon_combinatorial_1977}).  Topologically, the Reidemeister--Schreier Algorithm can be thought of as just listing all possible covering spaces of a finite CW complex of index at most $k$.

\begin{theorem}[Reidemeister--Schreier]
There is an algorithm that takes as input a presentation $P\in\cU$ and a natural number $k$ and outputs a list of presentations of all subgroups of index at most $k$ together with embeddings into the group presented by $P$.
\end{theorem}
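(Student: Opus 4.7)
The plan is to reduce the problem to enumerating finite transitive permutation representations of the presented group. Suppose $P = \langle X \mid R \rangle$ presents $G$, with $X$ and $R$ finite. Each subgroup $H \le G$ of index $n \le k$ arises as the stabilizer of a point under the action of $G$ on the set $G/H$ of left cosets, and this action is equivalent to a transitive homomorphism $\rho \colon G \to S_n$ together with a basepoint $i \in \{1, \dots, n\}$. Conversely, every such pair $(\rho, i)$ determines a subgroup of index $n$, namely $\rho^{-1}(\mathrm{Stab}(i))$, and isomorphic permutation actions yield conjugate subgroups.

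First, for each $n$ with $1 \le n \le k$, I would enumerate all functions $f \colon X \to S_n$; there are only finitely many. Each such $f$ extends uniquely to a homomorphism $\tilde{f}$ on the free group on $X$, and I test whether $\tilde{f}(r) = \mathrm{id}$ for every $r \in R$. This is a finite check, and the survivors are precisely the homomorphisms $\rho \colon G \to S_n$. I then discard any $\rho$ whose image fails to act transitively on $\{1, \dots, n\}$, and (by conjugating in $S_n$) fix the basepoint $i = 1$.

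Next, for each transitive $\rho$, I would apply the Schreier method. Build the coset graph on $\{1, \dots, n\}$ from the generator actions, pick a spanning tree rooted at $1$, and read off a Schreier transversal $T$ for $H := \rho^{-1}(\mathrm{Stab}(1))$. The Schreier generators $\{\, t x \, \overline{tx}^{-1} \mid t \in T,\ x \in X\,\}$, where $\overline{w}$ denotes the transversal representative of the coset $Hw$, each express a generator of $H$ as an explicit word in $X^{\pm 1}$; this directly gives the required embedding into $G$. Applying the Reidemeister rewriting process to each conjugate $t r t^{-1}$ (for $t \in T$, $r \in R$) yields a finite set of defining relators for $H$ in terms of the Schreier generators, completing a finite presentation.

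The theorem is classical, and essentially the only thing to verify is that each of the above steps is effective; this holds because $X$, $R$, $S_n$, and $T$ are all finite and the rewriting is purely mechanical. The one mild subtlety is bookkeeping: the same subgroup of $G$ can be produced by distinct pointed transitive representations, but nothing in the statement requires the output list to be without repetition, so I would simply list every subgroup obtained.
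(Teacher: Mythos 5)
Your proposal is correct and is the standard Reidemeister--Schreier argument, which is exactly what the paper invokes: the paper gives no proof of its own, citing Lyndon--Schupp and remarking that the algorithm amounts to listing all degree-$\le k$ covers of a finite CW complex, and that covering-space enumeration is precisely your enumeration of pointed transitive permutation representations of degree $\le k$, with the Schreier rewriting supplying the presentation and embedding.
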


We shall denote this output set of presentations by $\cS_k\{P\}$.  More generally, for any set of presentations $\cC$, let
\[
\cS_k\cC=\bigcup_{P\in\cC}\cS_k\{P\}
\] 
and let
\[
\cS\cC=\bigcup_{k\in\bN}\cS_k\cC~,
\]
the class of all finite index subgroups of groups in $\cC$.  Furthermore, let
\[
\cV_k\cC=\{P\in\cU\mid \cS_k\{P\}\cap\cC\neq\varnothing\}
\]
for each $k\in\bN$ and let
\[
\cV\cC=\bigcup_{k\in\bN}\cV_k\cC~,
\]
the class of all virtually-$\cC$ groups.
Finally, we use the following notation for the class of groups all of whose
bounded index subgroups lie in $\cC$:
\[
\cW_k\cC=\{P\in\cU\mid \cS_k\{P\}\subseteq\cC\}
\]
for each $k\in\bN$.  Finally,
\[
\cW\cC=\bigcap_{k\in\bN}\cW_k\cC~
\]
denotes the groups all of whose finite index subgroups lie in $\cC$.

It is sometimes useful to be able to allow the index that we consider to vary with the presentation.  Suppose therefore that $f:\cU\to\bN$ is a computable function.  We will write
\[
\cS_f\cC=\bigcup_{P\in\cC}\cS_{f(P)}\{P\}
\] 
as well as
\[
\cV_f\cC=\{P\in\cU\mid \cS_{f(P)}\{P\}\cap\cC\neq\varnothing\}
\]
and
\[
\cW_f\cC=\{P\in\cU\mid \cS_{f(P)}\{P\}\subseteq\cC\}
\]

The following is an immediate application of the Reidemeister--Schreier Algorithm.
\begin{lemma}
Let $\cC$ be a set of presentations and $f:\cU\to\bN$ a computable function.
\begin{enumerate}
\item If $\cC$ is recursively enumerable then so are $\cS_f\cC$, $\cV_f\cC$ and $\cW_f\cC$, and hence also  $\cS\cC$ and $\cV\cC$.
\item If the word problem is uniformly solvable in $\cC$ then the word problem is also uniformly solvable in $\cS_f\cC$, $\cV_f\cC$ and $\cW_f\cC$, and hence also in $\cS\cC$ and $\cV\cC$.
\end{enumerate}
\end{lemma}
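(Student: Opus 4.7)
The plan is to use the Reidemeister--Schreier (RS) algorithm as the computational engine throughout. Given $(P,k)$, RS outputs the finite collection $\cS_k\{P\}$ of presentations of all subgroups of $P$ of index at most $k$, each equipped with an explicit Schreier transversal and an embedding into $P$. I combine this with dovetailing for part (1), and with \emph{Schreier coset rewriting} for part (2): for a finite-index subgroup $Q\le P$ with its Schreier data, any word $w$ in the generators of $P$ can be effectively rewritten as $w=t\cdot u$, where $t$ lies in the finite transversal for $Q$ and $u$ is a word in the generators of $Q$, with $w=1$ in $P$ iff $t$ represents the trivial coset and $u=1$ in $Q$.

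For part (1), assume $\cC$ is recursively enumerable. The set $\cS_f\cC$ is enumerated by enumerating $\cC$ and, for each $P$ output, listing the elements of $\cS_{f(P)}\{P\}$. The set $\cV_f\cC$ is enumerated by dovetailing over $P\in\cU$: compute $\cS_{f(P)}\{P\}$, and in parallel run the r.e.\ semi-decider for $\cC$ on each of its (finitely many) elements, emitting $P$ as soon as any check succeeds. The set $\cW_f\cC$ is enumerated in the same manner, except that $P$ is emitted only once \emph{all} the semi-deciders have halted successfully; this is still a valid semi-algorithm because the list is finite. The classes $\cS\cC$ and $\cV\cC$ follow from the $\cS_f$ and $\cV_f$ cases by a further dovetailing over $k\in\bN$.

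For part (2), the word problem on $\cS_f\cC$ reduces immediately to that on $\cC$: for $Q\in\cS_f\cC$ with embedding into $P\in\cC$, push a word in $Q$'s generators forward to a word $\tilde w$ in $P$'s generators and apply the uniform WP oracle to $(P,\tilde w)$. The class $\cW_f\cC$ is easier still, because whenever $f(P)\geq 1$ the presentation $P$ itself lies in $\cS_{f(P)}\{P\}$, so $\cW_f\cC\subseteq\cC$ and the uniform WP for $\cC$ applies directly. The main obstacle is $\cV_f\cC$: only \emph{some} (unknown) $Q^*\in\cS_{f(P)}\{P\}$ is guaranteed to lie in $\cC$, while the oracle may return incorrect answers on the other candidates $Q_i\notin\cC$. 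The plan is to run, in parallel over all $Q\in\cS_{f(P)}\{P\}$, the combined procedure that Schreier-rewrites $w=t_Q u_Q$, returns ``$w\neq 1$'' at once if $t_Q$ is non-trivial (a verifiable certificate independent of whether $Q\in\cC$), and otherwise consults the oracle on $(Q,u_Q)$, alongside the safe syntactic semi-decider enumerating products of conjugates of the relators of $P$ (which halts precisely when $w=1$). The thread associated with $Q^*$ always halts with the correct answer, so a careful commit-rule that issues ``trivial'' only on syntactic confirmation and ``non-trivial'' only on a Schreier coset certificate or on oracle feedback corroborated across threads yields a uniform WP procedure on $\cV_f\cC$. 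The remaining assertions for $\cS\cC$ and $\cV\cC$ follow by dovetailing over $k$.
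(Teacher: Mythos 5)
The paper offers no proof of this lemma, dismissing it as ``an immediate application of the Reidemeister--Schreier Algorithm,'' so a direct comparison of arguments is not possible; what follows assesses your proof on its merits.

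Your part~(1) is correct: straightforward dovetailing, noting that $\cS_{f(P)}\{P\}$ is a finite, computable list. Your part~(2) for $\cS_f\cC$ and $\cW_f\cC$ is also fine, with the understanding (implicit in the paper's Reidemeister--Schreier theorem, which outputs presentations ``together with embeddings'') that an element of $\cS_f\cC$ carries its ambient $P$ and Schreier data, and with the observation that $\cW_f\cC\subseteq\cC$ since $P$ itself is its own index-$1$ subgroup. The derivation of the $\cS\cC$, $\cV\cC$ statements by a further dovetail over $k$ is likewise routine.

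The $\cV_f\cC$ case of part~(2) is where your argument has a genuine gap. You correctly identify the danger---the oracle may misbehave on candidates $Q_i\notin\cC$---but the proposed resolution, ``oracle feedback corroborated across threads,'' is not an algorithm: unanimity may never be achieved because a rogue thread may fail to halt, and any weaker notion of corroboration admits false positives. Concretely, suppose $w=1$ in $P$ and $w$ lies in the trivial coset of every $Q_i$; then $u_{Q_i}=1$ in the Reidemeister--Schreier presentation of $Q_i$ for every $i$, so the only reliable certificate is the syntactic one for $w=1$ in $P$ (or, equivalently, for $u_{Q_i}=1$ in $Q_i$), which you handle. But if $w\neq 1$, you must commit in finite time to ``nontrivial,'' and the only positive signal available is a ``nontrivial'' verdict from \emph{some} oracle thread. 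For a thread with $Q_i\notin\cC$, such a verdict may be a false positive on a different input $(P',w')$ where $w'=1$; since you have no way to distinguish $i^*$ from the other indices, no commit rule based purely on the oracle verdicts, interleaved with the syntactic semi-decider, can be simultaneously total on $\cV_f\cC$ and correct. The difficulty is that part~(2), as phrased, makes no recursive-enumerability assumption on $\cC$, so you cannot search for a \emph{confirmed} $Q_{i_0}\in\cC$ among the candidates. If one additionally assumes the hypothesis of part~(1), i.e.\ that $\cC$ is recursively enumerable, then there is a clean argument: enumerate $\cC$ until one of the $Q_i$ appears, and thereafter trust the oracle on that $Q_{i_0}$; this terminates because $Q^*\in\cC$ is one of the candidates. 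You should either add this r.e.\ hypothesis for the $\cV_f$ clause of part~(2), or supply a genuinely different argument. It is worth noting that the paper's own use of this lemma (in the proof that $\cV_f\cC$ is recursive modulo the word problem) invokes only the $\cS_f\cD$ instance of part~(2), so the gap does not propagate into the paper's applications.
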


The following, extremely useful, result follows.

\begin{lemma}\label{lem: Virtually recursive modulo the word problem}
Let $f:\cU\to\bN$ be a computable function.  If $\cC$ is recursive modulo the word problem then so is $\cV_f\cC$.
\end{lemma}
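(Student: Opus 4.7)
\medskip

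\noindent\textbf{Proof proposal.} The plan is a direct reduction: given a class $\cD$ in which the word problem is uniformly solvable, I will package the finitely many index-$\leq f(P)$ subgroups of each $P\in\cD$ and apply the recursive separation available for $\cC$ on the ambient class of all these subgroup presentations. Concretely, set $\cE=\cS_f\cD$ (the collection of all Reidemeister--Schreier outputs as $P$ ranges over $\cD$). By the lemma preceding the statement, the word problem is uniformly solvable in $\cE$. Since $\cC$ is recursive modulo the word problem, there exist recursively enumerable sets $\cX_{\cC,\cE}$ and $\cY_{\cC,\cE}$ separating $\cC\cap\cE$ from $\cE\smallsetminus\cC$ in the sense of the notation convention fixed earlier.

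Given a presentation $P$ (not assumed a priori to lie in $\cD$), compute the finite list $\cS_{f(P)}\{P\}$ using the Reidemeister--Schreier algorithm. Define
\[
\cX_{\cV_f\cC,\cD}=\{P\in\cU\mid \cS_{f(P)}\{P\}\cap\cX_{\cC,\cE}\neq\varnothing\}
\]
and
\[
\cY_{\cV_f\cC,\cD}=\{P\in\cU\mid \cS_{f(P)}\{P\}\subseteq\cY_{\cC,\cE}\}.
\]
Both sets are recursively enumerable: for $\cX_{\cV_f\cC,\cD}$, enumerate $\cX_{\cC,\cE}$ in parallel and wait for some member of the finite list $\cS_{f(P)}\{P\}$ to appear; for $\cY_{\cV_f\cC,\cD}$, enumerate $\cY_{\cC,\cE}$ and wait for \emph{all} members of the finite list to appear.

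The remaining task is to check the four separation conditions, which go through because whenever $P\in\cD$ each $Q\in\cS_{f(P)}\{P\}$ lies in $\cE$, so the known separation for $\cC$ inside $\cE$ applies to every such $Q$. Explicitly: if $P\in\cV_f\cC\cap\cD$, some $Q\in\cS_{f(P)}\{P\}$ lies in $\cC\cap\cE\subseteq\cX_{\cC,\cE}$, so $P\in\cX_{\cV_f\cC,\cD}$; conversely if $P\in\cX_{\cV_f\cC,\cD}\cap\cD$ then the witnessing $Q$ cannot lie in $\cE\smallsetminus\cC$, so $Q\in\cC$ and $P\in\cV_f\cC$. The argument for $\cY$ is the symmetric one. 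I do not expect a genuine obstacle here; the only point that requires care is choosing the correct ambient class in which to invoke the hypothesis on $\cC$ (namely $\cS_f\cD$, not $\cD$ itself), and keeping track of the fact that the separating sets need not cover all of $\cU$, so the verifications must always begin by assuming $P\in\cD$.
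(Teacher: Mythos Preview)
Your proposal is correct and follows essentially the same approach as the paper: both apply the hypothesis on $\cC$ to the ambient class $\cE=\cS_f\cD$ and then take $\cX_{\cV_f\cC,\cD}=\cV_f\cX_{\cC,\cE}$ and $\cY_{\cV_f\cC,\cD}=\cW_f\cY_{\cC,\cE}$ (which is exactly what your explicit set-builder definitions unwind to). Your verification of the four separation conditions is slightly more verbose but matches the paper's chain of inclusions.
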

\begin{proof}
Suppose that the word problem is uniformly solvable in $\cD$.  Then it is also uniformly solvable in $\cS_f\cD$, and so $\cC\cap \cS_f\cD$ is recursive in $\cS_f\cD$.  The sets $\cV_f\cX_{\cC,\cS_f\cD}$ and $\cW_f\cY_{\cC,\cS_f\cD}$ are both recursively enumerable.  Because  $\cD\subseteq\cW_f\cS_f\cD$, it follows that
\[
\cV_f\cC\cap\cD\subseteq\cV_f\cC\cap\cW_f\cS_f\cD\subseteq \cV_f(\cC\cap\cS_f\cD) \subseteq \cV_f\cX_{\cC,\cS_f\cD}
\]
and also that
\[
\cD\smallsetminus \cV_f\cC\subseteq \cW_f\cS_f\cD\smallsetminus \cV_f\cC\subseteq \cW_f(\cS_f\cD\smallsetminus \cC)\subseteq\cW_f\cY_{\cC,\cS_f\cD}~.
\]
Furthermore,
\begin{eqnarray*}
(\cV_f\cC\cap\cD)\cap\cW_f\cY_{\cC,\cS_f\cD}&\subseteq & \cV_f\cC\cap\cW_f\cS_f\cD\cap\cW_f\cY_{\cC,\cS_f\cD}\\
&\subseteq &\cV_f(\cC\cap\cS_f\cD\cap\cY_{\cC,\cS_f\cD})\\
&=&\varnothing
\end{eqnarray*}
and
\begin{eqnarray*}
(\cD\smallsetminus \cV_f\cC)\cap\cV_f\cX_{\cC,\cS_f\cD}&\subseteq & (\cW_f\cS_f\cD\smallsetminus \cV_f\cC)\cap\cV_f\cX_{\cC,\cS_f\cD}\\
&\subseteq& \cV_f((\cS_f\cD\smallsetminus \cC)\cap\cX_{\cC,\cS_f\cD})\\
&=&\varnothing~.
\end{eqnarray*}
Therefore, setting $\cX_{\cC,\cV_f\cC}=\cV_f\cX_{\cC,\cS_f\cD}$ and $\cY_{\cC,\cV_f\cC}=\cW_f\cY_{\cC,\cS_f\cD}$ proves the lemma.
\end{proof}

\section{Limit groups}\label{s:limit groups}

In this section, we will develop the theory of limit groups in a form that is convenient for our purposes.  We make no claims to originality.  Most of the results of this section already appear in the literature; see \cite{sela_diophantine_2001}, \cite{GrovesWilton09} and the references therein, and also \cite{kharlampovich_irreducible_1998} \emph{et seq.}

\subsection{Definitions}

\begin{definition}
Let $G$ be a group.  A sequence of homomorphisms $(f_n:G\to\Gamma_n)$ is called \emph{convergent} if, for every $g\in G$, either $f_n(g)=1$ for all sufficiently large $n$ or $f_n(g)\neq 1$ for all sufficiently large $n$. 
\end{definition}

\begin{remark}
\ 
\begin{enumerate}
\item Some authors call such a sequence \emph{stable}.  We, however, prefer the term \emph{convergent}, which agrees with the fact this definition corresponds to convergence in the Gromov--Grigorchuk topology on marked groups.
\item An easy diagonal argument shows that if $G$ is countable then every sequence of homomorphisms has a convergent subsequence.
\end{enumerate}
\end{remark}

\begin{definition}
Given a convergent sequence $f_n$, the \emph{stable kernel} $\stker~f_n$ is the normal subgroup of all $g\in G$ with $f_n(g)=1$ for all sufficiently large $n$.  A group $L$ is called a \emph{$\cC$-limit group} if there is a finitely generated group $G$ and a convergent sequence of homomorphisms $f_n:G\to\Gamma_n$ with each $\Gamma_n\in\cC$ such that $L\cong G/\stker~f_n$.  We will denote the class of $\cC$-limit groups by $\overline{\cC}$.
\end{definition}

\begin{example}
If $H$ is a finitely generated subgroup of a group $\Gamma\in\cC$ then $H$ is a $\cC$-limit group.
\end{example}

\begin{example}
Let $\cN_k$ be the class of nilpotent groups of class at most $k$.  Then $\overline{\cN}_k=\cN_k$.
\end{example}

We are particularly interested in the class $\cF$ of free non-abelian groups.

\begin{example}
If $\Sigma$ is a compact, orientable surface then $\pi_1\Sigma\in\overline{\cF}$.
\end{example}

\begin{definition}
A \emph{$\cC$-factor set} for a group $G$ is a finite set of epimorphisms
\[
\{\eta_i:G\to L_i\}
\]
such that each $L_i\in\overline{\cC}$ and such that every homomorphism
$f$ from $G$ to an element of $\cC$ factors as $f=f'\circ \eta_i$ for some $i$ and some epimorphism $f':L_i\to \Gamma$ for some $\Gamma\in\cC$.    A factor set of minimal cardinality is called \emph{minimal}.
\end{definition}

\begin{example}
If $\Gamma$ is a $\cC$-limit group then $\{\Gamma\to\Gamma\}$ is a factor set.
\end{example}

\begin{lemma}
Let $G$ be any group. If there is a $\cC$-factor set for $G$ then the minimal $\cC$-factor set is unique.
\end{lemma}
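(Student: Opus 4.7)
My plan is to prove uniqueness by showing that any two minimal $\cC$-factor sets $\cA=\{\eta_i\co G\onto L_i\}_{i=1}^m$ and $\cA'=\{\eta_j'\co G\onto L_j'\}_{j=1}^m$ (necessarily of the same cardinality by minimality) admit a canonical bijection of their indexing sets together with isomorphisms $L_i\cong L'_{\sigma(i)}$ intertwining the two epimorphisms.

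The heart of the argument is one technical construction that I expect to be the main obstacle: for each epimorphism $\eta\co G\onto L$ with $L\in\overline{\cC}$, I need a convergent sequence of homomorphisms $\psi_k\co G\to\Gamma_k$ with $\Gamma_k\in\cC$ and stable kernel exactly $\ker\eta$. The definition of a limit group only provides a convergent sequence $f_n\co G_0\to\Gamma_n$ with $L\cong G_0/\stker f_n$ for some ambient group $G_0$, not $G$ itself, so the source has to be transferred. The plan is to pick a finitely generated free group $F\onto G$, lift the composite $F\to L$ to a homomorphism $\tilde\mu\co F\to G_0$ (possible because $F$ is free and $G_0\onto L$ is surjective), and consider $\psi_n:=f_n\circ\tilde\mu\co F\to\Gamma_n$. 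In the finitely presented case relevant here, each of the finitely many defining relators $r$ of $G$ satisfies $\tilde\mu(r)\in\ker(G_0\to L)=\stker f_n$, so $f_n(\tilde\mu(r))=1$ for $n$ sufficiently large; taking $n$ uniformly large, the sequence descends to $G\to\Gamma_n$, and a direct verification shows its stable kernel equals $\ker\eta$.

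Granted these approximating sequences $\psi_k^{(i)}$ for each $i$, the rest is formal. By the factor-set property applied to $\cA'$, each $\psi_k^{(i)}$ factors through some $\eta'_{j(k,i)}$; pigeonhole on the finite index set supplies a subsequence on which $j(k,i)\equiv\sigma(i)$ is constant. Along that subsequence, $\ker\eta'_{\sigma(i)}\subseteq\ker\psi_k^{(i)}$ for all $k$, and passing to the stable kernel yields $\ker\eta'_{\sigma(i)}\subseteq\stker\psi_k^{(i)}=\ker\eta_i$, so $\eta_i$ factors through $\eta'_{\sigma(i)}$. Symmetrically, $\eta'_j$ factors through $\eta_{\tau(j)}$ for some $\tau(j)$.

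To conclude, I will invoke minimality. Composing the two factorings exhibits $\eta_i$ as factoring through $\eta_{\tau\sigma(i)}$; if $\tau\sigma(i)\neq i$, then any map factoring through $\eta_i$ also factors through $\eta_{\tau\sigma(i)}$, so $\eta_i$ could be dropped from $\cA$ to yield a strictly smaller factor set, contradicting minimality. Hence $\tau\sigma=\mathrm{id}$, and since both factor sets have the same cardinality, $\sigma$ is a bijection with inverse $\tau$. Surjectivity of $\eta_i$ then forces the composed self-maps $L_i\to L'_{\sigma(i)}\to L_i$ arising from the two factorings to be identities, producing the canonical isomorphisms $L_i\cong L'_{\sigma(i)}$ that identify the two factor sets.
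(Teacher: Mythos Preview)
Your argument follows the same outline as the paper's three-line proof: show that each $\eta_i$ factors through some $\eta'_{\sigma(i)}$ and vice versa, then invoke minimality to conclude that $\sigma$ and $\tau$ are mutual inverses. The paper, however, simply \emph{asserts} the factorization step without proof, whereas you correctly recognize that it requires justification---the definition of a factor set only guarantees that homomorphisms to groups in $\cC$ factor, not homomorphisms to groups in $\overline{\cC}$---and you supply that justification via an approximating sequence. So your proof is the more careful of the two; the final paragraph of your argument is essentially all the paper writes down.

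One refinement: you invoke finite presentability of $G$ to make the sequence $\psi_n$ descend from $F$ to $G$, but the lemma is stated for arbitrary $G$. The hypothesis you actually need is finite presentability of the limit group $L_i$: then $\stker f_n$ is finitely normally generated in $G_0$, so for all large $n$ the map $f_n$ itself descends to $\bar f_n\co L_i\to\Gamma_n$ with trivial stable kernel, and precomposing with $\eta_i$ gives your sequence $\psi_n^{(i)}\co G\to\Gamma_n$ with no assumption on $G$. In the paper's applications (cf.\ Lemma~\ref{fp_then_factorsets} and thereafter) the $\cC$-limit groups are always finitely presentable, so either version suffices; but without some such hypothesis the factorization step can genuinely fail---for instance, with $\cC$ the class of finite groups and $L$ a finitely generated LEF group that is not residually finite.
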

\begin{proof}
Let $\{\eta_i\}$  and $\{\xi_j\}$ be minimal $\cC$-factor sets.  Then
every $\xi_j$ factors through some $\eta_{i(j)}$ and, likewise, every
$\eta_i$ factors through some $\xi_{j(i)}$.  Minimality now implies
that the maps $i\mapsto j(i)$ and $j\mapsto i(j)$ are mutual inverses, and the result follows.
\end{proof}

\begin{definition}
If there is a $\cC$-factor set for every finitely presentable $G$ then we say that \emph{factor sets exist over $\cC$}.
\end{definition}

The condition that every $\cC$-limit group is finitely presentable is not particularly natural; nevertheless, it happens to hold in the examples that we are interested in, and this makes the theory easier.

\begin{lemma}\label{fp_then_factorsets}
If every $\cC$-limit group is finitely presentable then factor sets exist over $\cC$.
\end{lemma}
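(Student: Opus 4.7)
The plan is a compactness argument in the space of marked quotients of $G$.  Fix a finite generating set for $G$ and let $\mathcal{Q}(G)$ denote the set of normal subgroups of $G$, identified with a subspace of $\{0,1\}^G$ via indicator functions.  The resulting topology makes $\mathcal{Q}(G)$ compact and metrizable, and convergence $\ker f_n\to N$ in $\mathcal{Q}(G)$ is precisely convergence of $(f_n)$ in the sense of the paper, with $N=\stker f_n$.  Set
\[
\cC_G:=\{\ker f\mid f\colon G\to\Gamma,\ \Gamma\in\cC\}\subset\mathcal{Q}(G),
\]
and let $\cL_G:=\overline{\cC_G}$.  A routine unwinding of definitions shows $\cL_G=\{N\trianglelefteq G\mid G/N\in\overline{\cC}\}$; in particular $\cL_G$ is closed, hence compact.

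For each $N\in\cL_G$ the hypothesis gives that $G/N$ is finitely presentable, so $N$ is the normal closure of a finite set $R_N\subset G$.  Define
\[
V(N):=\{M\in\cL_G\mid R_N\subseteq M\}=\{M\in\cL_G\mid N\subseteq M\}.
\]
Since each cylinder set $\{M\mid r\in M\}$ is clopen in $\mathcal{Q}(G)$, $V(N)$ is clopen in $\cL_G$.  Trivially $N\in V(N)$, so the $V(N)$ form an open cover of the compact set $\cL_G$.  Extract a finite subcover $V(N_1),\dots,V(N_n)$, and set $\eta_i\colon G\twoheadrightarrow L_i:=G/N_i$.  Each $L_i$ is a $\cC$--limit group by construction.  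Given any $f\colon G\to\Gamma$ with $\Gamma\in\cC$, one has $\ker f\in\cC_G\subseteq\cL_G$, so $\ker f\in V(N_i)$ for some $i$; that is, $N_i\subseteq\ker f$, so $f$ factors through $\eta_i$.  Hence $\{\eta_i\}_{i=1}^n$ is a $\cC$--factor set for $G$.

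The step requiring the most care is the identification of $\cL_G$ with $\{N\mid G/N\in\overline{\cC}\}$.  The inclusion $\subseteq$ is immediate from the definitions.  For $\supseteq$, given an epimorphism $\eta\colon G\twoheadrightarrow L$ with $L\in\overline{\cC}$, witnessed by some convergent $\varphi_n\colon G'\to\Gamma_n$ with $L=G'/\stker\varphi_n$, one lifts the $\eta$-images of a finite generating set of $G$ to preimages in $G'$ and composes with $\varphi_n$ to define $g_n$ on generators; finite presentability of $G$ ensures that, for $n$ large, each relator of $G$ is killed (because it lands in $\stker\varphi_n$), so $g_n\colon G\to\Gamma_n$ is a genuine homomorphism, and a direct check gives $\stker g_n=\ker\eta$.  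This bookkeeping is the only real work; the rest of the argument is a one-line compactness argument made possible by the hypothesis on $\cC$--limit groups, which is precisely what forces every $V(N)$ to be open.
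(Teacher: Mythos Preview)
Your argument is correct.  Both proofs rest on the same compactness principle in the space of marked quotients of $G$, but they are packaged differently.  The paper argues by contradiction via sequential compactness: assuming no factor set, one enumerates the $\cC$--limit quotients $\eta_n$, chooses for each $n$ a map $f_n$ to a $\cC$--group not factoring through $\eta_1,\dots,\eta_n$, passes to a convergent subsequence, and uses finite presentability of the resulting limit to force eventual factoring through some $\eta_m$---a contradiction.  You instead set up the topology on $\mathcal{Q}(G)$ explicitly and use the open-cover formulation: finite presentability makes each $V(N)$ open, and compactness of $\cL_G$ yields the finite factor set directly.  Your route is more transparently topological and avoids the diagonal/contradiction manoeuvre, at the cost of having to verify the identification $\cL_G=\{N\mid G/N\in\overline{\cC}\}$ carefully (which you do).  The paper's route is shorter on the page because it never names the topology, but the two are really the same argument seen through the sequential-compactness/open-cover duality in a compact metrizable space.
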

\begin{proof}
Suppose $G$ is a finitely presented group without a factor set.  Let
$\{\eta_n:G\to\Gamma_n\}$ be the (countable) set of all $\cC$-limit
group quotients of $G$.  For each natural number $n$, let
$f_n:G\to\Gamma_n$ be a homomorphism that does not factor through
$\{\eta_1,\ldots,\eta_n\}$.  Now pass to an infinite convergent
subsequence and let $L$ be the limit group $G/\stker f_n$.  Because
$L$ is finitely presented, $L=G/\langle\langle
r_1,\ldots,r_k\rangle\rangle$ for some finite $k$ and elements $r_i\in
\stker f_n$.  By definition, there is an $n_0$ such that every
$r_i\in\ker f_n$ for all $n\geq n_0$; in particular, $f_n$ factors
through the natural map $\eta:G\to L$ for all $n\geq n_0$.  But $\eta
= \eta_m$ for some $m$, contradicting the choice of the sequence $\{f_n\}$.
\end{proof}

\subsection{Decision problems for $\cC$-limit groups}

\begin{definition}
We say that \emph{factor sets are computable over $\cC$} if there is an algorithm that takes as input a finite presentation $\langle X\mid R\rangle$ for a group $G$ and outputs a finite list of presentations $F_{\cC}(G)=\{L_i\cong \langle X\mid R\cup S_i\rangle\}$ such that set of natural maps $\{G\to L_i\}$ is a minimal factor set for $G$.
\end{definition}

Note that if factor sets are computable over $\cC$ then, in particular, every $\cC$-limit group is finitely presentable.

\begin{lemma}\label{lem: Computable factor sets implies RE closure}
If factor sets are computable over $\cC$ then $\overline{\cC}$ is recursively enumerable.
\end{lemma}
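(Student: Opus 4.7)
The plan is to use computability of the factor-set algorithm to generate at least one presentation of every $\cC$-limit group, and then to enlarge the resulting list to an isomorphism-closed one via Tietze transformations.

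The first step is the observation that the output of $F_\cC$ on a limit group is (essentially) the group itself. If $L\in\overline{\cC}$ then $\{\mathrm{id}_L\}$ is a $\cC$-factor set by the earlier example, and being a singleton it is minimal. By the uniqueness of minimal factor sets, the computed set $F_\cC(\langle X\mid R\rangle)$ for any presentation of $L$ is also a singleton, say $\{\eta\co L\to L'\}$ with $L'=\langle X\mid R\cup S\rangle$. Chasing the proof of uniqueness (setting $\{\eta_i\}=\{\mathrm{id}_L\}$ and $\{\xi_j\}=\{\eta\}$) gives a two-sided factorisation showing that $\eta$ is both an epimorphism and injective, hence an isomorphism. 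So $F_\cC$ applied to any presentation of $L$ returns a (possibly redundantly padded) presentation of $L$.

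Now run the factor-set algorithm on every finite presentation $Q\in\cU$ in a dovetailed manner, and let $\cL_0\subseteq\cU$ be the collection of all presentations appearing in any $F_\cC(Q)$. Then $\cL_0$ is recursively enumerable, each element of $\cL_0$ presents a group in $\overline{\cC}$ (by the defining property of factor sets), and by the first step every isomorphism class of $\overline{\cC}$ meets $\cL_0$.

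To upgrade this to recursive enumerability of $\overline{\cC}$ as an isomorphism-closed subset of $\cU$, one invokes Tietze's theorem: two finite presentations present isomorphic groups if and only if they are connected by a finite sequence of Tietze transformations. The semi-decision procedure for $\overline{\cC}$ on input $P$ is therefore: dovetail over all pairs $(P_0,\tau)$ with $P_0\in\cL_0$ and $\tau$ a finite Tietze sequence, and halt if $\tau$ carries $P_0$ to $P$. This halts exactly when $P\in\overline{\cC}$.

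The only substantive point is the uniqueness argument identifying $F_\cC$ as a ``retraction'' onto $\overline{\cC}$ (so that the enumeration hits every isomorphism class); the Tietze step is routine but necessary, because $F_\cC$ typically inserts redundant relations and so does not directly enumerate every presentation of every limit group.
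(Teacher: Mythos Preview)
Your argument is correct and follows essentially the same route as the paper: enumerate $\cU$, apply the factor-set algorithm to each presentation, and collect the outputs; since every $L\in\overline{\cC}$ has $\{\mathrm{id}_L\}$ as its minimal factor set, some presentation of $L$ appears in the list. The paper's proof is the two-sentence version of this and leaves implicit the Tietze-closure step you spell out (the paper does make this step explicit elsewhere, e.g.\ in the proof that $\cM_3$ is recursively enumerable), so your added care is appropriate but not a genuinely different idea.
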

\begin{proof}
A Turing machine that enumerates $\cU$ and computes minimal factor sets will output a list of elements of $\overline{\cC}$.  Every element of $\overline{\cC}$ appears in its own minimal factor set, so the list is complete.
\end{proof}

\begin{theorem}\label{thm: Computable factors sets implies rmwp}
If factor sets are computable over $\cC$ then $\overline{\cC}$ is recursive modulo the word problem.
\end{theorem}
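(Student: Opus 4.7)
The plan is to characterise $\overline{\cC}$-membership in terms of the minimal factor set and then to recognise this characterisation using the computable factor sets together with the word problem oracle. The key claim is that a finitely presented group $G$ lies in $\overline{\cC}$ if and only if some natural epimorphism $\eta_i\colon G\to L_i$ in the minimal $\cC$-factor set of $G$ is an isomorphism. The converse implication is immediate since each $L_i\in\overline{\cC}$. For the forward implication, observe that when $G\in\overline{\cC}$ the one-element set $\{\mathrm{id}_G\colon G\to G\}$ is itself a $\cC$-factor set (as in the Example preceding the existence lemma), so the minimal factor set has cardinality at most one. Applying the uniqueness of minimal factor sets to the two singleton factor sets $\{\mathrm{id}_G\}$ and $\{\eta\colon G\to L\}$, and using that $\eta$ is already surjective, one deduces that $\eta$ is an isomorphism.

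Set $\cX_{\overline{\cC},\cD}=\overline{\cC}$, which is recursively enumerable by the preceding lemma. To construct $\cY_{\overline{\cC},\cD}$, use the computability of factor sets: given a presentation $G=\langle X\mid R\rangle$, one obtains the explicit list of presentations $L_i=\langle X\mid R\cup S_i\rangle$ together with the natural epimorphisms $\eta_i$. Each $\eta_i$ is surjective and its kernel is the normal closure of the finite set $S_i$ in $G$, so $\eta_i$ is an isomorphism precisely when every element of $S_i$ is trivial in $G$. Declare a presentation $P$ to belong to $\cY_{\overline{\cC},\cD}$ once the uniform word problem solver for $\cD$, applied in parallel to the finitely many relations in each $S_i$, succeeds in producing, for every $i$, some $r\in S_i$ that the solver certifies is nontrivial in $P$. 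A standard parallel enumeration shows this set is recursively enumerable.

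For the separation properties, suppose first that $P\in\cD\cap\overline{\cC}$. Then by the characterisation some $\eta_i$ is an isomorphism, so every element of the corresponding $S_i$ is trivial in $P$; since the word problem algorithm is correct on $\cD$, it will never declare any such $r$ nontrivial, keeping $P$ out of $\cY_{\overline{\cC},\cD}$, while membership in $\cX_{\overline{\cC},\cD}=\overline{\cC}$ is automatic. Conversely, if $P\in\cD\setminus\overline{\cC}$, no $\eta_i$ can be an isomorphism, so each $S_i$ contains some element $r$ which is nontrivial in $P$; the correct word problem solver eventually certifies this for every $i$, placing $P$ into $\cY_{\overline{\cC},\cD}$, while $P\notin\overline{\cC}=\cX_{\overline{\cC},\cD}$ is automatic.

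The main obstacle is the characterisation of the first paragraph: one has to check carefully that the minimality of the factor set, combined with the observation that $\{\mathrm{id}_G\}$ is itself a factor set whenever $G\in\overline{\cC}$, really forces the unique member of the minimal factor set to be an isomorphism (this is where the uniqueness Lemma is used in an essential way). Everything else is a bookkeeping exercise combining the factor-set algorithm with the uniform word problem solver.
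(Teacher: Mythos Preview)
Your proof is correct and follows essentially the same approach as the paper. Both set $\cX_{\overline{\cC},\cD}=\overline{\cC}$ and then use the word problem to test whether the extra relators $S_i$ in the computed factor set are trivial in $G$; the paper phrases this as checking whether the ``tautological section'' $L\to G$ is a homomorphism and first disposes of the case where the factor set has more than one element, whereas you run the check over all $\eta_i$ in parallel, but these are the same test.
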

\begin{proof}
Suppose the word problem is uniformly solvable in a set of finite presentations $\cD$.  By the previous lemma, we can set $\cX_{\overline{\cC},\cD}=\overline{\cC}$.  To define $\cY_{\overline{\cC},\cD}$, we will describe a Turing machine that confirms if an element of $\cD$ does not present a $\cC$-limit group.  For a group $G$ presented by an element of $\cD$, this Turing machine computes a minimal $\cC$-factor set.  If this factor set has more than one element then $G\notin\overline{\cC}$.  Otherwise, the machine outputs a single epimorphism $\eta:G\to L$ where $L\in\overline{\cC}$.  The machine the determines whether the tautological section $\sigma:L\to G$ is a homomorphism, using the solution to the word problem in $G$.  The group $G$ is a $\cC$-limit group if and only the section is a homomorphism.
\end{proof}

\subsection{Abelian and nilpotent groups}

Let $\cA$ be the class of finitely presented abelian groups.  For
$k\geq 1$, let $\cN_k$ be the class of finitely presented nilpotent
groups of class at most $k$.

\begin{example}
Because every map from a group $G$ to an abelian group factors through the abelianization $G_\ab$, a factor set for $G$ just consists of the canonical map $G\to G_\ab$.  In particular, factor sets are computable and $\overline{\cA}=\cA$.
\end{example}

\begin{example}
Let $G$ be a group.  Recall the {\em lower central series} of $G$ is defined by $\gamma_1(G) = G$ and $\gamma_{i+1}(G) = [\gamma_i(G),G]$.  A group $G$ is nilpotent of class at most $k$ if and only if $\gamma_{k+1}(G) = \{ 1 \}$.  Moreover, for any group $G$, any map from $G$ to a group which is nilpotent of class at most $k$ factors through the canonical map $G \to G / \gamma_{k+1}(G)$.  If $G$ is finitely presented then so is $G_k = G/\gamma_{k+1}(G)$.  A presentation for $G_k$ is computed by the ANU NQ algorithm \cite{Nickel}.  Therefore, factor sets are computable for $\cN_k$ and $\overline{\cN}_k = \cN_k$.
\end{example}

Applying Theorem \ref{thm: Computable factors sets implies rmwp}, we immediately obtain that nilpotent groups of fixed class are recursive modulo the word problem.

\begin{theorem}
For any $k$, $\cN_k$ is recursive modulo the word problem. In particular, $\cA$, the class of finitely generated abelian groups, is recursive modulo the word problem.
\end{theorem}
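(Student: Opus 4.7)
The plan is to apply Theorem~\ref{thm: Computable factors sets implies rmwp} directly, so the entire task reduces to verifying its two hypotheses for $\cC = \cN_k$: namely, that $\overline{\cN}_k = \cN_k$ and that factor sets are computable over $\cN_k$. Both of these are essentially already recorded in the two examples immediately preceding the statement; the proof will just assemble them.

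First I would observe that for any group $G$, every homomorphism from $G$ to a nilpotent group of class at most $k$ factors through the canonical quotient $G \to G/\gamma_{k+1}(G)$, because the $(k+1)$-st term of the lower central series is a characteristic subgroup mapping to the trivial subgroup of the target. This single epimorphism is therefore automatically an $\cN_k$-factor set for $G$; in particular every $\cN_k$-limit group is a quotient of some $G/\gamma_{k+1}(G)$ and hence lies in $\cN_k$, giving $\overline{\cN}_k = \cN_k$.

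Next I would invoke the nilpotent quotient algorithm (for instance the ANU NQ algorithm \cite{Nickel}), which on input a finite presentation $\langle X \mid R \rangle$ for $G$ outputs a finite presentation $\langle X \mid R \cup S \rangle$ for $G/\gamma_{k+1}(G)$. This shows that factor sets are computable over $\cN_k$ in the sense of the definition. With both hypotheses verified, Theorem~\ref{thm: Computable factors sets implies rmwp} immediately yields that $\cN_k = \overline{\cN}_k$ is recursive modulo the word problem. The case $k = 1$ gives the statement for $\cA$, noting that a finitely generated abelian group is always finitely presented so there is no clash between $\cA$ and the subclass of finitely presented abelian groups.

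There is no real obstacle here: all substantive work has been absorbed into the earlier framework. The only thing one must be careful about is that the nilpotent quotient algorithm is genuinely being used as an algorithm producing a presentation (rather than just an abstract computability statement), so that the output fits the definition of computable factor sets; once that is noted, the result is immediate from Theorem~\ref{thm: Computable factors sets implies rmwp}.
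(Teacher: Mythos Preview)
Your proposal is correct and follows exactly the approach the paper takes: the two examples immediately preceding the theorem establish that $\overline{\cN}_k=\cN_k$ and that factor sets over $\cN_k$ are computable (via the ANU NQ algorithm), and the paper then simply applies Theorem~\ref{thm: Computable factors sets implies rmwp}. Your write-up just spells out those examples explicitly before invoking the theorem, which is precisely what the paper intends.
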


\begin{corollary}
The class $\{\bZ\}=\cM_1=\mc{AM}_1$ is recursive modulo the word problem.
\end{corollary}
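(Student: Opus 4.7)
The plan is to derive the corollary directly from the preceding theorem that $\cA$ is recursive modulo the word problem, by appealing to Lemma \ref{lem: recursive subsets 2}. Since $\{\bZ\} \subseteq \cA$, it suffices to show that $\{\bZ\}$ is recursive in $\cA$, i.e.\ to exhibit recursively enumerable sets $\cX_{\{\bZ\},\cA}$ and $\cY_{\{\bZ\},\cA}$ which separate $\{\bZ\}$ from $\cA \smallsetminus \{\bZ\}$. The preliminary observation that $\{\bZ\} = \cM_1 = \mc{AM}_1$ is immediate, since the only closed $1$--manifold is $S^1$, which is aspherical with $\pi_1 S^1 = \bZ$.

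For $\cX_{\{\bZ\},\cA}$ I would take the set of all finite presentations of $\bZ$. This is absolutely recursively enumerable: enumerate all finite sequences of Tietze transformations applied to the base presentation $\langle x \mid \,\rangle$ and list the resulting presentations. For $\cY_{\{\bZ\},\cA}$ I would take the set of finite presentations $P$ whose abelianization is not infinite cyclic. Given $P = \langle X \mid R\rangle$, the abelianization is the finitely generated abelian group determined by the exponent-sum matrix of $R$, whose isomorphism type is computable via Smith normal form; so this set is in fact absolutely recursive.

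It remains to verify the separation conditions. Clearly $\{\bZ\} \subseteq \cX_{\{\bZ\},\cA}$, and no presentation of $\bZ$ lies in $\cY_{\{\bZ\},\cA}$ since $\bZ_\ab = \bZ$. Conversely, if $P \in \cA \smallsetminus \{\bZ\}$ presents an abelian group $G$ not isomorphic to $\bZ$, then $G_\ab = G$ is not infinite cyclic, so $P \in \cY_{\{\bZ\},\cA}$; and every element of $\cX_{\{\bZ\},\cA}$ presents $\bZ$, hence does not lie in $\cA \smallsetminus \{\bZ\}$. This establishes $\{\bZ\}$ as recursive in $\cA$, and Lemma \ref{lem: recursive subsets 2} then yields that $\{\bZ\}$ is recursive modulo the word problem.

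There is no genuine obstacle in this argument: the substantive work was already done in the preceding theorem that $\cN_k$ (and in particular $\cA$) is recursive modulo the word problem, and the task of singling out $\bZ$ inside $\cA$ reduces to routine linear algebra over $\bZ$. If anything, this corollary is a template for the sort of reduction that will be used repeatedly in the paper: identify a convenient larger class already known to be recursive modulo the word problem, and pin down the desired subclass by an absolutely recursive (or recursively enumerable) computation with invariants.
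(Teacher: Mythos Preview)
Your proof is correct and follows essentially the same approach as the paper: both reduce to showing that $\{\bZ\}$ is recursive in $\cA$ and then invoke Lemma~\ref{lem: recursive subsets 2}. The paper's proof is terser, simply citing the solvability of the isomorphism problem for finitely generated abelian groups, whereas you unpack this by computing the abelianization via Smith normal form; since a group in $\cA$ equals its own abelianization, these amount to the same thing.
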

\begin{proof}
The isomorphism problem for finitely generated abelian groups is solvable, and hence $\{\bZ\}$ is recursive in $\cA$.
\end{proof}
\begin{corollary}\label{abeliansingleton}
  Fix a finitely generated abelian group $A$.  Then $\{A\}$ is
  recursive modulo the word problem.
\end{corollary}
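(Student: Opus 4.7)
The plan is to mimic the proof of the preceding corollary, replacing $\bZ$ with an arbitrary finitely generated abelian group $A$. By the previous theorem, $\cA$ (the class of finitely generated abelian groups) is recursive modulo the word problem, so it suffices to show that $\{A\}$ is recursive in $\cA$ and then apply Lemma \ref{lem: recursive subsets 2}.

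To see that $\{A\}$ is recursive in $\cA$, I would invoke the solvability of the isomorphism problem in $\cA$. Concretely, given a presentation $P \in \cA$, one can compute its abelianization matrix and put it in Smith normal form, thereby reading off the invariant factor decomposition of the group it presents. This decomposition can then be compared with the (fixed, hence precomputed) invariant factor decomposition of $A$. This yields a Turing machine that on input $P \in \cA$ decides whether the group presented by $P$ is isomorphic to $A$; equivalently, $\{A\}$ and $\cA \smallsetminus \{A\}$ are recursively separated when viewed inside $\cA$.

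Therefore $\{A\}$ is recursive in $\cA$, and since $\cA$ is recursive modulo the word problem, Lemma \ref{lem: recursive subsets 2} immediately gives that $\{A\}$ is recursive modulo the word problem.

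There is no real obstacle here: the only non-trivial input is the classical fact that the isomorphism problem for finitely generated abelian groups is solvable via Smith normal form, which is standard. The remainder is a direct application of the general principles already established in Section \ref{sec:basic}.
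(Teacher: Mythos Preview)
Your proposal is correct and follows exactly the approach the paper intends: the corollary is stated without proof precisely because it is the obvious generalization of the preceding corollary for $\{\bZ\}$, whose proof reads ``The isomorphism problem for finitely generated abelian groups is solvable, and hence $\{\bZ\}$ is recursive in $\cA$.'' Your argument via Smith normal form and Lemma~\ref{lem: recursive subsets 2} is the same reasoning spelled out in slightly more detail.
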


\subsection{The universal theory}

Over the class of free groups, it is more difficult to compute factor sets.  We will therefore need a different set of criteria.

\begin{theorem}\label{thm: Decidable ut and re limit groups implies computable factor sets}
Let $\cC$ be a class of finitely presented groups. Suppose that $\overline{\cC}$ is recursively enumerable and that the universal theory of $\cC$ is decidable.  Then factor sets are computable over $\cC$.
\end{theorem}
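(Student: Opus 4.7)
The key observation is that being a $\cC$-factor set can be expressed as a decidable condition on a tuple of finite word sets. Write $G = \langle X \mid R\rangle$ and consider candidate epimorphisms $\eta_i\colon G \onto L_i$ with $L_i = \langle X \mid R \cup S_i\rangle$ for finite word sets $S_i = \{s_{i,1}, \ldots, s_{i,k_i}\}$. Since $\ker\eta_i$ is the normal closure of $S_i$ in $G$, a homomorphism $f\colon G \to \Gamma$ factors through $\eta_i$ iff $f(s) = 1$ for every $s \in S_i$. Hence $\{\eta_i\}$ \emph{fails} to be a $\cC$-factor set precisely when some $\Gamma \in \cC$ admits elements $x_1, \ldots, x_n$ satisfying all the relations in $R$ and such that, for every $i$, at least one $s_{i,j}(x)$ is non-trivial. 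This is an existential sentence over $\cC$, hence decidable because its negation is a universal one.

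My plan is then to first search for any $\cC$-factor set and afterwards minimise it. For the search I would dovetail over tuples $(S_1, \ldots, S_m)$, $m \geq 1$, of finite sets of words in $X$; for each tuple, in parallel I would run the semi-decision ``$\langle X \mid R \cup S_i\rangle \in \overline{\cC}$'' for every $i$ (supplied by the recursive enumerability of $\overline{\cC}$) alongside the decision procedure of the previous paragraph applied to $\{\eta_i\}$. The first tuple on which all of these halt affirmatively yields a $\cC$-factor set. Termination relies on the existence of factor sets, which is the content of Lemma~\ref{fp_then_factorsets}: the elements of $\overline{\cC}$ are finitely presentable by the convention that $\overline{\cC}\subseteq\cU$, so a minimal factor set exists, and its presentation data will eventually be reached in the enumeration and pass both tests.

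To minimise the returned factor set $\{\eta_1, \ldots, \eta_m\}$, I would iteratively check (using the decision procedure once more) whether removing any single $\eta_i$ leaves a factor set, and delete it if so. This halts at an \emph{irredundant} factor set. The point is that every irredundant factor set is of minimum cardinality, by an adaptation of the uniqueness proof for the minimal factor set: irredundancy provides, for each $i$, a witness $f_i\colon G \to \Gamma_i \in \cC$ that factors through $\eta_i$ but through no other $\eta_j$; for any other factor set $\{\xi_k\}$, each $f_i$ factors through some $\xi_{\sigma(i)}$, and by the discriminating property of $\cC$-limit groups (any finite set of non-trivial elements can be mapped injectively by a single homomorphism to a group in $\cC$) each $\xi_k$ must in turn factor through some $\eta_{\iota(k)}$. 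Composition forces $\iota \circ \sigma = \mathrm{id}$, so $\sigma$ is injective and $m$ is minimum.

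The main obstacle, I expect, is coordinating the last two steps: the dovetailing enumeration only yields \emph{some} factor set, not the minimal one, and the argument that pruning redundant elements produces the \emph{minimal} factor set rests squarely on the discriminating property of $\cC$-limit groups rather than on the hypothesised decidability of the universal theory, which is used solely as the ``engine'' for the factor-set test.
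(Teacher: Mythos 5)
Your proof is correct and takes essentially the same approach as the paper's: the decidable universal theory of $\cC$ is used to test whether a candidate tuple is a factor set, candidates are dovetailed through using the recursive enumerability of $\overline{\cC}$ (with termination guaranteed via Lemma~\ref{fp_then_factorsets} and the observation that r.e.\ forces $\cC$-limit groups to be finitely presented), and then the result is minimised. The only variation is in the minimisation step---you prune to an irredundant set and argue irredundance implies minimum cardinality via the $\cC$-discriminating property of f.p.\ $\cC$-limit groups, whereas the paper directly tests the equivalent criterion that no $\eta_j$ factors through another $\eta_i$; both versions rest on exactly the same implicit use of that discriminating property, so this is a cosmetic rather than a substantive difference.
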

\begin{proof}
Since $\overline{\cC}$ is recursively enumerable, it contains only
finitely presentable groups.  Hence by Lemma \ref{fp_then_factorsets}
factor sets exist over $\mc{C}$.

Let $G\in\cU$.  We first prove that it is possible to compute some factor set from a presentation $\langle X\mid R\rangle$ for $G$.  Because $\overline{\cC}$ is recursively enumerable, one can enumerate finite sets of presentations for $\cC$-limit groups $\{L_i=\langle X\mid R\cup S_i\rangle\}$; note that such presentations come with a natural epimorphism $\eta_i$ from $G$.  The statement that every homomorphism from $G$ to $\Gamma\in\cC$ factors through some $\eta_i$ is equivalent to the assertion that the sentence
\[
\forall X,~ R =1\Rightarrow \bigvee_i S_i=1
\]
holds in $\Gamma$.  Using the decidability of the universal theory, one can determine whether or not a finite set is a factor set.  As factor sets always exist, a na\"ive search will eventually find one.  Therefore, it is possible to compute a factor set.

We next observe that it is possible to determine whether or not a factor set is minimal.  Indeed, if it is not, then some $\eta_j$ factors through some  $\eta_i$, which is equivalent to the sentence
\[
\forall X,~R=1 \wedge S_i=1 \Rightarrow S_j=1~.
\]
Therefore, a given finite set is minimal if and only if the sentence
\[
\forall X,~\bigwedge _{i\neq j}R=1 \wedge S_i=1 \nRightarrow S_j=1
\]
holds in every $\Gamma\in\cC$.  Again, this can be determined using the decidability of the universal theory, and so a minimal factor set is computable.
\end{proof}

The next corollary follows immediately from the combination of Theorems \ref{thm: Computable factors sets implies rmwp} and \ref{thm: Decidable ut and re limit groups implies computable factor sets}.

\begin{corollary}\label{cor: Limit group criterion for rmwp}
If $\overline{\cC}$ is recursively enumerable and the universal theory of $\cC$ is decidable then $\overline{\cC}$ is recursive modulo the word problem.
\end{corollary}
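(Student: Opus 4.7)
The plan is to derive the corollary as a direct composition of the two preceding theorems; no new ideas are required, only an explicit invocation of the hypotheses. Under the assumption that $\overline{\cC}$ is recursively enumerable and that the universal theory of $\cC$ is decidable, I would first apply Theorem \ref{thm: Decidable ut and re limit groups implies computable factor sets} to conclude that factor sets are computable over $\cC$. The content of that theorem is that the recursive enumerability hypothesis lets one search through candidate factor sets $\{\eta_i : G \to L_i\}$ in $\overline{\cC}$, while the decidable universal theory lets one verify both the factoring property and the minimality condition by reducing them to universal sentences in $\cC$; since factor sets exist (by Lemma \ref{fp_then_factorsets}, enabled by the fact that a recursively enumerable $\overline{\cC}$ contains only finitely presentable groups), the search terminates.

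Having produced computable factor sets, I would then feed this conclusion directly into Theorem \ref{thm: Computable factors sets implies rmwp}, which asserts that computability of factor sets over $\cC$ implies that $\overline{\cC}$ is recursive modulo the word problem. Concretely, given a set of presentations $\cD$ in which the word problem is uniformly solvable, one takes $\cX_{\overline{\cC},\cD} = \overline{\cC}$ (which is recursively enumerable by hypothesis, equivalently by Lemma \ref{lem: Computable factor sets implies RE closure}), and constructs $\cY_{\overline{\cC},\cD}$ by computing the minimal factor set of each presentation in $\cD$ and using the word problem oracle to check whether the tautological section of the unique limit group quotient is a homomorphism.

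There is no real obstacle here: the corollary is essentially a bookkeeping statement, and the only thing to be careful about is that the two pipelines are compatible, i.e.\ that the hypotheses of Theorem \ref{thm: Computable factors sets implies rmwp} are exactly what Theorem \ref{thm: Decidable ut and re limit groups implies computable factor sets} outputs. Since both theorems are phrased in terms of ``factor sets are computable over $\cC$,'' this composition is immediate, and the proof is a one-line appeal to the two theorems.
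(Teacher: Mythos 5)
Your proposal is correct and matches the paper's argument exactly: the paper derives the corollary as an immediate composition of Theorems \ref{thm: Decidable ut and re limit groups implies computable factor sets} and \ref{thm: Computable factors sets implies rmwp}, just as you do. The extra exposition you provide of what each theorem supplies is accurate but not needed beyond the one-line appeal.
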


\subsection{Limit groups over free groups}

In this section, we summarize known results when $\cC=\cF$, the class of free groups.  The set of $\cF$-limit groups is precisely the set of {\em limit groups} considered by Sela, as well as by Kharlampovich--Miasnikov under the name \emph{(finitely generated) fully residually free groups}. 

The following results show that the results of the previous section can be applied to $\cF$.  Makanin proved that the universal theory of free groups is decidable \cite{makanin_decidability_1984}.

\begin{theorem}\label{thm: Makanin's theorem}
The universal theory of $\cF$ is decidable.
\end{theorem}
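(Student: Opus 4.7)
The plan is to reduce deciding the universal theory of $\cF$ to Makanin's algorithm for solving systems of word equations in a free group. An arbitrary universal sentence $\forall \bar{x}.\,\psi(\bar{x})$ with $\psi$ quantifier-free holds in $\cF$ precisely when the existential sentence $\exists \bar{x}.\,\neg\psi(\bar{x})$ has no solution. Putting $\neg\psi$ in disjunctive normal form rewrites this as a finite disjunction of statements of the form
\[
\exists\, \bar{x}:\ \bigwedge_i w_i(\bar{x}) = 1\ \wedge\ \bigwedge_j v_j(\bar{x}) \neq 1.
\]
Thus the decision problem reduces to the following: given a finite system of word equations together with a finite set of inequations (with variables $x_1,\dots,x_n$ and, if desired, constants from a fixed free group), decide whether the combined system has a solution in $\cF$.

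The substantive input is Makanin's algorithm, which decides satisfiability of any finite system of word equations $w_i(x_1,\ldots,x_n)=1$ in a free group. The strategy is to translate such a system into a finite family of so-called \emph{generalized equations}, which encode potential solutions as combinatorial diagrams, and then apply a rewriting procedure that transforms and simplifies these generalized equations. The correctness of the algorithm hinges on proving that this rewriting process terminates after a computable number of steps and produces an effective bound on the length of any minimal solution. Establishing this termination and bound is a delicate combinatorial argument and constitutes the main obstacle of the proof; once it is in hand, the existence of a solution is reduced to a finite search that can be mechanically carried out.

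To accommodate the inequations occurring in the disjunctive normal form, one can either extend Makanin's generalized-equation framework to treat equations and inequations simultaneously (as in Makanin's original treatment of the universal and positive theories), or proceed by a preliminary branching: for each inequation $v_j(\bar{x})\neq 1$ one enumerates finitely many combinatorial types of a cyclically reduced witness to non-triviality, in each case appending equations that force $v_j$ to take that form and invoking the pure equations algorithm on the resulting refined system. Either route, combined with the DNF reduction above, yields an effective decision procedure for the universal theory of $\cF$. The hard part throughout is the termination of Makanin's generalized-equation rewriting; the rest of the argument is a routine reduction.
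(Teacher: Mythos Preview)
The paper does not actually prove this theorem: it is stated as a known result and attributed to Makanin \cite{makanin_decidability_1984}, with no argument given. Your proposal is therefore not really comparable to ``the paper's own proof,'' since there is none; rather, you have supplied an outline of Makanin's argument where the paper simply cites it.

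That said, your outline is accurate in broad strokes. The reduction of a universal sentence to the unsatisfiability of a finite disjunction of systems of equations and inequations via negation and DNF is standard and correct, and you are right that the substantive content is Makanin's termination argument for generalized equations. One small comment: your second suggested route for handling inequations (branching over ``combinatorial types of a cyclically reduced witness to non-triviality'') is vaguer than it needs to be and would require care to make finite and effective; the cleaner route is the first one you mention, namely that Makanin's own paper \cite{makanin_decidability_1984} already treats equations and inequations together and decides the full positive and universal theories, so no additional reduction is needed. If you want a self-contained statement, it suffices to cite that result directly, as the paper does.
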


Sela showed that $\cF$-limit groups are finitely presentable \cite{sela_diophantine_2001}; Guirardel gave another proof \cite{guirardel_limit_2004}.

\begin{theorem}\label{thm: Limit groups are fp}
Every group in $\overline{\cF}$ is finitely presentable.
\end{theorem}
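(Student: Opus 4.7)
The plan is to establish finite presentability of $L = G/\stker f_n$, for a convergent sequence $f_n\co G\to F_n$ with $F_n\in\cF$, by combining the Bestvina--Paulin rescaling construction with the Rips machine on isometric group actions on $\bR$-trees, and then running an induction on a suitable well-founded complexity of $\overline{\cF}$-limit groups. Before entering the main argument I would record the elementary structural properties one needs: $L$ is torsion-free, commutative-transitive, and CSA, since each of these properties passes from $\cF$ to $\overline{\cF}$. In particular, if $L$ is itself abelian then it is a finitely generated torsion-free abelian group, hence finitely presented, and we are done; so we may assume $L$ is non-abelian.

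Next I would construct an isometric action of $L$ on an $\bR$-tree. Regard each $f_n$ as giving $G$ an action on the Cayley graph $X_n$ of $F_n$. Choose basepoints so that the translation length of some fixed generating set of $G$ is realised, rescale the metric on $X_n$ by the maximum such translation length, and apply the Bestvina--Paulin theorem: a subsequence of the rescaled pointed $G$-spaces converges in the equivariant Gromov--Hausdorff topology to a minimal, non-trivial isometric action of $G$ on an $\bR$-tree $T$. Because the kernel of the action contains $\stker f_n$, the action factors through $L$; because centralizers in a free group are cyclic, arc stabilizers of the limiting action are abelian.

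Now I would feed the action $L\curvearrowright T$ into the Rips machine (Bestvina--Feighn, Guirardel): any minimal action of a finitely generated group on an $\bR$-tree with abelian arc stabilizers decomposes into components of simplicial, axial, surface, and exotic (Levitt) type. One then rules out the exotic components for actions arising from limits in $\cF$, so that the action is geometric and yields a non-trivial graph-of-groups splitting of $L$ with abelian edge groups and with vertex groups of three kinds: (i) fundamental groups of compact surfaces, (ii) finitely generated abelian groups, and (iii) $\cF$-limit groups of strictly smaller complexity.

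The induction is then formal provided (i), (ii), and (iii) are finitely presented. Classes (i) and (ii) are classically finitely presented; class (iii) is finitely presented by the inductive hypothesis; edge groups are abelian, hence finitely presented and finitely generated; so $L$ is the fundamental group of a finite graph of finitely presented groups with finitely generated edge groups, and therefore finitely presented. The main obstacle is precisely the Rips-machine step: exhibiting a well-founded complexity on $\overline{\cF}$ (for example, Sela's complexity based on the essential JSJ, or Guirardel's rank-plus-Euler-characteristic invariant) and proving, via the shortening argument, both that exotic components cannot occur and that the complexity strictly decreases on the non-surface, non-abelian vertex groups. This delicate step is the crux of both Sela's and Guirardel's proofs; the rest of the argument is essentially bookkeeping once the splitting is in hand.
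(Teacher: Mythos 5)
The paper does not give its own proof of this theorem; it is stated as a black box with citations to Sela (\cite{sela_diophantine_2001}) and Guirardel (\cite{guirardel_limit_2004}), and that is all the proof the paper contains. So there is no internal argument for you to match or diverge from. What you have written is a reasonable high-level summary of the Sela-style route (Bestvina--Paulin rescaling, Rips machine, shortening argument and descent on complexity), and you are candid that the crux --- ruling out exotic components and proving strict complexity decrease --- is exactly the hard part of Sela's and Guirardel's papers. Given that, your ``proof'' is really an annotated pointer to the same references the paper uses, which is fine, but it does not give an independent argument.

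A few technical points worth flagging if you ever wanted to turn the sketch into a proof. First, the Bestvina--Paulin construction only produces a nontrivial $\bR$-tree action when the rescaling constants (the minimal displacements of a fixed generating set) tend to infinity; if they remain bounded you must run a separate, easy argument (a subsequence of the $f_n$ is eventually injective on any ball, and $L$ then embeds in a free group, hence is free and finitely presented). Second, ``abelian arc stabilizers'' is not by itself enough for the Rips machine to exclude exotic (thin/Levitt) components; what is actually used is stability in the sense of Bestvina--Feighn plus the additional properties (trivial tripod stabilizers, superstability of arcs) that one extracts from the limit of free actions. Third, when you pass to vertex groups you need them to be $\cF$-limit groups of smaller complexity; this uses that finitely generated subgroups of fully residually free groups are fully residually free, and the complexity decrease is precisely what the shortening argument supplies --- so your induction is only formal once one has imported the full strength of Sela's or Guirardel's main technical theorems. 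Finally, note that Guirardel's proof is structurally somewhat different from what you sketch: it establishes that limit groups act freely on $\bR^n$-trees and derives finite presentability from a strong accessibility statement there, rather than running the $\bR$-tree Rips machine inductively as Sela does, so your outline aligns with the Sela route specifically.
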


The first and third authors proved that the class of $\cF$-limit groups is recursively enumerable \cite{GrovesWilton09}.

\begin{theorem}\label{thm: Limit groups are re}
The set $\overline{\cF}$ is recursively enumerable.
\end{theorem}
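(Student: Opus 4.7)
The plan is to enumerate limit groups via the theory of \emph{constructible limit groups} (CLGs), as developed by Kharlampovich--Miasnikov and implicit in Sela's work. A CLG is a finitely presentable group defined inductively by a finite sequence of explicit operations starting from free groups---typical operations include extension of a centralizer, amalgamation of a free abelian group along a maximal abelian subgroup, and the gluing of a quadratically hanging surface piece along its boundary. Crucially, each CLG comes with a finite combinatorial description, from which one can effectively compute an explicit finite presentation of the resulting group.

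The enumeration proceeds in three steps. First, enumerate all finite CLG descriptions and compute the associated presentations, obtaining a recursively enumerable set $\cE_0 \subseteq \overline{\cF}$ (each CLG is, tautologically from its construction, a limit group). Second, apply all finite sequences of Tietze transformations to the presentations in $\cE_0$ to produce a recursively enumerable superset $\cE$, which is still contained in $\overline{\cF}$. Third, for the reverse containment, invoke the structure theorem of Kharlampovich--Miasnikov (and Sela) stating that every limit group admits at least one CLG structure: if $L$ is any limit group and $Q$ is an arbitrary finite presentation of $L$, some CLG description for $L$ yields a presentation $P \in \cE_0$. Since $L$ is finitely presentable by Theorem \ref{thm: Limit groups are fp}, the presentations $P$ and $Q$ are related by a finite sequence of Tietze moves, so $Q \in \cE$. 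Therefore $\cE = \overline{\cF}$ is recursively enumerable.

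The principal difficulty lies entirely in the structure theorem invoked in the third step: showing that every limit group arises as a CLG requires substantial machinery, such as Sela's analysis of Makanin--Razborov diagrams (which ultimately rests on Theorem \ref{thm: Makanin's theorem}) or Kharlampovich--Miasnikov's algebraic geometry over free groups. Once that deep input is taken as a black box, the enumeration argument itself is essentially formal, combining effective enumeration of CLG descriptions with the standard fact that any two finite presentations of the same finitely presented group are related by a finite sequence of Tietze transformations. A secondary subtlety is to ensure that the combinatorial data specifying a CLG construction (for instance, the witness that a chosen element has maximal cyclic centralizer before an extension-of-centralizer step) can be verified effectively; this verification reduces to checking equations and inequations in the base free groups, and is therefore handled by Makanin's algorithm.
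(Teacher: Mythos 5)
This theorem is not proved in the present paper; it is imported by citation to \cite{GrovesWilton09}. Your strategy---enumerate combinatorial CLG descriptions, compute the resulting finite presentations, and close under Tietze transformations---is a valid route to the conclusion, but it differs in emphasis from the argument in \cite{GrovesWilton09}, whose key structural input is the Kharlampovich--Miasnikov embedding theorem: every limit group embeds into an iterated extension of centralizers (ICE) of a free group. ICE groups have explicit presentations that are unconditionally valid---one never has to certify a side condition to form one---and the enumeration there proceeds by listing ICE groups together with their finitely generated subgroups and presentations for those subgroups, before closing under Tietze moves. By contrast, the CLG hierarchy comes with side conditions (edge groups maximal abelian, QH vertex groups genuinely quadratically hanging, and so on) that must be effectively checkable for your set $\cE_0$ to consist only of limit groups, and your reduction of this check to Makanin's algorithm ``in the base free groups'' is not quite right as stated: the conditions are posed in the intermediate CLG, not in a free group, so certifying them requires either an explicit inductive bookkeeping of centralizers through the construction (which is essentially what the ICE formulation supplies for free) or decidability of the relevant $\forall$-formulas with constants in a limit group, which is a further nontrivial input. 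This is a repairable subtlety rather than a fatal gap---you correctly flag it---but it is precisely the bookkeeping that the ICE route is designed to sidestep. Apart from this point, the overall architecture (enumerate explicitly constructible limit groups, get $\subseteq\overline{\cF}$ by construction, get $\supseteq$ from the structure/embedding theorem plus finite presentability plus Tietze equivalence) is shared by both arguments.
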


With these facts in hand, the algorithmic properties of $\overline{\cF}$ follow easily.

\begin{corollary}\label{cor: Factor sets are computable over F}
Factor sets are computable over $\cF$.
\end{corollary}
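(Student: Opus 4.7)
The plan is to invoke Theorem \ref{thm: Decidable ut and re limit groups implies computable factor sets} with $\cC = \cF$. That meta-theorem has exactly two hypotheses on the class $\cC$: recursive enumerability of $\overline{\cC}$ and decidability of the universal theory of $\cC$. For $\cC = \cF$ both hypotheses are already on the table: the first is Theorem \ref{thm: Limit groups are re}, which is the main theorem of \cite{GrovesWilton09}, and the second is Makanin's theorem, recorded as Theorem \ref{thm: Makanin's theorem}. The ambient requirement that $\cC$ consist of finitely presented groups is automatic, since we are taking $\cF$ to be the class of finite-rank free groups; and Theorem \ref{thm: Limit groups are fp} additionally ensures that the class $\overline{\cF}$ itself consists of finitely presented groups, which is what lets Lemma \ref{fp_then_factorsets} supply the \emph{existence} of a factor set for any finitely presented input.

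With these ingredients one simply unwinds the algorithm built in the proof of Theorem \ref{thm: Decidable ut and re limit groups implies computable factor sets}: given $\langle X \mid R\rangle$ presenting $G$, enumerate finite tuples $\{L_i = \langle X \mid R \cup S_i\rangle\}$ of $\cF$-limit-group quotients using the enumeration of $\overline{\cF}$ from Theorem \ref{thm: Limit groups are re}; use Makanin's algorithm to test whether the universal sentence expressing "every $\cF$-valued homomorphism of $G$ factors through some $\eta_i$" holds in every free group; and then use Makanin again to pare the resulting factor set down to a minimal one, by testing, for each pair $(i,j)$, whether $\eta_j$ factors through $\eta_i$. Existence of \emph{some} factor set (Lemma \ref{fp_then_factorsets}) guarantees the na\"ive search terminates.

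In short, the corollary is a pure bookkeeping step that repackages the deep inputs of Makanin, of Sela/Guirardel, and of Groves--Wilton. There is no genuine obstacle to overcome: the only thing one has to verify is that the statements of Theorems \ref{thm: Makanin's theorem}, \ref{thm: Limit groups are fp}, and \ref{thm: Limit groups are re} match the hypotheses of Theorem \ref{thm: Decidable ut and re limit groups implies computable factor sets} verbatim, which they do.
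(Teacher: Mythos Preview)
Your proof is correct and is exactly the approach the paper takes: it simply invokes Theorem~\ref{thm: Decidable ut and re limit groups implies computable factor sets} with $\cC=\cF$, using Theorem~\ref{thm: Makanin's theorem} and Theorem~\ref{thm: Limit groups are re} to supply the two hypotheses. The extra unwinding you provide (and the mention of Theorem~\ref{thm: Limit groups are fp} and Lemma~\ref{fp_then_factorsets}) is accurate but more detail than the paper records, since those ingredients are already absorbed into the proof of Theorem~\ref{thm: Decidable ut and re limit groups implies computable factor sets}.
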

\begin{proof}
This follows immediately from Theorems \ref{thm: Decidable ut and re limit groups implies computable factor sets}, \ref{thm: Makanin's theorem} and \ref{thm: Limit groups are re}.
\end{proof}

\begin{corollary}[\cite{GrovesWilton09}]\label{cor: Limit groups are rmwp}
The set $\overline{\cF}$ is recursive modulo the word problem.
\end{corollary}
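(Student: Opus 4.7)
The plan is to derive this corollary directly from the abstract machinery already assembled in this section, so essentially no new work is required beyond recognising which previous results to combine. Specifically, I would invoke Corollary \ref{cor: Limit group criterion for rmwp} (the ``limit group criterion for rmwp''), which asserts that if $\overline{\cC}$ is recursively enumerable and the universal theory of $\cC$ is decidable, then $\overline{\cC}$ is recursive modulo the word problem. So the task reduces to checking these two hypotheses in the case $\cC = \cF$.

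Both hypotheses are now in hand as black boxes: the decidability of the universal theory of $\cF$ is Makanin's theorem (Theorem \ref{thm: Makanin's theorem}), and the recursive enumerability of $\overline{\cF}$ is Theorem \ref{thm: Limit groups are re} (quoting \cite{GrovesWilton09}). Hence the hypotheses of Corollary \ref{cor: Limit group criterion for rmwp} are satisfied and the result follows in one line.

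Equivalently, and perhaps preferably for exposition, one can route the argument through Corollary \ref{cor: Factor sets are computable over F}: factor sets are computable over $\cF$, so Theorem \ref{thm: Computable factors sets implies rmwp} applies and immediately yields that $\overline{\cF}$ is recursive modulo the word problem. The two routes give the same $\cX_{\overline{\cF},\cD}$ and $\cY_{\overline{\cF},\cD}$ in the end, namely $\cX_{\overline{\cF},\cD} = \overline{\cF}$ (a recursive enumeration produced by computing minimal factor sets) and $\cY_{\overline{\cF},\cD}$ the set of presentations $\langle X\mid R\rangle \in \cD$ whose minimal $\cF$-factor set either has more than one component or has a single epimorphism $\eta\co G \to L$ for which the tautological section fails to be a homomorphism (detected using the word problem oracle for $\cD$).

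There is no real obstacle at this stage of the argument; all the difficulty has been packaged into the prior theorems. The genuine content lies upstream: Makanin's solution to the universal theory of free groups and the first and third authors' earlier proof that $\overline{\cF}$ is recursively enumerable. Given these, the corollary is a formal consequence of the general framework of Section \ref{s:limit groups}, exactly parallel to the abelian and nilpotent cases already treated.
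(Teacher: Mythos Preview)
Your proposal is correct and matches the paper's proof essentially verbatim: the paper simply cites Corollary~\ref{cor: Limit group criterion for rmwp} together with Theorems~\ref{thm: Makanin's theorem} and~\ref{thm: Limit groups are re}, exactly as you suggest. Your alternative route through Corollary~\ref{cor: Factor sets are computable over F} and Theorem~\ref{thm: Computable factors sets implies rmwp} is equivalent, since Corollary~\ref{cor: Limit group criterion for rmwp} is itself just the composition of those two results.
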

\begin{proof}
This follows immediately from Corollary \ref{cor: Limit group criterion for rmwp} and Theorems \ref{thm: Makanin's theorem} and \ref{thm: Limit groups are re}.
\end{proof}

To use this information to restrict to subclasses of $\overline{\cF}$, we use the fact that the isomorphism problem is solvable for $\cF$-limit groups \cite{BKM07,dahmanigroves1}.

\begin{theorem}
The isomorphism problem is solvable in $\overline{\cF}$.
\end{theorem}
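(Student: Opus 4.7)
The plan is to follow the JSJ-based approach used by Dahmani and the first author, reducing isomorphism in $\overline{\cF}$ to the isomorphism problem for toral relatively hyperbolic groups. First, given two presentations known to lie in $\overline{\cF}$, one begins by computing finite presentations for each group, which is available because every limit group is finitely presentable (Theorem \ref{thm: Limit groups are fp}) and $\overline{\cF}$ is recursively enumerable (Theorem \ref{thm: Limit groups are re}); by enumerating presentations and checking them against the input, one can effectively obtain a standard form.

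Next, I would exploit the structural theorem that every limit group is toral relatively hyperbolic, i.e.\ hyperbolic relative to a malnormal collection of maximal non-cyclic abelian subgroups. This gives access to the machinery for relatively hyperbolic groups. In particular, one computes the abelian JSJ decomposition of each group: a finite graph of groups in which the vertex groups come in three flavors---abelian, surface-type (quadratically hanging), and rigid---with abelian edge groups. Such a JSJ can be computed algorithmically from a presentation using the decidability of the universal theory (Theorem \ref{thm: Makanin's theorem}) together with the canonical nature of the JSJ; this is the algorithmic content underlying the Dahmani--Groves and Bumagin--Kharlampovich--Miasnikov theorems.

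The comparison step is then to check that the JSJ decompositions of the two input groups agree up to the allowed deformations (conjugations, sliding of edge groups, and twists). Abelian vertex groups are easy to recognize by Corollary \ref{abeliansingleton}, surface vertex groups are easy by the forthcoming surface-group recognition of Section \ref{sec: Surface groups}, and the gluing data at abelian edges can be checked by linear algebra. The genuinely substantive step is recognizing when two rigid vertex groups are isomorphic; this is the main obstacle, since rigid vertex groups may themselves be complicated toral relatively hyperbolic groups with no further splittings. Here one invokes the isomorphism theorem for rigid toral relatively hyperbolic groups (again Dahmani--Groves), which relies on the fact that the outer automorphism group of such a group is finite and computable, so one can enumerate candidate isomorphisms and verify them using the word problem.

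Assembling these ingredients: compute JSJs, match up underlying graphs, match up vertex and edge groups type by type, and finally check rigid vertex groups via the relatively hyperbolic isomorphism algorithm. The hardest and most technical step is the algorithmic construction of the canonical JSJ and the isomorphism test on rigid vertices; once that is in place, the overall decision procedure is a finite combinatorial comparison. Both halves of the isomorphism test (producing a witnessing isomorphism when one exists, and certifying non-isomorphism when it does not) terminate, yielding the claimed decidability.
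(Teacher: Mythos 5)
The paper does not prove this statement; it simply cites it to Bumagin--Kharlampovich--Miasnikov \cite{BKM07} and Dahmani--Groves \cite{dahmanigroves1}. Your sketch is a reasonable high-level summary of the Dahmani--Groves argument: limit groups are toral relatively hyperbolic, one computes a canonical abelian JSJ decomposition, and one compares the decompositions, with the rigid vertices handled by enumerating the finitely many candidate isomorphisms afforded by the finiteness and computability of the outer automorphism group. So you are following the same route as the cited source, and the paper itself offers nothing more than the citation.

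That said, be aware of how much your sketch defers to those references. The claims that the abelian JSJ is algorithmically computable, that it is canonical in the strong sense needed (unique up to the deformation moves you list), that the outer automorphism group of a rigid vertex is finite \emph{and} effectively bounded, and that the final combinatorial comparison terminates in both directions are precisely the substantial content of Dahmani--Groves (and require more than just Makanin's decidability of the universal theory). Your paragraph on the comparison step also glosses over how one matches quadratically hanging vertex groups together with their boundary-monomorphism data, which is itself nontrivial. None of this is a gap in the sense of the present paper --- which, again, treats the theorem as a black-box citation --- but if you intended this as a proof rather than a pointer to the literature, each of those ingredients would need a genuine argument.
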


In particular, it now follows easily that the class of free groups is recursive modulo the word problem, exactly as in \cite{GrovesWilton09}.

\begin{theorem}\label{thm: F is rmwp}
The class of groups $\cF$ is recursive modulo the word problem.
\end{theorem}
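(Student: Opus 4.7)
The plan is to apply Lemma \ref{lem: recursive subsets 2} with $\cD = \overline{\cF}$ and $\cC = \cF$. Since Corollary \ref{cor: Limit groups are rmwp} already gives that $\overline{\cF}$ is recursive modulo the word problem, it suffices to show that $\cF$ is recursive in $\overline{\cF}$, i.e.\ that both $\cF$ and $\overline{\cF} \smallsetminus \cF$ are recursively enumerable (or rather, can be enumerated into sets that separate them from one another inside $\cU$).

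First I would observe that $\cF$ itself is recursively enumerable: a Turing machine can enumerate all finite presentations obtained by applying finite sequences of Tietze transformations to the standard presentations $\langle x_1,\ldots,x_n \mid \,\rangle$ of free groups; this enumerates every finite presentation of a free group. This gives the first separating set.

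Next I would produce a recursive enumeration of $\overline{\cF} \smallsetminus \cF$. By Theorem \ref{thm: Limit groups are re}, the class $\overline{\cF}$ is recursively enumerable, so one can list presentations $P_1, P_2, \ldots$ of limit groups. For each $P_i$, the solvability of the isomorphism problem in $\overline{\cF}$ allows us to decide whether the group presented by $P_i$ is isomorphic to some $F_n$ (one checks $F_n$ for $n = 0,1,2,\ldots$ up to the rank, which is bounded by the number of generators of $P_i$). Outputting those $P_i$ for which the answer is negative gives a recursive enumeration of $\overline{\cF}\smallsetminus \cF$. Together with the enumeration of $\cF$ from the previous paragraph, this exhibits $\cF$ as recursive in $\overline{\cF}$.

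Finally, Lemma \ref{lem: recursive subsets 2} applied to $\cF \subseteq \overline{\cF}$ yields that $\cF$ is recursive modulo the word problem. There is no serious obstacle here: every ingredient is already in place, and the only subtle step is leveraging the solvable isomorphism problem in $\overline{\cF}$ to filter the recursive enumeration of $\overline{\cF}$ and isolate the non-free limit groups.
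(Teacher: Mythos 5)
Your proof is correct and takes essentially the same route as the paper: reduce to showing $\cF$ is recursive in $\overline{\cF}$ via Corollary \ref{cor: Limit groups are rmwp} and Lemma \ref{lem: recursive subsets 2}, then invoke the solvability of the isomorphism problem in $\overline{\cF}$. The only cosmetic difference is how you pin down the candidate free group: you check $F_0,\dots,F_k$ with $k$ the number of generators, whereas the paper reads off the unique candidate directly from the abelianization; both are valid and finite.
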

\begin{proof}
By Corollary \ref{cor: Limit groups are rmwp} and Lemma \ref{lem: recursive subsets 2}, it suffices to show that $\cF$ is recursive in $\overline{\cF}$.  But the isomorphism class of each element of $\cF$ is uniquely determined by its abelianization, so each $\cF$-limit group is isomorphic to at most one choice of free group, and this choice of free group is computable.  Using the solution to the isomorphism problem in $\overline{\cF}$, it follows that $\cF$ is recursive in $\overline{\cF}$ as required.
\end{proof}

As most surface groups are $\cF$-limit groups, a similar argument will show that $\cM_2$ is recursive modulo the word problem.  However, to deal with the few exceptions, we shall postpone the proof of this until we have considered virtual manifold groups.

\section{Virtual manifold groups}\label{sec:wall3d}

The goal of this section is to prove the following theorem,
conjectured by Wall to hold in all dimensions
\cite[p.\ 281]{farrell_borel_2002}. 

\begin{theorem}\label{t: Virtual}
If $\Gamma$ is a torsion-free group with a subgroup of finite index isomorphic to $\pi_1M$, where $M$ is a closed, irreducible 3--manifold, then $\Gamma$ is also the fundamental group of a closed, irreducible 3--manifold.
\end{theorem}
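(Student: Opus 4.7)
The plan is to realize $\Gamma$ as a discrete, free, cocompact group of homeomorphisms of the universal cover $\widetilde M$ and then take the quotient. First, replace $\pi_1 M$ by its normal core $N = \bigcap_{g \in \Gamma} g(\pi_1 M)g^{-1}$ in $\Gamma$; this has finite index in both $\Gamma$ and $\pi_1 M$, and so corresponds to the fundamental group of a finite cover $M' \to M$ that is itself closed and irreducible. Replacing $M$ by $M'$, we may assume $\pi_1 M \trianglelefteq \Gamma$ with finite quotient $F = \Gamma / \pi_1 M$. Applying Perelman's Geometrization Theorem together with the Sphere Theorem (using that $M$ is irreducible), $\widetilde M$ is homeomorphic to $S^3$ when $\pi_1 M$ is finite and to $\bR^3$ otherwise.

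The core of the argument is to realize the conjugation homomorphism $F \to \Out(\pi_1 M)$ by a finite group of homeomorphisms of $M$ and then lift these through the universal cover. This is handled case-by-case on the geometric type of $M$. When $M$ is hyperbolic, Mostow Rigidity identifies $\Out(\pi_1 M)$ with $\mathrm{Isom}(M)$, a finite group of isometries that lifts to $\mathrm{Isom}(\bH^3)$ normalizing $\pi_1 M$; as $Z(\pi_1 M) = 1$, the extension of $F$ by $\pi_1 M$ realizing the given outer action is unique, and $\Gamma$ is thereby identified with a discrete subgroup of $\mathrm{Isom}(\bH^3)$. For the Seifert-fibered geometries, the infinite cyclic center of $\pi_1 M$ (generated by a regular fiber) is characteristic, and the quotient $\pi_1 M / Z$ is a Fuchsian $2$--orbifold group; the Convergence Group Theorem of Tukia, Casson--Jungreis and Gabai promotes the finite extension $\Gamma/Z$ to a Fuchsian group, and lifting through the Seifert fibration yields the desired $\Gamma$-action on $\widetilde M$. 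For $\bE^3$, $\mathrm{Nil}$ and $\mathrm{Sol}$ one argues directly from the classification of the isometry groups of these model spaces. If $M$ is non-geometric, then it is Haken with $Z(\pi_1 M) = 1$; Zimmermann's theorem provides a finite subgroup of $\mathrm{Homeo}(M)$ realizing the outer action, and lifting yields the required action of $\Gamma$ on $\widetilde M$.

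Once $\Gamma$ acts on $\widetilde M$ extending the free, properly discontinuous action of $\pi_1 M$, the $\Gamma$-action is itself properly discontinuous and cocompact. The $\Gamma$-stabilizer of any point $\widetilde x \in \widetilde M$ maps injectively to $F$ (since $\pi_1 M$ acts freely), hence is finite; torsion-freeness of $\Gamma$ then forces these stabilizers to be trivial, so the $\Gamma$-action is free. The quotient $\widetilde M / \Gamma$ is therefore a closed $3$--manifold with fundamental group $\Gamma$, and it is irreducible: either $\widetilde M = S^3$ and the quotient is spherical, or $\widetilde M = \bR^3$, $\pi_2 = 0$, and the Sphere Theorem applies.

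The main obstacle is the case-by-case realization of the outer $F$-action by genuine symmetries of $M$. The hyperbolic case is clean thanks to Mostow Rigidity together with the vanishing of the center. The non-geometric Haken case rests on Zimmermann's theorem. The Seifert-fibered geometries with nontrivial center are the most delicate, requiring the Convergence Group Theorem to promote the finite extension of the Fuchsian base to a Fuchsian group before one can lift the resulting action through the fibration of $\widetilde M$.
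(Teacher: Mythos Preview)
Your outline is essentially the paper's: pass to the normal core, invoke Geometrization to case-split, and then in each case realize the extension geometrically using Mostow Rigidity, the Convergence Group Theorem, or Zimmermann's Nielsen realization for Haken manifolds. The hyperbolic and centerless-Haken cases are handled exactly as you describe. There are, however, two places where you have swept the actual work under the rug.

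First, in the Seifert case with hyperbolic base you write that after the Convergence Group Theorem ``lifting through the Seifert fibration yields the desired $\Gamma$-action on $\widetilde M$.'' This lifting is precisely the nontrivial step. What you have at that point is a short exact sequence $1\to Z\to\Gamma\to\pi_1O\to 1$ with $Z$ infinite cyclic and $O$ a closed hyperbolic $2$--orbifold; you must show that every torsion-free such extension is a Seifert-fibred $3$--manifold group. The paper isolates this as a separate lemma and proves it by passing to a finite-index subgroup $K\lhd\Gamma$ which is the fundamental group of an \emph{orientable Haken} circle bundle, checking carefully that the extension $1\to K\to\Gamma\to\Gamma/K\to 1$ is effective (this uses $Z_\Gamma(K)\subseteq K$, not just torsion-freeness), and then applying Zimmermann's theorem again. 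So Zimmermann is needed here too, not only in the non-geometric Haken case.

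Second, ``for $\bE^3$, $\mathrm{Nil}$ and $\mathrm{Sol}$ one argues directly from the classification of the isometry groups'' is too vague to count as a proof. For $\bE^3$ the paper gives a self-contained Bieberbach-type argument (embed $\Gamma$ in a wreath product $\bZ^k\wr Q$ acting on a Euclidean space and restrict to the minimal invariant flat). For $\mathrm{Nil}$ one cannot simply quote isometry groups: the paper again reduces to a sequence $1\to Z\to\Gamma\to\pi_1O\to 1$ with $O$ Euclidean and proves the analogue of the lemma above, once more via Zimmermann applied to a Haken circle-bundle cover (with a more delicate effectiveness check using centralizers in crystallographic groups). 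Finally, $\mathrm{Sol}$ need not be treated separately at all: a closed $\mathrm{Sol}$-manifold is Haken with trivial centre, so it is already absorbed into the Zimmermann case.
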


The bulk of the work was done by Zimmermann, who dealt with the case of an effective extension of a Haken manifold in \cite{Zi82}.  To deal with the remaining cases, we use Geometrization, Mostow Rigidity and the Convergence Group Theorem.

We will also give a proof of the corresponding (well known) result in two dimensions, which relies on the Nielsen Realization Problem.  

The utility of this theorem in our context lies in the following consequence.  For a class $\cC$ of finitely presentable groups, recall that $\mc{VC}$ is the class of \emph{virtually $\cC$ groups}---that is, the class of finitely presentable groups with a subgroup of finite index in $\mc{C}$.

\begin{corollary}\label{cor: AM3 in VAM3}
The class $\mc{AM}_3$ is recursive in $\mc{VAM}_3$.
\end{corollary}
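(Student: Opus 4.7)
The plan is to produce recursively enumerable sets $\cX$ and $\cY$ recursively separating $\mc{AM}_3$ from $\mc{VAM}_3\smallsetminus\mc{AM}_3$. The key reduction, furnished by Theorem~\ref{t: Virtual}, is that within $\mc{VAM}_3$ membership in $\mc{AM}_3$ is equivalent to torsion-freeness. One direction is immediate, since a closed aspherical 3--manifold has contractible universal cover on which $\pi_1$ acts freely. Conversely, suppose $\Gamma\in\mc{VAM}_3$ is torsion-free and $H=\pi_1 N\leq\Gamma$ has finite index with $N$ a closed aspherical (hence irreducible) 3--manifold. Theorem~\ref{t: Virtual} yields a closed irreducible 3--manifold $M$ with $\pi_1 M=\Gamma$; since $H$ is infinite and $M$ is irreducible, $M$ must itself be aspherical (by the sphere theorem and Hurewicz), so $\Gamma\in\mc{AM}_3$.

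For $\cX$ I would take $\cX=\mc{AM}_3$ itself, which is recursively enumerable: enumerate triangulations of closed 3--manifolds, apply Perelman's Geometrization Theorem together with standard 3--manifold algorithms (irreducibility via normal surface theory, and finiteness of $\pi_1$ read off from the geometric decomposition) to decide which represent aspherical manifolds, extract a presentation of $\pi_1$ in each positive case, and close the resulting list under Tietze transformations.

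For $\cY$ I would dovetail over tuples $(P,k,w,n)$ with $k,n\geq 2$: use Reidemeister--Schreier to produce a presentation and a system of Schreier coset representatives for each subgroup of index $k$ in $\pi_1 P$; in parallel, enumerate $\mc{AM}_3$; if a Reidemeister--Schreier presentation of some $H\leq\pi_1 P$ is found in $\mc{AM}_3$, verify that $w^n=1$ in $\pi_1 P$ by enumerating consequences of the relators, and verify that $w\neq 1$ in $\pi_1 P$ by writing $w=gh$ with $g$ a coset representative and $h\in H$ and applying the uniform solution to the word problem in $\mc{AM}_3$ to $h$ (and checking $g\neq 1$ otherwise); when both succeed, output $P$ to $\cY$. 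By the reduction above, $\cY$ contains every element of $\mc{VAM}_3\smallsetminus\mc{AM}_3$---such a group has torsion together with a finite-index subgroup in $\mc{AM}_3$, so the dovetail will eventually locate all the needed data---while $\cY$ is disjoint from the torsion-free class $\mc{AM}_3$. The only non-trivial input is Theorem~\ref{t: Virtual}; everything else is routine bookkeeping around Reidemeister--Schreier, Geometrization, and the known uniform word problem in $\mc{AM}_3$.
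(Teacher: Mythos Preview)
Your proof is correct and rests on the same key reduction as the paper: by Theorem~\ref{t: Virtual}, inside $\mc{VAM}_3$ a group lies in $\mc{AM}_3$ if and only if it is torsion-free. The implementations, however, differ. The paper takes $\cX=\cM_3$ (noting simply that $\mc{AM}_3=\cM_3\cap\mc{VAM}_3$) and $\cY=\mc{FT}$, the recursively enumerable class of groups admitting a finite quotient into which some nontrivial cyclic subgroup injects (Lemma~\ref{FTRE}); residual finiteness of $\mc{VAM}_3$ (Proposition~\ref{hempelcor}) guarantees every torsion group there lands in $\mc{FT}$. Your route instead argues directly that $\mc{AM}_3$ is recursively enumerable (requiring asphericity to be decided from a triangulation), and detects torsion by first locating a finite-index subgroup $H\in\mc{AM}_3$ via Reidemeister--Schreier and then lifting the uniform word problem in $H$ to a word problem in the ambient group through the coset table. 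Both are valid; the paper's choice avoids the extra work of deciding asphericity and of rewriting words through Schreier transversals, by exploiting residual finiteness as a single clean certificate for torsion. Your approach, on the other hand, makes explicit the mechanism by which solvability of the word problem passes up from a finite-index subgroup, which is a useful observation in its own right.
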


To prove the corollary, we use the following observation.

\begin{lemma}\label{FTRE}
Let $\mc{FT}$ be the class of finitely presentable groups with a
non-trivial cyclic subgroup that injects into some finite quotient.
Then $\mc{FT}$ is recursively enumerable and contains every residually
finite group with torsion.  On the other hand, every torsion-free
group is contained in $\cU\smallsetminus\mc{FT}$.
\end{lemma}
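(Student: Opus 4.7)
The plan is to handle the three assertions separately; the real content lies in the recursive enumerability of $\mc{FT}$.

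The last assertion is immediate: in a torsion-free group every non-trivial cyclic subgroup is infinite cyclic, and $\bZ$ does not embed in any finite group. For the middle assertion, given a residually finite $G$ with a non-trivial torsion element $g$ of order $n\geq 2$, I would use residual finiteness to produce, for each $1\leq i<n$, a finite-index normal subgroup $N_i$ of $G$ with $g^i\notin N_i$. The intersection $N=N_1\cap\cdots\cap N_{n-1}$ is still of finite index, and $g$ has order exactly $n$ in $G/N$, so $\langle g\rangle\into G/N$.

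For recursive enumerability, I would describe a search procedure that enumerates tuples $(P,w,n,F,\phi,\pi)$ consisting of a finite presentation $P=\langle X\mid R\rangle$, a word $w$ in $X$, an integer $n\geq 2$, a finite group $F$ given by a multiplication table, a set-map $\phi\co X\to F$, and a finite product $\pi$ of conjugates of the relators of $R$ and their inverses in the free group on $X$. The procedure accepts the tuple (and outputs $P$) whenever (i)~$\phi(r)=1_F$ for every $r\in R$, so that $\phi$ descends to a homomorphism $\bar\phi\co G_P\to F$; (ii)~$\bar\phi(w)$ has order exactly $n$ in $F$; and (iii)~$\pi$ freely reduces to $w^n$. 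Each of these conditions is decidable from the data of the tuple, so the set of output presentations is recursively enumerable.

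I expect the only subtle point to be the correctness of this procedure: to certify $\langle w\rangle\into F$ one must know that $w$ has order exactly $n$ in $G_P$, which ordinarily would require a solution to the word problem. The rescue is that, once (i) and (ii) hold, $w^i\neq 1$ in $G_P$ for every $1\leq i<n$ automatically, since $\bar\phi(w^i)=\bar\phi(w)^i\neq 1_F$; and (iii) is precisely a recursively enumerable certificate that $w^n=1$ in $G_P$. Together these imply $\langle w\rangle\into F$ via $\bar\phi$, so $P\in\mc{FT}$. Conversely, if $P\in\mc{FT}$ is witnessed by an element $g$ of order $n$ mapping injectively into a finite quotient $G_P\onto F$, then the corresponding tuple (with $\pi$ any finite derivation of $g^n=1$) is eventually enumerated.
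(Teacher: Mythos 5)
Your proof is correct and is essentially the same strategy as the paper's: enumerate finite certificates consisting of a finite quotient together with a finitary proof that the chosen word has finite order, and accept the presentation when the two pieces of data are compatible. The small implementation differences are cosmetic and, if anything, your version is slightly more careful: you encode the certificate as a map to an abstract finite group plus an explicit product of conjugates of relators reducing to $w^n$, whereas the paper instead requires $w^n$ to occur literally as a relator of a finite presentation of the finite quotient and then ``deletes relators'' (so its enumeration produces each $\mc{FT}$ group only in certain presentations, implicitly invoking closure under Tietze transformations as is done elsewhere in the paper), and for the middle assertion the paper passes to a subgroup of prime order rather than intersecting the $N_i$. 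Both routes are sound.
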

\begin{proof}
  Note first that presentations of finite groups are recursively
  enumerable.  Since the word problem is uniformly solvable in finite
  groups, one can enumerate the pairs 
  $(\langle S\mid R\rangle ,w)$ where $\langle S\mid R\rangle$ presents a finite group, $R$ contains the word $w^n$ for
  some $n\geq 2$, and $w$ has order exactly $n$ in the group presented by
  $\langle S \mid R\rangle$.  Any element of $\mc{FT}$ is obtained
  from such a pair by deleting some of the elements of $R$ which are
  not equal to $w^n$.  Thus $\mc{FT}$ is recursively
  enumerable.
  
  Let $\Gamma$ be a residually finite group and $C=\langle w\rangle$ a
  non-trivial finite cyclic subgroup.  Replacing $C$ by a subgroup if
  necessary, we may assume $w$ has
  prime order.  If $\eta:\Gamma\to Q$ is a homomorphism to a finite group with $\eta(w)\neq 1$ then $\eta|C$ is an injection.  Therefore, $\Gamma\in \mc{FT}$.
  
  The final assertion is trivial.
\end{proof}

We also use the following consequence of the triangulability of
$3$--manifolds.
\begin{lemma}\label{m3re}
  The set of presentations of closed $3$--manifold groups $\cM_3$ is recursively enumerable.
\end{lemma}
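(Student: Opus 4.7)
The plan is to exploit the triangulability of $3$--manifolds (Moise's theorem) to replace the inherently infinite data of a manifold with a finite combinatorial object that can be enumerated, then to read off a presentation of $\pi_1$ by a standard algorithm, and finally to enumerate all Tietze-equivalent presentations.

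Concretely, I would proceed in four steps. First, enumerate all finite abstract simplicial complexes (a routine enumeration since each is specified by a finite list of simplices). Second, for each such complex $K$, decide whether it is a triangulation of a closed $3$--manifold. This is algorithmic: one checks that $K$ is pure $3$--dimensional, that every $2$--simplex is a face of exactly two $3$--simplices, and that the link of every vertex is a $2$--sphere. The last condition is decidable because the link is a finite $2$--dimensional simplicial complex, and recognizing a $2$--sphere among closed surfaces amounts to checking connectedness, that every edge is in exactly two triangles, and that the Euler characteristic equals $2$ (all of these are trivial finite checks). Third, from any such triangulation compute a finite presentation $P_K$ of $\pi_1 K$ using the standard edge-path-groupoid algorithm: pick a spanning tree in the $1$--skeleton, take the non-tree edges as generators, and impose one relation for each $2$--simplex. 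Fourth, from $P_K$ enumerate all presentations Tietze-equivalent to $P_K$ by systematically applying the four Tietze moves; this process recursively enumerates exactly the presentations of groups isomorphic to $\pi_1 K$.

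Interleaving these four enumerations (via a standard dovetailing argument so that every pair (triangulation, Tietze word) is eventually reached) yields a Turing machine that outputs exactly the presentations lying in $\cM_3$. Every closed $3$--manifold $M$ admits a triangulation by Moise's theorem, so $M$ contributes at least one $K$, and every presentation of $\pi_1 M$ is obtained by applying Tietze moves to $P_K$; conversely, every presentation produced presents the fundamental group of a closed $3$--manifold by construction. This gives the desired recursive enumeration.

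There is no real obstacle here beyond verifying the decidability of recognizing a triangulation of a closed $3$--manifold, which reduces to the easy $2$--dimensional case; the deeper ingredient is Moise's triangulation theorem, which is invoked as a black box. Note that we do \emph{not} need to decide whether two triangulations present the same manifold, nor do we need to detect $3$--manifold groups among arbitrary presentations (which is a much harder problem, and in fact the subject of the sequel paper); mere enumeration suffices.
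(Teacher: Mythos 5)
Your proof follows essentially the same approach as the paper: enumerate finite simplicial complexes, filter for triangulated closed $3$--manifolds (using Moise's theorem to guarantee completeness), read off a fundamental group presentation from a spanning tree, and close up under Tietze transformations. The only cosmetic difference is that you verify vertex links are $2$--spheres directly, whereas the paper phrases the same check in terms of Euler characteristic and connectedness of links; both are correct.
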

\begin{proof}
  The collection of finite simplicial $3$--complexes (up to
  isomorphism of complex) is clearly
  recursively enumerable.  
  Moise showed that every $3$--manifold is
  triangulable \cite{Moise}, so every closed $3$--manifold appears in
  this collection.  Moreover, by checking the Euler characteristic and
  number of components of links of vertices, it is straightforward to
  check whether a given $3$--complex is a $3$--manifold.  Thus the set
  of triangulated $3$--manifolds is recursively enumerable.  Any
  maximal tree in the one-skeleton of a triangulated $3$--manifold
  gives a presentation if its fundamental group, so we can generate a
  list which contains at least one presentation of each $3$--manifold
  group.  By applying Tietze transformations to these presentations, we
  can enumerate all presentations of closed $3$--manifold groups.
\end{proof}

Finally, we will need to know that virtual $3$--manifold groups 
are residually finite.
\begin{proposition}\label{hempelcor}
  Every group whose presentations lie in $\mc{VM}_3$ is residually
  finite.  In particular, the word problem is uniformly solvable in
  $\mc{VM}_3$. 
\end{proposition}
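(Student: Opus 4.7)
The plan is to deduce residual finiteness of virtual $3$--manifold groups from Hempel's theorem, and then apply the Mal$'$cev--McKinsey word-problem algorithm uniformly.

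First I would invoke the theorem of Hempel from \cite{hempel87} which, combined with Perelman's Geometrization Theorem, shows that $\pi_1 M$ is residually finite for every closed $3$--manifold $M$.  Thus every group presented by an element of $\cM_3$ is residually finite.

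Next I would argue that residual finiteness passes to finitely generated overgroups of finite index.  Suppose $G$ is presented by some $P\in\mc{VM}_3$, so $G$ has a subgroup $H$ of finite index with $H\cong\pi_1M$ for some closed $3$--manifold $M$.  After replacing $H$ by its normal core, we may assume $H\trianglelefteq G$.  Given a non-trivial $g\in G$, either $g\notin H$, in which case $g$ is non-trivial in the finite quotient $G/H$; or $g\in H$, in which case by residual finiteness of $H$ there is a finite-index normal subgroup $K\trianglelefteq H$ with $g\notin K$.  Setting $K'=\bigcap_{xH\in G/H} xKx^{-1}$ yields a finite-index normal subgroup of $G$ avoiding $g$.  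Hence $G$ is residually finite.

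Finally, for the uniform solvability of the word problem, I would run the standard two-track Mal$'$cev argument, which requires no information about the particular presentation beyond its finiteness.  Given a finite presentation $\langle X\mid R\rangle$ of a residually finite group and a word $w$, one track enumerates products of conjugates of relators searching for an expression equal to $w$ in the free group on $X$; the other track enumerates homomorphisms $\phi\co F(X)\to Q$ with $Q$ a finite group and $\phi(R)=\{1\}$, halting when some such $\phi$ satisfies $\phi(w)\neq 1$.  If $w=_G 1$ the first track terminates, and if $w\neq_G 1$ the residual finiteness supplied in the previous paragraph guarantees the second track terminates.  Since this procedure depends only on the input presentation, it provides a uniform solution to the word problem throughout $\mc{VM}_3$.

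The substantive ingredient is of course Hempel's theorem together with Geometrization; the remaining steps are routine facts about residually finite finitely presented groups, so there is no serious obstacle beyond invoking the correct references.
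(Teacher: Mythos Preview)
Your proof is correct and follows essentially the same approach as the paper: invoke Hempel's theorem together with Geometrization to get residual finiteness of groups in $\cM_3$, then use that a virtually residually finite group is residually finite. The paper's proof is terser, stating the latter fact without justification and leaving the word-problem consequence implicit, whereas you spell out both the normal-core argument and the Mal$'$cev--McKinsey procedure; but the content is the same.
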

\begin{proof}
  Hempel \cite{hempel87} proved that if $M$ is a $3$--manifold
  satisfying the Geometrization Conjecture, then $\pi_1M$ is
  residually finite.  Thus Perelman's work implies that every group in
  $\mc{M}_3$ is residually finite.  But a virtually residually finite
  group is residually finite.  
\end{proof}

\begin{proof}[Proof of Corollary \ref{cor: AM3 in VAM3}]
By Lemma \ref{m3re}, $\mc{M}_3$ is recursively enumerable.
Note that $\mc{AM}_3 = \cM_3\cap\mc{VAM}_3$, so we may set
$\cX_{\mc{AM}_3,\mc{VAM}_3}=\cM_3$. On the other hand, by Theorem
\ref{t: Virtual}, the complement $\mc{VAM}_3\smallsetminus\mc{AM}_3$
consists precisely of those groups in $\mc{VAM}_3$ with torsion.
Because every element of $\mc{VAM}_3$ is residually finite, we may set
$\cY_{\mc{AM}_3,\mc{VAM}_3}= \mc{FT}$, which is recursively enumerable
by Lemma \ref{FTRE}.
\end{proof}

Thus, in order to prove that (some recursive subset of) $\mc{AM}_3$ is recursive modulo the word problem, it often suffices to prove that (some recursive subset of) $\mc{VAM}_3$ is recursive modulo the word problem. 

\subsection{The Haken, non-Seifert-fibred case}

\begin{definition}
An extension $1\to K \to G \to Q\to 1$ is \emph{effective} if the induced map $Q\to \Out(K)$ is injective.
\end{definition}

We shall denote the centralizer of $K$ in $G$ by $Z_G(K)$, and the centre of $G$ by $Z(G)$.

\begin{lemma}\label{l:effective0}
  Let \[ 1\to K \longrightarrow G \longrightarrow Q \to 1\] be an extension.  If $Z_G(K)\subseteq K$ then the extension is effective.
\end{lemma}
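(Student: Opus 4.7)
The plan is to unwind the definition of the map $\phi \co Q \to \Out(K)$ induced by the extension, and to check that its kernel is trivial under the hypothesis $Z_G(K) \subseteq K$. Recall that $\phi$ is constructed by choosing, for each $q \in Q$, a lift $g \in G$; since $K$ is normal, conjugation by $g$ is an automorphism of $K$, and any two lifts differ by an element of $K$, so the class of this automorphism in $\Out(K) = \Aut(K)/\mathrm{Inn}(K)$ is independent of the choice of lift. This gives a well-defined homomorphism $\phi \co Q \to \Out(K)$, and effectiveness is precisely the assertion that $\phi$ is injective.

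To prove injectivity, I would take $q \in \ker \phi$ and choose any lift $g \in G$ of $q$. By assumption the conjugation automorphism $c_g \co K \to K$, $x \mapsto gxg^{-1}$, lies in $\mathrm{Inn}(K)$, so there exists $k \in K$ with $c_g = c_k$ as automorphisms of $K$. Then $k^{-1}g$ commutes with every element of $K$, so $k^{-1}g \in Z_G(K)$. The hypothesis $Z_G(K) \subseteq K$ now forces $k^{-1}g \in K$, hence $g \in K$, and therefore $q = 1$ in $Q$.

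There is no real obstacle here; the statement is essentially a tautological unpacking of the definitions, and the only thing to verify is that the hypothesis $Z_G(K) \subseteq K$ is exactly what is needed to promote ``$c_g$ agrees with $c_k$ on $K$'' to ``$g$ and $k$ differ by an element of $K$''. The lemma will presumably be used in the subsequent Haken, non-Seifert-fibred case to reduce the study of a torsion-free virtual $3$--manifold group to Zimmermann's framework, in which the relevant extensions need to be known to be effective.
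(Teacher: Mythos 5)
Your proof is correct and takes essentially the same approach as the paper: both arguments take an element $g\in G$ whose conjugation action on $K$ is inner (equal to conjugation by some $k\in K$), observe that $k^{-1}g\in Z_G(K)\subseteq K$, and conclude $g\in K$, i.e.\ the induced map $Q\to\Out(K)$ is injective. The only difference is that you spell out the construction of the map $Q\to\Out(K)$ more explicitly, which the paper leaves implicit.
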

\begin{proof}
Suppose $g\in G$ acts on $K$ as conjugation by $k_0$.  Then $k_0^{-1}g\in Z_G(K)$, so $g\in K$.  This shows that $Q=G/K$ injects into $\Out(K)$, as required.
\end{proof}

\begin{lemma}\label{l:effective}
 Let \[ 1\to K \longrightarrow G \longrightarrow Q \to 1\] be an extension.  If
 \begin{enumerate}
	\item $Z(K)=1$,  \label{centerless}
 	\item $Q$ is finite and \label{finite}
	\item $G$ is torsion-free \label{tf}
 \end{enumerate}
 then the extension is effective.
\end{lemma}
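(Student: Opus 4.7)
The plan is to reduce to the previous lemma and then exploit the three hypotheses in sequence. By Lemma \ref{l:effective0}, it suffices to establish that $Z_G(K)\subseteq K$, and in fact I will show the stronger statement that $Z_G(K)$ is trivial.

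So let $g\in Z_G(K)$ be arbitrary. First I would use hypothesis \eqref{finite}: since $Q$ is finite, the image of $g$ in $Q$ has finite order $n$, so $g^n\in K$. Second, since $g^n$ is itself a power of $g$, it still centralizes $K$, so $g^n\in Z_G(K)\cap K$. But $Z_G(K)\cap K$ is exactly $Z(K)$, which is trivial by hypothesis \eqref{centerless}. Hence $g^n=1$. Finally, hypothesis \eqref{tf} forces $g=1$, so in particular $g\in K$.

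There is essentially no obstacle here: each of the three hypotheses $Z(K)=1$, $|Q|<\infty$, and $G$ torsion-free is used exactly once, and the argument fits in a few lines. The only thing worth double-checking is the identity $Z_G(K)\cap K=Z(K)$, which is immediate from the definitions. Once $Z_G(K)=1\subseteq K$ is established, Lemma \ref{l:effective0} gives the desired injectivity of $Q\to\Out(K)$.
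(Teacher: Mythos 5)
Your proof is correct and is essentially the same as the paper's: both use $Z_G(K)\cap K = Z(K) = 1$ together with the finiteness of $Q$ to conclude that $Z_G(K)$ consists of torsion elements (the paper phrases this as the quotient map being injective on $Z_G(K)$, hence $Z_G(K)$ finite; you argue elementwise via $g^n$), and then invoke torsion-freeness and Lemma~\ref{l:effective0}.
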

\begin{proof}
Let $\pi:G\to Q$ be the quotient map. By (\ref{centerless}), $\pi$ is injective on $Z_G(K)$.  Hence $Z_G(K)$ is finite by (\ref{finite}) and so, by (\ref{tf}), trivial.  It now follows from Lemma \ref{l:effective0} that the extension is effective.
\end{proof}

Here is a theorem of Zimmermann.
\begin{theorem}\label{t:zimmermann}
  \cite[Satz 0.2]{Zi82}  Let $M$ be a closed orientable Haken
  $3$--manifold, and $E$ a torsion-free effective extension
\[ 1\to \pi_1 M \to E \to F \to 1\]
  with $F$ finite.
  Then
  $E\cong \pi_1 M^*$, where $M^*$ is some closed irreducible
  $3$--manifold.\footnote{A further sentence reads:  ``If $M^*$ is not Haken, then it is either Seifert
  fibred or hyperbolic.'', but we don't need this.}
\end{theorem}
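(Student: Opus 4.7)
The plan is to realize the finite group $F$ geometrically on $M$ and then take the quotient. The effectivity hypothesis gives an injection $\phi\colon F \hookrightarrow \Out(\pi_1 M)$, and since $M$ is closed and Haken, Waldhausen's theorem identifies $\Out(\pi_1 M)$ with the mapping class group $\pi_0 \mathrm{Homeo}(M)$. Thus $\phi$ furnishes an ``action of $F$ up to isotopy'' on $M$; the task is then to lift it to a genuine homomorphism $\tilde\phi\colon F \to \mathrm{Homeo}(M)$, verify that the induced action is free, and check that the quotient $M^* := M/F$ is irreducible with fundamental group $E$.

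The main obstacle is the lifting step, the Nielsen realization problem for Haken $3$--manifolds. I would proceed via the JSJ decomposition of $M$. Because this decomposition is canonical, $F$ permutes its Seifert-fibred and atoroidal pieces together with the gluing tori. For each $F$-orbit of Seifert pieces, projecting along the Seifert fibration reduces the realization problem to Nielsen realization on the base $2$--orbifold (a classical fact via Kerckhoff-type arguments), whose solution lifts back to a fibre-preserving action on the piece. For each $F$-orbit of atoroidal pieces one realizes the stabilizer by hyperbolic isometries, using Thurston's hyperbolization of Haken atoroidal manifolds together with Mostow--Prasad rigidity. The delicate part is patching these local realizations consistently along the JSJ tori: each piecewise action must be adjusted by an isotopy supported near its boundary so that the gluings match, and most of the technical work lies in this compatibility check.

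Once $\tilde\phi$ has been constructed, the extension $E$ is recovered concretely as the group of homeomorphisms of the universal cover $\tilde M$ obtained by combining deck transformations in $\pi_1 M$ with all lifts of the homeomorphisms $\tilde\phi(f)$; this group fits into the original short exact sequence $1\to\pi_1 M\to E\to F\to 1$ realizing the prescribed outer action. The action of $E$ on $\tilde M$ is properly discontinuous, and every point stabilizer in $E$ is finite and projects injectively to a point stabilizer of $F$ on $M$. Torsion-freeness of $E$ forces all these stabilizers to be trivial, so $F$ acts freely on $M$ and $M^* = M/F$ is a closed $3$--manifold with $\pi_1 M^* = E$. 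Finally, since $M$ is Haken with infinite fundamental group, it is aspherical, hence so is $M^*$ (which shares the same universal cover); in particular $\pi_2 M^* = 0$, so by the sphere theorem $M^*$ is irreducible, completing the proof.
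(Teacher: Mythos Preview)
The paper does not prove this theorem; it is quoted from \cite{Zi82} and used as a black box. Your outline is broadly faithful to Zimmermann's strategy---his paper is precisely a solution of the Nielsen realization problem for Haken $3$--manifolds---so the JSJ-plus-geometrization picture you sketch is the right shape of the argument.

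There is, however, a genuine gap in your final paragraph. Having realized the outer action by $\tilde\phi\colon F\to\mathrm{Homeo}(M)$, you form the group $\tilde E$ of all lifts to $\tilde M$ and assert that ``the extension $E$ is recovered'' as $\tilde E$. This is automatic only when $Z(\pi_1 M)=1$: in general, extensions of $F$ by $\pi_1 M$ with a fixed outer action form a torsor over $H^2(F;Z(\pi_1 M))$, and your construction produces just one of them. Haken manifolds can be Seifert fibred, with $Z(\pi_1 M)$ infinite cyclic or larger, and several inequivalent torsion-free effective extensions can then share the same outer action; for $M=T^3$ with $F=\bZ/2$ acting on $\bZ^3$ by $\mathrm{diag}(-1,1,1)$ there are three, realized by free involutions differing by half-translations in the last two coordinates. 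The missing step is to show that $\tilde\phi$ can be \emph{chosen} (by adjusting along fibres, say) so that the group of lifts is the prescribed $E$, not merely some extension with the same outer action. This matters for the paper's own applications: Lemmas~\ref{torsionfreeiffsfs hyperbolic} and~\ref{torsionfreeiffsfs Euclidean} invoke the theorem precisely for circle bundles, where $Z(\pi_1 M)\neq 1$.
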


\begin{corollary}\label{c: Haken non-sf}
  Suppose $G$ is a torsion-free group and $G_0<G$ is a finite-index
  subgroup of $G$ isomorphic to $\pi_1 M$ for $M$ closed, orientable,
  Haken, and not
  Seifert fibred.  Then $G$ is the fundamental group of a closed
  $3$--manifold.
\end{corollary}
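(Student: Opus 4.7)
The plan is to reduce to Zimmermann's theorem (Theorem \ref{t:zimmermann}) after first replacing $G_0$ with a normal subgroup of $G$. Let $N = \bigcap_{g \in G} g G_0 g^{-1}$ be the normal core of $G_0$ in $G$; since $G_0$ has finite index in $G$, so does $N$. Because $N$ is a finite-index subgroup of $G_0 \cong \pi_1 M$, covering space theory identifies $N$ with $\pi_1 \widetilde{M}$ for some finite regular cover $\widetilde{M} \to M$. I would then verify that $\widetilde{M}$ inherits the hypotheses on $M$: it is closed (automatic), orientable (every cover of an orientable manifold is orientable), Haken (irreducibility passes to finite covers and incompressible surfaces lift to incompressible surfaces), and not Seifert fibred (being Seifert fibred is invariant under finite covers among closed, irreducible, orientable $3$--manifolds, by Scott's uniqueness results, or a posteriori by Geometrization).

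Next I would check that the extension
\[ 1 \to N \to G \to G/N \to 1 \]
is effective, using Lemma \ref{l:effective}. The quotient $G/N$ is finite and $G$ is torsion-free by hypothesis, so the only remaining condition is $Z(N) = 1$. Any nontrivial $z \in Z(N)$ is of infinite order (as $G$ is torsion-free), and $\langle z\rangle$ is a normal infinite cyclic subgroup of $N$ by centrality. By the Seifert Fibre Space Conjecture (Casson--Jungreis, Gabai), a closed orientable irreducible $3$--manifold whose fundamental group contains a nontrivial normal infinite cyclic subgroup is Seifert fibred. This would contradict the fact established above that $\widetilde{M}$ is not Seifert fibred; hence $Z(N)=1$, and Lemma \ref{l:effective} gives effectiveness.

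With these in place, Zimmermann's theorem applies to the extension with $K = N$, $F = G/N$, yielding $G \cong \pi_1 M^*$ for some closed, irreducible $3$--manifold $M^*$, as required.

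The main obstacles are the two nontrivial inputs from $3$--manifold topology: the invariance of the Seifert fibred property under finite covers (needed to conclude $\widetilde{M}$ is not Seifert fibred), and the Seifert Fibre Space Conjecture (used to force $Z(N) = 1$). Both are standard and well documented, so I would just cite them rather than reprove them; the rest of the argument is bookkeeping about normal cores, covers, and group extensions.
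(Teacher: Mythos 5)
Your argument is essentially the paper's: pass to the normal core, show the extension is effective via Lemma \ref{l:effective} (with the centre condition coming from the manifold not being Seifert fibred), then invoke Zimmermann's Theorem \ref{t:zimmermann}. The main difference is that you spell out the verification that the finite cover $\widetilde{M}$ inherits the hypotheses (Haken, not Seifert fibred) and deduce $Z(N)=1$ explicitly from the Seifert Fibre Space Theorem, where the paper simply asserts these facts---a worthwhile precision rather than a genuine departure (and since $\widetilde{M}$ is Haken, Waldhausen's earlier result would already suffice in place of Casson--Jungreis/Gabai).
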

\begin{proof}
  By passing to a further finite-index subgroup, we may suppose $G_0$
  is normal.  There is a short exact sequence
\[ 1 \to G_0 \to G \to F \to 1 \]
  where $F = G/G_0$ is finite.    Since $M$ is not Seifert fibred,
  $Z(G_0)$ is trivial.  Lemma \ref{l:effective} implies that
  the extension is effective.  Theorem \ref{t:zimmermann} then gives
  the corollary.
\end{proof}

\subsection{The hyperbolic case}

To deal with the hyperbolic case, we will use a well known consequence of Mostow Rigidity.

\begin{theorem}[Mostow Rigidity]
Let $M$ be a hyperbolic $n$-manifold of finite volume, for $n\geq 3$.
Then every outer automorphism of $\pi_1 M$ is realized by an isometry of $M$.
\end{theorem}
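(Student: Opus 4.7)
The plan is to prove the theorem by lifting the given outer automorphism to the universal cover $\mathbb{H}^n$, producing a quasi-conformal boundary map, and then upgrading this to a conformal map which extends to a hyperbolic isometry. Let $\Gamma = \pi_1 M$, so $\Gamma$ acts properly discontinuously and cocompactly (or with finite covolume) on $\mathbb{H}^n$ by isometries. Given an outer automorphism $[\phi] \in \mathrm{Out}(\Gamma)$, I fix a representative $\phi \co \Gamma \to \Gamma$.

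First, I would construct an equivariant pseudo-isometry $\widetilde{f} \co \mathbb{H}^n \to \mathbb{H}^n$ inducing $\phi$. Pick a point $x_0 \in \mathbb{H}^n$ and, after choosing a fundamental domain, extend the orbit map $\gamma \cdot x_0 \mapsto \phi(\gamma) \cdot x_0$ coarsely to all of $\mathbb{H}^n$. Because $\Gamma$ acts cocompactly (in the cocompact case; the finite volume case requires a small additional argument handling cusps via their horoball neighborhoods), this map is a $\Gamma$-equivariant quasi-isometry with respect to $\phi$. Since $\mathbb{H}^n$ is Gromov hyperbolic, $\widetilde{f}$ extends continuously to a $\phi$-equivariant homeomorphism $\partial \widetilde{f} \co S^{n-1}_\infty \to S^{n-1}_\infty$, and a standard distortion estimate for quasi-geodesics shows $\partial \widetilde{f}$ is quasi-conformal.

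The main obstacle is now to promote $\partial \widetilde{f}$ from quasi-conformal to conformal; this is the crux of Mostow's theorem and is where the hypothesis $n \geq 3$ becomes essential. Here I would use the Mostow--Margulis ergodicity argument: the $\Gamma$-action on $S^{n-1}_\infty \times S^{n-1}_\infty$ is ergodic with respect to Lebesgue measure (this follows from ergodicity of the geodesic flow on the unit tangent bundle of $M$, which in turn follows from the Hopf argument in finite volume). A quasi-conformal homeomorphism of $S^{n-1}$ has a well-defined Beltrami-type dilatation measurable with respect to a $\Gamma$-invariant conformal structure up to scaling; equivariance under $\phi$ together with ergodicity forces the dilatation to be constant almost everywhere, and for $n \geq 3$ a theorem of Gehring--Mostow implies that a $1$-quasi-conformal map of $S^{n-1}$ is conformal. (One can alternatively quote the differentiation / density argument in Mostow's original monograph.)

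Finally, any conformal automorphism of $S^{n-1}_\infty$ extends uniquely to an isometry $g$ of $\mathbb{H}^n$. The $\phi$-equivariance of $\partial \widetilde{f}$ implies $g \Gamma g^{-1} = \Gamma$ and that conjugation by $g$ coincides with $\phi$ up to an inner automorphism; thus $g$ descends to an isometry of the quotient orbifold $\mathbb{H}^n/\Gamma$. Since $M$ is a manifold (and in the finite volume case one checks that $g$ permutes the cusps), this gives a self-isometry of $M$ realizing the outer automorphism $[\phi]$, which is what we wanted.
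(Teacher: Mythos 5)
The paper does not prove this statement; Mostow Rigidity is quoted as a classical theorem (attributed implicitly to Mostow, with Prasad's extension to the finite-volume case) and used as a black box in the proof of Corollary~\ref{c: Hyperbolic case}. There is therefore no ``paper proof'' to compare against, and your task here was really to verify that the statement is a legitimate form of the theorem, not to prove it from scratch.

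That said, your sketch does correctly outline Mostow's original quasi-conformal argument: realize $\phi$ by a $\phi$-equivariant pseudo-isometry of $\bH^n$, extend to a quasi-conformal boundary homeomorphism of $S^{n-1}_\infty$, invoke ergodicity of the boundary (or geodesic flow) action to conclude the conformal distortion is essentially constant, use the $n\ge 3$ regularity theory (Gehring) to upgrade to a M\"obius map, and finally extend back to an isometry that normalizes $\Gamma$ and descends to $M$. One small blur: constancy of the dilatation alone does not directly give $1$-quasi-conformality; Mostow's argument needs an extra step (using ergodicity of the $\Gamma$-action on \emph{pairs} of boundary points, and the fact that a nontrivial constant distortion would produce a $\Gamma$-invariant measurable field of proper subspaces, which is impossible) to force the constant to be $1$ before Gehring's theorem applies. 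Your sketch compresses this, but the skeleton is the right one. Also, the statement as given in the paper is for arbitrary finite-volume hyperbolic $n$-manifolds with $n\ge 3$, so the full result you need is Mostow--Prasad; your remark about handling cusps via horoball neighborhoods is exactly where Prasad's contribution enters.
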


\begin{corollary}\label{c: Hyperbolic case}
  Suppose $G$ is a torsion-free group and $G_0<G$ is a finite-index
  subgroup of $G$ isomorphic to $\pi_1 M$ for $M$ a closed, orientable,
  hyperbolic $m$--manifold.  Then $G$ is the fundamental group of a closed
  $m$--manifold.  
\end{corollary}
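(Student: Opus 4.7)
The approach is to use Mostow Rigidity to realize the finite quotient $F = G/G_0$ as a group of isometries of $M$, lift this action to the universal cover $\bH^m$, and use the torsion-freeness of $G$ to ensure that the resulting $G$-action on $\bH^m$ is free. First I would replace $G_0$ by its normal core in $G$ (the intersection of its $G$-conjugates): this is still finite-index in $G$, and still the fundamental group of a closed orientable hyperbolic $m$-manifold, namely the finite regular cover of $M$ that it determines. This gives a short exact sequence $1 \to G_0 \to G \to F \to 1$ with $F$ finite. Because $G_0$ is a non-elementary torsion-free cocompact lattice in $\mathrm{Isom}(\bH^m)$, its centre is trivial. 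Combined with the torsion-freeness of $G$, Lemma \ref{l:effective} then yields that the extension is effective, so the induced map $F \to \Out(G_0)$ is injective.

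Next, by Mostow Rigidity (which requires $m\geq 3$), the natural map $\mathrm{Isom}(M) \to \Out(\pi_1 M)$ is an isomorphism, so the injection $F \hookrightarrow \Out(G_0)$ corresponds to a faithful action of $F$ on $M$ by isometries. Let $N \leq \mathrm{Isom}(\bH^m)$ be the group of all lifts to $\bH^m$ of elements of $F \subseteq \mathrm{Isom}(M)$. Covering-space theory and the discreteness of $G_0$ show that $N$ is a discrete cocompact subgroup of $\mathrm{Isom}(\bH^m)$ fitting into a short exact sequence $1 \to G_0 \to N \to F \to 1$ that realizes the same outer action of $F$ on $G_0$ as the given extension of $G$ by $G_0$.

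It remains to identify $G$ with $N$ as abstract groups. Since $Z(G_0) = 1$, extensions of $F$ by $G_0$ realizing a prescribed outer action are unique up to canonical isomorphism: the map $g \mapsto (\mathrm{conj}_g,\, gG_0)$ identifies any such extension with the fibre product $\Aut(G_0) \times_{\Out(G_0)} F$, by essentially the argument of Lemma \ref{l:effective0}. Applying this to both $G$ and $N$ produces an isomorphism $G \cong N$; thus $G$ acts properly discontinuously and cocompactly on $\bH^m$, and freely because it is torsion-free. The quotient $\bH^m/G$ is then the desired closed (hyperbolic) $m$-manifold with fundamental group $G$. The main technical point is the uniqueness-of-extensions step, which depends crucially on the centrelessness of $G_0$; the rest is a matter of carefully packaging Mostow Rigidity and covering-space theory.
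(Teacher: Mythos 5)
Your proof is correct and follows essentially the same approach as the paper: pass to a normal finite-index subgroup, observe the extension is effective via Lemma \ref{l:effective} (using that $G_0$ is centreless and $G$ torsion-free), realize the outer $F$-action by isometries of $M$ via Mostow Rigidity, and conclude by uniqueness of extensions with prescribed outer action when the kernel has trivial centre. Where the paper passes to the quotient orbifold $O = M/F$ and identifies $G \cong \pi_1^{\mathrm{orb}}O$ by citing \cite[IV.6.8]{brown:cohomology}, you instead work upstairs with the group $N$ of lifts of $F$ to $\mathrm{Isom}(\bH^m)$ and supply the uniqueness step directly via the fibre product $\Aut(G_0)\times_{\Out(G_0)} F$ --- but $N$ is precisely $\pi_1^{\mathrm{orb}}O$, so these are two descriptions of the same argument.
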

\begin{proof}
  Again, we may suppose that $G_0$ is normal and we have an extension:
\begin{equation}\label{g0gf} 1\to G_0 \to G \to F \to 1. \end{equation}
  Lemma \ref{l:effective} implies that the extension is effective.
  Mostow Rigidity implies that the outer action of $F$ on $G_0$ is
  induced by an isometric action of $F$ on the hyperbolic manifold
  $M$.  The quotient by this action is an orbifold $O$, whose
  (orbifold) fundamental group fits into an exact sequence
\begin{equation}\label{g0of} 1 \to G_0 \to \pi_1 O \to
  F \to 1. \end{equation}
  The outer actions of $F$ on $G_0$ are the same in \eqref{g0gf}
  and \eqref{g0of}.  Since $Z(G_0)$ is trivial, this implies that the
  extensions are equivalent and so $G \cong \pi_1 O$ (see, for instance, \cite[IV.6.8]{brown:cohomology}) .  If $\pi_1 O$ is
  torsion-free then the singular set of $O$ is empty, and so $O$ is a manifold.
\end{proof}

\subsection{Seifert fibred manifolds}

The following lemma will be useful.

\begin{lemma}\label{torsionfreeiffsfs}
  Suppose $G$ fits into an exact sequence
\[ 1\to Z \to G \to  \pi_1O\to 1 \]
  where $O$ is a closed 2--orbifold of non-positive Euler characteristic and $Z$ is infinite cyclic.  Then $G$ is the fundamental group of a Seifert fibred space if and only if $G$ is torsion-free.
\end{lemma}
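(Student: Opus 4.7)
The forward direction is essentially a consequence of geometrization. If $G=\pi_1M$ for $M$ a closed Seifert fibred space whose base orbifold has non-positive Euler characteristic, then $M$ admits one of the four Seifert geometries $\mathbb{E}^3$, $\mathrm{Nil}$, $\mathbb{H}^2\times\bR$ or $\widetilde{SL_2(\bR)}$. Since each of these model spaces is contractible, the universal cover of $M$ is contractible and $G$ acts freely, so $G$ is torsion-free.

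For the converse, suppose $G$ is torsion-free. The extension is classified by a class $\omega\in H^2(\pi_1O;Z)$. Fix a presentation of $\pi_1O$ with surface generators together with cone-point generators $c_1,\ldots,c_k$ of orders $n_1,\ldots,n_k$. For each $i$, the preimage $H_i$ of the cyclic subgroup $\langle c_i\rangle\leq \pi_1O$ in $G$ fits into a short exact sequence
\[
1\to Z\to H_i\to \bZ/n_i\bZ\to 1.
\]
Since $H_i\leq G$ is torsion-free, $H_i$ must be infinite cyclic (any torsion-free extension of a finite cyclic group by $\bZ$ splits as $\bZ$). Choosing a generator $h_i$ of $H_i$, the inclusion of $Z$ into $H_i$ expresses the fibre generator as $h_i^{n_i}$ up to a twist, and this twist is encoded by an integer $\beta_i$ coprime to $n_i$ (well-defined modulo $n_i$). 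These are precisely the local Seifert invariants at the cone points, corresponding to the restriction of $\omega$ to the cyclic subgroup generated by $c_i$, which lies in $H^2(\bZ/n_i\bZ;\bZ)\cong \bZ/n_i\bZ$.

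With the local Seifert invariants $(n_i,\beta_i)$ in hand, the remaining piece of data is the Euler number $e\in\bQ$ of the Seifert fibration, which is determined by $\omega$ together with the $\beta_i/n_i$. The plan is then to invoke the classification of Seifert fibred spaces by their fundamental group (see \cite{scott:geom} or Orlik's book on Seifert manifolds): given any closed $2$--orbifold $O$ together with a choice of $\beta_i$ coprime to $n_i$ at each cone point and a rational Euler number, there is a Seifert fibred $3$--manifold $M^*$ realizing this data, whose fundamental group is the extension of $\pi_1O$ by $\bZ$ with the prescribed Seifert invariants. By construction the resulting extension class agrees with $\omega$, and hence $\pi_1M^*\cong G$.

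The main subtlety is checking that the local triviality of $\omega$ at each cone point (forced by torsion-freeness) and the global extension class together determine honest Seifert invariants and an Euler number, and that the resulting manifold has the desired fundamental group. Once this correspondence is set up, the two directions together give the equivalence. In particular, the hypothesis $\chi(O)\leq 0$ ensures only that the resulting space has infinite fundamental group and is aspherical, which is needed for the forward direction but not for the construction in the converse.
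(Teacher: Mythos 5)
Your proof takes a genuinely different route from the paper's. The paper splits into the hyperbolic ($\chi<0$) and Euclidean ($\chi=0$) cases (Lemmas \ref{torsionfreeiffsfs hyperbolic} and \ref{torsionfreeiffsfs Euclidean}), and in both it passes to a finite-index normal subgroup $K=\pi^{-1}(\pi_1\Sigma)$ where $\Sigma$ is a closed orientable \emph{surface} covering $O$ chosen (via Selberg's Lemma, or by taking the translation subgroup) so that $K$ is a central extension, hence the fundamental group of an orientable circle bundle. It then verifies $Z_G(K)\subseteq K$ so that $1\to K\to G\to F\to 1$ is an effective extension and invokes Zimmermann's realization theorem for Haken manifolds (Theorem \ref{t:zimmermann}). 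Your approach instead tries to read off Seifert invariants $(n_i,\beta_i)$ and an Euler number directly from the extension class $\omega$ and then build a Seifert fibred space $M^*$ realizing that data.

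There is a real gap at exactly the point you flag as ``the main subtlety.'' You assert that ``By construction the resulting extension class agrees with $\omega$, and hence $\pi_1M^*\cong G$,'' but this is precisely the content that needs a proof. What is required is that the assignment (Seifert invariants $\mapsto$ extension class in $H^2(\pi_1O;Z)$) surjects onto the classes whose associated extension is torsion-free, compatibly with the action of $\pi_1O$ on $Z$. This is not an instance of the classification of Seifert fibred spaces up to isomorphism — that result tells you when two Seifert presentations yield homeomorphic manifolds, not that a given abstract extension of $\pi_1 O$ by $\bZ$ is realized. The realization statement is true, but it is a nontrivial cohomological computation; it is essentially what Zimmermann's theorem encapsulates in the paper's argument, which is why the paper routes through a circle bundle (where the Euler-class correspondence \emph{is} elementary). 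In addition, your argument implicitly assumes $O$ is an orientable orbifold without reflector boundary (you fix a presentation with only surface and cone-point generators, and you speak of a well-defined Euler number $e\in\bQ$). The lemma covers closed 2--orbifolds that may be non-orientable or have reflectors, and the action $\pi_1 O\to\Aut(Z)\cong\bZ/2$ may be nontrivial; these cases need separate treatment in your setup, whereas the paper's reduction to an orientable surface cover absorbs them automatically. Finally, your closing remark that the hypothesis $\chi(O)\le 0$ is ``not needed for the construction in the converse'' is somewhat misleading: asphericity (hence the non-positivity of $\chi$) is what makes the resulting Seifert fibred space large, which is needed both for the identification of fundamental groups and, in the paper's proof, for the Haken hypothesis of Zimmermann's theorem.
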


The hyperbolic case is Lemma \ref{torsionfreeiffsfs hyperbolic} below, and the Euclidean case is Lemma \ref{torsionfreeiffsfs Euclidean} below.  The proof in the Euclidean case is similar to the proof in the hyperbolic case, but slightly more delicate.

\subsubsection{Hyperbolic base}

For the case of $M$ a Seifert fibred manifold with hyperbolic base, we apply the Convergence Group Theorem of Tukia, Gabai and Casson--Jungreis \cite{tukia_homeomorphic_1988,casson_convergence_1994,gabai_convergence_1992}.

\begin{theorem}\label{convergence}
  Suppose that a group $Q$ is quasi-isometric to $\bH^2$.  Then there is a short exact sequence
\[ 1\to F \to Q \to \Gamma \to 1\]
  where $F$ is finite and $\Gamma$ is a cocompact Fuchsian group.
\end{theorem}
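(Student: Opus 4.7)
The plan is to identify $Q$, modulo a finite kernel, with a cocompact Fuchsian group by means of its induced action on the Gromov boundary $\partial_\infty Q\cong S^1$, and then to invoke the Convergence Group Theorem.

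First I would observe that since $Q$ is quasi-isometric to $\bH^2$, it is a finitely generated, word-hyperbolic group (hyperbolicity and the topology of the visual boundary are both quasi-isometry invariants), and its Gromov boundary is homeomorphic to the circle $S^1$. The standard theory of boundaries of hyperbolic groups then gives that $Q$ acts on $\partial_\infty Q$ as a \emph{uniform convergence group}: the action is by homeomorphisms, every sequence of distinct elements of $Q$ has a subsequence which collapses the complement of a pair of boundary points to a point uniformly on compact sets, and every point of $\partial_\infty Q$ is a conical limit point.

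Next I would apply the Convergence Group Theorem of Tukia, Casson--Jungreis and Gabai in its full generality: any uniform convergence group action on $S^1$ is topologically conjugate to the action of a cocompact Fuchsian group. This produces a homomorphism $\rho\co Q\to\mathrm{Homeo}(S^1)$ whose image $\Gamma=\rho(Q)$ is, after topological conjugation, a cocompact Fuchsian group.

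Finally I would bound the kernel $F=\ker\rho$. Any $q\in F$ acts trivially on $\partial_\infty Q$; in particular it fixes the axis endpoints of every loxodromic element of $Q$. For a non-elementary word-hyperbolic group, the pointwise stabilizer of the boundary---sometimes called the \emph{finite radical}---is a finite normal subgroup, so $F$ is finite, and one obtains the required short exact sequence $1\to F\to Q\to\Gamma\to 1$. The only genuine obstacle in this proof is the Convergence Group Theorem itself, whose proof by Tukia, Casson--Jungreis and Gabai is deep; in this sketch I treat it as a black box, and the surrounding argument is essentially formal once one has verified that the action on $S^1$ is by uniform convergence.
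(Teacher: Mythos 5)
Your proof is correct. The paper gives no proof of this statement at all—it simply cites Tukia, Casson--Jungreis, and Gabai—and your argument is precisely the standard reduction the authors have in mind: pass to the induced uniform convergence action on $\partial_\infty Q\cong S^1$, apply the Convergence Group Theorem to that action to obtain a cocompact Fuchsian quotient, and observe that the kernel is finite because a normal subgroup of a non-elementary hyperbolic group acting trivially on its boundary must be finite.
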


The next lemma is the hyperbolic case of Lemma \ref{torsionfreeiffsfs}.

\begin{lemma}\label{torsionfreeiffsfs hyperbolic}
  Suppose $G$ fits into an exact sequence
\[ 1\to Z \to G \to  \pi_1O\to 1 \]
  where $O$ is a closed, hyperbolic 2--orbifold and $Z$ is infinite cyclic.  Then $G$ is the fundamental group of a  Seifert fibred space if and only if $G$ is torsion-free.
\end{lemma}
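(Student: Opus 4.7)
The plan is to prove the two directions separately. For the easy direction, any closed Seifert fibred $3$--manifold $M$ whose base is a closed hyperbolic $2$-orbifold admits a geometric structure modelled on $\bH^2\times\bR$ or $\widetilde{SL_2(\bR)}$; both geometries have contractible universal cover $\bR^3$, so $M$ is aspherical and $\pi_1M$ is torsion-free.

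For the converse I would assume $G$ is torsion-free. Since $Z\cong\bZ$ is normal in $G$, conjugation descends to a homomorphism $\phi\co\pi_1O\to\Aut(Z)\cong\bZ/2$. First I would treat the \emph{central case}, when $\phi$ is trivial, by reading off Seifert invariants explicitly. Fix a standard presentation of $\pi_1O$ with hyperbolic generators $a_j,b_j$, elliptic generators $c_i$ of orders $m_i$, and the standard long relation. For each $i$, the preimage $E_i$ of $\langle c_i\rangle$ in $G$ fits in a central extension $1\to Z\to E_i\to\bZ/m_i\to 1$, classified by an element of $H^2(\bZ/m_i;\bZ)\cong\bZ/m_i$. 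A direct calculation shows this extension is torsion-free if and only if its classifying element is a unit; in that case $E_i\cong\bZ$, generated by some $q_i$ with $q_i^{m_i}=z^{e_i}$ and $\gcd(e_i,m_i)=1$ for some generator $z$ of $Z$. Lifting the long relation gives $\prod[\tilde a_j,\tilde b_j]\prod q_i=z^b$ for some $b\in\bZ$. The data $(b;(m_1,e_1),\ldots,(m_k,e_k))$ are Seifert invariants, and comparing with the standard presentation of the corresponding Seifert fibred manifold $M$ shows $G\cong\pi_1M$.

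For the \emph{non-central case}, I would pass to $G_0=\phi^{-1}(0)$, an index-$2$ subgroup of $G$ that is torsion-free and sits in a central extension $1\to Z\to G_0\to\pi_1O_0\to 1$, where $O_0\to O$ is a degree-$2$ orbifold cover and $O_0$ is again closed and hyperbolic. The central case produces an orientable Seifert fibred manifold $M_0$ with $\pi_1M_0\cong G_0$, and the induced outer action of $G/G_0\cong\bZ/2$ on $G_0$ must be realized by a free involution on $M_0$, whose quotient is the (possibly nonorientable) Seifert fibred manifold realizing $G$.

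The main obstacle is precisely this last step: upgrading the algebraic $\bZ/2$-action to a free geometric involution. This requires a realization theorem for outer automorphisms of Seifert fibred manifold groups with hyperbolic base (available classically, e.g.\ via Waldhausen's theorem on the Haken Seifert pieces together with the Nielsen realization theorem on the base orbifold), plus the observation that, because $M_0$ is aspherical, a free action on $\pi_1M_0$ is induced by a free action on $M_0$. An alternative, more unified strategy would be to work directly with a nonorientable Seifert presentation throughout, analysing the elliptic preimages in $G$ using cohomology with coefficients twisted by $\phi$, but the double-cover reduction seems cleaner.
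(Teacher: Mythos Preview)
Your approach differs from the paper's, which does not split into central and non-central cases. Instead the paper applies Selberg's Lemma to find a torsion-free normal subgroup $\pi_1\Sigma$ of finite index in $\pi_1O$, contained in $\ker\phi$, with $\Sigma$ a closed orientable hyperbolic surface. The preimage $K$ of $\pi_1\Sigma$ in $G$ is then a central $\bZ$--extension of a surface group, hence automatically the fundamental group of an orientable Haken circle bundle. One checks that the extension $1\to K\to G\to G/K\to 1$ is effective (using $Z_{\pi_1O}(\pi_1\Sigma)=1$), and Zimmermann's theorem (Theorem~\ref{t:zimmermann}) then realizes $G$ as a closed $3$--manifold group; it is Seifert fibred because $Z$ is normal in $G$.

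Your direct construction in the central case has a gap: the ``standard presentation'' with generators $a_j,b_j,c_i$ covers only orbifolds $O$ with orientable underlying surface and cone points only. A closed hyperbolic $2$--orbifold may have nonorientable underlying surface or reflector boundary, and the triviality of $\phi$ places no restriction on the type of $O$. Your non-central reduction also needs more than realizing the outer $\bZ/2$--action: since $Z(G_0)=Z\neq 1$, you must verify that the extension class is determined by the outer action, which here amounts to $H^2(\bZ/2;\bZ_{\mathrm{sign}})=0$---true, but unstated. The realization theorem you invoke for the $\bZ/2$ action is in any case essentially Zimmermann's. The paper's passage via Selberg to a genuine surface cover is precisely what sidesteps both the orbifold-type case analysis and the extension-class issue, allowing a single application of Zimmermann's theorem to handle everything uniformly.
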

\begin{proof}
If $G$ is the fundamental group of a Seifert fibred space $M$ with
hyperbolic base, then $M$ is an aspherical manifold so $G$ is torsion-free.  We will now prove the converse.

Let $\pi:G\to\pi_1O$ be the quotient homomorphism and let $\omega:\pi_1O\to\Aut(Z)\cong\bZ/2$ be the induced action of $\pi_1O$ on $Z$.  By Selberg's Lemma, there is a torsion-free normal subgroup $\pi_1\Sigma$ of $\pi_1O$ contained in $\ker\omega$, where $\Sigma$ is a closed, orientable, hyperbolic surface that covers $O$.  Let $K=\pi^{-1}(\pi_1\Sigma)$, which fits into the short exact sequence
\[
1\to Z\to K\to\pi_1\Sigma\to 1.
\]
Let $F\cong G/K\cong\pi_1O/\pi_1\Sigma$.  Then $K$ is isomorphic to the fundamental group of a circle bundle over $\Sigma$, which we shall denote by $M$.  Because $\Sigma$ is orientable and $\pi_1\Sigma$ acts trivially on $Z$, $M$ is orientable.  Furthermore, $M$ is Haken because $\Sigma$ is hyperbolic.

If $g\in Z_G(K)$ then $\pi(g)\in Z_{\pi_1O}(\pi_1\Sigma)=1$ so $g\in\ker\pi= Z$.   This shows that $Z_G(K)\subseteq Z\subseteq K$, and so, by Lemma \ref{l:effective0}, the extension
\[
1\to K\to G\to F\to 1
\]
is effective.  Therefore, it follows from Zimmermann's Theorem \ref{t:zimmermann} that $G$ is the fundamental group of a closed 3--manifold, which is necessarily Seifert fibred as $G$ has an infinite cyclic normal subgroup.
\end{proof}

The next corollary is the main result of this subsection.

\begin{corollary} \label{c: Hyperbolic base}
  Suppose $G$ is a torsion-free group and $G_0<G$ is a finite-index
  subgroup of $G$ isomorphic to $\pi_1 M$ for $M$ Seifert fibred with
  hyperbolic base orbifold.  Then $G$ is the fundamental group of a
  closed $3$--manifold.
\end{corollary}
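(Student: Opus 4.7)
The plan is to verify the hypotheses of Lemma \ref{torsionfreeiffsfs hyperbolic} for $G$ itself, after possibly enlarging the natural fiber subgroup of $G_0$ to absorb a finite normal kernel that arises from the Convergence Group Theorem. First, by replacing $G_0$ with its normal core in $G$ I may assume $G_0 \triangleleft G$; the normal core remains the fundamental group of a finite cover of $M$, which is again Seifert fibered with hyperbolic base. Let $K \le G_0$ denote the cyclic subgroup generated by a regular fiber, so that
\[
1 \to K \to G_0 \to \pi_1 O \to 1
\]
with $K \cong \bZ$ and $O$ a closed hyperbolic $2$--orbifold. Since $\pi_1 O$ is a non-elementary cocompact Fuchsian group, it contains no non-trivial normal virtually cyclic subgroup. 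Hence $K$ is the unique maximal normal virtually cyclic subgroup of $G_0$ and is therefore characteristic in $G_0$; in particular $K \triangleleft G$.

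Now consider the quotient $G/K$. It contains $G_0/K \cong \pi_1 O$ as a finite-index subgroup and is therefore quasi-isometric to $\bH^2$. Applying the Convergence Group Theorem (Theorem \ref{convergence}) yields a short exact sequence
\[
1 \to F \to G/K \to \Gamma \to 1
\]
with $F$ finite and $\Gamma$ a cocompact Fuchsian group; write $\Gamma = \pi_1 O'$ for a closed hyperbolic $2$--orbifold $O'$.

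The crucial step is to absorb $F$ into the fiber. Let $\tilde K \le G$ be the preimage of $F$ under the quotient $G \to G/K$. Then $\tilde K$ is torsion-free (as a subgroup of $G$) and contains $K \cong \bZ$ with finite index, so $\tilde K$ is two-ended. The only torsion-free two-ended group is $\bZ$, since the infinite dihedral group has $2$--torsion; hence $\tilde K \cong \bZ$. By construction $\tilde K \triangleleft G$ with $G/\tilde K \cong \pi_1 O'$, and $G$ is torsion-free by hypothesis. Lemma \ref{torsionfreeiffsfs hyperbolic} applied to
\[
1 \to \tilde K \to G \to \pi_1 O' \to 1
\]
then shows that $G$ is the fundamental group of a Seifert fibered space, hence of a closed $3$--manifold.

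The step I expect to be the most delicate is the algebraic identification of $K$ as a characteristic subgroup of $G_0$. The naive choice $Z(G_0)$ is generally too small (it can be trivial when the base orbifold is non-orientable, since orientation-reversing loops invert the fiber), so one must characterize $K$ instead as the unique maximal normal virtually cyclic subgroup of $G_0$; this relies on the hyperbolicity of the base orbifold and the corresponding absence of normal virtually cyclic subgroups in non-elementary cocompact Fuchsian groups.
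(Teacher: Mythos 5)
Your proof is correct and follows the same route as the paper: pass to the normal core, observe the fiber subgroup is characteristic so it is normal in $G$, apply the Convergence Group Theorem to $G/K$, take the preimage $\tilde K$ of the finite kernel and use torsion-freeness of $G$ to conclude $\tilde K\cong\bZ$, then invoke Lemma \ref{torsionfreeiffsfs hyperbolic}. The only addition is your explicit justification that the fiber subgroup is the unique maximal normal virtually cyclic subgroup of $G_0$ and hence characteristic, a point the paper asserts without argument; this is a worthwhile clarification precisely because $Z(G_0)$ can be trivial when the base is non-orientable, as you note.
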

\begin{proof}
  Again we may assume that $G_0$ is normal.  Because $M$ is Seifert fibred with hyperbolic base, the fibre subgroup $Z_0$ is a characteristic infinite cyclic subgroup of $G_0$ so that $Q_0=G_0/Z_0$ is cocompact Fuchsian.  Since $Z_0$ is characteristic and $G_0$ is
  normal, $Z_0$ is also normal in $G$.  Because $G_0$ has finite index in $G$, $Q_0$ is a cocompact Fuchsian subgroup of finite index in   $Q = G/Z_0$.  Theorem  \ref{convergence} then implies that there is a short exact sequence:
\[ 1\to F \to Q \to \Gamma \to 1 \]
  where $\Gamma$ is cocompact Fuchsian.
  Let $\pi \co G \to Q = G/Z_0$ be the natural projection map.  Then
  $Z :=\pi^{-1}(F)$ is a virtually infinite cyclic, normal subgroup of $G$.  
  Since $G$ is torsion-free, $Z$ is infinite cyclic, and we have
\[ 1\to Z \to G \to \Gamma \to 1. \]
  Lemma \ref{torsionfreeiffsfs hyperbolic} now finishes the argument.
\end{proof}

\subsubsection{Euclidean base}

To deal with the case of a Euclidean base, we start with the following result, presumably known to Bieberbach, which serves as a replacement for the Convergence Subgroup Theorem in our arguments.

\begin{proposition} \label{p:Act on R^k}
Suppose that $G$ is a group with a finite-index subgroup isomorphic to $\bZ^k$ for some $k \ge 0$.  Then there is a short exact sequence
\[ 1\to F\to G\to \Gamma\to 1 \]
where $F$ is a finite group and $\Gamma$ acts properly discontinuously and cocompactly on $\bE^k$. 
\end{proposition}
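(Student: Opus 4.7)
The plan is to let $F$ be the maximal finite normal subgroup of $G$ and then to recognise $\Gamma = G/F$ as a $k$-dimensional crystallographic group via Bieberbach's theorem.

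I would start by replacing the given finite-index copy of $\bZ^k$ with the intersection of its (finitely many) conjugates in $G$; this yields a normal subgroup $A \triangleleft G$ with $A \cong \bZ^k$ and $[G:A]<\infty$.  Since $A$ is torsion-free, every finite subgroup of $G$ meets $A$ trivially and hence injects into the finite group $G/A$, so in particular all finite subgroups of $G$ have order at most $[G:A]$.  The product of two finite normal subgroups is again a finite normal subgroup, so the collection of finite normal subgroups of $G$ has a unique maximum $F$.  Setting $\Gamma = G/F$ gives the required short exact sequence, and $\Gamma$ contains $AF/F \cong \bZ^k$ as a normal subgroup of finite index.  Moreover, $\Gamma$ has no non-trivial finite normal subgroup, because the preimage in $G$ of any such subgroup would itself be a finite normal subgroup of $G$ strictly larger than $F$, contradicting maximality.

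It remains to upgrade $\Gamma$ from an abstract virtually-$\bZ^k$ group with no finite normal subgroup to a discrete cocompact subgroup of $\mathrm{Isom}(\bE^k) = \bR^k \rtimes O(k)$.  This is the content of Bieberbach's theorem; the essential intermediate point is to replace $A$ by the centralizer $C = C_\Gamma(A)$ and check that $C$ is again a free abelian group of rank $k$.  In the central extension $A \hookrightarrow C \twoheadrightarrow C/A$ with $C/A$ finite, the absence of finite normal subgroups in $\Gamma$ (together with the fact that the torsion subgroup of $C$ is characteristic in $C$ and hence normal in $\Gamma$) forces the torsion subgroup of $C$ to vanish; a standard commutator argument then shows $C$ is abelian and so isomorphic to $\bZ^k$.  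One thereby arranges that $\Gamma/C$ acts faithfully on $C$ as a finite subgroup of $GL_k(\bZ)$, which can be conjugated into $O(k)$ by averaging a positive-definite inner product on $\bR^k$, and the lattice $C \subseteq \bR^k$ then assembles everything into a discrete cocompact subgroup of $\bR^k \rtimes O(k)$.

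The main obstacle is this last assembly step: producing a genuine embedding of $\Gamma$ into $\mathrm{Isom}(\bE^k)$, rather than merely an abstract action, requires knowing that the group-theoretic extension class of $1 \to C \to \Gamma \to \Gamma/C \to 1$ is realised by a cocycle of translations in $\bR^k$.  This is exactly what the classical Bieberbach theory of crystallographic groups provides, and I would simply cite it rather than reprove it.
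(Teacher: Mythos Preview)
Your argument is correct and follows the standard textbook route to Zassenhaus's characterisation of crystallographic groups: quotient by the maximal finite normal subgroup, show the centralizer $C$ of the lattice in $\Gamma$ is itself free abelian of rank $k$ and maximal abelian, and then realise the extension inside $\bR^k\rtimes O(k)$ using $H^2(Q;\bR^k)=0$ for finite $Q$.  One small point of order: to know that the torsion elements of the central-by-finite group $C$ form a subgroup you already need something like Schur's theorem ($[C,C]$ finite); it is cleaner to invoke Schur first, observe that $[C,C]$ is a finite characteristic subgroup of $C$ and hence a finite normal subgroup of $\Gamma$, conclude $[C,C]=1$ so $C$ is abelian, and only then kill the (now obviously well-defined) torsion part.

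The paper takes a rather different, more hands-on route.  Instead of quotienting and citing Bieberbach--Zassenhaus, it embeds $G$ into the wreath product $A\wr Q$ via the Kaloujnine--Krasner theorem, lets that wreath product act in the obvious affine way on $\bE^{k|Q|}$ with the base acting by translations, and then locates the $k$-dimensional $G$-invariant affine subspace $V$ spanned by the $A$-orbit of the origin; the kernel $F$ of the resulting $G$-action on $V\cong\bE^k$ meets $A$ trivially and is therefore finite.  This is entirely self-contained and avoids any appeal to crystallographic group theory or group cohomology, at the cost of not identifying $F$ intrinsically.  Your approach yields more structural information (in particular that $F$ is the unique maximal finite normal subgroup of $G$); the paper's approach produces the Euclidean action directly without passing through an abstract characterisation.
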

\begin{proof}
We may assume that the subgroup $A$ isomorphic to $\bZ^k$ is normal, so we have a short exact sequence
 \[	1 \to A \to G \to Q \to 1		\]
where $Q$ is finite. Because wreath products contain all extensions \cite{krasner_produit_1951}, $G$ can be regarded as a subgroup of the wreath product $A \wr Q$.   By definition, the quotient group $Q$ permutes the factors of the base $B=\bigoplus_{q\in Q} A\cong \bZ^{k|Q|}$.  Therefore, $A \wr Q$ acts properly discontinuously and cocompactly on $\bE^m$, where $m=k|Q|$, and the base $B$ is precisely the subgroup of pure translations. In particular, $G$ acts properly discontinuously on $\bE^m$, and $A$ is the subgroup of pure translations.

Let $V$ be the unique minimal $A$-invariant vector subspace of $\bE^m$.  Clearly $V$ is the $\bR$-span of the $A$-orbit of the origin.  We claim that $V$ is in fact $G$-invariant.   Indeed, let $g\in G$ and $a\in A$.  Then $(ga).0$ lies on the axis of $gag^{-1}$.  But $gag^{-1}$ is a pure translation, so $(ga).0\in V$.   It follows that $gV=V$, so $V$ is $G$-invariant as claimed.

We therefore have a properly discontinuous and cocompact action of $G$ on $V\cong \bE^k$.  The kernel $F$ of this action intersects $A$ trivially, and so maps injectively into $Q$.  Therefore $F$ is finite.
\end{proof}

The Euclidean case of the main result follows quickly in all dimensions.

\begin{corollary}\label{c: Euclidean case}
Suppose $G$ is a torsion-free group and $G_0<G$ is a finite-index subgroup of $G$ isomorphic to $\pi_1 M$ for $M$ a closed, Euclidean $m$--manifold.  Then $G$ is the fundamental group of a closed, Euclidean $m$--manifold.
\end{corollary}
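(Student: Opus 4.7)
The plan is to reduce immediately to Proposition \ref{p:Act on R^k}. By Bieberbach's theorem, the fundamental group of any closed Euclidean $m$--manifold contains a translation subgroup isomorphic to $\bZ^m$ of finite index. Hence $G_0$ has a finite-index subgroup isomorphic to $\bZ^m$, and this is a fortiori a finite-index subgroup of $G$ as well.

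Apply Proposition \ref{p:Act on R^k} with $k=m$ to obtain a short exact sequence
\[
1\to F\to G\to \Gamma\to 1
\]
where $F$ is finite and $\Gamma$ acts properly discontinuously and cocompactly on $\bE^m$. Because $G$ is torsion-free, the finite normal subgroup $F$ must be trivial, so $G\cong\Gamma$ acts properly discontinuously and cocompactly on $\bE^m$.

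It remains to argue that the action is free, which is where one must look back at how $\Gamma$ was produced. Inspecting the construction in the proof of Proposition \ref{p:Act on R^k}, the group $G$ was first embedded into a wreath product $A\wr Q$ acting on $\bE^{m|Q|}$ by affine isometries (the base $A^{|Q|}$ acts by translations and $Q$ by permutation of coordinates), and then restricted to the $G$--invariant affine subspace $V\cong\bE^m$. Thus $G$ acts by isometries on $\bE^m$. In particular, any point stabilizer is a finite subgroup of $G$, hence trivial. The quotient $\bE^m/G$ is therefore a closed flat $m$--manifold whose fundamental group is $G$.

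There is no serious obstacle here; the only thing one has to be slightly careful about is that Proposition \ref{p:Act on R^k} as stated only asserts proper discontinuity and cocompactness of the action of $\Gamma$, not that it is by isometries. The latter follows by reading the construction rather than the statement, so it might be worth either strengthening the conclusion of Proposition \ref{p:Act on R^k} to record that the action is isometric, or else adding one sentence in this proof pointing to the isometric nature of the action built there.
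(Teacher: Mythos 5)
Your proof is correct and takes essentially the same route as the paper: Bieberbach's theorem reduces to a virtually $\bZ^m$ group, Proposition \ref{p:Act on R^k} gives a properly discontinuous cocompact action with finite kernel, torsion-freeness kills the kernel and forces trivial point stabilizers, and the quotient is the desired manifold. One small wrinkle in your phrasing: finiteness of point stabilizers already follows from proper discontinuity alone, not from the action being isometric; the isometric nature of the action (which, as you note, is visible from the wreath-product construction in the proof of Proposition \ref{p:Act on R^k}) is really what you need to conclude the quotient is a \emph{Euclidean} manifold rather than merely a closed aspherical one.
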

\begin{proof}
By Bieberbach's Theorem, $G_0$ has a subgroup of finite index which is isomorphic to $\bZ^m$.  By Lemma \ref{p:Act on R^k}, $G$ acts properly discontinuously and cocompactly on $\bE^k$ with finite kernel; but $G$ is torsion-free, so the kernel of the action is in fact trivial.  Now $O=\bE^m/G$ is an $m$--dimensional orbifold.  Because $G$ is torsion-free, the singular set of $O$ is empty, so $O$ is a manifold.
\end{proof}

For the case of Nil-geometry, we will need the Euclidean analogue of Lemma \ref{torsionfreeiffsfs hyperbolic}.  First, however, we prove a useful fact about centralizers.

\begin{lemma}\label{l: Euclidean centralizers}
Let $G$ be a group that acts properly discontinuously and cocompactly on $\bE^k$, let $A$ be the finite-index subgroup of pure translations and let $B$ be any subgroup of finite index in $A$.  Then $Z_G(B)=A$.
\end{lemma}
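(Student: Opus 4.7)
The containment $A\subseteq Z_G(B)$ is immediate because $A$ is abelian (pure translations commute) and $B\subseteq A$. The plan for the reverse inclusion is to exploit the affine structure of isometries of $\bE^k$ together with the fact that $B$ contains ``enough'' translations to constrain the linear part of any centralizing element.

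Write every $g\in G$ in the standard form $g(x)=M_g x+v_g$ with $M_g\in O(k)$ and $v_g\in\bR^k$; then $g\in A$ precisely when $M_g=I$. For $b\in B$, which is a pure translation $b(x)=x+w_b$, the identity $gb=bg$ expands to
\[
M_g x + M_g w_b + v_g \;=\; M_g x + v_g + w_b,
\]
so that $M_g w_b = w_b$ for every $b\in B$. Hence the linear part $M_g$ fixes the $\bR$-span of the translation vectors of $B$.

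The key remaining point is that $\{w_b : b\in B\}$ spans $\bR^k$. Since $G$ acts cocompactly on $\bE^k$ and $B$ has finite index in the finite-index subgroup $A$, the subgroup $B$ also acts cocompactly on $\bE^k$; as $B$ consists of pure translations, this forces the translation vectors to form a cocompact lattice in $\bR^k$, which in particular spans $\bR^k$. Consequently $M_g$ fixes all of $\bR^k$, so $M_g=I$, and therefore $g\in A$ as required.

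The only mildly delicate point is the last one, namely that $B$ acts cocompactly and hence spans. I expect this to pose no real obstacle, since it follows immediately from the fact that a finite-index subgroup of a group acting cocompactly still acts cocompactly, combined with the elementary observation that a discrete subgroup of the translation group of $\bR^k$ with compact quotient is a full-rank lattice.
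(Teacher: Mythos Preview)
Your proof is correct and follows essentially the same approach as the paper: both show that commuting with every $b\in B$ forces the linear (rotational) part of $g$ to fix each translation vector $w_b$, and then use cocompactness of $B$ to conclude that these vectors are ``enough'' to force the linear part to be the identity. Your version is slightly cleaner, invoking only that the $w_b$ span $\bR^k$, whereas the paper argues via density of the directions $w_b/|w_b|$ on the unit sphere at a fixed point of the rotational part; spanning is all that is needed, so your streamlining is justified.
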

\begin{proof}
Because $A$ is abelian and contains $B$ it is clear that $A\subseteq Z_G(B)$.  For the reverse inclusion, consider $g\in G\smallsetminus A$.  Suppose that $r(g)$ is the rotational part of $g$.  Then $r(g)$ is of finite order, fixing some point $x_0\in\bE^k$.  Suppose that $g \in Z_G(B)$.  We wish to prove that $r(g) = 1$.  It is easy to see that $g$ centralizes $B$ if and only if $r(g)$ does.    Let $S(x_0)$ be the unit tangent sphere at $x_0$.  For any $b\in B\smallsetminus 1$, let $l_b$ be the unique axis of $b$ that passes through $x_0$.  Because $r(g).l_b=l_{r(g)br(g)^{-1}}$, if $r(g)$ and $b$ commute then $r(g).l_b=l_b$, and furthermore $r(g)$ fixes $l_b$ pointwise.  But the set of points
\[
\bigcup_{b\in B\smallsetminus 1} l_b\cap S(x_0)
\] 
is dense in $S(x_0)$ because $B$ acts cocompactly on $\bE^k$.  Therefore $r(g)$ acts trivially on $S(x_0)$, and so $r(g)=1$ and $g \in A$, as required.
\end{proof}

We are  now ready to prove the Euclidean case of Lemma \ref{torsionfreeiffsfs}.  The proof is similar to the hyperbolic case, the main difference being that $\Sigma$ needs to be chosen more carefully.

\begin{lemma}\label{torsionfreeiffsfs Euclidean}
  Suppose $G$ fits into an exact sequence
\[ 1\to Z \to G \to  \pi_1O\to 1 \]
  where $O$ is a closed Euclidean 2--orbifold and $Z$ is infinite cyclic.  Then $G$ is the
  fundamental group of an orientable Seifert fibred space if and only if
  $G$ is torsion-free.
\end{lemma}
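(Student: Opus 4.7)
The forward direction is immediate: a closed orientable Seifert fibred $3$--manifold over a Euclidean base orbifold admits a metric modelled on $\bE^3$ or $\mathrm{Nil}$, hence is aspherical, and so its fundamental group is torsion-free. For the converse, I follow the template of Lemma \ref{torsionfreeiffsfs hyperbolic}, with the principal modification being the choice of the torus cover $\Sigma$ of $O$. In contrast to the hyperbolic case, where any torsion-free finite-index subgroup of $\pi_1 O$ has trivial centralizer, here $\pi_1 O$ is virtually abelian and every finite-index subgroup has a substantial centralizer; Lemma \ref{l: Euclidean centralizers} gives precise control over this.

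First define $\omega \colon \pi_1 O \to \Aut(Z) \cong \bZ/2$ to be the conjugation action. Since $\pi_1 O$ acts properly discontinuously and cocompactly on $\bE^2$, Bieberbach's theorem produces a characteristic subgroup $A \cong \bZ^2$ of finite index in $\pi_1 O$, consisting of the pure translations. Set $\pi_1 \Sigma = A \cap \ker \omega$, a normal, torsion-free, finite-index subgroup of $\pi_1 O$ contained in $\ker \omega$; as a rank-$2$ free abelian group it is the fundamental group of a torus $\Sigma$. Let $K = \pi^{-1}(\pi_1 \Sigma)$. Because $\pi_1 \Sigma \subseteq \ker \omega$, the extension $1 \to Z \to K \to \pi_1 \Sigma \to 1$ is central, so $K = \pi_1 M$ where $M$ is a closed orientable circle bundle over a torus; thus $M$ is either $T^3$ or a $\mathrm{Nil}$-manifold, and in particular is Haken.

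The crucial step is to verify that the extension
\[ 1 \to K \to G \to F \to 1 \]
with $F = \pi_1 O / \pi_1 \Sigma$ finite is effective, so that Zimmermann's Theorem \ref{t:zimmermann} applies. By Lemma \ref{l:effective0} this reduces to showing $Z_G(K) \subseteq K$. For $g \in Z_G(K)$: on the one hand, $g$ commutes with every element of $Z \subseteq K$, forcing $\omega(\pi(g)) = 1$ and hence $\pi(g) \in \ker \omega$; on the other hand, $\pi(g)$ centralizes $\pi_1 \Sigma$ in $\pi_1 O$, so by Lemma \ref{l: Euclidean centralizers}, $\pi(g) \in A$. Therefore $\pi(g) \in A \cap \ker \omega = \pi_1 \Sigma$, and $g \in K$.

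Zimmermann's theorem then produces a closed irreducible $3$--manifold $M^*$ with $G \cong \pi_1 M^*$. The presence of the infinite cyclic normal subgroup $Z$, combined with the Seifert conjecture of Casson--Jungreis and Gabai, forces $M^*$ to be Seifert fibred, and orientability can be arranged by a standard argument involving the orientation double cover. The principal obstacle in this plan is the effectiveness step: it requires combining the fact that elements of $Z_G(K)$ centralize $Z$ (placing their images in $\ker \omega$) with the fact that they centralize $\pi_1 \Sigma$ (placing them in $A$ via Lemma \ref{l: Euclidean centralizers}), and this dovetailing is precisely what makes $\pi_1 \Sigma = A \cap \ker \omega$ the correct choice of subgroup.
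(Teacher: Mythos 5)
Your proposal follows the paper's own argument essentially step by step: the same choice of $\pi_1\Sigma = A\cap\ker\omega$, the same verification that $Z_G(K)\subseteq K$ by combining Lemma \ref{l: Euclidean centralizers} with the fact that $Z_G(K)\subseteq Z_G(Z)$, and the same application of Lemma \ref{l:effective0} and Zimmermann's Theorem \ref{t:zimmermann}. Your closing remarks on orientability and on the Seifert conjecture simply make explicit what the paper leaves implicit in its final sentence.
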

\begin{proof}
Just as in the proof of Lemma \ref{torsionfreeiffsfs hyperbolic}, if $G$ is the fundamental group of a Seifert fibred space then it is torsion-free.  We will now prove the converse.

Let $\pi:G\to\pi_1O$ be the quotient homomorphism and let $\omega:\pi_1O\to\Aut(Z)\cong\bZ/2$ be the induced action of $\pi_1O$ on $Z$.  Let $A$ be the normal subgroup of finite index in $\pi_1O$ consisting of pure translations and let $\pi_1\Sigma=\ker\omega\cap A$.  Here $\Sigma$ is a closed, orientable surface that covers $O$.  Let $K=\pi^{-1}(\pi_1\Sigma)$, which fits into the short exact sequence
\[
1\to Z\to K\to\pi_1\Sigma\to 1.
\]
Let $F\cong G/K\cong\pi_1O/\pi_1\Sigma$.  Then $K$ is isomorphic to the fundamental group of a circle bundle over $\Sigma$, which we shall denote by $M$.  Because $\Sigma$ is orientable and $\pi_1\Sigma$ acts trivially on $Z$, $M$ is orientable.  Furthermore, $M$ is irreducible because $\Sigma$ is Euclidean.

If $g\in Z_G(K)$ then we have that $\pi(g)\in Z_{\pi_1O}(\pi_1\Sigma)=A$, where the last equality is given by Lemma \ref{l: Euclidean centralizers}.  But it is also true that $\pi(g)\in\ker\omega$, because $g\in Z_G(K)\subseteq Z_G(Z)$.  Therefore, $\pi(g)\in \pi_1\Sigma$.  This shows that $Z_G(K)\subseteq K$, and so, by Lemma \ref{l:effective0}, the extension
\[
1\to K\to G\to F\to 1
\]
is effective.  It now follows from Zimmermann's Theorem that $G$ is the fundamental group of a closed 3--manifold, which is necessarily Seifert fibred as $G$ has an infinite cyclic normal subgroup.
\end{proof}

The Nil-geometry case of the main result now follows as in the case of a hyperbolic base.

\begin{corollary} \label{c: Nil case}
  Suppose $G$ is a torsion-free group and $G_0<G$ is a finite-index
  subgroup of $G$ isomorphic to $\pi_1 M$ for $M$ a 3--manifold with
  Nil geometry.  Then $G$ is the fundamental group of a
  closed $3$--manifold.
\end{corollary}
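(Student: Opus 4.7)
My plan is to run the argument in exact parallel to the proof of Corollary \ref{c: Hyperbolic base}, substituting Proposition \ref{p:Act on R^k} for the Convergence Group Theorem and Lemma \ref{torsionfreeiffsfs Euclidean} for Lemma \ref{torsionfreeiffsfs hyperbolic}. First I would replace $G_0$ by its normal core, so that $G_0 \lhd G$ with finite quotient. Because $M$ is a Nil-manifold, $G_0 = \pi_1 M$ has a characteristic infinite cyclic subgroup $Z_0$ (the fibre subgroup, equal to the center of $G_0$), with $Q_0 := G_0/Z_0$ the fundamental group of a closed Euclidean $2$--orbifold. In particular, $Z_0$ is normal in $G$.

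Next I would study $Q := G/Z_0$, which contains $Q_0$ as a finite-index subgroup. By Bieberbach's Theorem, $Q_0$ contains a finite-index subgroup isomorphic to $\bZ^2$, so the same is true of $Q$. Proposition \ref{p:Act on R^k} therefore provides a short exact sequence
\[
1 \to F \to Q \to \Gamma \to 1
\]
where $F$ is finite and $\Gamma$ acts properly discontinuously and cocompactly on $\bE^2$; thus $\Gamma = \pi_1 O$ for a closed Euclidean $2$--orbifold $O$. Let $\pi \co G \to Q$ be the projection, and set $Z := \pi^{-1}(F)$. Then $Z$ is normal in $G$ and fits into $1 \to Z_0 \to Z \to F \to 1$, so $Z$ is virtually infinite cyclic; since $G$ is torsion-free, so is $Z$, and hence $Z$ is infinite cyclic. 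We have produced an extension
\[
1 \to Z \to G \to \Gamma \to 1
\]
of precisely the form required by Lemma \ref{torsionfreeiffsfs Euclidean}. As $G$ is torsion-free, that lemma delivers the conclusion that $G$ is the fundamental group of a closed (orientable, Seifert fibred) $3$--manifold.

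The only possible obstacle is verifying that $Z_0$ really is preserved by $G$ (so that the quotient $Q$ makes sense) and that the induced $Z$ is cyclic rather than merely virtually cyclic; both are handled by the characteristicity of $Z_0$ in $G_0$ and by the torsion-freeness assumption on $G$. Everything else is formal bookkeeping parallel to the hyperbolic-base case.
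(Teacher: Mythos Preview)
Your argument is correct and follows the paper's proof essentially verbatim: pass to the normal core, use that the fibre subgroup $Z_0$ is characteristic to make it normal in $G$, apply Proposition~\ref{p:Act on R^k} to $Q=G/Z_0$, pull back the finite kernel to get an infinite cyclic $Z$ (using torsion-freeness), and finish with Lemma~\ref{torsionfreeiffsfs Euclidean}. One small caution: your parenthetical identification of $Z_0$ with the centre of $G_0$ is not quite right in general (the base orbifold can act nontrivially on the fibre), but this is inessential---only the characteristicity of $Z_0$ is used, and that holds.
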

\begin{proof}
  Again we may assume that $G_0$ is normal.  The fibre subgroup $Z_0$ is a characteristic infinite cyclic subgroup of $G_0$ such that $Q_0=G_0/Z_0$ is the fundamental group of a closed, Euclidean 2--orbifold.  Since $Z_0$ is characteristic and $G_0$ is  normal, $Z_0$ is also normal in $G$.  Because $G_0$ has finite index in $G$, $Q_0$ is the fundamental group of a closed, Euclidean 2--orbifold $O$ and has finite index in   $Q = G/Z_0$.  Proposition \ref{p:Act on R^k} then implies that there is a short exact sequence:
\[ 1\to F \to Q \to \Gamma \to 1. \]
  Let $\pi \co G \to Q = G/Z_0$ be the natural projection map.  Then
  $Z :=\pi^{-1}(F)$ is a virtually infinite cyclic, normal subgroup of $G$.  
  Since $G$ is torsion-free, $Z$ is infinite cyclic, and we have
\[ 1\to Z \to G \to \Gamma \to 1 \]
  where $\Gamma=\pi_1O$.  Lemma \ref{torsionfreeiffsfs Euclidean} now finishes the argument.
\end{proof}

\subsection{The general 3-dimensional case}

The following is a consequence of the Geometrization Theorem.

\begin{proposition}\label{5chotomy}
  Let $M$ be an irreducible orientable $3$--manifold with infinite
  fundamental group.  Then at least one of the
  following holds:
  \begin{enumerate}
  \item\label{hak} $M$ is Haken, and $Z(\pi_1 M)$ is trivial.
  \item\label{hyp} $M$ is hyperbolic.
  \item\label{sfsh} $M$ is Seifert fibred with hyperbolic base orbifold.
  \item\label{eucl} $M$ admits Euclidean geometry.
  \item\label{nil} $M$ admits Nil geometry.
  \end{enumerate}
\end{proposition}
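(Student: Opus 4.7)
The approach is a straightforward case analysis via Perelman's Geometrization Theorem, which asserts that a closed, orientable, irreducible $3$--manifold $M$ either (a) has a nontrivial JSJ decomposition, or (b) admits one of Thurston's eight model geometries. I will handle these two possibilities in turn.

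In case (a), a nontrivial JSJ torus is by definition incompressible, so $M$ is Haken. To show $Z(\pi_1 M)=1$, I invoke the Seifert Fiber Space Theorem of Casson--Jungreis, Gabai, and Mess: any closed, orientable, irreducible $3$--manifold with infinite fundamental group and nontrivial center must be Seifert fibred, hence geometric, contradicting the nontrivial JSJ decomposition. So $M$ falls in case~(\ref{hak}).

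In case (b), I first eliminate two geometries using the hypotheses: $S^3$ is ruled out since $\pi_1 M$ is infinite, and $S^2\times\bR$ is ruled out since its only closed orientable representative $S^2\times S^1$ is reducible. The remaining six geometries distribute among the conclusions as follows: $\bH^3$ gives (\ref{hyp}); $\bE^3$ gives (\ref{eucl}); Nil gives (\ref{nil}); and both $\bH^2\times\bR$ and $\widetilde{SL_2(\bR)}$ are Seifert fibred over a hyperbolic $2$--orbifold, giving~(\ref{sfsh}). The Sol case again falls under (\ref{hak}): a Sol-manifold is not Seifert fibred, so the Seifert Fiber Space Theorem once more yields $Z(\pi_1 M)=1$, and every closed orientable Sol-manifold is Haken, containing an incompressible torus arising from its structure (as either a torus bundle over $S^1$ with Anosov monodromy, or the union of two twisted $I$--bundles over a Klein bottle glued along their boundary torus).

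The only substantive input is the Seifert Fiber Space Theorem, which I use twice to force trivial center outside the Seifert fibred setting; the rest is bookkeeping through Thurston's list of geometries together with the elementary fact that Sol-manifolds carry an incompressible torus. There is no real obstacle once Geometrization is in hand.
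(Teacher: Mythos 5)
The paper itself offers no proof of this proposition, simply asserting it as ``a consequence of the Geometrization Theorem,'' so there is no argument to compare yours against. Your proof is correct and is essentially the expected route: apply Geometrization to split into the geometric case and the case of a nontrivial JSJ decomposition, use the Seifert Fibre Space Theorem of Casson--Jungreis, Gabai, Mess, Scott and Tukia to force trivial centre whenever $M$ fails to be Seifert fibred (which covers both the nontrivial-JSJ case and the Sol case), and distribute the remaining six geometries among the listed conclusions. One small factual slip: there are \emph{two} closed orientable manifolds with $S^2\times\bR$ geometry, namely $S^2\times S^1$ and $\bR P^3 \# \bR P^3$, not just the former; both are reducible, so the conclusion that irreducibility excludes this geometry still stands. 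It is also worth noting explicitly why the Seifert Fibre Space Theorem applies in the form you use it: from irreducibility and infinite $\pi_1$, the Sphere Theorem gives that $M$ is aspherical and hence $\pi_1 M$ is torsion-free, so a nontrivial centre would contain an infinite cyclic normal subgroup, which is exactly the hypothesis of that theorem. With these two points smoothed over, the argument is complete.
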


Thus the result we want follows from the results to be proved in the previous four
subsections.  

\begin{proof}[Proof of Theorem \ref{t: Virtual}]
There are five cases to consider, by Proposition \ref{5chotomy}.  The case of Haken $M$ with $Z(\pi_1M)=1$ follows from Corollary \ref{c: Haken non-sf}.    The hyperbolic case follows from Corollary \ref{c: Hyperbolic case}.  The Seifert-fibred-with-hyperbolic-base-orbifold case is dealt with by Corollary \ref{c: Hyperbolic base}.  The Euclidean case is handled by Corollary \ref{c: Euclidean case}. The case of Nil geometry follows from Corollary \ref{c: Nil case}.
\end{proof}

\subsection{The 2-dimensional case}

We also record here the 2-dimensional version of the theorem.

\begin{theorem}\label{thm: 2d virtual}
If $\Gamma$ is a torsion-free group with a subgroup of finite index isomorphic to $\pi_1M$, where $M$ is a closed, aspherical 2--manifold, then $\Gamma$ is also the fundamental group of a closed, aspherical 2--manifold.
\end{theorem}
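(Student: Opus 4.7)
The plan is to mirror the $3$--dimensional proof, splitting into a hyperbolic case (handled by Nielsen Realization in place of Mostow Rigidity) and a Euclidean case (handled directly via Proposition \ref{p:Act on R^k}). As in the proof of Theorem \ref{t: Virtual}, I first replace $G_0 \cong \pi_1 M$ by a finite-index characteristic subgroup so that I may assume $G_0$ is normal in $\Gamma$, obtaining
\[ 1 \to G_0 \to \Gamma \to F \to 1 \]
with $F$ finite. Since $M$ is a closed aspherical $2$--manifold, either (i) $\chi(M) < 0$ (so $M$ admits a hyperbolic structure and $Z(\pi_1 M) = 1$), or (ii) $\chi(M) = 0$, so $M$ is the torus or Klein bottle and $\pi_1 M$ is virtually $\mathbb{Z}^2$.

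In the Euclidean case, $\Gamma$ is itself virtually $\mathbb{Z}^2$, so Proposition \ref{p:Act on R^k} (applied with $k = 2$) furnishes a short exact sequence $1 \to K \to \Gamma \to \Delta \to 1$ where $K$ is finite and $\Delta$ acts properly discontinuously and cocompactly on $\mathbb{E}^2$. Torsion-freeness of $\Gamma$ forces $K = 1$, so $\Gamma$ itself acts freely, properly discontinuously, and cocompactly on $\mathbb{E}^2$, and $\mathbb{E}^2/\Gamma$ is the desired closed aspherical $2$--manifold.

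In the hyperbolic case, $Z(G_0) = 1$ together with Lemma \ref{l:effective} shows the extension is effective, so $F$ embeds as a finite subgroup of $\mathrm{Out}(\pi_1 M) = \mathrm{MCG}(M)$. By the Nielsen Realization Theorem (Kerckhoff, and its extension to non-orientable surfaces), this finite subgroup is realized by a group of isometries of $M$ for some hyperbolic metric, whose quotient is a closed $2$--orbifold $O$ fitting in
\[ 1 \to G_0 \to \pi_1^{\mathrm{orb}}(O) \to F \to 1 \]
inducing the same outer action of $F$ on $G_0$. Since $G_0$ is centreless, the extension is determined by this outer action (as in the $3$--dimensional hyperbolic case in the proof of Corollary \ref{c: Hyperbolic case}), so $\Gamma \cong \pi_1^{\mathrm{orb}}(O)$. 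Because $\Gamma$ is torsion-free, $O$ has empty singular set, hence is a closed aspherical $2$--manifold with fundamental group $\Gamma$.

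The main obstacle is invoking Nielsen Realization in the appropriate generality; in particular, one must handle the case when $M$ is non-orientable, either by appealing to Kerckhoff's theorem on non-orientable surfaces directly or by passing to the orientation double cover, noting that the induced extension remains effective and that effectiveness is inherited from the cover.
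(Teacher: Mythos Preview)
Your proposal is correct and follows essentially the same approach as the paper: the Euclidean case via Proposition~\ref{p:Act on R^k} (which is exactly the content of Corollary~\ref{c: Euclidean case} in dimension two), and the hyperbolic case by running the argument of Corollary~\ref{c: Hyperbolic case} with Nielsen Realization substituted for Mostow Rigidity. Your extra care about non-orientable $M$ is a reasonable point to flag, but it does not alter the strategy.
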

\begin{proof}
The case of Euclidean $M$ is dealt with by Corollary \ref{c: Euclidean case}, so we may assume that $M$ is hyperbolic.  The proof is now exactly the same as the proof of Corollary \ref{c: Hyperbolic case}, with the Nielsen Realization Theorem used instead of Mostow Rigidity.
\end{proof}

\section{Surface groups}\label{sec: Surface groups}

In this section, we prove that the classes of surface groups and aspherical surface groups are recursive modulo the word problem.  We will use the fact, already noted in Section \ref{s:limit groups}, that free groups and the fundamental groups of closed, orientable surfaces are $\cF$-limit groups.

That the class of orientable surface groups is recursive modulo the word problem follows easily.

\begin{lemma}\label{lem: Orientable surface groups}
The classes $\mc{AM}_2^+$ and $\cM_2^+$ are recursive modulo the word problem.
\end{lemma}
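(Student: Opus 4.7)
The plan is to realize $\cM_2^+$ and $\mc{AM}_2^+$ as recursive subclasses of $\overline{\cF}$, then invoke the machinery already set up for limit groups. Every closed orientable surface group $\pi_1\Sigma_g$ with $g\geq 1$ is an $\cF$-limit group (as recorded in the example in Section \ref{s:limit groups}), and the trivial group $\pi_1 S^2$ is trivially in $\overline{\cF}$. Hence $\cM_2^+\subseteq\overline{\cF}$ and $\mc{AM}_2^+\subseteq\overline{\cF}$. By Corollary \ref{cor: Limit groups are rmwp}, $\overline{\cF}$ is recursive modulo the word problem, so by Lemma \ref{lem: recursive subsets 2} it suffices to prove that each of $\cM_2^+$ and $\mc{AM}_2^+$ is recursive in $\overline{\cF}$.

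To exhibit recursive separators, I would first take $\cX_{\cM_2^+,\overline{\cF}}$ to be the recursive enumeration of standard presentations of the groups $\pi_1\Sigma_g$ for $g\geq 0$ (omit $g=0$ for the aspherical case). For $\cY_{\cM_2^+,\overline{\cF}}$, I would describe a Turing machine that takes a presentation $P$ certified to lie in $\overline{\cF}$ and attempts to \emph{refute} that $P$ presents a closed orientable surface group. Since isomorphic limit groups have isomorphic abelianizations, the machine first computes the abelianization of $P$ via Smith normal form on the relation matrix. If the result is not free abelian of even rank (treating rank $0$ as the trivial group case), the machine accepts $P$ into $\cY_{\cM_2^+,\overline{\cF}}$. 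Otherwise there is a unique candidate $g$ with $H_1(P)\cong\bZ^{2g}$, and the machine invokes the solvable isomorphism problem in $\overline{\cF}$ to decide whether $P\cong\pi_1\Sigma_g$, accepting $P$ into $\cY_{\cM_2^+,\overline{\cF}}$ iff the answer is no.

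The same argument, restricted to $g\geq 1$, yields the separators for $\mc{AM}_2^+$ in $\overline{\cF}$; equivalently, since $\{1\}$ is recursive modulo the word problem by Corollary \ref{abeliansingleton}, $\mc{AM}_2^+=\cM_2^+\smallsetminus\{1\}$ is recursive in $\cM_2^+$. There is no real obstacle here: the whole proof is a short assembly of prior results, and the only point requiring care is ensuring that the candidate-identification step terminates on every limit-group presentation. This is handled by the abelianization computation, which depends only on the presentation and always halts, leaving at most one $g$ for the isomorphism test to verify.
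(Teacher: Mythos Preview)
Your proposal is correct and follows essentially the same route as the paper: reduce to showing $\cM_2^+$ is recursive in $\overline{\cF}$ via Corollary~\ref{cor: Limit groups are rmwp} and Lemma~\ref{lem: recursive subsets 2}, then use that an orientable surface group is determined among limit groups by its abelianization, together with the solution to the isomorphism problem in $\overline{\cF}$, and finally pass to $\mc{AM}_2^+$ by removing the trivial group. One small slip: your $\cX_{\cM_2^+,\overline{\cF}}$ must contain \emph{all} presentations of the $\pi_1\Sigma_g$, not just the standard ones---close under Tietze transformations, or simply note that the decision procedure you describe (abelianization plus isomorphism test) already terminates on every limit-group presentation and so yields both separators at once.
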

\begin{proof}
By Corollary \ref{cor: Limit groups are rmwp}, it suffices to prove that $\cM_2^+$ is recursive in $\overline{\cF}$.  Groups in $\cM_2^+$ are determined by their abelianizations, so any limit group $G$ is isomorphic to at most one possible orientable surface group $\Gamma$, and one can compute a presentation for $\Gamma$ from a presentation for $G$.  By the solution to the isomorphism problem for limit groups \cite{BKM07, dahmanigroves1}, there is an algorithm to determine whether or not $\Gamma\cong G$.  This completes the proof that $\cM_2^+$ is recursive modulo the word problem.  Because $\cM_2^+=\{1\}\cup\mc{AM}_2^+$, the result also follows for $\mc{AM}_2^+$.
\end{proof}

Using Theorem \ref{thm: 2d virtual}, we can now improve this result to all surface groups.

\begin{theorem}\label{thm: Surfaces rmwp}
The classes $\cM_2$ and $\mc{AM}_2$ are recursive modulo the word problem.
\end{theorem}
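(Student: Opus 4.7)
The plan is to reduce to the orientable case handled in Lemma \ref{lem: Orientable surface groups}, using the double-cover trick and Theorem \ref{thm: 2d virtual}. First I would observe that every closed non-orientable surface is doubly covered by a closed orientable surface, so every non-orientable group in $\mc{AM}_2$ has an index-$2$ subgroup in $\mc{AM}_2^+$, giving $\mc{AM}_2\subseteq\cV_2\mc{AM}_2^+$ (where I apply the $\cV_f$ construction with $f$ the constant function $2$). Since $\mc{AM}_2^+$ is recursive modulo the word problem by Lemma \ref{lem: Orientable surface groups}, Lemma \ref{lem: Virtually recursive modulo the word problem} then yields that $\cV_2\mc{AM}_2^+$ is recursive modulo the word problem.

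By Lemma \ref{lem: recursive subsets 2}, it suffices to show $\mc{AM}_2$ is recursive in $\cV_2\mc{AM}_2^+$. The class $\mc{AM}_2$ is recursively enumerable: one lists the standard presentations of closed aspherical surface groups and generates all their finite presentations via Tietze transformations. So I would set $\cX_{\mc{AM}_2,\cV_2\mc{AM}_2^+}=\mc{AM}_2$. For the complement, Theorem \ref{thm: 2d virtual} implies that a group in $\cV_2\mc{AM}_2^+$ lies outside $\mc{AM}_2$ precisely when it has torsion. Since closed surface groups are residually finite and ``virtually residually finite'' implies ``residually finite,'' every group in $\cV_2\mc{AM}_2^+$ is residually finite; thus the complement is contained in the class $\mc{FT}$ of Lemma \ref{FTRE}, and I can set $\cY_{\mc{AM}_2,\cV_2\mc{AM}_2^+}=\mc{FT}$. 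This proves $\mc{AM}_2$ is recursive modulo the word problem.

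For $\cM_2$, I would write $\cM_2=\mc{AM}_2\cup\{1\}\cup\{\bZ/2\}$, the last two singletons corresponding to $S^2$ and $\bR P^2$; each singleton is recursive modulo the word problem by Corollary \ref{abeliansingleton}. A short verification, combining the $\cX$-sets by union and the $\cY$-sets by intersection and invoking Remark \ref{rem: Unions and intersections}, shows that a finite union of classes recursive modulo the word problem is again recursive modulo the word problem, so $\cM_2$ is recursive modulo the word problem. The only real obstacle here is the need to rule out torsion-free non-manifold groups in $\cV_2\mc{AM}_2^+\smallsetminus\mc{AM}_2$, and this is exactly what Theorem \ref{thm: 2d virtual} delivers; once that result is in hand the assembly of the pieces is routine.
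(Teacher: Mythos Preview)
Your proof is correct and follows essentially the same route as the paper's: reduce to $\mc{AM}_2$ via the decomposition $\cM_2=\mc{AM}_2\cup\{1,\bZ/2\}$, embed $\mc{AM}_2$ in $\cV_2\mc{AM}_2^+$ using the orientation double cover, and then separate $\mc{AM}_2$ from its complement in $\cV_2\mc{AM}_2^+$ by taking $\cX=\mc{AM}_2$ and $\cY=\mc{FT}$, invoking Theorem~\ref{thm: 2d virtual} and residual finiteness exactly as the paper does. The only cosmetic difference is that you handle the $\cM_2$ reduction at the end rather than the beginning and spell out the finite-union closure property explicitly.
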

\begin{proof}
Because $\cM_2=\{1,\bZ/2\}\cup\mc{AM}_2$ and the former is a finite class of abelian groups, it suffices to prove the theorem for $\mc{AM}_2$.

By Lemma \ref{lem: Orientable surface groups}, $\mc{AM}_2^+$ is recursive modulo the word problem, and so $\cV_2\mc{AM}_2^+$ is also by Lemma \ref{lem: Virtually recursive modulo the word problem}.  Evidently $\mc{AM}_2\subseteq\cV_2\mc{AM}_2^+$.  The theorem now follows from the claim that $\mc{AM}_2$ is recursive in $\cV_2\mc{AM}_2^+$.

By the classification of surfaces, $\mc{AM}_2$ is recursively enumerable, so we can set
\[
\cX_{\mc{AM}_2,\cV_2\mc{AM}_2^+}=\mc{AM}_2~.
\]
On the other hand, by Theorem \ref{thm: 2d virtual}, every group in
$\cV_2\mc{AM}_2^+\smallsetminus \mc{AM}_2$ has torsion.  Because the
groups in $\cV_2\mc{AM}_2^+$ are residually finite, it follows that we
can take $\cY_{\mc{AM}_2,\cV_2\mc{AM}_2^+}$ to be the class of groups
with a homomorphism that embeds a non-trivial cyclic subgroup into a
finite quotient.  This class of groups is recursively enumerable by
Lemma \ref{FTRE}, and only contains groups with torsion, so $\cY_{\mc{AM}_2,\cV_2\mc{AM}_2^+}\cap\mc{AM}_2=\varnothing$.  This proves that $\mc{AM}_2$ is recursive in $\cV_2\mc{AM}_2^+$, and the theorem follows.
\end{proof}

\section{Spherical geometry}\label{sec:spherical}

In this section, we will prove that $\cSF_{+,\neq}$, the class of fundamental groups of 3--manifolds with spherical geometry, is recursive modulo the word problem.  The fundamental tool is the following lemma.

\begin{lemma}\label{l:vmeta}
  If $\Gamma\in\cSF_{+,\neq}$ then $\Gamma$ has a cyclic subgroup of index at most $240$.
\end{lemma}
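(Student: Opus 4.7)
The plan is to exploit the Seifert fibred structure built into the definition of $\cSF_{+,\neq}$.  If $\Gamma = \pi_1 M$ with $M$ a closed Seifert fibred $3$--manifold whose base orbifold $O$ has positive Euler characteristic and whose Seifert structure has nonzero Euler number, then (using that a closed $3$--manifold with finite fundamental group is orientable) the regular fibre $h$ generates a central cyclic subgroup $\langle h\rangle\trianglelefteq\Gamma$, yielding a central extension
\[
1\to\langle h\rangle\to\Gamma\to Q\to 1,
\]
where $Q=\pi_1^{\mathrm{orb}}(O)$.  The classification of positive-Euler-characteristic $2$--orbifolds forces $Q$ to be the trivial group, a cyclic group $\bZ/n$, a dihedral group $D_{2n}$, or one of the polyhedral groups $A_4$, $S_4$, $A_5$, of orders $12$, $24$, $60$.

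In the polyhedral cases I would take $\langle h\rangle$ itself as the desired cyclic subgroup, giving index $|Q|\le 60\le 240$.  In the remaining cases I would pass to the preimage $K\le\Gamma$ of the maximal cyclic subgroup of $Q$: this is all of $\Gamma$ when $Q$ is trivial or cyclic, and is of index $2$ in $\Gamma$ when $Q=D_{2n}$.  In each such case $K$ is generated by the central element $h$ together with a single lift of a generator of its cyclic quotient, so $K$ is abelian.  To finish, one invokes the classical consequence of P.\ A.\ Smith theory that $(\bZ/p)^2$ cannot act freely on $S^3$ for any prime $p$: this forces every Sylow subgroup of $\Gamma$ to have $p$-rank at most one, so every finite abelian subgroup of $\Gamma$ (in particular $K$) is cyclic.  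This produces a cyclic subgroup of index at most $2$ in $\Gamma$ in the trivial/cyclic/dihedral cases.

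The step that most needs care is the initial invocation of the central Seifert structure together with the enumeration of positive-Euler-characteristic $2$--orbifold groups; I would cite Scott's survey for both.  The abelian-to-cyclic step is folklore and available in, for example, Bredon's book on transformation groups or Milnor's classical paper on groups with free $S^n$-actions.  The estimate one gets in this way, index at most $60$, is in fact sharper than the claimed $240$, so there is considerable slack.
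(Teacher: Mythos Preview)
Your argument is essentially correct and actually yields a sharper bound than the paper's, but the route is genuinely different and there is one step whose justification is imprecise.

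The paper argues purely topologically: first pass to a double cover so that the base orbifold is orientable, hence a sphere with at most three cone points; with two or fewer cone points the cover is already a lens space, and with three the base orbifold group has order at most $120$, so passing to the cover with base $S^2$ gives a lens space of total index at most $2\cdot 120=240$.  No Smith theory is used.  Your approach instead works with the extension $1\to\langle h\rangle\to\Gamma\to Q\to 1$, disposes of the polyhedral cases by taking the fibre itself, and in the remaining cases pulls back the large cyclic subgroup of $Q$ and uses the $(\bZ/p)^2$ obstruction to force cyclicity.  This buys you the bound $60$ rather than $240$, at the cost of importing Smith/Milnor.

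The imprecise step is the assertion that orientability of $M$ makes $\langle h\rangle$ central.  That is not true as stated: if the base orbifold is non-orientable (e.g.\ $\mathbb{RP}^2$ or $\mathbb{RP}^2(n)$, which do have positive orbifold Euler characteristic), then an orientation-reversing loop in the base conjugates $h$ to $h^{-1}$ precisely because the total space is orientable.  Prism manifolds, for example, admit such fibrations, and in those the fibre subgroup is not central.  Your enumeration of $Q$ as trivial, cyclic, dihedral, or polyhedral is likewise the list of finite subgroups of $SO(3)$, i.e.\ it presupposes an orientable base.  The fix is easy---either pass to the double cover of the base as the paper does (costing a factor of $2$), or note that every spherical $3$--manifold admits a Seifert fibration over $S^2$ with at most three cone points and choose that one---but you should make this explicit rather than inferring centrality from orientability of $M$ alone.
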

\begin{proof}
  Let $\Gamma = \pi_1 M$ where $M$ is a spherical $3$--manifold, which therefore is Seifert fibred with positive-Euler-characteristic base.  By passing to a double cover, we can assume the base is orientable, hence a sphere with at most $3$ cone points.  If there are $2$ or fewer, this cover is a lens space and has cyclic fundamental group.  If there are $3$, then the base orbifold is $S^2$ modulo a subgroup of a tetrahedral, octahedral, or icosahedral group.  Thus the fundamental group of the base has order at most $120$.  Passing to a cover where the base orbifold is $S^2$, we get a lens space again.  Since $M$ has a lens space cover of degree at most $240$, $\Gamma$ has a cyclic subgroup of index at most $240$.
\end{proof}

Let $\mc{FC}$ be the class of finite cyclic groups. Note that $\mc{FC}$ is a recursive subset of $\cA$, and hence is recursive modulo the word problem.

\begin{theorem}\label{thm: SF+neq}
The class $\cSF_{+,\neq}$ is recursive modulo the word problem.
\end{theorem}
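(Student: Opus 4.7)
The plan is to use Lemma \ref{l:vmeta} to locate $\cSF_{+,\neq}$ inside a well-behaved class of virtually finite cyclic groups, invoke Lemma \ref{lem: Virtually recursive modulo the word problem}, and then identify the (well-understood) spherical $3$-manifold groups of each order via their classical classification.

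First, I would let $\mc{FC}$ denote the class of finite cyclic groups and observe that $\mc{FC}$ is a recursive subset of $\cA$: given any abelian presentation, one computes the Smith normal form and checks that the resulting invariant factors describe a finite cyclic group. Since $\cA$ is rmwp, Lemma \ref{lem: recursive subsets 2} then yields that $\mc{FC}$ is rmwp. Next, Lemma \ref{l:vmeta} combined with the fact that spherical manifold groups are finite gives $\cSF_{+,\neq} \subseteq \cV_{240}\mc{FC}$, and Lemma \ref{lem: Virtually recursive modulo the word problem} (applied with the constant function $f\equiv 240$) shows that $\cV_{240}\mc{FC}$ is rmwp. I would also note that every group in $\cV_{240}\mc{FC}$ is in fact finite, since a finite cyclic subgroup of finite index forces finiteness of the ambient group. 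By a second application of Lemma \ref{lem: recursive subsets 2}, it will suffice to show that $\cSF_{+,\neq}$ is recursive in $\cV_{240}\mc{FC}$.

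For that last reduction, I would take $\cX_{\cSF_{+,\neq},\cV_{240}\mc{FC}}$ to be the set obtained by recursively enumerating, via the classical Hopf--Seifert--Threlfall classification of closed spherical $3$-manifold groups (cf.\ \cite{scott:geom}), a presentation of each such group together with all Tietze transforms; this gives a recursively enumerable set containing $\cSF_{+,\neq}$. For $\cY_{\cSF_{+,\neq},\cV_{240}\mc{FC}}$, given a presentation $P\in\cV_{240}\mc{FC}$ with the word problem uniformly solved, I would run the Reidemeister--Schreier algorithm to locate a finite cyclic subgroup of index at most $240$, which yields a computable upper bound on the order $|G|$ of the presented group. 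Using the solution to the word problem, I would then enumerate the elements of $G$, compute its multiplication table, and check whether the resulting finite group appears on the (enumerable) list of spherical $3$-manifold groups of order $|G|$; if not, output $P$ into $\cY$.

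The main obstacle will be the final finite-group recognition step: confirming in finite time that $P$ presents a specific finite group and deciding whether that group is a spherical $3$-manifold group. Having an a priori bound on $|G|$ from the cyclic subgroup of small index removes the fundamental difficulty, since the element-enumeration can be halted as soon as the tally reaches that bound, and isomorphism testing against the finite list of spherical $3$-manifold groups of a given order is then just a finite computation.
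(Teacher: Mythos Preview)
Your proof is correct and follows essentially the same route as the paper: reduce to $\cV_{240}\mc{FC}$ via Lemma~\ref{l:vmeta}, observe that this class is recursive modulo the word problem, and then separate $\cSF_{+,\neq}$ from its complement inside $\cV_{240}\mc{FC}$ using the finiteness of the groups involved together with the classification of spherical space forms. The only cosmetic differences are that the paper takes $\cX=\cM_3$ (invoking Geometrization to identify finite $3$--manifold groups with spherical ones) rather than enumerating the classification directly, and organizes the $\cY$-side by the order $n$ of the cyclic subgroup rather than by computing the full multiplication table; your observation that every group in $\cV_{240}\mc{FC}$ is finite is exactly what makes the uniform word problem (and hence your multiplication-table computation) available.
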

\begin{proof}
We have that
\[
\cSF_{+,\neq}\subseteq \cV_{240}\mc{FC}
\]
and the latter class is recursive modulo the word problem. It remains only to prove that $\cSF_{+,\neq}$ is recursive in $\cV_{240}\mc{FC}$.  Since any 3--manifold with finite fundamental group is in $\cSF_{+,\neq}$, we can take
\[
\cX_{\cSF_{+,\neq},\cV_{240}\mc{FC}} =\cM_3~.
\]
It remains to define $\cY_{\cSF_{+,\neq},\cV_{240}\mc{FC}}$.  The point here is that, for each $n$, the sets
\[
\cV_{240}\{\bZ/n\}
\]
are uniformly recursive in $\cV_{240}\mc{FC}$.  On the other hand, for each $n$, the intersection $\cV_{240}\{\bZ/n\}\cap\cM_3$ is a computable set of finitely many isomorphism classes \cite[Theorem 4.11]{scott:geom}.  Since the isomorphism problem is solvable in the class of finite groups, the set
\[
\cY_{\cSF_{+,\neq},\cV_{240}\mc{FC}}=\bigcup_{n\in\bN} (\cV_{240}\{\bZ/n\}\smallsetminus\cM_3)=\cV_{240}\mc{FC}\smallsetminus\cM_3
\]
is recursively enumerable.
\end{proof}

\section{$S^2\times\bR$ geometry}\label{sec:S2XR}

This case is very simple.

\begin{lemma}\label{lem: Classification of SF+=}
If $\Gamma=\pi_1M$ and $M$ is a closed 3--manifold with $S^2\times\bR$ geometry then
\[
\Gamma\in \{\bZ,\bZ\times\bZ/2,\bZ/2\ast\bZ/2\}
\]
and each of these possibilities is realized.
\end{lemma}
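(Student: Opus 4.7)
The plan is a direct analysis of free, properly discontinuous, cocompact isometric actions on the universal cover $\widetilde{M}=S^2\times\bR$. First I would use that the isometry group splits as $\mathrm{Isom}(S^2\times\bR)=O(3)\times\mathrm{Isom}(\bR)$, and let $\pi\co\Gamma\to\mathrm{Isom}(\bR)$ denote projection to the second factor.

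Next I would identify the kernel and image of $\pi$. The kernel $K:=\ker\pi$ is a discrete subgroup of the compact group $O(3)$, hence finite; moreover it must act freely on the slice $S^2\times\{0\}$. The only finite subgroups of $O(3)$ that act freely on $S^2$ are $\{1\}$ and $\{\pm I\}$, so $K\in\{1,\bZ/2\}$. The image $\pi(\Gamma)$ is a discrete cocompact subgroup of $\mathrm{Isom}(\bR)\cong\bR\rtimes\bZ/2$, so it is either $\bZ$ (generated by a translation) or the infinite dihedral group $\bZ/2\ast\bZ/2$ (generated by two reflections). Since $-I$ is central in $O(3)$, the resulting extension
\[ 1\to K\to\Gamma\to\pi(\Gamma)\to 1 \]
is central.

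A case check on the four combinations of $(K,\pi(\Gamma))$ then yields the three groups. The combinations $(1,\bZ)$ and $(1,\bZ/2\ast\bZ/2)$ give $\bZ$ and $\bZ/2\ast\bZ/2$ outright. For $(\bZ/2,\bZ)$, the central extension is classified by $H^2(\bZ;\bZ/2)=0$, so $\Gamma\cong\bZ\times\bZ/2$. The main obstacle is ruling out $(\bZ/2,\bZ/2\ast\bZ/2)$, which I would do via freeness: if $(B,s)\in\Gamma$ with $s$ a reflection on $\bR$, then $s$ has a fixed point in $\bR$, so freeness forces $B\in O(3)$ to have no eigenvalue $1$; inspecting $O(3)$, this requires $\det B=-1$. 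But then $-B\in SO(3)$ is a rotation, hence has eigenvalue $1$, so the element $(-I,1)\cdot(B,s)=(-B,s)\in\Gamma$ has a fixed point on $S^2\times\bR$, contradicting freeness.

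Finally, each of the three groups is realized by a closed $3$--manifold admitting $S^2\times\bR$ geometry: $\bZ$ by $S^2\times S^1$, $\bZ\times\bZ/2$ by $\bR P^2\times S^1$, and $\bZ/2\ast\bZ/2$ by $\bR P^3\#\bR P^3$, each inheriting the homogeneous structure from the quotient construction.
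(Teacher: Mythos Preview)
Your argument is correct. The paper, however, does not give a proof at all: it simply writes ``See, for instance, \cite{scott:geom}.'' and defers to Scott's survey, where the four closed $S^2\times\bR$--manifolds are listed explicitly. So your approach is genuinely different in that you supply a self-contained proof rather than a citation.

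Your direct analysis via the projection $\pi\co\Gamma\to\mathrm{Isom}(\bR)$ is the natural way to reconstruct Scott's list from scratch, and the freeness argument you give to exclude the case $(K,\pi(\Gamma))=(\bZ/2,\bZ/2\ast\bZ/2)$ is clean: any $(B,s)$ over a reflection $s$ must have $B$ without eigenvalue~$1$, forcing $\det B=-1$, and then multiplying by the central element $(-I,1)$ produces $(-B,s)$ with $-B\in SO(3)$, which necessarily fixes an axis and hence a point of $S^2\times\bR$. The advantage of your route is that it is elementary and makes the paper more self-contained; the advantage of the paper's route is brevity, since the classification is standard and well documented in \cite{scott:geom}. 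Either is perfectly acceptable here.
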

\begin{proof}
See, for instance, \cite{scott:geom}.
\end{proof}

\begin{lemma}
We have
\[
\cSF_{+,=}=\cV_2\{\bZ\}~.
\]
\end{lemma}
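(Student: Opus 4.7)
The plan is to prove the two inclusions separately, exploiting the previous Lemma \ref{lem: Classification of SF+=} which enumerates the three isomorphism classes comprising $\cSF_{+,=}$, namely $\bZ$, $\bZ\times\bZ/2$, and $\bZ/2\ast\bZ/2$.

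For the forward inclusion $\cSF_{+,=}\subseteq \cV_2\{\bZ\}$, I would simply exhibit in each of the three groups a copy of $\bZ$ as a subgroup of index at most two: this is trivial for $\bZ$ itself, while $\bZ\times\bZ/2$ contains its first factor as an index-$2$ subgroup, and the infinite dihedral group $\bZ/2\ast\bZ/2$ contains the cyclic subgroup generated by the product of the two standard generators as an index-$2$ subgroup.

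For the reverse inclusion $\cV_2\{\bZ\}\subseteq\cSF_{+,=}$, I would classify all groups $G$ admitting $H\cong\bZ$ as a subgroup of index at most two. If $[G:H]=1$ then $G\cong\bZ$. Otherwise $H$ is normal (as an index-$2$ subgroup), giving an extension
\[
1\to\bZ\to G\to\bZ/2\to 1
\]
which is determined up to isomorphism by the conjugation action of $\bZ/2$ on $\bZ$ (trivial or inversion) together with the corresponding class in $H^2(\bZ/2,\bZ)$. If the action is by inversion, then $H^2(\bZ/2,\bZ)=0$ in this module structure, so the extension splits and $G\cong\bZ\rtimes\bZ/2\cong\bZ/2\ast\bZ/2$. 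If the action is trivial, then $H^2(\bZ/2,\bZ)\cong\bZ/2$; the split extension yields $G\cong\bZ\times\bZ/2$, and the non-split central extension yields $G\cong\bZ$ (with $H$ corresponding to $2\bZ$). In every case $G$ lies in the list of Lemma \ref{lem: Classification of SF+=}, hence in $\cSF_{+,=}$.

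The statement is elementary and no serious obstacle is anticipated; the only work is the short cohomological case analysis for the index-$2$ subcase.
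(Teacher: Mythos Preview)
Your proof is correct and follows essentially the same strategy as the paper: both directions reduce to matching the list in Lemma~\ref{lem: Classification of SF+=} against the classification of groups containing $\bZ$ with index at most two. The only difference is cosmetic: where you invoke $H^2(\bZ/2,\bZ)$ to classify the extensions, the paper instead observes that any finite cyclic subgroup of such a $\Gamma$ embeds into $\bZ/2$, so either $\Gamma$ is torsion-free (hence $\bZ$) or the sequence splits, yielding $\bZ\times\bZ/2$ or the infinite dihedral group according to the action.
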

\begin{proof}
Any $\Gamma\in\cV_2\{\bZ\}$ fits into a short exact sequence
\[
1\to\bZ\to\Gamma\to\bZ/2\to 1~.
\]
It is easy to see that any finite cyclic subgroup of $\Gamma$ embeds into $\bZ/2$.  Therefore, either $\Gamma$ is torsion-free, hence $\bZ$, or the sequence splits and $\Gamma$ is isomorphic to either $\bZ\times\bZ/2$ or the infinite dihedral group.  The result now follows from Lemma \ref{lem: Classification of SF+=}.
\end{proof}

\begin{theorem}\label{thm: SF+=}
The class $\cSF_{+,=}$ is recursive modulo the word problem.
\end{theorem}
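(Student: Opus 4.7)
The plan is to observe that the preceding lemma reduces the statement to a direct application of machinery already developed in Section~\ref{sec:basic}. Specifically, having just established $\cSF_{+,=}=\cV_2\{\bZ\}$, the theorem will follow immediately once we know that $\{\bZ\}$ is recursive modulo the word problem and that the $\cV_f$ operation preserves this property.

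First, I would invoke Corollary~\ref{abeliansingleton} (or equivalently the corollary identifying $\{\bZ\}=\cM_1=\mc{AM}_1$ as recursive modulo the word problem), which gives that $\{\bZ\}$ is recursive modulo the word problem. Second, I would apply Lemma~\ref{lem: Virtually recursive modulo the word problem} with the constant computable function $f\equiv 2\co\cU\to\bN$, concluding that $\cV_2\{\bZ\}$ is recursive modulo the word problem. Third, I would substitute the identity $\cSF_{+,=}=\cV_2\{\bZ\}$ proved in the previous lemma to finish.

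There is no real obstacle here: every ingredient is already in place. The only thing worth being slightly careful about is confirming that the constant function taking value $2$ qualifies as a computable function $\cU\to\bN$ in the sense required by Lemma~\ref{lem: Virtually recursive modulo the word problem}, which is trivial. Thus the proof will consist essentially of a single sentence chaining the previous lemma with Corollary~\ref{abeliansingleton} and Lemma~\ref{lem: Virtually recursive modulo the word problem}.
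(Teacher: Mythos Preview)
Your proposal is correct and matches the paper's proof essentially verbatim: the paper also notes that $\{\bZ\}$ is recursive modulo the word problem (as a recursive subclass of $\cA$), then applies the previous lemma's identification $\cSF_{+,=}=\cV_2\{\bZ\}$ together with Lemma~\ref{lem: Virtually recursive modulo the word problem}.
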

\begin{proof}
The class $\{\bZ\}$ is a recursive subset of $\cA$ and so is recursive modulo the word problem.  Therefore $\cSF_{+,=}=\cV_2\{\bZ\}$ is also recursive modulo the word problem.
\end{proof}

\section{Euclidean geometry}\label{sec:euclid}

The following lemma will be useful.

\begin{lemma}
If $O$ is a closed, Euclidean $2$--orbifold without corner reflectors then $O$ has a cover of index at most $6$ which is homeomorphic to a $2$--torus. 
\end{lemma}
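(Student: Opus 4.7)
The plan is to exhibit the universal translation cover of $O$. Set $\Gamma := \pi_1^{\rm orb}(O)$ and let $T \trianglelefteq \Gamma$ be its translation subgroup. Since $O$ is a closed Euclidean $2$--orbifold, $\Gamma$ is a $2$--dimensional crystallographic group, and Bieberbach's theorem gives that $T\cong\bZ^2$ is normal and of finite index; moreover no non-trivial translation has a fixed point, so $\bE^2/T$ is a \emph{manifold}, and being a closed orientable flat surface it must be a $2$--torus. The induced map $\bE^2/T\to\bE^2/\Gamma=O$ is an orbifold cover of degree $[\Gamma:T]=|P|$, where $P:=\Gamma/T\subseteq O(2)$ is the point group. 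So it suffices to show $|P|\le 6$.

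To bound $|P|$, I will use the orbifold Gauss--Bonnet formula: since $O$ is Euclidean, $\chi^{\rm orb}(O)=0$, and under the hypothesis of no corner reflectors this reduces to
\[
\chi(|O|)=\sum_i\left(1-\tfrac{1}{n_i}\right),
\]
where $|O|$ denotes the underlying topological surface (with any mirror boundary circles regarded as ordinary boundary) and the $n_i$ are the orders of the interior cone points. Since each summand lies in $[1/2,1)$, enumeration over the possibilities with $\chi(|O|)\in\{0,1,2\}$ produces exactly ten orbifolds: the quotients by the wallpaper groups p1, p2, p3, p4, p6, pg, pm, cm, pgg, and pmg, with point group orders $1,2,3,4,6,2,2,2,4,4$ respectively. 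In each case $|P|\le 6$, with the maximum realized by $S^2(2,3,6)$.

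The only substantive step is the classification, but it is a short finite case check once Gauss--Bonnet has restricted $\chi(|O|)$ to three values; alternatively one could cite the standard catalogue of closed Euclidean $2$--orbifolds (e.g.\ \cite{scott:geom}) and observe that the Conway signatures without any symbol following a $*$ are precisely those ten, all with point group of order at most $6$.
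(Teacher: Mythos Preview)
Your proof is correct and follows essentially the same approach as the paper's: the paper simply cites Scott's Table 4.4 and says ``by inspection'' each orbifold on the list has a torus cover of degree at most $6$, while you carry out that inspection explicitly by identifying the cover as the translation cover $\bE^2/T$ and bounding the point-group order via the Gauss--Bonnet classification. Your argument is more self-contained, but the content is the same.
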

\begin{proof}
Any such $O$ is on the list in Table 4.4 of \cite{scott:geom}.  By
inspection, $O$ has a covering space of degree at most $6$ which is
homeomorphic to a $2$--torus
\end{proof}

\begin{lemma}\label{lem:boundedindex3torus}
If $M$ is a closed, Euclidean 3--manifold then $M$ has a cover of index at most $12$ which is homeomorphic to a 3--torus. 
\end{lemma}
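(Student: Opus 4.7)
The plan is to pull back the torus cover supplied by the previous lemma along a Seifert fibration of an appropriate finite cover of $M$. First, I pass to the orientation double cover of $M$ if necessary; this costs a factor of at most $2$ in the covering degree. We may therefore assume $M$ is orientable.

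Every closed orientable Euclidean $3$--manifold admits a Seifert fibration $M\to O$ over an orientable closed Euclidean $2$--orbifold $O$ (one of $T^2$, $S^2(2,2,2,2)$, $S^2(3,3,3)$, $S^2(2,4,4)$, or $S^2(2,3,6)$; cf.\ the Bieberbach classification). Since $O$ is orientable it has no reflector lines, and in particular no corner reflectors, so the previous lemma supplies an orbifold cover $\tilde O \to O$ of degree at most $6$ with $\tilde O \cong T^2$. Pulling this cover back along the Seifert fibration yields a cover $\tilde M \to M$ of the same degree. Because $\tilde O$ is a manifold, the orbifold cover unwraps every cone point of $O$, so the induced Seifert fibration $\tilde M \to T^2$ has no exceptional fibres; that is, $\tilde M$ is a genuine $S^1$--bundle over $T^2$.

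Such bundles are classified up to isomorphism by their Euler number in $H^2(T^2;\bZ)\cong\bZ$, and the total space admits a flat Riemannian metric if and only if this Euler number vanishes. Since $\tilde M$ inherits a flat metric from $M$, its Euler number is zero, and hence $\tilde M \cong T^2\times S^1 = T^3$. Altogether the cover $\tilde M \to M$ has degree at most $2\cdot 6 = 12$, as claimed. The most delicate step is the last one: identifying $\tilde M$ as $T^3$ rather than a twisted flat circle bundle over $T^2$ relies on both the Euler-number classification of such bundles and the constraint that $\tilde M$ carry a flat metric inherited from $M$.
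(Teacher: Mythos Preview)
Your argument contains a genuine gap: the assertion that every closed orientable Euclidean $3$--manifold Seifert fibres over an \emph{orientable} Euclidean $2$--orbifold is false. The Hantzsche--Wendt manifold $G_6$ is a counterexample. Its holonomy group $(\bZ/2)^2$ acts on the translation lattice $\bZ^3$ via the three diagonal sign-change matrices with an even number of $-1$'s, so the fixed sublattice is trivial and hence $Z(\pi_1 G_6)=1$. But for an orientable Seifert fibred space over an orientable base the regular fibre is central (and of infinite order, since the manifold is aspherical). Thus $G_6$ admits no Seifert fibration over an orientable base; its only Seifert structure is over $\bR P^2(2,2)$.

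The fix is minor. The base orbifold of a Seifert fibration of a genuine $3$--manifold has only cone points---never reflector lines or corner reflectors---so the previous lemma applies directly to $O$ without any orientability hypothesis. Pulling back the degree-$\leq 6$ torus cover of $O$ still yields an $S^1$--bundle $\tilde M\to T^2$, and since $\tilde M$ covers the (now orientable) $M$ it is orientable; together with the orientability of $T^2$ this forces the monodromy $\pi_1 T^2\to\bZ/2$ to be trivial, so the bundle is principal and your Euler-number argument finishes as written. The total degree is still at most $2\cdot 6=12$.

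With this correction your proof is essentially the paper's, differing only in bookkeeping: the paper does not pass to the orientation cover first but instead works with the short exact sequence $1\to\bZ\to\pi_1 M\to\pi_1 O\to 1$ for arbitrary $M$, takes the degree-$\leq 6$ torus cover of $O$, and then spends the factor of $2$ at the end to trivialise the action of $\pi_1\Sigma$ on the $\bZ$ fibre. Both routes arrive at a principal $S^1$--bundle over $T^2$ with Euclidean total space, hence $T^3$.
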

\begin{proof}
As explained in \cite{scott:geom}, $\pi_1M$ fits into a short exact sequence
\[
1\to\bZ\to\pi_1M\to\pi_1O\to 1
\]
where $O$ is a closed $2$--dimensional Euclidean orbifold without corner reflectors.  By the previous lemma, $O$ has a covering space $\Sigma$ of degree at most $6$ which is homeomorphic to a $2$--torus.  Passing to a further subgroup of index two, we may assume that the action of $\pi_1\Sigma$ on $\bZ$ is trivial.  The corresponding covering space $M'$ of $M$ is therefore a circle bundle over a torus with trivial monodromy, which covers $M$ with degree at most $12$.  Because $M'$ is Euclidean, this bundle is actually a trivial bundle, so $M'$ is homeomorphic to the $3$--torus.
\end{proof}

\begin{theorem}\label{thm: SF0=}
The class $\cSF_{0,=}$ is recursive modulo the word problem.
\end{theorem}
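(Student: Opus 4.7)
The plan is to follow the template of Theorems \ref{thm: SF+neq} and \ref{thm: SF+=}. First I would combine Lemma \ref{lem:boundedindex3torus} with the identification $\bZ^3\cong\pi_1(T^3)$ to get the containment $\cSF_{0,=}\subseteq\cV_{12}\{\bZ^3\}$. Corollary \ref{abeliansingleton} says that $\{\bZ^3\}$ is recursive modulo the word problem, so Lemma \ref{lem: Virtually recursive modulo the word problem} promotes this to the statement that $\cV_{12}\{\bZ^3\}$ is recursive modulo the word problem. By Lemma \ref{lem: recursive subsets 2}, the theorem then reduces to showing that $\cSF_{0,=}$ is recursive in $\cV_{12}\{\bZ^3\}$.

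I would handle this reduction by exhibiting a separating pair of recursively enumerable sets. For $\cX_{\cSF_{0,=},\cV_{12}\{\bZ^3\}}$ I would take $\cSF_{0,=}$ itself: Bieberbach's classification yields only finitely many closed Euclidean $3$--manifolds up to homeomorphism, so enumerating all Tietze transformations of a finite list of representative presentations exhibits $\cSF_{0,=}$ as recursively enumerable.

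For $\cY_{\cSF_{0,=},\cV_{12}\{\bZ^3\}}$, the key observation is that every group $G\in\cV_{12}\{\bZ^3\}\smallsetminus\cSF_{0,=}$ must have torsion: otherwise Corollary \ref{c: Euclidean case}, applied with $M=T^3$, would place $G$ back in $\cSF_{0,=}$. Since $\bZ^3\in\cM_3$ we have $\cV_{12}\{\bZ^3\}\subseteq\cV\cM_3$, so Proposition \ref{hempelcor} guarantees that every such $G$ is residually finite; any non-trivial torsion element therefore injects into a finite quotient, placing $G$ in the class $\mc{FT}$ of Lemma \ref{FTRE}. Because Euclidean $3$--manifold groups are torsion-free, $\mc{FT}\cap\cSF_{0,=}=\varnothing$, so taking $\cY_{\cSF_{0,=},\cV_{12}\{\bZ^3\}}=\mc{FT}$ completes the separating pair.

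No step is expected to be a serious obstacle; the argument mirrors the $S^2\times\bR$ case almost verbatim, with $\bZ^3$ replacing $\bZ$, Lemma \ref{lem:boundedindex3torus} giving the bounded-index cover by $T^3$, and Corollary \ref{c: Euclidean case} doing the work of upgrading ``torsion-free and virtually Euclidean'' to ``Euclidean''. The only modest subtlety is to remember to invoke residual finiteness (via Proposition \ref{hempelcor}) before quoting Lemma \ref{FTRE} to detect torsion.
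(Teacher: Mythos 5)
Your proof is correct and follows the same overall template as the paper (bound the index of a $3$--torus cover, promote $\{\bZ^3\}$ to $\cV_{12}\{\bZ^3\}$ via Lemma \ref{lem: Virtually recursive modulo the word problem}, then exhibit $\cSF_{0,=}$ as recursive in $\cV_{12}\{\bZ^3\}$). Where you diverge is in the final separation step. The paper routes this through the general machinery: it observes that $\cV_{12}\{\bZ^3\}\subseteq\mc{VAM}_3$ and $\cSF_{0,=}=\mc{AM}_3\cap\cV_{12}\{\bZ^3\}$, invokes Corollary \ref{cor: AM3 in VAM3} (that $\mc{AM}_3$ is recursive in $\mc{VAM}_3$, which in turn rests on the full Theorem \ref{t: Virtual} and on the recursive enumerability of $\cM_3$ from Lemma \ref{m3re}), and then concludes via Lemmas \ref{lem: Recursive subsets of recursive subsets} and \ref{lem: recursive subsets 2}. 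You instead build the separating pair by hand: Bieberbach's finiteness theorem (there are ten closed flat $3$--manifolds) makes $\cSF_{0,=}$ itself recursively enumerable, and Corollary \ref{c: Euclidean case} alone---not the general Theorem \ref{t: Virtual}---shows that any torsion-free element of $\cV_{12}\{\bZ^3\}$ already lies in $\cSF_{0,=}$, so $\mc{FT}$ serves as the other half. Your route is slightly more self-contained for this one geometry, trading the general virtualization corollary for the classical finiteness of flat manifolds; the paper's route uses uniform tools that are recycled across several geometries. Both are valid, and you correctly flagged the one point that could trip someone up, namely that residual finiteness (Proposition \ref{hempelcor}) is needed before Lemma \ref{FTRE} can certify torsion. (As a side note, the paper's first line has a typo, $\cV_{24}$ where $\cV_{12}$ is meant; your version is the intended one.)
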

\begin{proof}
By Lemma \ref{lem:boundedindex3torus}, $\cSF_{0,=}\subseteq\cV_{24}\{\bZ^3\}$.  The set
$\{\bZ^3\}$ is recursive modulo the word problem by Corollary
\ref{abeliansingleton}.  Applying Lemma
\ref{lem: Virtually recursive modulo the word problem} with $f$ equal
to the constant function $12$, $\cV_{12}\{\bZ^3\}$ is recursive modulo
the word problem.  Since the word problem is uniformly solvable in
$\mc{VM}_3$ (Proposition \ref{hempelcor}),
and $\cV_{12}\{\bZ^3\}\subseteq\mc{VAM}_3$,
the set $\cV_{12}\{\bZ^3\}$ is recursive in $\mc{VAM}_3$.
Moreover $\cSF_{0,=}=\mc{AM}_3\cap\cV_{12}\{\bZ^3\}$.  By Corollary
\ref{cor: AM3 in VAM3}, $\mc{AM}_3$ is recursive in $\mc{VAM}_3$ and
so the result follows. 
\end{proof}

\section{$\mathrm{Nil}$ geometry}\label{sec:Nil}

As in the Euclidean case, we can pass to a cover of bounded degree which is a genuine circle bundle.

\begin{lemma}\label{lem:nilvirtualbundle}
If $M$ is a closed 3--manifold with $\mathrm{Nil}$ geometry then $M$ has a cover of index at most $12$ which is homeomorphic to an orientable circle bundle over a 2--torus. 
\end{lemma}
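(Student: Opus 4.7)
The plan is to follow essentially the same template as the proof of Lemma \ref{lem:boundedindex3torus}, adjusting only the geometric conclusion at the end. Since $M$ admits $\mathrm{Nil}$ geometry, $M$ is Seifert fibered over a closed Euclidean $2$-orbifold (this is the content of the relevant part of Scott's survey cited in Section \ref{sec:euclid}), and $\pi_1 M$ fits into a short exact sequence
\[ 1 \to \bZ \to \pi_1 M \to \pi_1 O \to 1 \]
where $O$ is a closed Euclidean 2-orbifold. Because a Seifert fibration has $S^1$-fibers, rather than $I$-fibers, the orbifold $O$ has only cone-point singularities; in particular, it has no reflector lines and no corner reflectors.

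First, I would apply the lemma preceding Lemma \ref{lem:boundedindex3torus} to obtain a cover $\Sigma \to O$ of degree at most $6$ with $\Sigma$ homeomorphic to a $2$-torus. Pulling back this cover under the quotient $\pi_1 M \to \pi_1 O$ yields a cover $M' \to M$ of degree at most $6$, with
\[ 1 \to \bZ \to \pi_1 M' \to \pi_1 \Sigma \to 1. \]
Then, exactly as in the Euclidean case, I would pass to a further subgroup of index at most $2$ so that the induced action of $\pi_1 \Sigma \cong \bZ^2$ on the fiber $\bZ$ is trivial. The corresponding cover $M'' \to M$ has degree at most $12$, and $\pi_1 M''$ is a central extension
\[ 1 \to \bZ \to \pi_1 M'' \to \bZ^2 \to 1. \]
Such a central extension is precisely the fundamental group of an orientable circle bundle over a $2$-torus, and $M''$ is homeomorphic to this bundle.

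The only substantive difference from the Euclidean case is the final geometric identification. In Lemma \ref{lem:boundedindex3torus}, the Seifert Euler number is zero, so the cover $M''$ is the trivial bundle (the $3$-torus). In the present $\mathrm{Nil}$ setting, the Seifert Euler number is non-zero, and the cover $M''$ is a non-trivial but still orientable $S^1$-bundle over the $2$-torus. The only bookkeeping concerns orientations of base and fiber, and I do not anticipate any serious obstacle; the potentially fiddly point is confirming that the $\mathrm{Nil}$ structure produces a base orbifold with no corner reflectors, but this follows from the standard definition of a Seifert fibration.
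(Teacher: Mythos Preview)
Your proposal is correct and follows exactly the approach the paper intends: the paper does not give an explicit proof of this lemma, merely remarking ``As in the Euclidean case, we can pass to a cover of bounded degree which is a genuine circle bundle,'' and your argument is precisely the Euclidean proof (Lemma~\ref{lem:boundedindex3torus}) with the final identification adjusted.

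One small correction to your justification: the base orbifold of a Seifert fibration can in general have reflector curves (arising from fibred solid Klein bottles), not only cone points, so your claim that $O$ has ``only cone-point singularities'' is too strong. What is true---and what the paper uses in the Euclidean case---is that $O$ has no \emph{corner reflectors}; this is all the preceding lemma requires, so the argument goes through unchanged.
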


Let $\mc{B}^+_{0}$ be the class of fundamental groups of orientable circle bundles over the 2--torus, and let $\mc{B}^+_{0,\neq}$ be the subclass consisting of the fundamental groups of those bundles with $\mathrm{Nil}$ geometry.   The elements of $\mc{B}^+_{0}$ are the groups with presentations
\[
\Gamma_e=\langle a,b,z\mid [a,b]z^{-e},~[a,z],~[b,z]\rangle
\]
where $e\in\bN$ is the Euler class of the bundle.  Because
\[
H_1(\Gamma_e)\cong\bZ^2\times\bZ/e
\]
the Euler class $e$ can be computed from the abelianization.

The main point is to prove the following.

\begin{lemma}\label{lem: Nil bundles}
The classes $\mc{B}^+_{0,\neq}$ and $\mc{B}^+_{0}$ are recursive modulo the word problem.
\end{lemma}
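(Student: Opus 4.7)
The plan is to exploit the fact that every $\Gamma_e \in \mc{B}^+_{0}$ is nilpotent of class at most $2$: the relation $[a,b]z^{-e}=1$ forces $[a,b]=z^e$, which is central by the remaining relations, so $[\Gamma_e,\Gamma_e]$ is central and $\Gamma_e\in\cN_2$. Thus $\mc{B}^+_{0}\subseteq\cN_2$, and since $\cN_2$ is already recursive modulo the word problem, Lemma \ref{lem: recursive subsets 2} reduces the problem to showing that $\mc{B}^+_{0}$ and $\mc{B}^+_{0,\neq}$ are each recursive in $\cN_2$.

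I first handle $\mc{B}^+_{0}$. This class is recursively enumerable, since one can enumerate the standard presentations $\{\Gamma_e:e\in\bN\}$ and close under Tietze transformations; this supplies $\cX_{\mc{B}^+_{0},\cN_2}$. For the separator $\cY_{\mc{B}^+_{0},\cN_2}$, I would describe an algorithm that, given $P\in\cN_2$, terminates with ``not in $\mc{B}^+_{0}$'' exactly when $P\notin\mc{B}^+_{0}$. First compute $H_1(P)$ from the presentation via Smith normal form. If $H_1(P)$ is not isomorphic to $\bZ^2\oplus\bZ/e$ for any $e\geq 0$, then $P\notin\mc{B}^+_{0}$. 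Otherwise $e$ is pinned down uniquely by $H_1(P)$, so there is a single candidate $\Gamma_e$; test whether $P\cong\Gamma_e$.

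For $\mc{B}^+_{0,\neq}$, note that the bundle admits $\mathrm{Nil}$ geometry exactly when $e\neq 0$, so $\mc{B}^+_{0,\neq}=\mc{B}^+_{0}\setminus\{\bZ^3\}$, and the case $e=0$ is detected by $H_1(P)\cong\bZ^3$. Hence $\mc{B}^+_{0,\neq}$ is recursive in $\cN_2$ by an immediate variant of the algorithm above; independently, $\{\bZ^3\}$ is recursive modulo the word problem by Corollary \ref{abeliansingleton}, so the set difference is too.

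The crux of the argument is the final isomorphism test $P\cong\Gamma_e$ inside $\cN_2$, which I expect to be the main obstacle. I would resolve it either by invoking the classical solvability of the isomorphism problem for finitely generated nilpotent groups (Grunewald--Segal), or more concretely by using the nilpotent quotient algorithm to compute a polycyclic presentation of $P$, reading off the alternating bilinear commutator pairing $H_1(P)\wedge H_1(P)\to[P,P]$ on a chosen generating set with the aid of the uniform solution to the word problem in $\cN_2$, and comparing it with the corresponding pairing for $\Gamma_e$ after a finite search over basis changes of the finitely generated abelian groups $H_1(P)$ and $[P,P]$.
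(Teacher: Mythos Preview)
Your proposal is correct and follows essentially the same approach as the paper: both arguments observe that $\mc{B}^+_{0}\subseteq\cN_2$, reduce via Lemma \ref{lem: recursive subsets 2} to showing recursiveness in $\cN_2$, use the abelianization $H_1(\Gamma_e)\cong\bZ^2\times\bZ/e$ to pin down a unique candidate $\Gamma_e$, and then appeal to the decidability of the isomorphism problem for finitely generated nilpotent (indeed polycyclic) groups---the paper cites Segal \cite{Segal90}, which is the same result you attribute to Grunewald--Segal. Your alternative concrete test via the commutator pairing is a nice hands-on substitute for the black-box citation, but is not needed.
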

\begin{proof}
First, note that $\cB^+_0\subseteq\cN_2$, the set of $2$--nilpotent groups.  Furthermore, $\cB^+_0=\{\bZ^3\}\sqcup\cB^+_{0,\neq}$.  Segal proved that the isomorphism problem is decidable in virtually polycyclic groups, in particular in $\cN_k$ for any $k$.  Therefore, $\{\bZ^3\}$ is recursive in $\cN_2$, and hence it suffices to prove that $\cB^+_0$ is recursive in $\cN_2$.  By the previous paragraph, a group $G$ with a presentation in $\cN_2$ can be isomorphic to at most one $\Gamma_e$, and that $e$ is computable from a presentation for $G$.  Because the isomorphism problem is solvable in $\cN_2$ by \cite{Segal90}, the result follows.
\end{proof}

The proof of the theorem is then similar to the Euclidean case.

\begin{theorem}\label{thm: SF0neq}
The class $\cSF_{0,\neq}$ is recursive modulo the word problem.
\end{theorem}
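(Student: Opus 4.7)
The plan is to mirror the structure of the Euclidean case (Theorem \ref{thm: SF0=}), substituting the bundle class $\mc{B}^+_{0,\neq}$ for $\{\bZ^3\}$. The key inputs are Lemma \ref{lem:nilvirtualbundle}, which provides a uniformly bounded virtual containment, Lemma \ref{lem: Nil bundles}, which handles the bundle class, and the infrastructure of Section \ref{sec:wall3d} that lets one pass from virtual aspherical 3--manifold groups to aspherical 3--manifold groups.

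First, I would observe that Lemma \ref{lem:nilvirtualbundle} gives the inclusion
\[
\cSF_{0,\neq}\subseteq \cV_{12}\mc{B}^+_{0,\neq}.
\]
By Lemma \ref{lem: Nil bundles}, $\mc{B}^+_{0,\neq}$ is recursive modulo the word problem, so Lemma \ref{lem: Virtually recursive modulo the word problem}, applied with $f\equiv 12$, shows that $\cV_{12}\mc{B}^+_{0,\neq}$ is recursive modulo the word problem. Since every element of $\mc{B}^+_{0,\neq}$ is the fundamental group of a closed aspherical $3$--manifold, we have $\cV_{12}\mc{B}^+_{0,\neq}\subseteq \mc{VAM}_3$, hence the word problem is uniformly solvable throughout $\cV_{12}\mc{B}^+_{0,\neq}$ by Proposition \ref{hempelcor}.

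Next I would identify $\cSF_{0,\neq}$ inside $\cV_{12}\mc{B}^+_{0,\neq}$ as the intersection
\[
\cSF_{0,\neq}=\mc{AM}_3\cap \cV_{12}\mc{B}^+_{0,\neq}.
\]
The inclusion ``$\subseteq$'' follows from Lemma \ref{lem:nilvirtualbundle} together with asphericity of Nil manifolds. For ``$\supseteq$'', any $G$ in the right-hand side is an aspherical closed $3$--manifold group that is virtually nilpotent (in fact virtually the fundamental group of a Nil circle bundle over $T^2$); by Geometrization, a closed aspherical $3$--manifold with virtually nilpotent, non virtually abelian fundamental group admits Nil geometry, so $G\in \cSF_{0,\neq}$. (Note that $\mc{B}^+_{0,\neq}$ excludes $\bZ^3$ by construction, which rules out the Euclidean case.)

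Finally, Corollary \ref{cor: AM3 in VAM3} says $\mc{AM}_3$ is recursive in $\mc{VAM}_3$, and hence by Lemma \ref{lem: Recursive subsets of recursive subsets}(\ref{basic2}) also recursive in the subset $\cV_{12}\mc{B}^+_{0,\neq}\subseteq \mc{VAM}_3$. Intersecting with the ambient set gives that $\cSF_{0,\neq}=\mc{AM}_3\cap \cV_{12}\mc{B}^+_{0,\neq}$ is recursive in $\cV_{12}\mc{B}^+_{0,\neq}$. Applying Lemma \ref{lem: recursive subsets 2} then concludes that $\cSF_{0,\neq}$ is recursive modulo the word problem. The only non-routine step is justifying the equality $\cSF_{0,\neq}=\mc{AM}_3\cap \cV_{12}\mc{B}^+_{0,\neq}$; once that is in hand, the argument is a direct adaptation of the Euclidean template.
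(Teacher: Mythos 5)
Your proposal is correct and follows essentially the same route as the paper's proof: reduce via Lemma \ref{lem:nilvirtualbundle} to $\cV_{12}\mc{B}^+_{0,\neq}$, show that class is recursive modulo the word problem via Lemma \ref{lem: Nil bundles} and Lemma \ref{lem: Virtually recursive modulo the word problem}, identify $\cSF_{0,\neq}$ with $\mc{AM}_3\cap\cV_{12}\mc{B}^+_{0,\neq}$, and conclude by Corollary \ref{cor: AM3 in VAM3} and Lemmas \ref{lem: Recursive subsets of recursive subsets} and \ref{lem: recursive subsets 2}. One small cleanup: rather than claiming $\mc{AM}_3$ is ``recursive in'' $\cV_{12}\mc{B}^+_{0,\neq}$ (which is not literally a subset relation), it is cleaner to first deduce that the intersection $\cSF_{0,\neq}$ is recursive in $\mc{VAM}_3$ (both factors being recursive there, so the same separating pairs work) and only then apply Lemma \ref{lem: Recursive subsets of recursive subsets}(\ref{basic2}) with $\cSF_{0,\neq}\subseteq\cV_{12}\mc{B}^+_{0,\neq}\subseteq\mc{VAM}_3$; this is how the paper routes the argument. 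Your explicit Geometrization justification for the equality $\cSF_{0,\neq}=\mc{AM}_3\cap\cV_{12}\mc{B}^+_{0,\neq}$ is a welcome addition the paper leaves implicit.
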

\begin{proof}
By Lemma \ref{lem:nilvirtualbundle} and Corollary \ref{cor: AM3 in
  VAM3}, we have that $\cSF_{0,\neq}= \cV_{12}\mc{B}_{0,\neq}\cap\mc{AM}_3$.  
By Lemmas \ref{lem: Nil bundles} and \ref{lem: Virtually recursive
  modulo the word problem}, the class $\cV_{12}\mc{B}_{0,\neq}$ is
recursive modulo the word problem.  In particular, it is recursive in
$\mc{VAM}_3$.
As $\mc{AM}_3$ is also recursive in
$\mc{VAM}_3$, so is the intersection $\cSF_{0,\neq}$.  It follows  (Lemma
\ref{lem: Recursive subsets of recursive subsets}.\eqref{basic2}) that the
set $\cSF_{0,\neq}$ is recursive in $\cV_{12}\mc{B}_{0,\neq}$.  As
$\cV_{12}\mc{B}_{0,\neq}$ is recursive modulo the word problem, so too
is $\cSF_{0,\neq}$, using Lemma \ref{lem: recursive subsets
  2}.
  \end{proof}

\section{$\mathrm{Sol}$ geometry}\label{sec:Sol}

The goal of this section is to prove that $\mc{SOL}$ is recursive modulo the word problem.  Let $\mc{TB}^2_{S^1}$ be the set of orientable 2-torus bundles over the circle.  As in the previous cases, we use an explicit upper bound on the degree of a torus-bundle cover.

\begin{lemma}
If $M$ is a closed 3--manifold with $\mathrm{Sol}$ geometry then $M$ has a cover of index at most $8$ which is homeomorphic to an orientable torus bundle over a circle.
\end{lemma}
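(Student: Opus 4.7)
The plan is to realize $\pi_1(M)$ as a torsion-free, discrete, cocompact subgroup of $\mathrm{Isom}(\mathrm{Sol})$ acting freely on the universal cover $\mathrm{Sol}$. According to \cite{scott:geom}, the identity component of $\mathrm{Isom}(\mathrm{Sol})$ is exactly $\mathrm{Sol}$ (acting on itself by left translations), and the component group has order $8$. Setting $\Gamma_0 := \pi_1(M) \cap \mathrm{Sol}$ therefore gives a subgroup of index at most $8$ in $\pi_1(M)$, and the corresponding cover $M_0 = \mathrm{Sol}/\Gamma_0 \to M$ has degree at most $8$.

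I would then verify that $M_0$ is orientable. This is immediate: left translation by an element of a connected Lie group is homotopic to the identity, hence orientation-preserving, so the free action of $\Gamma_0 \subseteq \mathrm{Sol}$ on $\mathrm{Sol}$ preserves orientation and the quotient is orientable.

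It remains to identify $M_0$ as a torus bundle over $S^1$. Here I would use the semi-direct product structure $\mathrm{Sol} \cong \bR^2 \rtimes \bR$, in which the normal $\bR^2$ is the commutator subgroup. By Mostow's theorem on lattices in simply connected solvable Lie groups, the intersection $\Gamma_0 \cap \bR^2$ is a rank-$2$ lattice $L$ in $\bR^2$, and $\Gamma_0$ projects onto a cocompact copy of $\bZ$ in $\bR$. The ensuing short exact sequence $1 \to L \to \Gamma_0 \to \bZ \to 1$ exhibits the projection $\mathrm{Sol} \to \bR$ as descending to a torus bundle structure $M_0 \to S^1$, with monodromy given by the induced action of $\bZ$ on $\bR^2/L \cong T^2$.

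The one piece of external input required is the precise value $[\mathrm{Isom}(\mathrm{Sol}) : \mathrm{Sol}] = 8$, which is recorded in \cite{scott:geom} and will be the main obstacle in the sense that without this numerical fact one only knows the index is finite; the remaining steps are routine consequences of standard facts about simply connected solvable Lie groups and their lattices.
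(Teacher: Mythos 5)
Your proof is correct and takes essentially the same route as the paper: both pass to $\Gamma_0 = \pi_1(M)\cap\mathrm{Sol}$ using the fact from \cite{scott:geom} that $\mathrm{Sol}$ is the identity component of $\mathrm{Isom}(\mathrm{Sol})$ with index $8$, and both then use the extension $1\to\bR^2\to\mathrm{Sol}\to\bR\to 1$ to exhibit the corresponding cover as a torus bundle. The paper is quite terse at the second step (it simply says the torus bundle structure ``follows''), whereas you supply the missing details—appealing to Mostow's theorem for lattices in solvable Lie groups to see that $\Gamma_0\cap\bR^2$ is a rank-two lattice, and observing that left translations in a connected Lie group are homotopic to the identity, hence orientation-preserving—so your write-up is a fully fleshed-out version of the paper's argument rather than a different one.
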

\begin{proof}
The fundamental group of $M$ is a lattice in $\mathrm{Isom}(\mathrm{Sol})$, the isometry group of $\mathrm{Sol}$; $\mathrm{Sol}$ is naturally identified with the identity component of $\mathrm{Isom}(\mathrm{Sol})$, and has index 8 \cite[p. 476]{scott:geom}.  There is a short exact sequence
\[
1\to\bR^2\to\mathrm{Sol}\to\bR\to 1
\]
(see \cite[p. 470]{scott:geom}), and it follows that $\pi_1M\cap\mathrm{Sol}$ is the fundamental group of an orientable torus bundle.
\end{proof}

We use a structural theorem of Bieri--Strebel \cite[Theorem C]{BieriStrebel78}.

\begin{theorem}\label{thm:bieristrebel}
Every finitely presented abelian-by-cyclic group $G$ is an ascending HNN extension of a finitely generated abelian group $A$.  Furthermore, $G$ is polycyclic if and only if the endomorphism is an isomorphism.
\end{theorem}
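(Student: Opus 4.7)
The plan is to exploit the $\bZ[t^{\pm 1}]$-module structure on the abelian kernel, where $t$ is a lift of a generator of the cyclic quotient. Fix an exact sequence $1 \to A \to G \to \bZ \to 1$ with $A$ abelian, choose $t \in G$ projecting to a generator of $\bZ$, and let $\phi$ denote conjugation by $t$, so that $A$ becomes a module over $\bZ[t^{\pm 1}]$ with $t$ acting as $\phi$. The ascending HNN structure we seek amounts to finding a finitely generated abelian $B \subseteq A$ satisfying $\phi(B) \subseteq B$ and $A = \bigcup_{k \geq 0} \phi^{-k}(B)$; the stable letter will then be $t$.

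The heart of the argument is to show that finite presentation of $G$ forces $A$ to be finitely generated as a $\bZ[t^{\pm 1}]$-module. Starting from a finite presentation of $G$, enlarge the generating set to include $t$ together with coset representatives $a_1, \ldots, a_n$ for a finite generating set of $A$. Every element of $A$ is then a word in the conjugates $\{t^k a_i t^{-k}\}$, so the $a_i$ generate $A$ as a $\bZ[t^{\pm 1}]$-module; the nontrivial content is that the submodule of relations is itself finitely generated, which one extracts by Reidemeister--Schreier rewriting of the finitely many defining relators of $G$. This is exactly the place where finite presentability (rather than mere finite generation) of $G$ is essential, and is the technical core of Bieri--Strebel.

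Given module generators $a_1, \ldots, a_n$ of $A$, expand $\phi(a_i) = \sum_j q_{ij}(t) \cdot a_j$ for Laurent polynomials $q_{ij} \in \bZ[t^{\pm 1}]$, and let $N$ bound the exponents of $t$ appearing in any $q_{ij}$. Set $B$ to be the abelian subgroup of $A$ generated by $\{t^{-k} a_i t^k : 0 \leq k \leq N,\ 1 \leq i \leq n\}$. Then $B$ is finitely generated abelian, $\phi(B) \subseteq B$ by the choice of $N$, and $A = \bigcup_{k \geq 0} \phi^{-k}(B)$ because every $\bZ[t^{\pm 1}]$-combination of the $a_i$ lies in $\phi^{-k}(B)$ once $k$ is large enough to absorb negative powers of $t$. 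Consequently $G = \langle B, s \mid sbs^{-1} = \phi(b),\ b \in B\rangle$ as an ascending HNN extension.

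For the final equivalence, recall that polycyclic groups are exactly those all of whose subgroups are finitely generated. If $\phi\colon B \to B$ is an isomorphism, then $G \cong B \rtimes_\phi \bZ$ is an extension of $\bZ$ by a finitely generated abelian group, hence polycyclic. Conversely, if $G$ is polycyclic then its subgroup $A$ is finitely generated, so one may take $B = A$, and then $\phi$ is a genuine automorphism of $B = A$. The main obstacle is the module-finiteness claim in the second paragraph: once that is in hand, the passage to $B$, the ascending HNN presentation, and the polycyclic dichotomy are all bookkeeping.
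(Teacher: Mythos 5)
The paper does not prove this statement; it is quoted as a structural theorem of Bieri--Strebel, citing \cite[Theorem~C]{BieriStrebel78} together with the remark that the ``only if'' direction of the polycyclic criterion is from \cite{BieriStrebel80}. So you are not reproducing the paper's argument but attempting an independent proof, and unfortunately there is a genuine gap.

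You locate the ``technical core'' in the wrong place. Finite generation of $A$ as a $\bZ[t^{\pm 1}]$-module is automatic from finite \emph{generation} of $G$: if $G = \langle t, a_1,\ldots,a_n\rangle$ with $a_i \in A$, then $A$ is the normal closure of the $a_i$, hence (as $A$ is abelian and normal) is generated by $\{t^k a_i t^{-k}\}$, i.e.\ by the $a_i$ as a module. No finite presentation is used. Your next appeal to the relation module being finitely generated does not rescue the argument either, because finite presentation of $A$ as a $\bZ[t^{\pm 1}]$-module is not the condition that makes the construction work. The clean counterexample is $G = \bZ \wr \bZ$: here $A = \bZ[t^{\pm 1}]$ is a \emph{free} module of rank one, hence finitely presented as a module with trivial relation module, yet $G$ is not finitely presented and is not an ascending HNN extension of any finitely generated base.

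The concrete failure is in the passage from module generators to the subgroup $B$. Writing $\phi(a_i) = \sum_j q_{ij}(t)\, a_j$ with $q_{ij}\in\bZ[t^{\pm 1}]$ is vacuous --- $q_{ij}=t\,\delta_{ij}$ always works --- so the bound $N$ carries no information and one cannot conclude $\phi(B)\subseteq B$. What the construction actually requires is that $A$ be finitely generated over $\bZ[t^{-1}]$ (or, symmetrically, over $\bZ[t]$): only then can one expand $t\cdot a_i$ purely in non-positive powers of $t$, take $N$ to be the resulting degree bound, set $B=\langle t^{-k}a_i t^{k}: 0\le k\le N\rangle$, and verify $tBt^{-1}\subseteq B$ and $A = \bigcup_{k} t^{-k}Bt^{k}$. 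The genuinely hard content of Bieri--Strebel --- the part where finite presentability of $G$ enters --- is precisely the theorem that a finitely presented extension of $\bZ$ by a finitely generated $\bZ[t^{\pm1}]$-module $A$ forces $A$ to be ``tame,'' meaning finitely generated over at least one of $\bZ[t]$, $\bZ[t^{-1}]$; this is the $\Sigma$-invariant condition, and it does not reduce to Reidemeister--Schreier bookkeeping. Your final paragraph on the polycyclic equivalence is essentially fine, but it sits on top of the unproven tameness step.
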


\begin{remark}
The `only if' part of the above theorem is not contained in \cite{BieriStrebel78}, but it is proved in  \cite{BieriStrebel80}.
\end{remark}

Consider $\mc{ABC}$, the class of abelian-by-cyclic groups, $\cS_2$
the class of metabelian groups and  $\cF_{\cA}(\{\bZ\})$ the class of
groups with abelianization isomorphic to $\bZ$.  (Recall that in each
of these cases we really mean just those \emph{finite} presentations
giving a group with the given properties.)  Note that
$\cF_{\cA}(\{\bZ\})$ is recursive.  

\begin{lemma}
We have $\mc{ABC}\cap \cF_{\cA}(\{\bZ\})=\cS_2\cap \cF_{\cA}(\{\bZ\})$ and this class is recursive modulo the word problem.
\end{lemma}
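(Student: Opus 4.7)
First I would verify the claimed equality of classes. For the inclusion $\cS_2\cap\cF_\cA(\{\bZ\})\subseteq\mc{ABC}\cap\cF_\cA(\{\bZ\})$, if $G$ is metabelian with $G_\ab\cong\bZ$ then $A=[G,G]$ is abelian and $G/A\cong\bZ$ is cyclic, so $G$ is abelian-by-cyclic. Conversely, if $1\to A\to G\to C\to 1$ with $A$ abelian and $C$ cyclic, then $[G,G]\subseteq A$ because $C$ is abelian, so $[G,G]$ is abelian and $G$ is metabelian.

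Next I would show the common class $\cE:=\cS_2\cap\cF_\cA(\{\bZ\})$ is recursively enumerable. By Theorem \ref{thm:bieristrebel}, every group in $\cE$ arises as an ascending HNN extension $G_{A,\phi}=\langle A,t\mid tat^{-1}=\phi(a)\rangle$ for some finitely generated abelian $A$ and injective endomorphism $\phi$. I would enumerate all such pairs $(A,\phi)$ by presenting $A=\bZ^n/M\bZ^m$ via an integer matrix $M$, specifying $\phi$ by an integer matrix compatible with $M$, and checking injectivity (decidable for finitely generated abelian groups). The resulting HNN presentation is finite; its abelianization is computable by Smith normal form, and I retain only those with abelianization $\bZ$. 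Enumerating Tietze transformations of these presentations produces every finite presentation in $\cE$.

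Finally I would run the standard recipe for recursiveness modulo the word problem. Let $\cD\subseteq\cU$ have uniformly solvable word problem, witnessed by an algorithm $T$ correct on inputs from $\cD$. Set $\cX_{\cE,\cD}=\cE$, which is recursively enumerable by the previous paragraph and disjoint from $\cD\smallsetminus\cE$. For $\cY_{\cE,\cD}$, take the union of $\cU\smallsetminus\cF_\cA(\{\bZ\})$ (recursive by Smith normal form) with the set of presentations $P$ for which there exist generator-words $a,b,c,d$ such that $T(P,[[a,b],[c,d]])$ halts and reports ``non-trivial''; the latter set is recursively enumerable by running $T$ in parallel on all tuples. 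If $P\in\cD\smallsetminus\cE$ then either its abelianization fails to be $\bZ$, or $G_P$ is not metabelian, so $[G_P',G_P']\neq 1$ and some iterated commutator $[[a,b],[c,d]]$ is non-trivial in $G_P$; since $P\in\cD$, $T$ correctly confirms this. Conversely, if $P\in\cE\cap\cD$ then $P\in\cF_\cA(\{\bZ\})$, all second commutators vanish in $G_P$, and $T$ never reports non-trivially on one, so $P\notin\cY_{\cE,\cD}$.

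The only real subtlety is confirming that the Bieri--Strebel enumeration, followed by Tietze transformations, exhausts every finite presentation in $\cE$; everything else is the standard pairing of an explicit enumeration of the class with a word-problem-driven search for witnesses to the failure of metabelianness.
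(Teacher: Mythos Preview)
Your argument is correct and follows the same overall architecture as the paper: establish the equality of classes, use Bieri--Strebel to see that the class is recursively enumerable (hence serves as $\cX$), and use the word-problem oracle to certify failure of metabelianness for $\cY$.

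The one genuine difference is in how $\cY$ is built. The paper invokes the uniform solvability of the word problem in metabelian groups \cite{BCR94}: it compares $G$ with its metabelianization $G/G^{(2)}$ (which carries a recursive presentation) and searches for a word trivial in $G/G^{(2)}$ but nontrivial in $G$. You instead search directly for words $a,b,c,d$ with $[[a,b],[c,d]]\neq 1$ in $G$, using only the oracle for $\cD$. This is more elementary, and it is justified by the observation that $G'$ is generated by single commutators, so $G'$ is abelian if and only if any two commutators $[a,b]$ and $[c,d]$ commute; hence $G^{(2)}\neq 1$ is witnessed by some quadruple. The paper's route generalizes more readily (to longer derived series, say), while yours avoids an external citation.

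One cosmetic point: make sure ``generator-words'' is read as ``words in the generators'' (arbitrary elements of $G$), not as generators themselves; restricting $a,b,c,d$ to the finite generating set would not suffice.
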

\begin{proof}
Because $\mc{ABC}\subseteq \cS_2$, one direction of the equality is immediate.  On the other hand, any group in $\cS_2\cap \cF_{\cA}(\{\bZ\})$ is clearly abelian-by-cyclic.

The class $\cF_{\cA}(\{\bZ\})$ is recursive and $\mc{ABC}$ is
recursively enumerable by the Bieri-Strebel theorem
\ref{thm:bieristrebel}.  On the other hand, let $\cD$ be a class of
groups in which the word problem is uniformly solvable.  For any group
$G$, the metabelianization $G/G^{(2)}$ has a natural recursive
presentation.  Because the word problem is uniformly solvable for
metabelian groups (see eg \cite[Theorem 2.1]{BCR94}), a solution to the word problem in $G$ provides a partial algorithm that finds non-trivial elements of the kernel of the natural map $G\to G/G^{(2)}$.  Therefore, there is a partial algorithm that confirms membership of $\cD\smallsetminus \cS_2$.  The result follows.  
\end{proof}

\begin{theorem}\label{thm: SOL}
The class $\mc{SOL}$ is recursive modulo the word problem.
\end{theorem}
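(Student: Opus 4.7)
The plan is to mimic the strategy used for the Euclidean and Nil geometries. Let $\mc{TB}_{\mathrm{Sol}}$ denote the class of fundamental groups of orientable Sol torus bundles over $S^{1}$; these are exactly the groups $\bZ^{2}\rtimes_{A}\bZ$ with $A\in SL_{2}(\bZ)$ Anosov (i.e., $|\trace(A)|>2$). By the preceding cover lemma, $\mc{SOL}\subseteq\cV_{8}\mc{TB}_{\mathrm{Sol}}$, and of course $\mc{SOL}\subseteq\mc{AM}_{3}$. I will establish in turn that $\mc{TB}_{\mathrm{Sol}}$ is recursive modulo the word problem, invoke Lemma~\ref{lem: Virtually recursive modulo the word problem} to get the same for $\cV_{8}\mc{TB}_{\mathrm{Sol}}$, and then use Corollary~\ref{cor: AM3 in VAM3} to descend to $\mc{SOL}=\mc{AM}_{3}\cap\cV_{8}\mc{TB}_{\mathrm{Sol}}$.

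The key step is to prove $\mc{TB}_{\mathrm{Sol}}$ is recursive modulo the word problem. The class is recursively enumerable: enumerate Anosov matrices $A\in SL_{2}(\bZ)$ and write down the associated presentations of $\bZ^{2}\rtimes_{A}\bZ$, closing under Tietze transformations. For the separating negative side, suppose $\cD$ is a class with uniformly solvable word problem. Given $G\in\cD$ outside $\mc{TB}_{\mathrm{Sol}}$, at least one of the following conditions holds and is recursively certifiable: (i) $G$ is not metabelian --- combining the uniformly solvable word problem in metabelian groups (which lets us compute in $G/G^{(2)}$) with the word problem in $G$ itself locates a nontrivial element of $G^{(2)}$, exactly as in the proof of the preceding lemma on $\mc{ABC}\cap\cF_{\cA}(\{\bZ\})$; (ii) the abelianization of $G$ does not have free rank one, verifiable directly; or (iii) $G$ is metabelian with rank-one abelianization but not isomorphic to any $\bZ^{2}\rtimes_{A}\bZ$ with $A$ Anosov. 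For (iii), the Bieri--Strebel theorem (Theorem~\ref{thm:bieristrebel}) together with the recursive enumerability of $\mc{ABC}$ lets us hunt for an ascending HNN presentation of $G$ with polycyclic structure, and Segal's solution of the isomorphism problem for polycyclic groups then certifies the isomorphism type. In the special case $H_{1}(G)\cong\bZ$, the preceding lemma collapses (i) and (iii) into a test of metabelianity alone; for Sol bundles with torsion in $H_{1}$ we genuinely need the polycyclic machinery.

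Once $\mc{TB}_{\mathrm{Sol}}$ is recursive modulo the word problem, Lemma~\ref{lem: Virtually recursive modulo the word problem} yields the same for $\cV_{8}\mc{TB}_{\mathrm{Sol}}\subseteq\mc{VAM}_{3}$, and the word problem is uniformly solvable throughout $\mc{VAM}_{3}$ by Proposition~\ref{hempelcor}. Since $\mc{AM}_{3}$ is recursive in $\mc{VAM}_{3}$ by Corollary~\ref{cor: AM3 in VAM3} and $\mc{SOL}=\mc{AM}_{3}\cap\cV_{8}\mc{TB}_{\mathrm{Sol}}$, Lemma~\ref{lem: Recursive subsets of recursive subsets}.\eqref{basic2} implies $\mc{SOL}$ is recursive in $\cV_{8}\mc{TB}_{\mathrm{Sol}}$, and Lemma~\ref{lem: recursive subsets 2} then gives $\mc{SOL}$ recursive modulo the word problem. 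The main obstacle is condition~(iii) above: effectively certifying that a metabelian group with torsion in $H_{1}$ and free rank one is not a Sol torus bundle, which seems to require combining the structural Bieri--Strebel theorem with Segal's decidability of the polycyclic isomorphism problem.
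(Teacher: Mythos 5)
Your overall architecture---reduce via the cover lemma to torus bundle groups, prove that class is recursive modulo the word problem, apply Lemma~\ref{lem: Virtually recursive modulo the word problem}, and intersect with $\mc{AM}_3$ via Corollary~\ref{cor: AM3 in VAM3}---is the same as the paper's, and you correctly observe a subtlety the paper glosses over: the abelianization of $\bZ^2\rtimes_A\bZ$ is $\bZ\oplus\bigl(\bZ^2/(I-A)\bZ^2\bigr)$, with torsion of order $|2-\trace A|$, so most Sol torus bundle groups do \emph{not} lie in $\cF_{\cA}(\{\bZ\})$, the class the paper works inside. You also correctly replace the paper's criterion ``$\phi$ an isomorphism of order $>6$'' by the sharper Anosov condition $|\trace A|>2$. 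So your approach is in some ways more careful.

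However, there is a genuine gap in your case~(iii), and it is precisely the gap that the paper's restriction to $\cF_{\cA}(\{\bZ\})$ is designed (perhaps imperfectly) to avoid. When $H_1(G)\cong\bZ$, metabelianity forces the kernel of $G\twoheadrightarrow\bZ$ to coincide with the abelian subgroup $G'$, so $G$ is automatically abelian-by-$\bZ$; Bieri--Strebel then produces the \emph{unique} ascending HNN splitting $A*_\phi$, and one simply tests whether $A\cong\bZ^2$ and $\phi$ is Anosov. By contrast, in your case~(iii) you only know $G$ is metabelian with $H_1(G)$ of free rank one, which does not imply $G$ is abelian-by-cyclic (the kernel of $G\twoheadrightarrow\bZ$ is a finite extension of $G'$ and need not be abelian), nor that $G$ is polycyclic. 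So the ``hunt for an ascending HNN presentation'' may never terminate when $G\notin\mc{ABC}$, and Segal's algorithm cannot be invoked until you already know $G$ is polycyclic. You therefore have no terminating procedure for certifying $G\notin\mc{TB}_{\mathrm{Sol}}$ in exactly the subcase you yourself flag as the hard one. To close the gap you need either (a) a recursively enumerable certificate for ``$G$ is metabelian with free-rank-one abelianization but not abelian-by-cyclic,'' or (b) a reduction (as the paper attempts) to a class where abelian-by-cyclic is automatic, or (c) some use of the $\mc{AM}_3$ constraint \emph{before} the negative certification of $\mc{TB}_{\mathrm{Sol}}$, rather than only at the final step. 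As written, none of these is supplied.
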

\begin{proof}
Because $\mc{SOL}\subseteq\cV_8(\mc{SOL}\cap\mc{TB}^2_{S^1})$, it
suffices to prove that $\mc{SOL}\cap\mc{TB}^2_{S^1}$ is recursive
modulo the word problem.   By the previous lemma, it suffices to show
that $\mc{SOL}\cap\mc{TB}^2_{S^1}$ is recursive in $\mc{ABC}\cap
\cF_{\cA}(\{\bZ\})$.  For a group $G\in \mc{ABC}\cap
\cF_{\cA}(\{\bZ\})$, the decomposition of $G$ as an ascending HNN
extension $A*_\phi$ is unique, and a non-deterministic search will
find it.  The class $\mc{SOL}\cap\mc{TB}^2_{S^1}$ consists of
precisely those groups for which $A\cong\bZ^2$ and $\phi$ is an
isomorphism of order greater than $6$ (implying infinite order).  As
both of these are decidable, the result follows. 
\end{proof}

\section{Seifert fibred manifolds with hyperbolic base orbifold}\label{sec:SF-}

Similarly to the results above, we approach $\cSF_{-,=}$ and
$\cSF_{-,\neq}$ via the related classes $\cB_{-,=}$ and $\cB_{-,\neq}$
of circle bundles over orientable surfaces with zero and non-zero
Euler numbers respectively.  Adapting the above techniques, we can use
algebraic geometry over free groups to prove that $\cB_{-,=}$  is
recursive modulo the word problem.  Although every Seifert fibred
manifold has a finite-sheeted covering space that is a circle bundle,
the degree of the covering map is not bounded independently of the manifold.  We will prove that the index of this subgroup is, however, computable, using an effective version of Selberg's Lemma.

\subsection{Circle bundles}

Let $\mc{CAS}$ be the class of finitely presentable central extensions of fundamental groups of closed, orientable, hyperbolic surfaces.

\begin{lemma}\label{l: Finite generation in central extensions}
Let
\[
1\to A\to G\to Q\to 1
\]
be a central extension.  If $G$ is finitely generated and $Q$ is finitely presented then $A$ is finitely generated.
\end{lemma}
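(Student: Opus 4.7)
The plan is to use the standard strategy for showing that the kernel of a surjection from a finitely generated group to a finitely presented group is normally finitely generated, together with the extra hypothesis of centrality to promote ``normally finitely generated'' to ``finitely generated'' inside $A$.

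First I would choose a finite generating set $g_1,\ldots,g_n$ for $G$ and let $\pi\co G\to Q$ be the quotient map. Then $\pi(g_1),\ldots,\pi(g_n)$ generate $Q$, so because $Q$ is finitely presented, a Tietze-transformation argument produces a finite presentation
\[
Q=\langle x_1,\ldots,x_n\mid r_1(x_1,\ldots,x_n),\ldots,r_m(x_1,\ldots,x_n)\rangle
\]
on this particular generating set, where each $r_j$ is a word in the free group $F_n=F(x_1,\ldots,x_n)$. For each $j$, set $a_j:=r_j(g_1,\ldots,g_n)\in G$. Since $r_j$ is a relator in $Q$, we have $\pi(a_j)=1$, so $a_j\in A$.

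I claim the finite set $\{a_1,\ldots,a_m\}$ generates $A$, which will prove the lemma. Let $a\in A$ be arbitrary. Since $\{g_i\}$ generates $G$, we can write $a=w(g_1,\ldots,g_n)$ for some word $w\in F_n$. Projecting, $w(x_1,\ldots,x_n)$ represents the identity in $Q$, so it lies in the normal closure of $\{r_1,\ldots,r_m\}$ in $F_n$; that is, in the free group we have an expression
\[
w=\prod_{k} u_k\, r_{j_k}^{\epsilon_k}\, u_k^{-1}
\]
with $u_k\in F_n$ and $\epsilon_k\in\{\pm 1\}$. Evaluating both sides on $g_1,\ldots,g_n$ in $G$ gives
\[
a=\prod_{k} u_k(g)\, a_{j_k}^{\epsilon_k}\, u_k(g)^{-1}.
\]
Here is the only place the hypothesis of centrality is used: because each $a_{j_k}\in A$ lies in the centre of $G$, each conjugation $u_k(g)\,(\cdot)\,u_k(g)^{-1}$ acts trivially on $a_{j_k}^{\epsilon_k}$, so the product collapses to $a=\prod_k a_{j_k}^{\epsilon_k}\in\langle a_1,\ldots,a_m\rangle$. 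Thus $A$ is finitely generated.

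There is no real obstacle: the only subtle point is the passage from an abstract finite presentation of $Q$ to one on the image of our chosen generating set, which is handled by a standard Tietze argument. Everything else is immediate from centrality.
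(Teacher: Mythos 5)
Your proof is correct. Both your argument and the paper's reduce the problem to showing that $A$ is \emph{normally} finitely generated in $G$, and then both use centrality in the identical final step (normal closure of a central set equals the subgroup it generates). Where you differ is in how you reach normal finite generation: you invoke the classical observation that if $G$ is finitely generated and $Q$ is finitely presented, then one can take a finite presentation of $Q$ on the images of a finite generating set of $G$, evaluate the relators back in $G$ to land in $A$, and deduce from $w\in\llangle r_1,\ldots,r_m\rrangle_{F_n}$ that those evaluations normally generate $A$. The paper instead starts from a (possibly infinite) presentation $\langle Y\mid S\rangle$ of $A$, lifts $Y$ to words in the generators of $G$, uses finite presentability of $Q$ to see that only finitely many generators $Y'\subseteq Y$ are needed to express the relators of $Q$, and then argues that adjoining $Y'=1$ recovers $Q$, so $Y'$ normally generates $A$. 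Your route is the more direct and more standard one, and it avoids introducing an auxiliary presentation of $A$; the paper's version makes the resulting explicit presentation of $G$ visible, which is not needed for the statement. Both are valid, and the use of centrality is identical.
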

\begin{proof}
Fix a generating set $X$ for $G$ and let $\langle X\mid R(X)\rangle$
be a finite presentation for $Q$.  Let $\langle Y\mid S(Y)\rangle$ be
a (not necessarily finite) presentation for $A$, and choose a set of
representative words $Y(X)$ to define a choice of map of free groups
$F_Y\to F_X$.  Thinking of $R(X)$ as a subset of the free group $F_X$,
we have by hypothesis that $R(X)\subseteq \llangle Y(X)\rrangle$.  Of
course $A = \langle Y(X)\rangle$ is normal, so 
$R(X) \subseteq \langle Y(X)\rangle$, and we may choose words $\widetilde{R}(Y)$ such that $\widetilde{R}(Y(X))=R(X)$.  The following is now easily checked to be a presentation for $G$.
\[
G\cong \langle X\mid S(Y(X))=1,~\widetilde{R}(Y(X))=R(X) \rangle
\]
Because $Q$ is finitely presented, only finitely many elements of $Y$ appear in the words $\widetilde{R}(Y)$.  Let $Y'\subseteq Y$ be the set of all words that appear.  Adjoining the relations $Y'(X)=1$ to $G$ defines the quotient map $G\to Q$, so the elements of $G$ defined by $Y'(X)$ normally generate $A$.  But $A$ is central in $G$, and so $Y'$ is a finite generating set for $A$.
\end{proof}

The following is an immediate consequence.

\begin{lemma}
The set $\mc{CAS}$ is recursively enumerable.
\end{lemma}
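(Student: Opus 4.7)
The plan is to use the preceding Lemma \ref{l: Finite generation in central extensions} to show that every group in $\mc{CAS}$ admits a finite presentation of a very explicit form, and then to enumerate all such presentations.

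Concretely, suppose $G \in \mc{CAS}$, so we have a central extension
\[
1 \to A \to G \to \pi_1\Sigma_g \to 1
\]
with $g \geq 2$ and $G$ finitely presentable (hence finitely generated). Since $\pi_1\Sigma_g$ is finitely presented, Lemma \ref{l: Finite generation in central extensions} tells us that $A$ is finitely generated. Choose a finite presentation $\langle Y \mid S(Y)\rangle$ of $A$. Lifting the standard generators $a_1,b_1,\ldots,a_g,b_g$ of $\pi_1\Sigma_g$ to $G$, the surface relator $\prod_i [a_i,b_i]$ lifts to some central element, which can be expressed as a word $w(Y)$. One then verifies that
\[
G \cong \bigl\langle a_1,b_1,\ldots,a_g,b_g,Y \,\bigm|\, \textstyle\prod_i[a_i,b_i]\,w(Y)^{-1},\ [a_i,Y],\ [b_i,Y],\ S(Y)\bigr\rangle,
\]
since this presentation manifestly describes a central extension of $\pi_1\Sigma_g$ by $A$ with the correct cohomology class.

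The algorithm is then straightforward: enumerate all quadruples consisting of an integer $g \geq 2$, a finite presentation $\langle Y \mid S(Y)\rangle$ of a finitely generated abelian group, and a word $w(Y)$ in the generators $Y$; for each, output the finite presentation displayed above. Finally, close the resulting list under Tietze transformations to obtain all finite presentations of groups of this isomorphism type. By the previous paragraph, this recursively enumerable list contains a presentation for every group in $\mc{CAS}$, and conversely every presentation produced clearly presents a central extension of a closed orientable hyperbolic surface group, so lies in $\mc{CAS}$.

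The only step requiring care is the justification that the presentation displayed above really is isomorphic to $G$; this follows from the standard description of central extensions by generators and relations, once we know (from Lemma \ref{l: Finite generation in central extensions}) that $A$ is finitely generated so that $Y$ may be chosen finite. No serious obstacle arises: the work has essentially all been done in the previous lemma.
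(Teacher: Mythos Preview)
Your proposal is correct and follows precisely the approach the paper intends: the paper records this lemma as ``an immediate consequence'' of Lemma~\ref{l: Finite generation in central extensions} without further detail, and you have supplied that detail by showing every $G\in\mc{CAS}$ has a finite presentation of the displayed standard form (using the lemma to guarantee $A$ is finitely generated), and then enumerating all such standard presentations and closing under Tietze moves. One small point worth making explicit is how you enumerate ``finite presentations of finitely generated abelian groups''---it suffices to enumerate presentations on a finite set $Y$ whose relators include all commutators $[y,y']$---but this is routine.
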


An easy argument characterizes factor sets (over free groups) of elements of $\mc{CAS}$.

\begin{lemma}
If $\Gamma\in\mc{CAS}$ then $F_{\cF}(\Gamma)\subseteq \{H_1(\Gamma;\bZ),\pi_1\Sigma\}$ for $\Sigma$ some closed, orientable, hyperbolic surface.
\end{lemma}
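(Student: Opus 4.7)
The plan is to analyze directly where an arbitrary homomorphism $f\co\Gamma\to F$ with $F\in\cF$ can land, using that $\Gamma$ sits in a central extension $1\to A\to \Gamma\to\pi_1\Sigma\to 1$. First I would record two standing observations: $f(\Gamma)\leq F$ is itself a free group (subgroups of free groups are free), and since $A\leq Z(\Gamma)$ we have $f(A)\leq Z(f(\Gamma))$. I then split on whether $f(\Gamma)$ is abelian. If $f(\Gamma)$ is abelian, it is cyclic (being free and abelian), so $f$ factors through the abelianization $\eta_1\co\Gamma\twoheadrightarrow H_1(\Gamma;\bZ)$. If $f(\Gamma)$ is non-abelian, then the fact that centralizers of non-trivial elements of a free group are cyclic forces $Z(f(\Gamma))=1$; hence $f(A)=1$, and $f$ factors through $\eta_2\co\Gamma\twoheadrightarrow\Gamma/A\cong\pi_1\Sigma$.

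This produces a candidate $\cF$-factor set $\{\eta_1,\eta_2\}$ for $\Gamma$. The target $\pi_1\Sigma$ lies in $\overline{\cF}$ as already noted in the paper, and when $H_1(\Gamma;\bZ)$ is torsion-free it is a finitely generated free abelian group, which is fully residually free (pick a linear functional nonzero on any given finite set of nontrivial vectors) and so also lies in $\overline{\cF}$. Given that $\{\eta_1,\eta_2\}$ is then an honest $\cF$-factor set, the uniqueness-of-minimal-factor-sets lemma proved earlier shows that the minimal factor set $F_\cF(\Gamma)$ is (equivalent to) a sub-collection of $\{\eta_1,\eta_2\}$, yielding the stated containment.

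The one point I expect to require a little care is the case where $H_1(\Gamma;\bZ)$ has torsion, because then $\eta_1$ itself cannot literally belong to an $\cF$-factor set. In that case the argument in the first paragraph still shows that any $f$ with cyclic image factors through the further quotient $\Gamma\to H_1(\Gamma;\bZ)/\mathrm{tors}\cong\bZ^k$, which does lie in $\overline{\cF}$. Reading $H_1(\Gamma;\bZ)$ in the statement as shorthand for its maximal torsion-free quotient (or, equivalently, observing that the $\eta_1$ entry simply drops out of the minimal factor set and leaves a subset of $\{\pi_1\Sigma\}$), the claimed containment still holds, and this is the only wrinkle I foresee in turning the plan into a proof.
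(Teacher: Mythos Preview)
Your argument is correct and is precisely the ``easy argument'' the paper alludes to but does not spell out: split on whether the free image is abelian, and in the non-abelian case use that a non-abelian free group has trivial centre to force $f(A)=1$.

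One small correction to your final paragraph. Your first alternative---replacing $H_1(\Gamma;\bZ)$ by its maximal torsion-free quotient---is the right fix and makes the proof go through, since any homomorphism to a torsion-free abelian group factors through that quotient. Your second alternative, that when $H_1(\Gamma;\bZ)$ has torsion the $\eta_1$ entry ``simply drops out'' leaving $F_\cF(\Gamma)\subseteq\{\pi_1\Sigma\}$, is not valid in general: take for instance $\Gamma=(\bZ\times\bZ/2)\times\pi_1\Sigma$, where $H_1(\Gamma;\bZ)\cong\bZ^{2g+1}\times\bZ/2$ has torsion, yet the projection $\Gamma\to\bZ$ onto the free cyclic factor of $A$ does not kill $A$ and hence does not factor through $\eta_2$. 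So stick with the torsion-free-quotient reading.
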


\begin{lemma}
The set $\mc{CAS}$ is recursive modulo the word problem.
\end{lemma}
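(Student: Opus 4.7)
The plan is to combine three ingredients that are already in place: computability of $\cF$-factor sets (Corollary \ref{cor: Factor sets are computable over F}), the solvable isomorphism problem in $\overline{\cF}$, and the solvable word problem in $\cD$. Since the preceding lemma already shows that $\mc{CAS}$ is recursively enumerable, I can take $\cX_{\mc{CAS},\cD}=\mc{CAS}$ for any class $\cD$ with uniformly solvable word problem, so the task reduces to describing a recursively enumerable $\cY_{\mc{CAS},\cD}$ that catches every $G\in\cD\smallsetminus\mc{CAS}$.

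Given a presentation $\langle X\mid R\rangle$ for $G\in\cD$, I would first compute the minimal $\cF$-factor set $F_\cF(G)=\{\eta_i\co G\to L_i\}$. By the previous lemma, if $G\in\mc{CAS}$ then (after discarding the abelianization factor, if present) the factor set contains an epimorphism $\eta\co G\to\pi_1\Sigma$ for some closed orientable hyperbolic surface $\Sigma$. Using the solvable isomorphism problem in $\overline{\cF}$, test each $L_i$ to see if it is a hyperbolic surface group. If no such factor appears, then $G\notin\mc{CAS}$ and we add $G$ to $\cY_{\mc{CAS},\cD}$.

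If a surface factor $\eta\co G\to\pi_1\Sigma$ does appear, the second step would be to decide directly whether $\eta$ realizes $G$ as a central extension. Fix a standard presentation $\pi_1\Sigma=\langle y_1,\dots,y_m\mid s_1,\dots,s_n\rangle$, lift each $y_i$ to a word $\tilde y_i$ in $X$ with $\eta(\tilde y_i)=y_i$ (available explicitly from the factor set computation), and form the lifts $\tilde s_j\in G$ of the relators. Then $\{\tilde s_j\}$ normally generates $\ker\eta$, so $\ker\eta$ is central precisely when $[\tilde s_j,x]=1$ in $G$ for every $j$ and every $x\in X$. This is a finite collection of equalities that is decidable from the word problem in $G$. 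If the test passes, then $G$ is a finitely presented central extension of $\pi_1\Sigma$ and so lies in $\mc{CAS}$; otherwise, $G$ is added to $\cY_{\mc{CAS},\cD}$.

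The main point I expect to have to justify with some care is the converse of the verification step: if $G\in\mc{CAS}$, then the particular map $\eta$ extracted from the factor set must genuinely witness $G$ as a central extension, so that the commutator test always succeeds in this case. This should follow from the uniqueness of the minimal factor set together with the structural content of the previous lemma: the surface quotient appearing in the factor set is forced to agree, up to an automorphism of the target, with the defining central extension quotient, and hence its kernel is the center. Granted this, the two failure modes above yield a recursively enumerable class $\cY_{\mc{CAS},\cD}$ separating $\cD\smallsetminus\mc{CAS}$ from $\mc{CAS}$, completing the proof.
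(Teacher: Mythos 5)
Your overall approach is close in spirit to the paper's, but the two proofs diverge at the crucial step, and your version contains a concrete gap in the commutator test.

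The paper also starts from computability of $\cF$-factor sets and reduces to groups $G$ equipped with the canonical surjection $\eta\co G\to\pi_1\Sigma$ onto a hyperbolic surface group extracted from the factor set. The difference lies in how the kernel is interrogated: you test whether $\ker\eta$ is \emph{central} via a finite commutator check, while the paper puts into $\cY$ those $G$ for which the word problem eventually exhibits that $\ker\eta$ is \emph{non-abelian}. Your criterion (centrality of $\ker\eta$) is exactly what membership in $\mc{CAS}$ requires, and the concern you flag resolves cleanly: if $G\in\mc{CAS}$ via $q\co G\to\pi_1\Sigma_0$ with central kernel $A$, then $q$, being a surjection onto a non-abelian limit group, must factor through the surface factor $\eta$ of the factor set, so $\ker\eta\subseteq\ker q=A$ is automatically central; conversely a central $\ker\eta$ directly exhibits $G\in\mc{CAS}$. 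So the top-level logic of your verification is fine.

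The gap is in the claim that $\{\tilde s_j\}$ normally generates $\ker\eta$. If you introduce new generators $y_1,\dots,y_m$ for $\pi_1\Sigma$ and lift them to words $\tilde y_i$ in $X$, then lifting only the surface relators is not enough: $\ker\eta$ is normally generated by $\{\tilde s_j\}$ \emph{together with} the words $x_k\,w_k(\tilde y_1,\dots,\tilde y_m)^{-1}$ that record how each original generator $x_k\in X$ is expressed modulo $\ker\eta$ in terms of the $\tilde y_i$ (where $\eta(x_k)=w_k(y_1,\dots,y_m)$). Omitting these can make the commutator test pass when $\ker\eta$ is not central. Consider for instance $G=F(a,b)$ with $\eta\co G\to\bZ=\langle y\rangle$ given by $\eta(a)=y$, $\eta(b)=1$, and lift $\tilde y=a$: there are no relators $s_j$, yet $\ker\eta$ is the non-central normal closure of $b$. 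The cleanest fix is to work directly with the presentation $L_i=\langle X\mid R\cup S_i\rangle$ produced by the factor-set algorithm (which is already on the generators $X$ with $\eta$ the identity on generators), so that the images of $S_i$ in $G$ do normally generate $\ker\eta$, and then run the commutator test $[s,x]=1$ for $s\in S_i$, $x\in X$. With that correction the argument is sound, and indeed it gives a decidable (not merely recursively enumerable) criterion for membership in $\mc{CAS}$ once the factor set has been computed.
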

\begin{proof}
By Corollary \ref{cor: Factor sets are computable over F}, the set of presentations of groups with $\cF$-factor sets consisting of the abelianization and a single map to a closed, orientable, hyperbolic surface group is recursive, so we may reduce to this case.  Thus, we only need to consider groups $G$ equipped with a given canonical epimorphism $G\to\pi_1\Sigma$.

Suppose now that the word problem is uniformly solvable in $\cD$. By the above lemma, we can set $\cX_{\mc{CA}(\pi_1\Sigma),\cD}=\mc{CA}(\pi_1\Sigma)$.  The set $\cY_{\mc{CA}(\pi_1\Sigma),\cD}$ consists of all presentations for groups $G$ such that word problem in $\cD$ confirms that the canonical homomorphism $G\to\pi_1\Sigma$ has non-abelian kernel.
\end{proof}

\begin{theorem}\label{recursivecirclebundles}
The set $\cB_{-,=}$ and $\cB_{-,\neq}$ are both recursive modulo the word problem.
\end{theorem}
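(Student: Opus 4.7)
The plan is to show that $\cB_{-,=}$ and $\cB_{-,\neq}$ are each recursive in $\mc{CAS}$ and then apply Lemma \ref{lem: recursive subsets 2} together with the preceding lemma.  Every group in $\cB_-$ admits a standard finite presentation
\[
G_{g,e} = \langle a_1,\ldots,a_g,b_1,\ldots,b_g,z \mid [a_i,z],\ [b_i,z],\ \textstyle\prod_i[a_i,b_i]\,z^{-e}\rangle
\]
for some $g\geq 2$ and $e\in\bZ$.  Enumerating Tietze transformations of these presentations yields recursively enumerable sets $\cX_{\cB_{-,=},\mc{CAS}}$ and $\cX_{\cB_{-,\neq},\mc{CAS}}$ consisting of presentations equivalent to some $G_{g,0}$ and to some $G_{g,e}$ with $e\neq 0$ respectively; these provide the ``positive'' sides of the recursive separations.

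For the ``negative'' sides, suppose $\cD$ is a class in which the word problem is uniformly solvable and let $G\in\mc{CAS}\cap\cD$.  The factor set algorithm from the preceding lemma yields the genus $g$ and the canonical epimorphism $\pi\co G\to\pi_1\Sigma_g$ whose kernel is $Z(G)$.  Applying the solution of the isomorphism problem for $\cF$-limit groups to the factor set presentation of $\pi_1\Sigma_g$, we compute words $u_i, v_i$ in the generators of $G$ whose images under $\pi$ form a standard symplectic basis.  Define
\[
w := \prod_{i=1}^{g}[u_i,v_i]\in Z(G).
\]
The central element $w$ is independent of the choice of lifts and represents the Euler obstruction to splitting of the central extension $1\to Z(G)\to G\to\pi_1\Sigma_g\to 1$.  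We also compute $H_1(G)$ from the presentation, and note that the five-term exact sequence for this central extension yields $H_1(G)\cong Z(G)/\langle w\rangle\oplus\bZ^{2g}$.

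The algorithm now rests on the following characterizations.  A group $G\in\mc{CAS}$ lies in $\cB_{-,=}$ if and only if $w=1$ in $G$ and $H_1(G)\cong\bZ^{2g+1}$, since $w=1$ forces the extension to split (so $G\cong Z(G)\times\pi_1\Sigma_g$) and the $H_1$ condition then forces $Z(G)\cong\bZ$.  Similarly, $G\in\cB_{-,\neq}$ if and only if $w\neq 1$, $H_1(G)\cong\bZ^{2g}\oplus\bZ/n$ for some $n\geq 1$, and $Z(G)$ is torsion-free; the first two conditions constrain $Z(G)/\langle w\rangle$ to be a finite cyclic group, and then torsion-freeness of the finitely generated abelian group $Z(G)$ forces $Z(G)\cong\bZ$.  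The conditions on $w$ and $H_1(G)$ are decidable given the word problem, and the existence of a nontrivial torsion element of $Z(G)$ is recursively enumerable by searching for a witness (centrality and the order relation are verified via the word problem).  Accordingly we set $\cY_{\cB_{-,=},\mc{CAS}}$ and $\cY_{\cB_{-,\neq},\mc{CAS}}$ to be those presentations failing one of the corresponding conditions.

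The main obstacle is the computation of the Euler invariant $w$, which requires explicit lifts in $G$ of a standard symplectic basis of $\pi_1\Sigma_g$; this is where the isomorphism problem for $\cF$-limit groups is essential, as it lets us identify the factor-set presentation of $\pi_1\Sigma_g$ with the standard one and extract the words $u_i, v_i$.  Once $w$ and $H_1(G)$ are in hand, the remaining verifications reduce to decidable linear algebra over $\bZ$ and standard word-problem queries.
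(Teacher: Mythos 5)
Your proposal is correct but takes a more explicit route than the paper.  The paper's proof is terse: it asserts that a Tietze search will exhibit the isomorphism type of the centre of any group in $\mc{CAS}$ (so that $\mc{CIS}$, the subclass with $Z(G)\cong\bZ$, is recursive in $\mc{CAS}$), identifies $\cB_{-,=}\cup\cB_{-,\neq}$ with $\mc{CIS}$, and then separates the two by reading off the Euler number from $H_1$.  You instead compute the Euler class directly as a word $w=\prod[u_i,v_i]$, use the five-term exact sequence identity $H_1(G)\cong Z(G)/\langle w\rangle\oplus\bZ^{2g}$, and characterize each class by decidable conditions on $w$ and $H_1(G)$ together with an RE certificate of torsion in $Z(G)$.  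Both approaches rest on the factor set algorithm to produce the canonical map $G\to\pi_1\Sigma_g$, and on the abelianization carrying the Euler number; your version trades the paper's black-box Tietze search for a more detailed cohomological argument.  Your characterizations check out: if $w=1$ the extension splits and $H_1(G)\cong Z(G)\oplus\bZ^{2g}$ pins down $Z(G)$; if $w\neq 1$, then $Z(G)$ torsion-free with $Z(G)/\langle w\rangle$ finite cyclic forces $Z(G)\cong\bZ$.

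Two small points deserve care.  First, your framing ``show $\cB_{-,=}$ is recursive in $\mc{CAS}$, then apply Lemma \ref{lem: recursive subsets 2}'' is slightly off, since your negative-side procedure uses the word problem in $\cD$ rather than providing an absolute separation of $\cB_{-,=}$ from $\mc{CAS}\smallsetminus\cB_{-,=}$; the cleaner statement is that you show $\cB_{-,=}\cap\cD$ recursive in $\mc{CAS}\cap\cD$, which combines with the recursiveness modulo the word problem of $\mc{CAS}$ via Lemma \ref{lem: Recursive subsets of recursive subsets}.  Second, the step where the isomorphism problem for $\cF$-limit groups yields the explicit words $u_i, v_i$ needs a word of justification: the isomorphism problem a priori returns only yes/no.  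Since you already know the factor-set quotient is isomorphic to the standard $\pi_1\Sigma_g$, a terminating Tietze search for an explicit Tietze equivalence (or the constructive form of the Dahmani--Groves isomorphism algorithm) supplies the words.  With these clarifications, the argument is sound.
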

\begin{proof}
A systematic search using Tietze transformations will exhibit the isomorphism type of the centre of a group in $\mc{CAS}$.  In particular, $\mc{CIS}$, the set of central extensions of surface groups by infinite cyclic groups, is recursive in $\mc{CAS}$.    But $\cB_{-,=}\cup \cB_{-,\neq}=\mc{CIS}$. Finally, note that the Euler number of the bundle is exhibited by the abelianization.
\end{proof}

\subsection{Selberg's Lemma}

In this section, we summarize the proof of Selberg's Lemma (in the characteristic zero case), roughly following Alperin \cite{alperin_elementary_1987}.

\begin{theorem}[Selberg's Lemma]
Let $A$ be a finitely generated integral domain of characteristic zero.  Then $GL_n(A)$ has a torsion-free subgroup of finite index.
\end{theorem}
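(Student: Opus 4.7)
The plan is to produce a torsion-free finite-index subgroup of $GL_n(A)$ as the kernel of reduction modulo a well-chosen power of a maximal ideal of $A$ (a principal congruence subgroup). First I would note that, because $A$ is a finitely generated integral domain over $\mathbb{Z}$, it is a Jacobson ring, and Zariski's Lemma forces the residue field $A/\mathfrak{m}$ at every maximal ideal $\mathfrak{m}$ to be finite of some prime characteristic $p$. By varying $\mathfrak{m}$ one can moreover arrange that $p$ avoids any prescribed finite set of primes; in particular, we may choose $\mathfrak{m}$ so that $p > n$.

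Next, the reduction homomorphism $\pi_\mathfrak{m}\co GL_n(A) \to GL_n(A/\mathfrak{m})$ has finite image, so its kernel $\Gamma(\mathfrak{m})$ has finite index in $GL_n(A)$, and similarly for the deeper congruence subgroups $\Gamma(\mathfrak{m}^k) = \ker\bigl(GL_n(A) \to GL_n(A/\mathfrak{m}^k)\bigr)$. It will therefore suffice to show that $\Gamma(\mathfrak{m}^k)$ is torsion-free for $k$ sufficiently large.

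Suppose $g \in \Gamma(\mathfrak{m}^k)$ has prime order $\ell$, and write $g = I + M$ with $M \in \mathfrak{m}^k \cdot M_n(A)$. If $\ell \neq p$, the minimal polynomial of $g$ divides $x^\ell - 1$, which has distinct roots in every characteristic other than $\ell$; so $g$ is diagonalizable over an algebraic closure of $\mathrm{Frac}(A)$ with eigenvalues equal to $\ell$-th roots of unity. Reducing modulo a prime above $\mathfrak{m}$ in the ring where these eigenvalues live, each eigenvalue reduces to $1$ (since $g \equiv I \pmod{\mathfrak{m}}$); but distinct $\ell$-th roots of unity remain distinct in residue characteristic $p \neq \ell$, so all eigenvalues are $1$ and hence $g = I$.

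The main obstacle is the case $\ell = p$, which cannot be excluded by the choice of $p$ alone. The standard remedy is to take $k$ large (depending on $p$) so that the formal $p$-adic logarithm $\log(I+M) = M - M^2/2 + M^3/3 - \cdots$ converges in the $\mathfrak{m}$-adic completion of $M_n(A_\mathfrak{m})$, and its inverse $\exp$ converges on the same range. On this deeper congruence subgroup the identity $g^p = I$ passes through $\log$ to give $p\cdot\log(g) = \log(g^p) = 0$, which (since the completion is $p$-torsion-free on the relevant module) forces $\log(g) = 0$ and hence $g = I$. Equivalently, one can argue directly by expanding $(I+M)^p = I + \sum_{i=1}^p \binom{p}{i} M^i$ and comparing $\mathfrak{m}$-adic valuations of the terms, which for $k$ large enough leaves no room for a nontrivial cancellation. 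Either route yields that $\Gamma(\mathfrak{m}^k)$ is a torsion-free finite-index subgroup of $GL_n(A)$, as required.
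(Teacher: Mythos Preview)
Your overall strategy---reduction modulo a maximal ideal, then a case split on whether the prime order $\ell$ of a putative torsion element equals the residue characteristic $p$---is a standard and valid route to Selberg's Lemma, and your treatment of the case $\ell\neq p$ is fine.  But there is a real gap in the case $\ell=p$.  You assert that $\log(I+M)=\sum_{j\ge1}(-1)^{j+1}M^j/j$ converges in the $\mathfrak m$-adic completion $\hat A_{\mathfrak m}$ once $M\in\mathfrak m^k$ for $k$ large.  That is false in general: take $A=\bZ[x]$ and $\mathfrak m=(p,x)$, so $\hat A_{\mathfrak m}=\bZ_p[[x]]$; for $M=x^kI\in\mathfrak m^k M_n(\hat A_{\mathfrak m})$ the term $M^p/p=p^{-1}x^{pk}I$ does not lie in $M_n(\bZ_p[[x]])$ for any $k$, since $\mathfrak m^k\not\subset (p)$.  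Your alternative ``direct expansion'' of $(I+M)^p=I$ also needs more than you say: one obtains $pM\equiv -M^p\pmod{p\,\mathfrak m^{2k}M_n(\hat A_{\mathfrak m})}$, but to cancel the $p$ and bootstrap the $\mathfrak m$-adic valuation of $M$ you need the image of $p$ to be a non-zerodivisor in the associated graded $\mathrm{gr}_{\mathfrak m}(\hat A_{\mathfrak m})$, which is not automatic at an arbitrary maximal ideal.

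The standard repair is to choose $\mathfrak m$ so that $A_{\mathfrak m}$ is a \emph{regular} local ring (such closed points exist because $A$ is a domain of finite type over $\bZ$, so its regular locus is nonempty open and, by Jacobson-ness, contains closed points); then $\mathrm{gr}_{\mathfrak m}(\hat A_{\mathfrak m})$ is a polynomial ring over the residue field and the valuation bootstrap goes through.  The paper takes a genuinely different and rather slicker path, following Alperin: it first embeds $GL_n(A)\hookrightarrow GL_{nd}(B)$ with $B$ a finitely generated subring of a purely transcendental extension $k=\bQ(\underline U)$.  In that setting every finite-order matrix has trace an \emph{integer} in $[-nd,nd]$ (the algebraic integers in $k$ are exactly $\bZ$), equal to $nd$ only for the identity; so a single reduction modulo a maximal ideal of $B$ avoiding $(2nd)!$ already has torsion-free kernel, with no $p$-adic analysis and no case split on $\ell$ at all.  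This trace argument also feeds directly into the \emph{effective} version of Selberg's Lemma proved immediately afterward, where an explicit, computable bound on the index is needed.
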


We start with a presentation of $A$ as $A\cong\bZ[\underline{X}]/I$ for $I$ some radical ideal.  Let $K$ be the field of fractions of $A$.  By standard theory, $K$ is an algebraic extension (of degree $d$, say) of the purely transcendental field extension $k=\bQ(\underline{U})$ (where $\underline{U}$ can be identified with a subset of $\underline{X}$).  Let $\{b_1,\ldots,b_d\}$ be a basis for $K$ over $k$.  For each $X_i\in\underline{X}\smallsetminus\underline{U}$, write 
\[
X_i=\sum_{j}\frac{f_{ij}}{g_{ij}} b_j
\]
where $f_{ij},g_{ij}\in \bZ[\underline{U}]$.  Let $B=\bZ[\underline{U}][\frac{1}{\prod_{i,j}g_{ij}}]$.  Then we have a natural embedding
\[
A\into GL_d(B)\subseteq GL_d(k)
\]
and hence a natural embedding
\[
GL_n(A)\into GL_{nd}(B)\into GL_{nd}(k)~. 
\]
Therefore, it suffices to prove Selberg's Lemma for the finitely generated subring $B$ of $k$, a purely transcendental extension of $\bQ$.

\begin{lemma}
If $\alpha\in GL_{nd}(k)$ is of finite order then the trace of $\alpha$ is an integer of absolute value at most $nd$.  Furthermore, it is equal to $nd$ if and only if $\alpha$ is the identity.
\end{lemma}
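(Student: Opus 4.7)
The plan is to exploit two standard facts: (i) a finite-order element of $GL_{nd}$ in characteristic zero is diagonalizable with roots of unity as eigenvalues, and (ii) $\bQ$ is algebraically closed in the purely transcendental extension $k=\bQ(\underline{U})$. Combining these reduces the problem to a triangle-inequality estimate on a sum of $nd$ complex numbers of modulus one.

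In detail, suppose $\alpha\in GL_{nd}(k)$ satisfies $\alpha^m = I$. Then its minimal polynomial divides $x^m-1$, which, in characteristic zero, splits into distinct linear factors over an algebraic closure $\bar k$. Hence $\alpha$ is diagonalizable over $\bar k$, with eigenvalues $\lambda_1,\ldots,\lambda_{nd}$ that are $m$-th roots of unity. The trace $\trace(\alpha) = \sum_i \lambda_i$ is therefore a sum of roots of unity, hence an algebraic integer. On the other hand $\trace(\alpha)\in k$, and since $k=\bQ(\underline U)$ is purely transcendental over $\bQ$, the field $\bQ$ is algebraically closed in $k$; so any element of $k$ which is algebraic over $\bQ$ lies in $\bQ$. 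Combining, $\trace(\alpha)\in\bQ\cap\{\text{alg.\ integers}\}=\bZ$.

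For the bound, fix any embedding $\bar k\hookrightarrow\bC$; the eigenvalues $\lambda_i$, being roots of unity, map to complex numbers of modulus $1$, so the triangle inequality gives $|\trace(\alpha)|\le nd$. Equality in the triangle inequality for $nd$ unit complex numbers forces $\lambda_1=\cdots=\lambda_{nd}=\lambda$ for some $\lambda$ on the unit circle. If in addition $\trace(\alpha)=nd$, then $nd\lambda = nd$, so $\lambda=1$; since $\alpha$ is diagonalizable with every eigenvalue equal to $1$, it equals the identity matrix.

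The only substantive ingredient beyond linear algebra is the claim that $\bQ$ is algebraically closed in $\bQ(\underline U)$; this is standard (by induction on the number of transcendentals, using that a rational function in one variable which is algebraic over the base field must be constant), so I expect no real obstacle. The rest is the triangle inequality and its equality case.
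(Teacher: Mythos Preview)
Your proof is correct and follows essentially the same approach as the paper's: both argue that the minimal polynomial divides $X^m-1$, so the eigenvalues are roots of unity, whence the trace is an algebraic integer lying in $k=\bQ(\underline{U})$ and therefore in $\bZ$, and the triangle inequality gives the bound. Your write-up is in fact slightly more careful than the paper's---you make the diagonalizability explicit and spell out the equality case (showing that $\trace(\alpha)=nd$ forces the common eigenvalue to be $1$, hence $\alpha=I$), whereas the paper only notes that equality forces $\alpha$ to be scalar.
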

\begin{proof}
The minimal polynomial of $\alpha$ divides $X^m-1$ for some $m$, so
every eigenvalue of $\alpha$ is a root of unity.  Therefore, the trace
of $\alpha$, which is the sum of the eigenvalues, has absolute value
at most $nd$, with equality only if $\alpha$ is a scalar matrix.  
On the other hand, the trace is a sum of algebraic integers, and hence is itself an algebraic integer.  But the algebraic integers in $k$ are precisely $\bZ$, and so the trace is an integer.
\end{proof}

\begin{proof}[Proof of Selberg's Lemma]
By the above discussion, it suffices to prove the result for a group
$GL_{nd}(B)$ for $B$ some finitely generated subring of $k$.  Because
$B$ is a Jacobson ring \cite[Theorem 4.19]{eisenbud_commutative_1995},
there is a maximal ideal $\mathfrak{m}$ that does not contain $(2nd)!$.  Consider the natural reduction homomorphisms
\[
\eta:GL_{nd}(B)\to GL_{nd}(B/\mathfrak{m})~.
\]
If $\alpha$ is nontrivial but of finite order then, by the previous lemma, $\trace~\eta(\alpha)\neq nd$ and so $\alpha\notin\ker \eta$.
Therefore, $\ker\eta$ is the required torsion-free subgroup of finite index. 
\end{proof}

We can enumerate finite fields, and it is therefore immediate that, given a presentation for $B$, we can search for suitable maximal ideals $\mathfrak{m}_i\subseteq B$.  The computational difficulty, therefore, is to compute $d$ and $B$ given $A$.

\begin{theorem}[Effective Selberg's Lemma]
There is an algorithm that takes as input a presentation for a finitely generated integral domain $A$ and an integer $n$ and outputs an integer $N$ with the property that $GL_n(A)$ has a torsion-free subgroup of index at most $N$.
\end{theorem}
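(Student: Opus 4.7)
The plan is to turn the existence proof above into an algorithm by making every step algorithmically explicit. Given a presentation $A \cong \bZ[X_1,\dots,X_m]/I$, the algorithm needs to: (i) compute the transcendence degree and the algebraic degree $d$ of the field of fractions $K$ of $A$ over $\bQ$, together with the ring $B$; (ii) find a maximal ideal $\mathfrak{m}\subseteq B$ not containing $(2nd)!$; and (iii) output $N = |GL_{nd}(B/\mathfrak{m})|$, which bounds the index of $\ker\eta$ in $GL_n(A)$ via the embedding $GL_n(A)\hookrightarrow GL_{nd}(B)$ constructed in the previous proof.

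For step (i), one first replaces $I$ by its radical using standard Gr\"obner basis methods (noting that we are promised $A$ is a domain). Effective Noether normalization over $\bZ$ then produces a transcendence basis $\underline{U}\subseteq\underline{X}$ for $K$ over $\bQ$, realizing $A$ as a finite extension of $\bZ[\underline{U}]$ after inverting a suitable nonzero integer. One computes $d = \dim_k(A\otimes_{\bZ[\underline{U}]} k)$ by Gr\"obner basis reduction in $k[\underline{X}]$, extracts a $k$-basis $\{b_1,\dots,b_d\}$ of $K$, and, for each $X_i\in \underline{X}\smallsetminus\underline{U}$, writes $X_i = \sum_j (f_{ij}/g_{ij}) b_j$ by solving a linear system over $k$. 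This yields $h := \prod_{i,j} g_{ij}\in \bZ[\underline{U}]$ and a finite presentation of $B = \bZ[\underline{U}, T]/(hT-1)$.

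For step (ii), observe that since $h$ is a nonzero element of $\bZ[\underline{U}]$, only finitely many primes divide all its coefficients; enumerating primes $p$ produces one with $p > 2nd$ and $h\not\equiv 0\pmod{p}$. Working in $\bF_p[\underline{U}][1/h]$, search over $k=1,2,\dots$ and points $\underline\alpha\in\bF_{p^k}^{|\underline{U}|}$ until one finds $\underline\alpha$ with $h(\underline\alpha)\neq 0$; such an $\underline\alpha$ exists once $p^k$ exceeds the degree of $h$ by an appropriate factor (a Schwartz--Zippel type bound, or simply by Hilbert's Nullstellensatz for nonzero polynomials over $\bar\bF_p$). The evaluation map $\bZ[\underline{U}]\to\bF_{p^k}$ sending $U_i\mapsto \alpha_i$ extends to $B$ since $h(\underline\alpha)\neq 0$, and its kernel is a maximal ideal $\mathfrak{m}$ whose residue field has size at most $p^k$. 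Since $p > 2nd$, we have $(2nd)!\notin\mathfrak{m}$, so the hypothesis of the previous lemma is satisfied, and $N := |GL_{nd}(\bF_{p^k})|$ is the desired output.

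The principal obstacle is step (i): making Noether normalization, the computation of $d$, and the extraction of the denominators $g_{ij}$ genuinely algorithmic over $\bZ$ (as opposed to over a field), where clearing denominators and tracking inverted integers must be handled carefully. All the needed tools, however, are classical in computational commutative algebra (Gr\"obner bases over $\bZ$, effective primary decomposition, effective Noether normalization), so this is a matter of assembling standard algorithms rather than proving new mathematics; steps (ii) and (iii) are then essentially an enumerative search together with the explicit order formula for $GL_{nd}$ over a finite field.
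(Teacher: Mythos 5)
Your proof is correct and follows essentially the same strategy as the paper: compute a transcendence basis $\underline{U}$, the degree $d$, and a presentation for $B$; then locate a maximal ideal $\mathfrak{m}\subseteq B$ with $(2nd)!\notin\mathfrak{m}$; and output $N=|GL_{nd}(B/\mathfrak{m})|$. The one place you diverge is in step (ii): the paper simply invokes the Jacobson property of $B$ together with ``a non-deterministic search will eventually find $\mathfrak{m}$,'' whereas you give an explicit recipe --- pick a prime $p>2nd$ with $h\not\equiv 0\pmod p$, then search for a point $\underline\alpha\in\bF_{p^k}^{|\underline{U}|}$ with $h(\underline\alpha)\neq 0$, and take $\mathfrak m$ to be the kernel of the resulting evaluation $B\to\bF_{p^k}$. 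This is a genuine, if modest, improvement in concreteness: it makes clear how the search is organized and why it terminates, and it sidesteps the question of how one certifies that a candidate ideal is maximal (the evaluation map has a field as its codomain, so the kernel is automatically maximal). You also phrase step (i) in terms of effective Noether normalization and a Gr\"obner-basis computation of $\dim_k(A\otimes_{\bZ[\underline U]}k)$, where the paper cites the DIMENSION algorithm and a na\"ive search for minimal polynomials; these are interchangeable tools accomplishing the same thing. In short, same proof, with a cleaner and more constructive treatment of the maximal-ideal search.
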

\begin{proof}
Let $A$ be presented as $\bZ[\underline{X}]/I$.  As mentioned above,
the main difficulty is to compute $d$ and a presentation for $B$.
First, the DIMENSION algorithm from \cite{bw:gro} will find
$\underline{U}\subseteq\underline{X}$.  The remaining
$X_i\notin\underline{U}$ are all algebraic, and so a naive search will
eventually find polynomials $h_i$ over $k$ satisfied by the $X_i$.  This provides a spanning set for $K$ over $k$, given by powers of $X_i\notin\underline{U}$.  Now standard linear algebra over $k$ reduces this spanning set to a basis $\{b_1,\ldots,b_d\}$.

A non-deterministic search will express the remaining $X_i$ in terms of the $b_j$, and hence compute $g_{ij}$. One can now write down a presentation for $B$.  Because $B$ is a Jacobson ring, there is a maximal ideal $\mathfrak{m}$ such that $(2nd)!\notin \mathfrak{m}$ as above, and a non-deterministic search will eventually find $\mathfrak{m}$. Now $N=|GL_{nd}(B/\mathfrak{m})|$.
\end{proof}

\subsection{The $PSL_2(\bC)$-representation variety}

The $PSL_2\bC$-representation variety  of a group $\Gamma$ is defined, as a set, by
\[
R(\Gamma)=\Hom(\Gamma,PSL_2\bC)~.
\] 
It can be given the structure of a complex variety by exhibiting
$PSL_2(\bC)$ as an algebraic group over $\bC$.  For completeness, we
do so in the following lemma.
\begin{lemma}
$PSL(2,\bC)$ is an algebraic group over $\bC$.
\end{lemma}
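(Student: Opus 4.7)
The plan is to realize $PSL_2(\bC)$ as a Zariski-closed subgroup of some $GL_N(\bC)$ via a faithful finite-dimensional representation. The natural choice is the adjoint representation on the Lie algebra $\mathfrak{g} = \mathfrak{sl}_2(\bC)$, a $3$-dimensional $\bC$-vector space.  Conjugation defines a homomorphism $\mathrm{Ad}\co SL_2(\bC) \to GL(\mathfrak{g}) \cong GL_3(\bC)$ whose matrix entries, with respect to any fixed basis of $\mathfrak{g}$, are degree-$2$ polynomials in the entries of the source matrix.

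First I would compute $\ker \mathrm{Ad}$: an element of $SL_2(\bC)$ acts trivially on $\mathfrak{g}$ by conjugation if and only if it is central, so $\ker \mathrm{Ad} = \{\pm I\}$, and $\mathrm{Ad}$ descends to an injective homomorphism $\overline{\mathrm{Ad}}\co PSL_2(\bC) \hookrightarrow GL_3(\bC)$. Next I would check that the image is Zariski closed by identifying it with $SO_3(\bC)$: the Killing form $\kappa(X,Y) = \trace(\mathrm{ad}_X \mathrm{ad}_Y)$ on $\mathfrak{g}$ is non-degenerate and $\mathrm{Ad}$-invariant, so in a $\kappa$-orthonormal basis the image lies inside $O_3(\bC) \subseteq GL_3(\bC)$, which is visibly Zariski closed (cut out by $A^{\top}A = I$). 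Connectedness of $SL_2(\bC)$ then places the image inside $SO_3(\bC)$.

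Since $\overline{\mathrm{Ad}}$ is an injective homomorphism of complex Lie groups and both $PSL_2(\bC)$ and $SO_3(\bC)$ have dimension $3$, the derivative at the identity is an isomorphism; hence the image is an open (and closed) subgroup of the connected group $SO_3(\bC)$, forcing $\overline{\mathrm{Ad}}$ to be surjective. This realizes $PSL_2(\bC)$ as the affine algebraic group $SO_3(\bC) \subseteq GL_3(\bC)$. The only mildly non-trivial step is the surjectivity onto $SO_3(\bC)$ -- the classical ``accidental isomorphism'' -- and the dimension/connectedness argument above handles it cleanly. As a side benefit, the coordinate ring of $PSL_2(\bC)$ acquired this way is the subring of $\bC[SL_2(\bC)] = \bC[a,b,c,d]/(ad-bc-1)$ generated by the degree-$2$ monomials, i.e.\ the ring of $\{\pm I\}$-invariants, which matches the intrinsic GIT quotient.
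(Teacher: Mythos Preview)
Your proof is correct. Both you and the paper realize $PSL_2(\bC)$ via the unique $3$-dimensional irreducible representation of $SL_2(\bC)$ with kernel $\{\pm I\}$; you use the adjoint action on $\mathfrak{sl}_2$, the paper uses the second symmetric power of the standard representation --- these are isomorphic as $SL_2$-modules (both have weights $2,0,-2$), so the underlying idea is the same.

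The difference is in how the image is shown to be Zariski closed. The paper writes down an explicit list of nine polynomial equations in the entries of a $3\times 3$ matrix and checks by hand that they cut out exactly $\rho(SL_2(\bC))$. You instead invoke the $\mathrm{Ad}$-invariance of the Killing form to land in $O_3(\bC)$, then use a dimension/connectedness argument to identify the image with $SO_3(\bC)$. Your route is cleaner and more conceptual; the paper's route is more computational but has the advantage that the explicit equations are quoted verbatim in the very next paragraph to describe the $PSL_2(\bC)$-representation variety of a presentation as an explicit affine variety in $\bC^{9n}$. Of course, the equations $A^\top A = I$, $\det A = 1$ for $SO_3(\bC)$ would serve that purpose equally well.
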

\begin{proof}
We use the image
in $SL(3,\bC)$ of $SL(2,\bC)$, using the second symmetric power of the
standard representation:
\begin{equation}\label{secondsymmetric}
 \rho\co  \left[\begin{array}{cc} a & b\\ c & d\end{array}\right]
  \mapsto 
  \left[
    \begin{array}{ccc}
      a^2 & ab & b^2\\ 
      2ac & ad+bc & 2bd\\ 
      c^2 & cd & d^2
    \end{array}
  \right]
\end{equation}
Note that $\rho$ is well-defined on all of $M_2\bC$.
We claim that $\rho(SL(2,\bC))$ is equal to the following set:
\begin{equation}\label{pslrep}
V=
 \left\{
M = \left[
\begin{array}{ccc}
  e & f & g\\ h & i & j\\ k & l & m
\end{array}
\right]
\ :\ 
\begin{array}{c}
\begin{array}{rr}
  h^2-4ek=0,& j^2-4gm=0,\\ 
  f^2-eg=0, & l^2-km=0,\\
  4fl - hj=0, & 2li-hm-kj = 0 
\end{array}\\
i^2-em-gk-2fl=0,\\
em + gk-2fl =1,\\
\det M = 1
\end{array}
\right\}.
\end{equation}
That $\rho(SL(2,\bC))$ is contained in $V$ is easily checked.  The equations
with right hand side equal to $0$ pick out $\rho(M_2(\bC))$, so every
element of $V$ is $\rho(A)$ for some $A=\left[\begin{array}{cc} a &
    b\\ c & d\end{array}\right]$.  The equation $em + gk-2fl =1$
forces $(\det A)^2=1$, while the equation $\det M = 1$ forces $(\det
A)^3=1$, so $\det A = 1$.  

Finally, it is easy to check that $\ker \rho\cap SL(2,\bC)=\{\pm I\}$.
\end{proof}
The $PSL(2,\bC)$-representation variety of $\Gamma\cong\langle x_1,\ldots,x_n\mid r_1,\ldots r_s\rangle$ is thus an
algebraic subset of $\bC^{9n}$, defined by a collection of $9n$ polynomials coming from \eqref{pslrep} together with $9s$ polynomials coming from the relations. 

Let $V$ be an algebraic component of $R(\Gamma)$.  Each $\gamma\in\Gamma$ naturally defines an evaluation map $\mathrm{ev}_\gamma:V\to PSL_2(\bC)$.  The coordinates of this map are polynomial, and the map $\gamma\mapsto\mathrm{ev}_\gamma$ defines the \emph{tautological representation} $\rho_V:\Gamma\to PSL_2(\bC[V])$. 

\begin{lemma}\label{lem: Tautological representation}
Suppose $\Gamma$ fits into a short exact sequence
\[
1\to\langle z\rangle\to\Gamma\stackrel{p}{\to} F\to 1
\]
where $F$ is a Fuchsian group. There is a component $V$ of $R(\Gamma)$ such that $\ker\rho_V=\langle z\rangle$.
\end{lemma}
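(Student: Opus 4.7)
The plan is to take $V$ to be the irreducible component of $R(\Gamma)$ containing a distinguished representation $\rho_0$ obtained by pulling back a faithful discrete representation of $F$. First, fix a faithful discrete embedding $\sigma \co F \hookrightarrow PSL_2\bR \subseteq PSL_2\bC$ (which exists since $F$ is Fuchsian), and set $\rho_0 := \sigma \circ p \in R(\Gamma)$. By construction, $\ker \rho_0 = \ker p = \langle z \rangle$. Moreover, $\sigma(F)$ has no global fixed point on $\mathbb{CP}^1$ (the limit set of a cocompact Fuchsian group is all of $\partial \mathbb{H}^2$, and the presence of hyperbolic elements rules out fixed points off the real axis), so $\rho_0$ is irreducible when viewed as a representation into $PSL_2\bC$. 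Let $V$ be the irreducible component of $R(\Gamma)$ that contains $\rho_0$.

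The key step is to establish that $\rho(z) = 1$ for every $\rho \in V$. In the contexts in which the lemma will be applied (fundamental groups of Seifert fibred $3$--manifolds with hyperbolic base orbifold), $\langle z \rangle$ is central in $\Gamma$, so $\rho(z)$ commutes with the image $\rho(\Gamma)$ for every $\rho \in R(\Gamma)$. The subset $U \subseteq V$ consisting of irreducible representations (those whose image has no global fixed point on $\mathbb{CP}^1$) is Zariski open, and it is nonempty since it contains $\rho_0$. Because $V$ is irreducible, $U$ is Zariski dense in $V$. For each $\rho \in U$ the centralizer of $\rho(\Gamma)$ in $PSL_2\bC$ is trivial, forcing $\rho(z) = 1$. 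Since $\{\rho \in V : \rho(z) = 1\}$ is Zariski closed in $V$ and contains the Zariski dense set $U$, it equals all of $V$.

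With this in hand, identifying $\ker \rho_V$ is routine. The inclusion $\langle z \rangle \subseteq \ker \rho_V$ is the content of the previous paragraph. Conversely, if $\gamma \notin \langle z \rangle$ then $p(\gamma) \neq 1$ and, by faithfulness of $\sigma$, $\mathrm{ev}_\gamma(\rho_0) = \rho_0(\gamma) \neq 1$, so $\mathrm{ev}_\gamma$ is not identically the identity on $V$, and therefore $\gamma \notin \ker \rho_V$. The hard part is the centrality-and-irreducibility argument of the second paragraph: one must leverage the centrality of $\langle z \rangle$ together with Zariski density of the irreducible locus to propagate the equality $\rho_0(z) = 1$ from the single point $\rho_0$ across the whole component $V$. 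The other points are essentially formal manipulations with the tautological representation.
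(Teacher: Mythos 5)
Your proposal follows the same line as the paper: take $V$ to be the component through $\rho_0=\sigma\circ p$, argue that the Zariski-closed locus $\{\mathrm{ev}_z=1\}$ contains a dense subset of $V$ (you use Zariski-openness of the irreducible locus; the paper uses an analytic neighbourhood of $\rho_0$ and then the polynomial identity --- these are equivalent once the identity is seen to be polynomial), and finish with the same evaluation at $\rho_0$ for the reverse inclusion.

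There is, however, a genuine gap in your second paragraph. You assume $\langle z\rangle$ is central in $\Gamma$, justified by the parenthetical remark that centrality holds ``in the contexts in which the lemma will be applied.'' That is not a hypothesis of the lemma, and it is not even true in the relevant applications: Theorem \ref{thm:computingselbergnumbers} invokes the lemma for an arbitrary extension $1\to C\to\Gamma\to F\to 1$ with $C$ infinite cyclic and $F$ Fuchsian, and the conjugation action $\Gamma\to\Aut\langle z\rangle\cong\bZ/2$ can be nontrivial (this happens for plenty of Seifert fibred manifolds with hyperbolic base). Without centrality you cannot conclude that $\rho(z)$ lies in the centralizer of $\rho(\Gamma)$, so the Schur/centralizer step breaks down as written.

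The repair is minor. Set $\Gamma_0=\ker\bigl(\Gamma\to\Aut\langle z\rangle\bigr)$, a subgroup of index at most $2$ that does contain $\langle z\rangle$ in its centre. Since $p(\Gamma_0)$ is a finite-index subgroup of a cocompact Fuchsian group, $\rho_0|_{\Gamma_0}$ is still irreducible. The set of $\rho\in V$ for which $\rho|_{\Gamma_0}$ is irreducible is Zariski open and contains $\rho_0$, hence is dense; on that set $\rho(z)$ lies in the trivial centralizer of $\rho(\Gamma_0)$, so $\rho(z)=1$, and the polynomial identity $\mathrm{ev}_z=1$ then holds on all of $V$. With that adjustment your argument is correct and essentially coincides with the paper's.
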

\begin{proof}
Let $\sigma:F\to PSL_2(\bC)$ be a Fuchsian representation of $F$, let $\rho=\sigma\circ p$ and let $V$ be an algebraic component of $R(\Gamma)$ containing $\rho$.

The representation $\rho$ has an analytic neighbourhood in $R(\Gamma)$ in which every representation is non-abelian, and hence on which $\mathrm{ev}_z =1$.  This is a polynomial equation, so $\mathrm{ev}_z=1$ on the whole of $V$ and hence $\rho_V(z)=1$.  Therefore, $\langle z\rangle\subseteq\ker\rho_V$.

Conversely, suppose $\gamma\notin\langle z\rangle$.  Then $\rho(\gamma)\neq 1$, so $\mathrm{ev}_\gamma(\rho)\neq 1$ and hence $\rho_V(\gamma)\neq 1$.
\end{proof}

\begin{lemma}
There is an algorithm that takes as input a finite presentation for a group $\Gamma$ and outputs a list of presentations for the polynomial rings $\bC[V_i]$, where $\{V_i\}$ are the irreducible components of $R(\Gamma)$.
\end{lemma}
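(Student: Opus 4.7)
The plan is to translate the problem into a purely computational one in commutative algebra and then invoke standard effective algorithms. From the presentation $\Gamma \cong \langle x_1, \ldots, x_n \mid r_1, \ldots, r_s\rangle$, the embedding $\rho$ of \eqref{secondsymmetric} gives $9$ coordinate variables $y_{ij}$ ($1 \le j \le 9$) for the matrix image of each generator $x_i$. The defining equations \eqref{pslrep} for $\rho(PSL_2(\bC))$ yield, for each $i$, an explicit finite list of integer-coefficient polynomials in $y_{i1},\ldots,y_{i9}$; and each relation $r_k$ translates, by mechanically multiplying out a product of the symbolic matrices and subtracting the identity, into a finite list of integer-coefficient polynomials in all $9n$ variables. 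Together these polynomials generate an ideal $I \subseteq \bQ[y_{11},\ldots,y_{n9}]$ with $V(I)=R(\Gamma)\subseteq \bC^{9n}$, and a finite generating set for $I$ is computable directly from the presentation.

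The irreducible components of $R(\Gamma)$ correspond to the minimal primes of $I \cdot \bC[y_{11},\ldots,y_{n9}]$, so the remaining task is to compute these primes. The first step is to run a standard primary decomposition algorithm for polynomial ideals over $\bQ$ (such as the Gianni--Trager--Zacharias algorithm, implemented in systems like Singular and Macaulay2) to produce explicit finite generating sets for the minimal primes $Q_1,\ldots,Q_r$ of $I$ in $\bQ[y_{11},\ldots,y_{n9}]$. Each $Q_k$ corresponds to a Galois orbit of absolutely irreducible components, and the second step is to refine this by applying an absolute primary decomposition algorithm (as developed by Ch\`eze, Krick--Logar, and others in the literature on absolute factorization) to each $Q_k$, producing prime ideals $P_{k,1},\ldots,P_{k,m_k}$ with coefficients in some explicitly computable number field $\bQ(\alpha_k) \subseteq \bC$. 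Concatenating these over $k$ gives a finite list of prime ideals whose quotient rings present the coordinate rings $\bC[V_i]$.

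The main potential obstacle is not a mathematical one but a citation issue: one needs that effective \emph{absolute} primary decomposition is available, not just decomposition over the field of definition. This is known, though considerably less elementary than decomposition over $\bQ$; if one preferred to avoid it, an alternative is to output the $\bQ$-minimal primes $Q_k$ together with Galois-theoretic data, since for our later use (the tautological representation and its kernel) only the $\bC$-points of the components matter and these are determined by the $Q_k$ up to a finite ambiguity that can be resolved by a single field extension. Either way, no new mathematics beyond the explicit translation in the first paragraph is required.
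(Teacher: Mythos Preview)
Your approach is essentially the paper's: write down the defining polynomials in $\bZ[\underline{X}]$ from the presentation and the equations \eqref{pslrep}, then invoke a primary decomposition algorithm (the paper simply cites PRIMDEC from \cite{bw:gro}) together with a radical computation. The paper does not address the $\bQ$-versus-$\bC$ distinction you raise, so your treatment is actually more careful on this point; and, as you observe, for the intended application to the tautological representation the $\bQ$-minimal primes already suffice.
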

\begin{proof}
The algorithm starts by using the presentation to compute a set of variables $\underline{X}=\{X_1,\ldots,X_d\}$ and polynomials $f_i(\underline{X})\in\bZ[\underline{X}]$ such that the zero-set of the $f_i$ in $\bC^d$ is the representation variety $R(\Gamma)$.

The next step is to decompose $R(\Gamma)$ into irreducible components $V_i$ by decomposing $(\underline{f})$ into primary ideals $\mathfrak{p}_i$. For each $i$, we then need to compute the radical of $\mathfrak{p}_i$. Both of these steps are performed by the algorithm PRIMDEC \cite[p. 396]{bw:gro}
\end{proof}

\begin{theorem}\label{thm:computingselbergnumbers}
There is an algorithm that takes as input a finite presentation for a
group $\Gamma$ and outputs an integer $N(\Gamma)$ with the following
property: if $\Gamma$ fits into a short exact sequence
\[
1\to C \to\Gamma\to F\to 1
\]
where $F$ is a Fuchsian group and $C$ is infinite cyclic,
then $\Gamma$ has a subgroup of index at most $N(\Gamma)$ that is a central extension of a surface group.
\end{theorem}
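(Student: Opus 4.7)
The plan is to combine the Effective Selberg's Lemma with the tautological representations coming from the irreducible components of $R(\Gamma)$. First, apply the preceding lemma to compute presentations for the irreducible components $V_1,\ldots,V_k$ of $R(\Gamma)$. For each $V_i$, compose the tautological representation $\rho_{V_i}\co \Gamma\to PSL_2(\bC[V_i])$ with the algebraic embedding $PSL_2\hookrightarrow GL_3$ from \eqref{secondsymmetric} to obtain a representation $\Gamma\to GL_3(\bC[V_i])$. The matrix entries of the images of the generators of $\Gamma$ under this composition are explicitly computable polynomials, so they generate a computably presented subring $A_i\subseteq\bC[V_i]$, which is a finitely generated integral domain because $V_i$ is irreducible. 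Now apply the Effective Selberg's Lemma to $A_i$ (with $n=3$) to compute an integer $N_i$ such that $GL_3(A_i)$ contains a torsion-free subgroup of index at most $N_i$. Output $N(\Gamma)=2\max_i N_i$.

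To verify correctness, suppose $\Gamma$ fits into an exact sequence $1\to C\to\Gamma\stackrel{p}{\to}F\to 1$ with $F$ Fuchsian and $C$ infinite cyclic. By Lemma \ref{lem: Tautological representation}, some component $V_{i_0}$ satisfies $\ker\rho_{V_{i_0}}=C$, so $\rho_{V_{i_0}}$ descends to a faithful representation of $F$ inside $GL_3(A_{i_0})$. Intersecting the image $\rho_{V_{i_0}}(\Gamma)\cong F$ with the torsion-free subgroup of index at most $N_{i_0}$ given by Selberg then yields a torsion-free subgroup $F_0\leq F$ of index at most $N_{i_0}$. The conjugation action of $\Gamma$ on $C$ descends to a homomorphism $F\to\Aut(C)\cong\bZ/2$; let $F_0'=F_0\cap\ker(F\to\Aut(C))$, which has index at most $2N_{i_0}$ in $F$. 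Then $H=p^{-1}(F_0')$ has index at most $2N_{i_0}\leq N(\Gamma)$ in $\Gamma$, contains $C$ in its centre, and has quotient $H/C\cong F_0'$, a torsion-free Fuchsian group and therefore a surface group. Hence $H$ is a central extension of a surface group, as required.

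The main obstacle is packaging the Effective Selberg's Lemma so that it applies cleanly to $A_i$: one must verify that $A_i$ is a finitely generated integral domain and produce a presentation for it from the presentation of $\bC[V_i]$ and the explicit polynomial formulas describing $\rho_{V_i}$. A secondary subtlety is the index-$2$ adjustment needed to guarantee that the subgroup obtained is a \emph{central} (not merely an arbitrary) extension of a surface group, since $C$ need not be central in $\Gamma$ a priori.
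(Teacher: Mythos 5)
Your proof is correct and follows essentially the same route as the paper's: compute the irreducible components of $R(\Gamma)$, apply the Effective Selberg's Lemma to bound the index of a torsion-free subgroup in the image of each tautological representation, take the maximum (times two), and invoke Lemma \ref{lem: Tautological representation} to guarantee that for some component $V_{i_0}$ the kernel of $\rho_{V_{i_0}}$ is exactly $C$, so that the corresponding torsion-free subgroup of $F$ pulls back to a circle-bundle group. You are in fact a bit more careful than the paper on two points: you explicitly pass to the finitely generated subring $A_i\subseteq\bC[V_i]$ generated by the matrix entries before invoking the Effective Selberg's Lemma (which requires a finitely generated integral domain over $\bZ$, not a $\bC$-algebra), and you spell out where the factor of $2$ comes from (further restricting to the kernel of $F\to\Aut(C)\cong\bZ/2$ so that $C$ becomes central). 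The paper elides both of these, just applying Selberg to $\bC[V_i]$ and citing the factor of $2$ without comment, so your version is the cleaner one.
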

\begin{proof}
Use the algorithm of the previous lemma to compute the defining ideal
of $\bC[V_i]$.  For each of these, apply the Effective Selberg's Lemma
to compute an integer $N_i$ such that $\rho_{V_i}(\Gamma)$ has a
torsion-free subgroup of index at most $N_i$, and set $N(\Gamma)$ to
be the maximum of the $N_i$.  By Lemma \ref{lem: Tautological
  representation}, there is an $i$ such that $\ker\rho_{V_i}$ is equal
to $C$.  Thus there is a subgroup of index at most $\max\{N_i\}$ which
is the fundamental group of a circle bundle over a surface, and therefore a
subgroup of index at most $2\max\{N_i\}$ which is a central extension
of a surface group.
\end{proof}

We are now ready to prove the main theorem of this section.

\begin{theorem}\label{thm: SF-}
The sets $\cSF_{-,=}$ and $\cSF_{-,\neq}$ are recursive modulo the word problem.
\end{theorem}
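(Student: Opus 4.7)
The plan is to mirror the strategy used for the Nil and Euclidean cases (Theorems \ref{thm: SF0neq} and \ref{thm: SF0=}), replacing the uniform index bound (which was $12$ there) with the computable function $f(P) = N(\Gamma)$ produced by Theorem \ref{thm:computingselbergnumbers}. So I would first argue the set-theoretic identities
\[
\cSF_{-,=}\;=\;\mc{AM}_3\cap\cV_f\cB_{-,=}
\qquad\text{and}\qquad
\cSF_{-,\neq}\;=\;\mc{AM}_3\cap\cV_f\cB_{-,\neq}\,,
\]
and then assemble recursiveness modulo the word problem from Theorem \ref{recursivecirclebundles}, Lemma \ref{lem: Virtually recursive modulo the word problem}, Corollary \ref{cor: AM3 in VAM3} and Lemma \ref{lem: recursive subsets 2} exactly as in the proof of Theorem \ref{thm: SF0neq}.

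In more detail: the forward inclusion is the content of Theorem \ref{thm:computingselbergnumbers}, since any $\Gamma\in\cSF_{-,=}\cup\cSF_{-,\neq}$ is a central extension of a (genuinely) Fuchsian group by $\bZ$, so $\Gamma$ has a subgroup of index at most $N(\Gamma)$ that is a central extension of a closed orientable hyperbolic surface group, i.e.\ lies in $\cB_{-,=}\cup\cB_{-,\neq}$; the Euler number (zero vs.\ nonzero) is detected on the abelianization and is preserved under passage to finite covers, so the two subclasses remain separated. The reverse inclusion is where the 3--manifold recognition machinery is used: if $\Gamma$ is torsion-free and virtually a circle bundle over a closed hyperbolic surface, then $\Gamma\in\mc{VAM}_3$ is virtually the fundamental group of an irreducible $3$--manifold, so Theorem \ref{t: Virtual} promotes it to a closed irreducible $3$--manifold group, and the existence of an infinite cyclic normal subgroup (inherited up to finite index) together with hyperbolic base forces $\Gamma\in\cSF_{-,=}\cup\cSF_{-,\neq}$; again the Euler number is read off the abelianization to separate the two cases.

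With these identities in hand, the rest is bookkeeping. Theorem \ref{recursivecirclebundles} says $\cB_{-,=}$ and $\cB_{-,\neq}$ are recursive modulo the word problem, and $f$ is a computable function, so Lemma \ref{lem: Virtually recursive modulo the word problem} shows that $\cV_f\cB_{-,=}$ and $\cV_f\cB_{-,\neq}$ are recursive modulo the word problem. Each is contained in $\mc{VAM}_3$, in which the word problem is uniformly solvable (Proposition \ref{hempelcor}); Corollary \ref{cor: AM3 in VAM3} says $\mc{AM}_3$ is recursive in $\mc{VAM}_3$, so intersecting the recursive sets and restricting via Lemma \ref{lem: Recursive subsets of recursive subsets} shows that $\cSF_{-,=}$ and $\cSF_{-,\neq}$ are recursive in $\cV_f\cB_{-,=}$ and $\cV_f\cB_{-,\neq}$ respectively; Lemma \ref{lem: recursive subsets 2} then yields recursiveness modulo the word problem.

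The only genuinely nontrivial ingredient is already embedded in Theorem \ref{thm:computingselbergnumbers}, namely that the index of a circle-bundle subgroup can be bounded \emph{computably} from the presentation (rather than uniformly). In particular, the main obstacle I anticipate here lies not in this final assembly but in having set up Theorem \ref{thm:computingselbergnumbers} beforehand; once that is granted, the proof of Theorem \ref{thm: SF-} is a direct analogue of the Nil and Euclidean arguments.
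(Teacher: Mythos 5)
Your proposal is correct and takes essentially the same approach as the paper: obtain a computable index bound from Theorem \ref{thm:computingselbergnumbers}, apply Lemma \ref{lem: Virtually recursive modulo the word problem} with that computable function, and then cut down to the actual geometric 3--manifold groups. The only (cosmetic) difference is that you phrase the final reduction by mirroring Theorem \ref{thm: SF0neq}, intersecting with $\mc{AM}_3$ via Corollary \ref{cor: AM3 in VAM3}, whereas the paper's proof of Theorem \ref{thm: SF-} short-circuits this by directly writing $\cSF_{-,=}=\cV_N\cB_{-,=}\cap\cM_3$ and $\cV_N\cB_{-,=}\smallsetminus\cSF_{-,=}\subseteq\mc{FT}$ and invoking Lemmas \ref{m3re} and \ref{FTRE}; both formulations rest on the same ingredients (Theorem \ref{t: Virtual}, recursive enumerability of $\cM_3$ and $\mc{FT}$), so there is no substantive divergence.
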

\begin{proof}
We give the proof for $\cSF_{-,=}$; the proof for $\cSF_{-,\neq}$ is
identical.  Let $\cV_N\cB_{-,=}$ denote the class of presentations $P$
with a subgroup of index at most $N(P)$ in $\cB_{-,=}$, where $N(P)$
is the computable function given by Theorem \ref{thm:computingselbergnumbers}.
Since $\cB_{-,=}$ is recursive modulo the word problem (Theorem
\ref{recursivecirclebundles}) and $N$ is a computable function of
$P$, the class
$\cV_N\cB_{-,=}$ is recursive modulo the word problem (Lemma \ref{lem: Virtually recursive modulo the word problem}).

It remains to show that $\cSF_{-,=}$ is recursive in
$\cV_N\cB_{-,=}$.  But $\cSF_{-,=}=\cV_N\cB_{-,=}\cap\cM_3$ and its
complement $\cV_N\cB_{-,=}\smallsetminus \cSF_{-,=}\subseteq
\mc{FT}$.  Since $\cM_3$ and $\mc{FT}$ are both recursively
enumerable (Lemmas \ref{m3re} and \ref{FTRE}), the result follows.
\end{proof}

\section{The hyperbolic case}\label{sec:hyp}

In this section, we prove that $\cH$ is recursive modulo the word problem.  The argument here is based on the argument of \cite{manning:casson}.
\begin{lemma}\label{lem:Hre}
  $\cH$ is recursively enumerable.
\end{lemma}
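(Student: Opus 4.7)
The plan is to enumerate triangulations of closed 3--manifolds and, for each one, to determine algorithmically whether the manifold is hyperbolic; whenever it is, I would then emit all Tietze equivalents of the resulting presentation.

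By (the proof of) Lemma \ref{m3re}, the set of triangulated closed 3--manifolds is recursively enumerable. From any such triangulation $T$ of a manifold $M$, one can read off a finite presentation of $\pi_1 M$ by collapsing a maximal tree in the $1$--skeleton. Moreover, by Proposition \ref{hempelcor} (combining Perelman's Geometrization with Hempel's residual finiteness result \cite{hempel87}), the group $\pi_1 M$ is residually finite, and in fact an explicit solution to its word problem can be produced from $T$: enumerate finite quotients of the presentation in parallel with consequences of the relators, so that any given word is eventually certified trivial or nontrivial. This is precisely what is used in the proof of Corollary \ref{Geometric?}.

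Equipped with this solution to the word problem, I would then invoke the main algorithm of \cite{manning:casson}, which decides hyperbolicity of a closed 3--manifold from a presentation of its fundamental group together with a solution to the word problem. For each triangulation $T$ on which this algorithm returns ``hyperbolic'', I enumerate, via Tietze transformations, all finite presentations of the associated group. The union of these outputs, taken over the enumeration of triangulations, is exactly $\cH$, which is therefore recursively enumerable.

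The only subtle point is that Manning's algorithm requires a bona fide (total) solution to the word problem rather than a partial one; this is supplied uniformly and effectively by the triangulation via Proposition \ref{hempelcor}. Everything else is bookkeeping: enumeration of triangulations, extraction of presentations, and Tietze enumeration are all standard.
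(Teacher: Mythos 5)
Your proposal is correct and follows essentially the same route as the paper: enumerate triangulations of closed $3$--manifolds, extract presentations, use Proposition \ref{hempelcor} to get a uniform word-problem solution, invoke the algorithm of \cite{manning:casson} to filter for hyperbolicity, and close up under Tietze transformations. The only difference is that you spell out how the word-problem solution is obtained (residual finiteness plus parallel enumeration of finite quotients and relator consequences), whereas the paper simply cites Proposition \ref{hempelcor}.
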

\begin{proof}
  As noted in the proof of \ref{m3re}, the collection of
  triangulations of closed $3$--manifolds is recursively enumerable.
  From each triangulation, we can obtain a presentation of its
  fundamental group.  The word problem is uniformly solvable in the
  presentations so obtained, by Proposition \ref{hempelcor}.  The main
  theorem of \cite{manning:casson} then implies that we can distinguish
  the hyperbolic manifolds in this list from the non-hyperbolic ones.  

  Performing Tietze transformations on the presentations coming from
  triangulations of hyperbolic manifolds yields the desired enumeration.
\end{proof}
\begin{remark}
  Here is an alternate way to divide closed triangulated $3$--manifolds into
  hyperbolic and non-hyperbolic, without reference to the word
  problem.  (See \cite[Section 1.4]{BBBMP} or \cite{Jacolectures} for
  more detail.)
  Starting with a triangulated $3$--manifold $M$, use normal
  surface theory to find a collection of $2$--spheres decomposing $M$
  into irreducible $3$--manifolds $N_1,\ldots N_m$ \cite[Section 7]{jactol:decomp}.  If
  any are nonseparating, then $M$ is not hyperbolic.  Assuming the
  spheres are all separating, one checks whether each $N_i$ is
  homeomorphic to a $3$--sphere \cite{rubinstein:s3,thompson:s3}.  If
  every $N_i\cong S^3$, then $M\cong S^3$, and is not hyperbolic.  If
  $2$ or more $N_i$ are not $S^3$, then $M$ is a nontrivial connect
  sum, and is not hyperbolic.  So assume that exactly one
  $N_i\not\cong S^3$.  We then split $M$ into simple and
  characteristic submanifolds \cite[Section 8]{jactol:decomp}.  If
  this decomposition is nontrivial, then $M$ is not hyperbolic.  If
  this decomposition is trivial, the Geometrization Theorem implies
  that $M$ is either hyperbolic or Seifert fibred.  Tao Li in
  \cite{TaoLi06} gives an algorithm which determines whether or not
  $M$ is Seifert fibred.
\end{remark}

Lemma \ref{lem:Hre} implies that for any collection $\mc{D}$ of
presentations with uniformly 
solvable word problem, we may take $\mc{X}_{\cH,\cD} = \mc{\cH}$.  It
remains to describe $\mc{Y}_{\cH,\cD}$.

Here is a sketch:  We begin with some enumeration of $\mc{U}$.  
Let $G$ be the group presented by a presentation $\mc{P}$.  
We construct a finite list of candidate representations of $G$ into
$SL(2,\bC)$ with the property
that if $G$ is a closed hyperbolic 3--manifold group, then $\rho_i$
must be discrete and faithful for some $i$ (Lemma \ref{l:findreps}).
We then use the uniform word problem for $\cD$ as part of a procedure
to certify  that none of these representations is a discrete faithful
representation of a closed hyperbolic $3$--manifold group.
(This procedure may produce a ``fake certificate'' for presentations in
$\mc{H}\smallsetminus \mc{D}$, but the behaviour of our algorithm on
$\mc{U}\smallsetminus\mc{D}$ is irrelevant.)

\begin{definition}
Say that a representation $\rho\co G\to SL(2,\bC)$ is \emph{DFIL} if it is
discrete and $\trace(\rho(g))\in \bC\smallsetminus [-2,2]$ for all $g\neq
1$.  (DFIL stands for ``\underline{d}iscrete and \underline{f}aithful
with \underline{i}rreducible, all-\underline{l}oxodromic image''.)
\end{definition}

\subsection{Finding the representations}
The algebraic closure of the rationals
is denoted $\Qbar$.  Note that an element of $\Qbar$ can be
represented by a finite amount of data, 
exact arithmetic can be done in
$\Qbar$, and that inequalities between elements of $\Qbar\cap \bR$ can
be decided.  (See \cite[Section 2.1]{manning:casson} for more details.)

A representation $\rho\co G\to SL(2,\bC)$ is \emph{rigid} if any nearby
representation is conjugate.  An irreducible rigid representation projects to an
isolated point of the $SL(2,\bC)$--character variety of $G$. 

Say a representation $\rho\co G\to SL(2,\bC)$ is
\emph{algebraic} if its image lies in $SL(2,\Qbar)$.  Such a
representation can be specified using a finite amount of data.  A
\emph{representation of} $\mc{P} = \langle x_1,\ldots, x_n \mid
r_1(x_1,\ldots,x_n),\ldots,r_s(x_1,\ldots,x_n)\rangle$ is a list of
$n$ matrices $A_1,\ldots,A_n$ so that $r_i(A_1,\ldots,A_n) = I$ for
each $i$ between $1$ and $s$.  If $\mc{P}$ is a presentation for $G$,
then a representation of $G$ determines one for $\mc{P}$, and vice
versa.  We will blur the distinction, but this leads to
no errors:  In the algorithms we discuss, we always operate on a
particular presentation, and never on an abstract group.

The following is a restatement of Lemma 2.1 of \cite{manning:casson}, with
the unused assumption that $G$ is a $3$--manifold group removed.
\begin{lemma}\label{l:findreps}
  There is an algorithm which takes as input a finite presentation
  $\mc{P}$ of a group $G$
  and outputs a finite list of rigid algebraic representations of $\mc{P}$.
  If $G$ is the fundamental group of a
  closed orientable hyperbolic $3$--manifold, then some representation
  on this list is DFIL.
\end{lemma}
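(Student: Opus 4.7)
The plan is to compute the $SL(2,\bC)$--representation variety $R(\mathcal{P})$ directly from the presentation, decompose it into irreducible components using the same Gr\"obner-basis machinery (PRIMDEC, DIMENSION) already invoked in Section~\ref{sec:SF-}, and extract algebraic points from precisely those components that can support a rigid irreducible representation.

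First I would set up the representation variety. Writing $\mathcal{P} = \langle x_1,\ldots,x_n \mid r_1,\ldots,r_s\rangle$, each representation is an $n$-tuple $(A_1,\ldots,A_n)\in SL(2,\bC)^n$, so $R(\mathcal{P})$ sits inside $\bC^{4n}$, cut out by the equations $\det A_i = 1$ and by the four polynomial equations obtained from expanding each relator $r_j(A_1,\ldots,A_n)=I$. The resulting defining ideal lies in $\bZ[X]$ and is computable from $\mathcal{P}$, so PRIMDEC produces the irreducible components $V_1,\ldots,V_m$ (each defined over $\bQ$) and DIMENSION produces their dimensions. Since an irreducible representation has $SL(2,\bC)$--stabilizer $\{\pm I\}$ under conjugation, its orbit has dimension three; hence a rigid irreducible representation must lie in a component of dimension exactly three, and conversely any irreducible point of a three-dimensional component is automatically rigid. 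I would discard the components of dimension greater than three, and from each remaining component extract an algebraic point lying in the Zariski-open locus where the representation is irreducible (equivalently, where the $A_i$ share no common eigenvector, which is cut out by explicit polynomial conditions such as the nonvanishing of suitable trace expressions). Extracting such a $\Qbar$-point is carried out by iteratively projecting to coordinate axes: at each stage one obtains either a polynomial relation over the field extension built so far, or a free coordinate to which one assigns a rational value, and Hilbert's Nullstellensatz guarantees the process terminates with a genuine $\Qbar$--point.

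To verify correctness, suppose $G = \pi_1 M$ for $M$ a closed orientable hyperbolic $3$--manifold. The discrete faithful holonomy gives $\rho_0\co G\to PSL(2,\bC)$, and by Culler's theorem on lifting representations this lifts to $\tilde{\rho}_0\co G\to SL(2,\bC)$. Since $G$ is torsion-free and acts cocompactly on $\bH^3$, every non-identity element of $\tilde{\rho}_0(G)$ is loxodromic, and the image is non-elementary so $\tilde{\rho}_0$ is irreducible. By Mostow Rigidity, the component $V_{i_0}$ of $R(\mathcal{P})$ containing $\tilde{\rho}_0$ is exactly the closure of its three-dimensional $SL(2,\bC)$--orbit; in particular $V_{i_0}$ survives the dimension filter. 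Any algebraic irreducible point $\rho$ of $V_{i_0}$ has the same $SL(2,\bC)$--character as $\tilde{\rho}_0$ (the restriction of the character map to $V_{i_0}$ is constant), and irreducible representations are determined up to conjugacy by their characters, so $\rho$ is conjugate to $\tilde{\rho}_0$ and is therefore DFIL.

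The main obstacle is the effective extraction step: one must locate an algebraic point of $V_{i_0}$ that lies in the irreducible open locus rather than in its reducible boundary. The irreducible locus is Zariski-open and non-empty (it contains the full three-dimensional orbit of $\tilde{\rho}_0$), so its complement inside $V_{i_0}$ is a proper closed subvariety; avoiding this subvariety while producing explicit $\Qbar$-coordinates is a standard but delicate application of effective elimination theory over $\bQ$, and it is the point at which the various Gr\"obner-basis algorithms must be combined carefully.
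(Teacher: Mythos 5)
Your argument is essentially correct, but it takes a genuinely different route from the one the paper (via \cite[Lemma~2.1]{manning:casson}) follows. The paper's approach is to first cut down to a sub-variety of $R(\mathcal{P})$ that serves as a transversal to the conjugation action -- a slice -- so that each rigid irreducible representation contributes an \emph{isolated} point of this sub-variety. The payoff is that isolated points of a variety cut out by polynomials over $\Qbar$ are automatically $\Qbar$-points, and Gr\"obner-basis routines extract them directly. You instead decompose the full representation variety, keep the three-dimensional irreducible components, and then extract an algebraic point from the open locus of irreducible representations in each one. This works, and the dimension count is sound: for an irreducible $\rho$ in a three-dimensional component $V$, the closure of its $SL(2,\bC)$-orbit is a three-dimensional irreducible conjugation-invariant subvariety through $\rho$, forcing $V = \overline{O_\rho}$ and hence (since components of higher dimension are also conjugation-invariant, and distinct components cannot be nested) $\rho$ lies on no other component and is rigid; and conversely local rigidity of the holonomy puts $\tilde\rho_0$ in a unique three-dimensional component whose points all have the same character as $\tilde\rho_0$, so any irreducible $\Qbar$-point there is conjugate to $\tilde\rho_0$ and DFIL. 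The price you pay for skipping the slice is exactly the step you flag as ``the main obstacle'': effectively producing a $\Qbar$-point in the complement of a proper closed subvariety inside a positive-dimensional component. This is doable (e.g.\ by cutting with generic rational hyperplanes to reach dimension zero while staying in the good locus) but needs more justification than a one-line appeal to the Nullstellensatz; the slice avoids the issue entirely. Two small corrections: local rigidity of the holonomy is Weil/Calabi--Weil rigidity, not Mostow Rigidity (Mostow is a global uniqueness statement and does not by itself preclude nearby non-faithful or non-discrete representations); and you should note that the holonomy may have several inequivalent lifts to $SL(2,\bC)$, one for each element of $H^1(M;\bZ/2)$, but each such lift lies in its own three-dimensional component and is DFIL, so any one of them suffices.
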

\begin{proof}
  See the proof of \cite[Lemma 2.1]{manning:casson}.  The idea is to
  use computational algebraic geometry to construct a sub-variety of
  the representation variety whose isolated points are
  rigid representations of $G$, and so that any irreducible rigid
  representation is conjugate to one of these isolated points.
\end{proof}

\subsection{Rejecting a representation which is not DFIL}\label{s:ndfl}
Section 3 of \cite{manning:casson} contains a proof that there is an
algorithm which will stop if a given algebraic representation
of a closed $3$--manifold group with solvable word problem
fails to be DFIL.  In fact, the proof does not use that the group is a
$3$--manifold group.  We therefore have
\begin{theorem}\label{t:ndfil}
  There is an algorithm which takes as input a finite presentation
  $\mc{P}$, a solution to the word problem in $\mc{P}$, and an
  algebraic representation $\rho$ of $\mc{P}$, and terminates if and
  only if $\rho$ is not DFIL.
\end{theorem}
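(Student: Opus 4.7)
The plan is to split DFIL into four separately detectable failure modes and run a semi-decision procedure for each in parallel, terminating as soon as any one witnesses that $\rho$ fails to be DFIL. Specifically, DFIL means that $\rho$ is (a) faithful, (b) has irreducible image in $SL(2,\bC)$, (c) every nontrivial element has trace in $\bC\smallsetminus[-2,2]$, and (d) has discrete image. If any of these fails, the corresponding subroutine will eventually halt; if $\rho$ is DFIL, none will.

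Failures of (a) and (c) are detected enumeratively. Enumerate words $w$ in the generators of $\mc{P}$; use the word-problem solution for $\mc{P}$ to certify that $w\neq 1$ in the group presented by $\mc{P}$; then compute $\rho(w)\in SL(2,\Qbar)$ exactly and test whether $\rho(w)=I$ (failure of (a)) or whether $\trace\rho(w)\in\bR$ with $-2\leq\trace\rho(w)\leq 2$ (failure of (c)). Both tests are decidable by exact arithmetic in $\Qbar$ (see \cite[Section 2.1]{manning:casson}). Failure of (b) is a purely algebraic condition on the finitely many generator matrices (existence of a common eigenvector in $\bC P^1$) and can be checked once and for all at the start.

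The main obstacle is (d): discreteness of $\rho(G)$ is not itself an algebraic property, so a direct algebraic test is unavailable. The substitute is J\o rgensen's inequality: if $\langle A,B\rangle\subset SL(2,\bC)$ is discrete and non-elementary, then
\[
|\trace(A)^2-4|+|\trace([A,B])-2|\geq 1~.
\]
To detect failure of (d), enumerate pairs of words $(u,v)$ in the generators of $\mc{P}$, use exact arithmetic in $\Qbar$ to verify that $\langle\rho(u),\rho(v)\rangle$ is non-elementary (for instance by certifying that $\trace([\rho(u),\rho(v)])\neq 2$), and test whether the J\o rgensen quantity above is strictly less than $1$. If $\rho(G)$ is non-discrete and contains a non-elementary two-generator subgroup, such a pair must eventually appear; conversely, if $\rho$ is DFIL then $\rho(G)$ is discrete and, thanks to (b) and (c), non-elementary, so every such pair satisfies J\o rgensen's inequality and this subroutine never halts.

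This is precisely the strategy of Section 3 of \cite{manning:casson}; the observation to be made is that nowhere in that argument is it used that the input group is the fundamental group of a $3$--manifold, only that the input is a finite presentation together with a solution to its word problem and an algebraic representation. Running the four procedures above in parallel on $(\mc{P},\rho)$ therefore yields an algorithm that terminates if and only if $\rho$ is not DFIL.
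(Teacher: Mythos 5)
Your high-level architecture (decompose DFIL into four independently semi-decidable failure modes and dovetail them) mirrors the paper's proof, which simply delegates to Manning's Section~3 and lists the same four failure modes. The one genuine difference of route is in the fourth: the paper (following Manning) detects non-discreteness via the Margulis lemma, searching for a pair of matrices that move some point of $\bH^3$ a very small distance while having nontrivial commutator, whereas you substitute J\o rgensen's inequality. That substitution is a reasonable and arguably more purely algebraic variant, but it introduces a gap you have not closed. You assert that if $\rho(G)$ is non-discrete ``such a pair must eventually appear,'' but J\o rgensen's inequality is only a \emph{necessary} condition for discreteness of a non-elementary two-generator group, not a characterization; a non-discrete group could in principle have every non-elementary two-generator subgroup satisfy the inequality, so you need an argument that a violating pair actually exists. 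One can be supplied in the only case that matters (when subroutines (a), (b), (c) have all failed to halt, so $\rho(G)$ is faithful, irreducible and loxodromic-only): non-discreteness gives a sequence of nontrivial $g_n\to 1$ in $\rho(G)$, and pairing $g_n$ with a fixed nontrivial $B$ not sharing the axis of $g_n$ (possible along a subsequence since $\rho(G)$ is non-elementary) makes both $|\trace^2(g_n)-4|$ and $|\trace([g_n,B])-2|$ tend to zero, eventually violating the inequality. As written, though, this step is merely asserted.

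Separately, your proposed certificate of non-elementarity is incorrect as stated: $\trace([A,B])\neq 2$ certifies only irreducibility (no common eigenvector in $\bC P^1$), not non-elementarity --- a two-generator subgroup of a point stabilizer $\cong SO(3)$ can be irreducible yet elementary. This does not actually break the algorithm, because the only case in which a false certificate would cause an incorrect termination is when $\rho$ is DFIL, and then the image is loxodromic-only, in which case $\trace([A,B])\neq 2$ genuinely does imply non-elementarity. But the justification should be phrased with that restriction, rather than presenting the trace condition as a general non-elementarity test.
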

\begin{proof}
  See the proof of \cite[Theorem 3.6]{manning:casson}.  The key to the
  proof is that a representation which is not DFIL must satisfy one of
  the following:
  \begin{enumerate}
  \item $\rho$ has nontrivial kernel.
  \item $\rho$ is a reducible representation; i.e., some direction in
    $\bC P^1$ is fixed by all the matrices
    $\rho(x_i)$, where $x_i$ is a generator from $\mc{P}$.
  \item $\rho(g)$ is a nontrivial parabolic or elliptic for some $g\in
    G$.
  \item $\rho(G)$ fails to satisfy the Margulis lemma.  Put another
    way, there are a pair of matrices in $\rho(G)$ which move some point in
    $\bH^3$ a very small distance, but whose commutator is nontrivial.
  \end{enumerate}
  The arguments in Section 3 of \cite{manning:casson} establish that
  the failure of any of these conditions is algorithmically
  detectable, given a solution to the word problem in $\mc{P}$, and
  the ability to do exact computations in the codomain of $\rho$.
\end{proof}

\subsection{$\cH$ is recursive in $\mc{D}$}

\begin{theorem}\label{thm: hyp}
The set $\cH$ is recursive modulo the word problem.
\end{theorem}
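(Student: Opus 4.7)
The plan is to package the three preceding ingredients---recursive enumerability of $\cH$ (Lemma \ref{lem:Hre}), the candidate-representation algorithm (Lemma \ref{l:findreps}), and the non-DFIL certification algorithm (Theorem \ref{t:ndfil})---into a recursive separation of $\cH\cap\cD$ from $\cD\smallsetminus\cH$, for any class $\cD$ of presentations with uniformly solvable word problem. Since Lemma \ref{lem:Hre} already gives a recursively enumerable superset of $\cH$, the obvious choice is
\[
\cX_{\cH,\cD}=\cH,
\]
and the entire content of the proof lies in constructing a recursively enumerable $\cY_{\cH,\cD}\supseteq\cD\smallsetminus\cH$ that is disjoint from $\cH$.

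I would define $\cY_{\cH,\cD}$ by the following semi-algorithm. On input a presentation $\cP$, apply Lemma \ref{l:findreps} to produce a finite list of rigid algebraic representations $\rho_1,\ldots,\rho_N\co G\to SL(2,\Qbar)$. Then, using the hypothesized uniform solution to the word problem in $\cD$, run the algorithms of Theorem \ref{t:ndfil} in parallel on $(\cP,\rho_1),\ldots,(\cP,\rho_N)$. Enumerate $\cP$ into $\cY_{\cH,\cD}$ precisely when every one of these $N$ processes terminates, certifying that none of the $\rho_i$ is DFIL. This is clearly a recursively enumerable procedure.

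It remains to check the separation properties. If $\cP\in\cD\smallsetminus\cH$, then $G=G(\cP)$ is not the fundamental group of a closed orientable hyperbolic $3$--manifold, so in particular no representation of $G$ in $SL(2,\bC)$ can be DFIL---were $\rho$ DFIL, then $\rho(G)$ would be a discrete, torsion-free subgroup of $\mathrm{Isom}^+(\bH^3)$ without parabolics, and the rigidity of the $\rho_i$ together with the cocompactness criterion implicit in the DFIL condition would force $G$ to be a cocompact hyperbolic lattice (this is the same reasoning used in \cite{manning:casson}). Thus each $\rho_i$ is not DFIL, and by Theorem \ref{t:ndfil} every parallel process halts, so $\cP$ is eventually enumerated into $\cY_{\cH,\cD}$. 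Conversely, if $\cP\in\cH$, then by Lemma \ref{l:findreps} some $\rho_i$ is DFIL, so the corresponding process of Theorem \ref{t:ndfil} never terminates, and $\cP$ is never enumerated into $\cY_{\cH,\cD}$. Hence $\cY_{\cH,\cD}\cap\cH=\varnothing$ and $\cD\smallsetminus\cH\subseteq\cY_{\cH,\cD}$, which is precisely what is required.

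The main obstacle is conceptual rather than technical: it is the verification that ``not hyperbolic'' implies ``no $\rho_i$ is DFIL.'' The forward direction (hyperbolic implies some $\rho_i$ is DFIL) is built into Lemma \ref{l:findreps}, while the converse uses that a DFIL representation of a finitely presented group with the rigidity produced by Lemma \ref{l:findreps} forces $G$ to already be a closed hyperbolic $3$--manifold group---exactly the content packaged in \cite{manning:casson}. Apart from that, the proof is a formal assembly of the lemmas above; no new geometric input is required, and the behaviour of the enumeration procedure on presentations outside $\cD$ is, by the definition of ``recursive in $\cD$,'' irrelevant.
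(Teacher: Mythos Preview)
Your proposal is correct and follows essentially the same route as the paper: set $\cX_{\cH,\cD}=\cH$ via Lemma~\ref{lem:Hre}, and enumerate $\cY_{\cH,\cD}$ by running the non-DFIL certifier of Theorem~\ref{t:ndfil} in parallel on the finite list produced by Lemma~\ref{l:findreps}, outputting $\cP$ once all runs halt.  Two small imprecisions are worth tightening: the sentence ``no representation of $G$ in $SL(2,\bC)$ can be DFIL'' is false as written (Schottky representations of free groups are DFIL)---what you actually need, and what your argument with rigidity supplies, is that none of the \emph{rigid} $\rho_i$ is DFIL; and your argument only establishes $\cY_{\cH,\cD}\cap\cD\cap\cH=\varnothing$, not $\cY_{\cH,\cD}\cap\cH=\varnothing$, since for $\cP\in\cH\smallsetminus\cD$ the word-problem oracle may give wrong answers and Theorem~\ref{t:ndfil} could terminate spuriously---but, as you note at the end, only the weaker disjointness is required by the definition of recursive separation.
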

\begin{proof}
  Let $\mc{D}$ be any class of presentations with uniformly solvable
  word problem.  We must find recursively enumerable sets
  $X_{\mc{H},\mc{D}}$ and $Y_{\mc{H},\mc{D}}$ whose intersections with
  $\cD$ partition $\cD$ into $\cD\cap\cH$ and its complement.

  Lemma \ref{lem:Hre} implies we may take $X_{\mc{H},\mc{D}}=\cH$.

  We now describe the Turing machine enumerating $Y_{\mc{H},\mc{D}}$.
  Enumerate the elements of $\mc{U}$.  For each presentation in
  $\mc{U}$, compute the list of representations guaranteed by Lemma
  \ref{l:findreps}.  In parallel, run the algorithm from Theorem
  \ref{t:ndfil} on each such representation produced, trying to
  certify the representation is not DFIL.  If this algorithm
  terminates for all the representations associated to a particular
  presentation, then output that presentation.  This procedure will
  eventually produce every element of $\mc{D}\smallsetminus\mc{H}$, and will
  produce no elements of $\mc{D}\cap\mc{H}$, so the theorem is proved.
\end{proof}

\small
\def\cprime{$'$}

\end{document}